\documentclass[a4paper,11pt,reqno]{amsart}


\usepackage[utf8x]{inputenc}
\usepackage{hyperref}
\usepackage{xcolor}
\hypersetup{
    colorlinks,
    linkcolor={red!50!black},
    citecolor={blue!50!black},
    urlcolor={blue!80!black}
}

\usepackage{microtype}
\usepackage{MnSymbol}
\usepackage{textcomp}
\usepackage{stmaryrd}

\newtheoremstyle{plain}
  {.5\baselineskip\@plus.2\baselineskip\@minus.2\baselineskip}
  {.5\baselineskip\@plus.2\baselineskip\@minus.2\baselineskip\@plus.5em}
  {\slshape}
  {}
  {\bfseries}
  {.}
  { }
  {}

\newtheoremstyle{definition}
  {.5\baselineskip\@plus.2\baselineskip\@minus.2\baselineskip}
  {0.8\baselineskip\@plus.2\baselineskip\@minus.2\baselineskip\@plus.5em}
  {}
  {}
  {\bfseries}
  {.}
  { }
  {}

\makeatletter
\newcommand{\eqnum}{\refstepcounter{equation}\textup{\tagform@{\theequation}}}
\makeatother

\makeatletter \@addtoreset{equation}{section} \makeatother
\renewcommand{\theequation}{\thesection.\arabic{equation}}

\newtheorem{thm}[equation]{Theorem}
\newtheorem{thmX}{Theorem}
\newtheorem{corX}[thmX]{Corollary}

\newtheorem*{thm*}{Theorem}
\newtheorem{lem}[equation]{Lemma}
\newtheorem{cor}[equation]{Corollary}
\newtheorem{prop}[equation]{Proposition}
\newtheorem*{defthm*}{Definition/Theorem}

\theoremstyle{definition}
\newtheorem{defn}[equation]{Definition}
\newtheorem{rem}[equation]{Remark}
\newtheorem{exam}[equation]{Example}
\newtheorem{constr}[equation]{Construction}

\newtheorem{notat}[equation]{Notation}
\newtheorem{warn}[equation]{Warning}
\newtheorem*{exam*}{Example}

\newcommand\arXiv[1]{\href{http://arxiv.org/abs/#1}{arXiv:#1}}


\usepackage{xparse}

\usepackage{xspace}

\newcommand{\changelocaltocdepth}[1]{%
  \addtocontents{toc}{\protect\setcounter{tocdepth}{#1}}%
  \setcounter{tocdepth}{#1}}

\newcommand{\nc}{\newcommand}
\nc{\renc}{\renewcommand}
\nc{\ssec}{\subsection}
\nc{\sssec}{\subsubsection}
\nc{\on}{\operatorname}
\nc{\term}[1]{#1\xspace}

\usepackage{tikz}
\usetikzlibrary{matrix}
\usepackage{tikz-cd}

\tikzset{
  commutative diagrams/.cd,
  arrow style=tikz,
  diagrams={>=latex}}
\makeatletter
\tikzset{
  column sep/.code=\def\pgfmatrixcolumnsep{\pgf@matrix@xscale*(#1)},
  row sep/.code   =\def\pgfmatrixrowsep{\pgf@matrix@yscale*(#1)},
  matrix xscale/.code=%
    \pgfmathsetmacro\pgf@matrix@xscale{\pgf@matrix@xscale*(#1)},
  matrix yscale/.code=%
    \pgfmathsetmacro\pgf@matrix@yscale{\pgf@matrix@yscale*(#1)},
  matrix scale/.style={/tikz/matrix xscale={#1},/tikz/matrix yscale={#1}}}
\def\pgf@matrix@xscale{1}
\def\pgf@matrix@yscale{1}
\makeatother

\usepackage[cmtip, all]{xy}

\usepackage[inline]{enumitem}
\setlist[enumerate,1]{label={(\alph*)},itemsep=\parskip}

\newlist{thmlist}{enumerate}{2}
\setlist[thmlist,1]{
  label={\em(\roman*)}, ref={(\roman*)},
  itemsep=0.5em,
  align=right,widest=vi)}
\setlist[thmlist,2]{
  label={\em(\alph*)}, ref={(\alph*)},
  itemsep=0.75em,
  labelsep=0em,labelindent=0em,leftmargin=*,align=left,widest=vi),
  topsep=0.75em}

\newlist{thmlistbis}{enumerate}{1}
\setlist[thmlistbis,1]{
  label={\em(\roman*~\textit{bis})},
  ref={(\roman*}~\textit{bis}\upshape{)},
  itemsep=0.5em,
  leftmargin=0pt, align=right, widest=vi)}

\newlist{defnlist}{enumerate}{2}
\setlist[defnlist,1]{
  label={(\roman*)}, ref={(\roman*)},
  itemsep=0.5em,
  align=right, widest=vi)}

\setlist[defnlist,2]{
  label={(\alph*)}, ref={(\alph*)},
  itemsep=0.75em,
  labelsep=0em,labelindent=0em,leftmargin=*,align=left,widest=vi),
  topsep=0.75em}

\newlist{inlinelist}{enumerate*}{1}
\setlist[inlinelist,1]{label={(\alph*)}}

\newlist{inlinedefnlist}{enumerate*}{1}
\definecolor{green}{HTML}{38550C}
\setlist[inlinedefnlist,1]{label={\color{green}(\roman*)}}

\nc{\sA}{\ensuremath{\mathcal{A}}\xspace}
\nc{\sB}{\ensuremath{\mathcal{B}}\xspace}
\nc{\sC}{\ensuremath{\mathcal{C}}\xspace}
\nc{\sD}{\ensuremath{\mathcal{D}}\xspace}
\nc{\sE}{\ensuremath{\mathcal{E}}\xspace}
\nc{\sF}{\ensuremath{\mathcal{F}}\xspace}
\nc{\sG}{\ensuremath{\mathcal{G}}\xspace}
\nc{\sH}{\ensuremath{\mathcal{H}}\xspace}
\nc{\sI}{\ensuremath{\mathcal{I}}\xspace}
\nc{\sJ}{\ensuremath{\mathcal{J}}\xspace}
\nc{\sK}{\ensuremath{\mathcal{K}}\xspace}
\nc{\sL}{\ensuremath{\mathcal{L}}\xspace}
\nc{\sM}{\ensuremath{\mathcal{M}}\xspace}
\nc{\sN}{\ensuremath{\mathcal{N}}\xspace}
\nc{\sO}{\ensuremath{\mathcal{O}}\xspace}
\nc{\sP}{\ensuremath{\mathcal{P}}\xspace}
\nc{\sQ}{\ensuremath{\mathcal{Q}}\xspace}
\nc{\sR}{\ensuremath{\mathcal{R}}\xspace}
\nc{\sS}{\ensuremath{\mathcal{S}}\xspace}
\nc{\sT}{\ensuremath{\mathcal{T}}\xspace}
\nc{\sU}{\ensuremath{\mathcal{U}}\xspace}
\nc{\sV}{\ensuremath{\mathcal{V}}\xspace}
\nc{\sW}{\ensuremath{\mathcal{W}}\xspace}
\nc{\sX}{\ensuremath{\mathcal{X}}\xspace}
\nc{\sY}{\ensuremath{\mathcal{Y}}\xspace}
\nc{\sZ}{\ensuremath{\mathcal{Z}}\xspace}

\nc{\bA}{\ensuremath{\mathbf{A}}\xspace}
\nc{\bB}{\ensuremath{\mathbf{B}}\xspace}
\nc{\bC}{\ensuremath{\mathbf{C}}\xspace}
\nc{\bD}{\ensuremath{\mathbf{D}}\xspace}
\nc{\bE}{\ensuremath{\mathbf{E}}\xspace}
\nc{\bF}{\ensuremath{\mathbf{F}}\xspace}
\nc{\bG}{\ensuremath{\mathbf{G}}\xspace}
\nc{\bH}{\ensuremath{\mathbf{H}}\xspace}
\nc{\bI}{\ensuremath{\mathbf{I}}\xspace}
\nc{\bJ}{\ensuremath{\mathbf{J}}\xspace}
\nc{\bK}{\ensuremath{\mathbf{K}}\xspace}
\nc{\bL}{\ensuremath{\mathbf{L}}\xspace}
\nc{\bM}{\ensuremath{\mathbf{M}}\xspace}
\nc{\bN}{\ensuremath{\mathbf{N}}\xspace}
\nc{\bO}{\ensuremath{\mathbf{O}}\xspace}
\nc{\bP}{\ensuremath{\mathbf{P}}\xspace}
\nc{\bQ}{\ensuremath{\mathbf{Q}}\xspace}
\nc{\bR}{\ensuremath{\mathbf{R}}\xspace}
\nc{\bS}{\ensuremath{\mathbf{S}}\xspace}
\nc{\bT}{\ensuremath{\mathbf{T}}\xspace}
\nc{\bU}{\ensuremath{\mathbf{U}}\xspace}
\nc{\bV}{\ensuremath{\mathbf{V}}\xspace}
\nc{\bW}{\ensuremath{\mathbf{W}}\xspace}
\nc{\bX}{\ensuremath{\mathbf{X}}\xspace}
\nc{\bY}{\ensuremath{\mathbf{Y}}\xspace}
\nc{\bZ}{\ensuremath{\mathbf{Z}}\xspace}

\nc{\bbA}{\ensuremath{\mathbb{A}}\xspace}
\nc{\bbB}{\ensuremath{\mathbb{B}}\xspace}
\nc{\bbC}{\ensuremath{\mathbb{C}}\xspace}
\nc{\bbD}{\ensuremath{\mathbb{D}}\xspace}
\nc{\bbE}{\ensuremath{\mathbb{E}}\xspace}
\nc{\bbF}{\ensuremath{\mathbb{F}}\xspace}
\nc{\bbG}{\ensuremath{\mathbb{G}}\xspace}
\nc{\bbH}{\ensuremath{\mathbb{H}}\xspace}
\nc{\bbI}{\ensuremath{\mathbb{I}}\xspace}
\nc{\bbJ}{\ensuremath{\mathbb{J}}\xspace}
\nc{\bbK}{\ensuremath{\mathbb{K}}\xspace}
\nc{\bbL}{\ensuremath{\mathbb{L}}\xspace}
\nc{\bbM}{\ensuremath{\mathbb{M}}\xspace}
\nc{\bbN}{\ensuremath{\mathbb{N}}\xspace}
\nc{\bbO}{\ensuremath{\mathbb{O}}\xspace}
\nc{\bbP}{\ensuremath{\mathbb{P}}\xspace}
\nc{\bbQ}{\ensuremath{\mathbb{Q}}\xspace}
\nc{\bbR}{\ensuremath{\mathbb{R}}\xspace}
\nc{\bbS}{\ensuremath{\mathbb{S}}\xspace}
\nc{\bbT}{\ensuremath{\mathbb{T}}\xspace}
\nc{\bbU}{\ensuremath{\mathbb{U}}\xspace}
\nc{\bbV}{\ensuremath{\mathbb{V}}\xspace}
\nc{\bbW}{\ensuremath{\mathbb{W}}\xspace}
\nc{\bbX}{\ensuremath{\mathbb{X}}\xspace}
\nc{\bbY}{\ensuremath{\mathbb{Y}}\xspace}
\nc{\bbZ}{\ensuremath{\mathbb{Z}}\xspace}

\nc{\mrm}[1]{\ensuremath{\mathrm{#1}}\xspace}
\nc{\mit}[1]{\ensuremath{\mathit{#1}}\xspace}
\nc{\mbf}[1]{\ensuremath{\mathbf{#1}}\xspace}
\nc{\mcal}[1]{\ensuremath{\mathcal{#1}}\xspace}
\nc{\msc}[1]{\ensuremath{\mathscr{#1}}\xspace}

\nc{\sub}{\subseteq}
\nc{\too}{\longrightarrow}
\nc{\hook}{\hookrightarrow}
\nc{\hooklongrightarrow}{\lhook\joinrel\longrightarrow}
\nc{\hooklong}{\hooklongrightarrow}
\nc{\hooklongleftarrow}{\longleftarrow\joinrel\rhook}
\nc{\twoheadlongrightarrow}{\relbar\joinrel\twoheadrightarrow}
\nc{\longrightleftarrows}{\ \raisebox{0.3ex}{\(\mathrel{\substack{\xrightarrow{\rule{1em}{0em}} \\[-1ex] \xleftarrow{\rule{1em}{0em}}}}\)}\ }

\renc{\ge}{\geqslant}
\renc{\le}{\leqslant}

\nc{\id}{\mathrm{id}}

\DeclareMathOperator{\rk}{\mathrm{rk}}
\DeclareMathOperator{\Hom}{\on{Hom}}
\nc{\uHom}{\underline{\smash{\Hom}}}
\DeclareMathOperator{\Maps}{\on{Maps}}
\DeclareMathOperator{\Aut}{\on{Aut}}
\DeclareMathOperator{\End}{\on{End}}

\nc{\uEnd}{\underline{\smash{\End}}}
\DeclareMathOperator{\codim}{\on{codim}}
\nc{\colim}{\varinjlim}
\renc{\lim}{\varprojlim}
\nc{\Cofib}{\on{Cofib}}
\nc{\Fib}{\on{Fib}}
\nc{\initial}{\varnothing}
\nc{\op}{\mathrm{op}}

\DeclareMathOperator*{\fibprod}{\times}
\DeclareMathOperator*{\fibcoprod}{\operatorname{\sqcup}}

\renc{\setminus}{\smallsetminus}

\usepackage{mathtools}
\DeclarePairedDelimiter\abs{\lvert}{\rvert}%


\newcommand{\thmref}[1]{Theorem~\ref{#1}}

\newcommand{\secref}[1]{Sect.~\ref{#1}}
\newcommand{\ssecref}[1]{Subsect. ~\ref{#1}}
\newcommand{\sssecref}[1]{(\ref{#1})}
\newcommand{\lemref}[1]{Lemma~\ref{#1}}
\newcommand{\propref}[1]{Proposition~\ref{#1}}
\newcommand{\corref}[1]{Corollary~\ref{#1}}

\newcommand{\remref}[1]{Remark~\ref{#1}}
\newcommand{\defnref}[1]{Definition~\ref{#1}}

\renewcommand{\eqref}[1]{(\ref{#1})}
\newcommand{\constrref}[1]{Construction~\ref{#1}}

\newcommand{\examref}[1]{Example~\ref{#1}}

\newcommand{\notatref}[1]{Notation~\ref{#1}}
\newcommand{\itemref}[1]{\ref{#1}}


\nc{\h}{\mrm{h}}
\nc{\pt}{{\mrm{pt}}}
\nc{\un}{\mbf{1}}
\nc{\Pic}{\on{Pic}}
\nc{\pr}{{\mrm{pr}}}
\nc{\pur}{\mrm{pur}}

\nc{\A}{\bA}
\renc{\P}{\bP}
\nc{\Spec}{\on{Spec}}
\nc{\Asp}{\mrm{Rep}}
\nc{\Sm}{\mrm{Sm}}
\nc{\Th}{\on{Th}}
\nc{\vb}[1]{\langle #1\rangle}

\nc{\D}{\on{\mbf{D}}}
\nc{\Qcoh}{\on{Qcoh}}
\nc{\Dqc}{\on{\mbf{D}}_{\mrm{qc}}}
\nc{\bDelta}{\mathbf{\Delta}}
\nc{\Cech}{\textnormal{\v{C}}}
\nc{\Dperf}{\on{\mbf{D}}_{\mrm{perf}}}
\nc{\cl}{{\mrm{cl}}}
\nc{\CH}{\on{A}}
\nc{\et}{\mrm{\acute{e}t}}
\renc{\H}{\on{A}}
\nc{\Z}{\bZ}
\nc{\Q}{\bQ}
\nc{\K}{{\on{K}}}
\nc{\KB}{\K^{\mrm{B}}}
\nc{\G}{{\on{G}}}
\nc{\KH}{\mrm{KH}}
\nc{\Einfty}{{\sE_\infty}}
\renc{\sp}{\mrm{sp}}
\nc{\red}{\mrm{red}}
\nc{\RGamma}{\on{\mrm{R}\Gamma}}
\renc{\L}{\mrm{L}}
\nc{\otimesL}{\otimes^\bL}
\nc{\fibprodR}{\fibprod^\bR}
\nc{\Ex}{\mrm{Ex}}
\nc{\GL}{\mrm{GL}}

\nc{\Nis}{\mathrm{Nis}}
\nc{\LNis}{\on{\mrm{L}_\Nis}}
\nc{\htp}{{\A^1}}
\nc{\Lhtp}{\on{\mrm{L}_\htp}}
\nc{\MotSpc}{{\mbf{H}}}
\nc{\SH}{{\mbf{SH}}}
\nc{\DM}{{\mbf{DM}}}

\nc{\KGL}{{\mrm{KGL}}}
\nc{\MGL}{{\mrm{MGL}}}

\nc{\dash}{{\textnormal{-}}}
\nc{\dashMod}[1]{#1\dash\textrm{mod}}
\nc{\MGLmod}{\MGL\dash\textbf{mod}}

\nc{\fr}{{\mrm{fr}}}
\nc{\SHfr}{\SH^\fr}

\nc{\eul}{\mrm{eul}}
\nc{\gys}{\mrm{gys}}

\nc{\Bor}{{\triangleleft}}
\nc{\Lis}{\mrm{Lis}}
\nc{\RKE}{\mrm{RKE}}
\nc{\Shv}{\on{Shv}}

\nc{\rep}{\mrm{rep}}

\nc{\piz}{\on{\pi}_0}

\nc{\rightrightrightarrows}
  {\mathrel{\substack{\textstyle\rightarrow\\[-0.6ex]
   \textstyle\rightarrow \\[-0.6ex]
   \textstyle\rightarrow}}}
\nc{\rightrightrightrightarrows}
  {\mathrel{\substack{\textstyle\rightarrow\\[-0.6ex]
   \textstyle\rightarrow \\[-0.6ex]
   \textstyle\rightarrow \\[-0.6ex]
   \textstyle\rightarrow}}}

\nc{\Ccoh}{\mrm{C}^\bullet}
\nc{\BM}{\mrm{BM}}
\nc{\CBM}{\mrm{C}^{\BM}_\bullet}


\nc{\inftyCat}{\term{$\infty$-category}}
\nc{\inftyCats}{\term{$\infty$-categories}}

\nc{\inftyGrpd}{\term{$\infty$-groupoid}}
\nc{\inftyGrpds}{\term{$\infty$-groupoids}}

\nc{\ani}{\term{animum}}
\nc{\anis}{\term{anima}}

\nc{\fund}{\term{fundamental}}
\nc{\funds}{\term{fundamentals}}

\nc{\qfund}{\term{quasi-fundamental}}

\title{Generalized cohomology theories for algebraic stacks}

\author[A.\,A. Khan]{Adeel~A.~Khan}
\author[C. Ravi]{Charanya Ravi}

\date{2024-10-06}

\makeatletter
\def\l@subsection{\@tocline{2}{0pt}{4pc}{6pc}{}}
\makeatother

\begin{document}

\begin{abstract}
  We extend the stable motivic homotopy category of Voevodsky to the class of scalloped algebraic stacks, and show that it admits the formalism of Grothendieck's six operations.
  Objects in this category represent generalized cohomology theories for stacks like algebraic K-theory, as well as new examples like genuine motivic cohomology and algebraic cobordism.
  These cohomology theories admit Gysin maps and satisfy homotopy invariance, localization, and Mayer--Vietoris.
  For example, we deduce that homotopy K-theory satisfies cdh descent on scalloped stacks.
  We also prove a fixed point localization formula for torus actions.
  
  Finally, the construction is contrasted with a ``lisse-extended'' stable motivic homotopy category, defined for arbitrary stacks: we show for example that lisse-extended motivic cohomology of quotient stacks is computed by the equivariant higher Chow groups of Edidin--Graham, and we also get a good new theory of Borel-equivariant algebraic cobordism.
  Moreover, the lisse-extended motivic homotopy type is shown to recover all previous constructions of motives of stacks.
\end{abstract}

\maketitle

\renewcommand\contentsname{\vspace{-1cm}}
\tableofcontents

\setlength{\parindent}{0em}
\parskip 0.75em


\section{Introduction}

  \subsection{Motivic homotopy theory of stacks}

    Motivic homotopy theory provides a framework for the study of generalized or extraordinary cohomology theories in algebraic geometry, such as motivic cohomology, algebraic K-theory, and algebraic cobordism.
    Objects of the motivic stable homotopy category $\SH(X)$, over a scheme $X$, are ``generalized sheaves'' that can be taken as coefficients for cohomology.

    In this paper we are interested in generalized cohomology theories on algebraic stacks.
    To that end, we introduce an extension of the motivic stable homotopy category to a large class of algebraic stacks, called \emph{scalloped} stacks (see \ssecref{ssec:intro/pt1} below), which includes for instance tame Deligne--Mumford or Artin stacks with separated diagonal as well as quotients of qcqs algebraic spaces by nice linear algebraic groups.
    Our first main result is as follows (see \thmref{thm:exc} and \examref{exam:SH}):

    \begin{thmX}\label{thm:intro/six}
      The assignment $\sX \mapsto \SH(\sX)$, together with the formalism of six operations, extends from qcqs\footnote{quasi-compact and quasi-separated} algebraic spaces to scalloped algebraic stacks.
      More precisely, we have the following operations:
      \begin{thmlist}
        \item
        For every scalloped stack $\sX$, a pair of adjoint bifunctors $(\otimes, \uHom)$.

        \item
        For every morphism of scalloped stacks $f : \sX \to \sY$, an adjoint pair
        \[
          f^* : \SH(\sY) \to \SH(\sX),
          \quad
          f_* : \SH(\sX) \to \SH(\sY).
        \]
        
        \item
        For every representable morphism of finite type $f : \sX \to \sY$ between scalloped stacks $\sX$ and $\sY$, an adjoint pair
        \[
          f_! : \SH(\sX) \to \SH(\sY),
          \quad
          f^! : \SH(\sY) \to \SH(\sX).
        \]
      \end{thmlist}
      Moreover, these satisfy various identities including the base change and projection formulas, homotopy invariance, purity isomorphism, and localization triangle.
    \end{thmX}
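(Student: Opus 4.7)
The plan is to construct $\SH(\sX)$ for scalloped stacks by descent from the case of algebraic spaces, and then transport the six operations along this extension. First I would fix the target: the $\infty$-category of symmetric monoidal presentable stable $\infty$-categories with colimit-preserving functors, on which $\SH(-)^*$ is already functorial for qcqs algebraic spaces and satisfies Nisnevich (and indeed smooth) descent.

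The heart of the extension exploits the defining property of scalloped stacks: they admit Nisnevich-local presentations by quotients $[U/\GL_n]$ with $U$ an algebraic space. Concretely, I would define $\SH(\sX)$ by right Kan extension of $\SH$ from (a small subcategory of) such nice stacks along its inclusion into all scalloped stacks, or equivalently as the limit of $\SH$ evaluated on a Nisnevich hypercover by nice quotients. The key input is Nisnevich descent on algebraic spaces upgraded to the full $\infty$-categorical level so that the symmetric monoidal structure and presentability pass to limits. This immediately produces the pair $(\otimes, \uHom)$ of part (i), and $f^* \dashv f_*$ of part (ii): $f^*$ is functorial by construction, and $f_*$ arises by the adjoint functor theorem.

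Next I would construct $(f_!, f^!)$ for a representable morphism of finite type $f : \sX \to \sY$. Representability is crucial: for any atlas $V \to \sY$ from an algebraic space, the pullback $f_V : V \fibprod_{\sY} \sX \to V$ is a morphism of algebraic spaces, so $f_{V,!}$ is defined classically. One assembles these into $f_! : \SH(\sX) \to \SH(\sY)$ by descent along a scalloped presentation of $\sY$, and obtains $f^!$ as right adjoint. To handle the higher coherences (compatibility with base change along $V \to \sY$, symmetric monoidality, exchange transformations) I would use the Liu--Zheng / Mann machinery for extending a six-functor formalism through a descent-stable class of morphisms, which feeds in the already-established formalism for algebraic spaces together with Nisnevich descent and produces the extended formalism with all required coherences.

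The main obstacle, I expect, is not constructing the individual functors but verifying the homotopy-coherent compatibilities — base change, projection formula, and most delicately the purity isomorphism — simultaneously with the construction. Purity and the localization triangle require that smooth morphisms and closed immersions between scalloped stacks admit, after pullback to a Nisnevich cover, the same local structure as in the algebraic space case, so that the known purity/localization descends. Homotopy invariance is inherited tautologically from the scheme case, since it only involves $f^*$ along the projection $\htp \times \sX \to \sX$, and this projection is tested on any smooth atlas. Once all of this is in place, the descent-stability of the six-functor formalism implies that the resulting identities on scalloped stacks are exactly the ones listed in the theorem.
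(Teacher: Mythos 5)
Your construction of $\SH(\sX)$ recovers the \emph{lisse extension} $\SH_\Bor(\sX)$ of \secref{sec:lim}, not the genuine stable motivic homotopy category, and the paper goes to considerable lengths to distinguish the two. The issue is that $\SH$ does \emph{not} satisfy descent along smooth surjections from algebraic spaces, so neither ``limit over a hypercover by atlases $V \to \sX$'' nor ``right Kan extension from algebraic spaces'' produces the right category. This is not a matter of taste: for $\sX = [X/G]$ the lisse extension computes Borel-equivariant K-theory, which is the completion (not the value) of genuine equivariant K-theory (\examref{exam:0pa8hfds}). The paper's genuine $\SH(\sX)$, in the \qfund case, is instead obtained by adjoining $\otimes$-inverses of the Thom anima $\Th_\sX(\sE)$ for \emph{all} locally free sheaves $\sE$ on $\sX$ --- in particular for nontrivial representations of the stabilizer group $G$, which are not Nisnevich-locally trivial and therefore invisible to any construction assembled purely from $\SH$ of algebraic spaces. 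The descent that the paper actually uses, after this stabilization is done on \qfund stacks, is along \emph{representable étale} covers of stacks by other stacks (\constrref{constr:SH global}), not along smooth covers by spaces.

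There is a second, related gap in the plan for the $!$-operations. Your argument relies on $f_{V,!}$ being ``defined classically'' after pulling back along an atlas $V \to \sY$ and then assembling by descent, but since $\SH$ is not a sheaf for those covers, one cannot glue. In the paper, the $!$-operations are built instead from compactifications after a separate proof of the proper base change theorem; the nontrivial input there is a stacky version of Chow's lemma (\thmref{thm:Chow}) reducing proper representable morphisms to projective ones, plus the Voevodsky conditions (homotopy invariance, localization, Thom stability) verified in the scalloped setting. Your write-up correctly identifies Liu--Zheng gluing as a tool and correctly senses that the coherences are the delicate part, but the plan is missing the two load-bearing ingredients: the correct definition of $\SH$ on quotient stacks by inverting Thom spaces of representations, and the proper base change theorem that makes the $!$-formalism go through.
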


    In the case of noetherian schemes, the six functor formalism on $\SH$ was constructed in the work of Voevodsky, Ayoub, and Cisinski--Déglise (see \cite{VoevodskyCross,AyoubThesis,CisinskiDegliseBook}).
    For a self-contained account in the generality of qcqs algebraic spaces, see \cite{KhanSix}.
    Our proof of \thmref{thm:intro/six} is logically independent of these earlier works.

  \subsection{Genuine cohomology theories}
  \label{ssec:intro/coh}

    Given a motivic spectrum $\sF \in \SH(\sX)$ over a scalloped stack $\sX$, we define the cohomology spectra of $\sX$ with coefficients in $\sF$ as the mapping spectra
    \[ \Ccoh(\sX, \sF) = \Maps_{\SH(\sX)}\big(\un_\sX, \sF\big). \]
    Given any K-theory class $\alpha \in \K(\sX)$, we write $\sF\vb{\alpha}$ for the Thom twist\footnote{%
      For oriented examples (such as motivic cohomology, algebraic K-theory, and algebraic cobordism), a choice of orientation determines isomorphisms $\sF\vb{\alpha} \simeq \sF\vb{r} \simeq \sF(r)[2r]$, where $r$ is the virtual rank of $\alpha$ (when viewed as a K-theory class, $r = [\sO_\sX] + \cdots + [\sO_\sX] = [\sO^r_\sX]$).
    } by $\alpha$.
    \thmref{thm:intro/six} yields (see \ssecref{ssec:coh/op}):
    
    \begin{corX}\label{cor:intro/cohomology}
      Cohomology with coefficients in $\sF \in \SH(\sX)$ has the following properties:

      \begin{thmlist}
        \item\emph{Functoriality}.\label{item:iyzbho}
        For every representable morphism $f : \sX' \to \sX$, there are inverse image maps
        \[
          f^* : \Ccoh(\sX, \sF) \to \Ccoh(\sX', \sF).
        \]
        If $f$ is smooth and proper, then there is also a Gysin map
        \[
          f_! : \Ccoh(\sX', \sF) \to \Ccoh(\sX, \sF)\vb{-\Omega_{\sX'/\sX}}
        \]
        where $\Omega_{\sX'/\sX}$ denotes the relative cotangent sheaf.
        These are functorial and satisfy base change and projection formulas.
        
        \item\emph{Homotopy invariance}.
        For every scalloped stack $\sX$ and every vector bundle $p : \sE \to \sX$, the inverse image map
        \[ p^* : \Ccoh(\sX, \sF) \to \Ccoh(\sE, \sF) \]
        is invertible.
        
        \item\emph{Localization}.\label{item:0a7sfd0p1}
        For every scalloped stack $\sX$ and every closed immersion $i : \sZ \to \sX$ with quasi-compact open complement $j : \sU \to \sX$, there is an exact triangle
        \begin{equation*}
          \Ccoh_\sZ(\sX, \sF)
          \to \Ccoh(\sX, \sF)
          \xrightarrow{j^*} \Ccoh(\sU, \sF),
        \end{equation*}
        where
        \[
          \Ccoh_\sZ(\sX, \sF)
          = \Maps_{\SH(\sX)}\big(i_*(\un_\sZ), \sF\big)
        \]
        is cohomology with support in $\sZ$.
        
        \item\emph{Mayer--Vietoris/cdh descent}.\label{item:a07gsf01}
        Let $f : \sX' \to \sX$ be a representable étale morphism (resp. a representable proper morphism) of scalloped stacks which induces an isomorphism away from a quasi-compact open substack $\sY \subseteq \sX$ (resp. a closed substack $\sY \subseteq \sX$).
        Then there is a homotopy cartesian square
        \[ \begin{tikzcd}
          \Ccoh(\sX, \sF) \ar{r}\ar{d}{f^*}
          & \Ccoh(\sY, \sF) \ar{d}
          \\
          \Ccoh(\sX', \sF) \ar{r}
          & \Ccoh(f^{-1}(\sY), \sF).
        \end{tikzcd} \]
      \end{thmlist}
    \end{corX}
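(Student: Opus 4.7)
The plan is to deduce each part formally from \thmref{thm:intro/six}: every item is obtained by applying the mapping-spectrum functor $\Maps_{\SH(\sX)}(\un_\sX, -)$, or dually $\Maps_{\SH(\sX)}(-, \sF)$, to structure already packaged in $\SH$ by Theorem~A, so no further geometric input is needed.

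For (i), the pullback $f^*$ on cohomology is just the functor $f^* : \SH(\sX) \to \SH(\sX')$ applied to morphisms, combined with the canonical identification $f^*\un_\sX \simeq \un_{\sX'}$. For smooth and proper $f$, the Gysin map is built from the counit $f_!f^!\sF \to \sF$ after substituting $f_! \simeq f_*$ (properness) and $f^! \simeq f^*\vb{\Omega_{\sX/\sY}}$ (relative purity for smooth $f$); applying $\Maps(\un_\sY, -)$ together with the $(f^*, f_*)$-adjunction delivers the asserted map, with the twist $\vb{-\Omega_{\sX/\sY}}$ bookkeeping the purity shift. Naturality of these adjunctions gives the base change and projection formulas on cohomology automatically.

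For (ii), the homotopy invariance built into Theorem~A says $p^* : \SH(\sX) \to \SH(\sE)$ is fully faithful for a vector bundle $p$, so $\Maps(\un_\sX, \sF) \to \Maps(p^*\un_\sX, p^*\sF) = \Maps(\un_\sE, p^*\sF)$ is an equivalence. For (iii), apply the localization triangle of Theorem~A to $\un_\sX$ to obtain $j_!\un_\sU \to \un_\sX \to i_*\un_\sZ$ in $\SH(\sX)$, then take $\Maps(-, \sF)$: the first term is $\Ccoh_\sZ(\sX, \sF)$ by definition, the middle is $\Ccoh(\sX, \sF)$, and the third rewrites via $(j_!, j^!)$-adjunction as $\Maps(\un_\sU, j^!\sF) \simeq \Ccoh(\sU, \sF)$ since $j^! \simeq j^*$ for an open immersion. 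Finally, for (iv), representable étale Nisnevich squares and representable proper cdh (abstract blowup) squares both give rise to cocartesian squares in $\SH$: the étale case is Nisnevich descent, built into the construction, while the proper case reduces to the localization triangle together with the fact that the morphism is an isomorphism outside $\sY$. Applying $\Maps(-, \sF)$ turns these into the asserted homotopy cartesian squares of cohomology spectra.

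In short, the genuine obstacle is \thmref{thm:intro/six} itself; Corollary~\ref{cor:intro/cohomology} is a purely formal consequence once that theorem is in hand. The only care required is bookkeeping: one must align the Thom twists in (i) with the definition of $\sF\vb{\alpha}$, and in (iv) verify that the cocartesianness of a proper abstract blowup square in $\SH$ is genuinely a consequence of the localization triangle rather than an independent axiom.
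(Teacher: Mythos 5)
Your proposal is correct and follows essentially the same route as the paper: \corref{cor:intro/cohomology} is packaged in \ssecref{ssec:coh/op} as a purely formal consequence of the six-operations formalism, with part (i) coming from the $(f_!,f^!)$-adjunction together with the purity isomorphism and $f_!\simeq f_*$ for proper $f$, part (ii) from full faithfulness of $p^*$, part (iii) from the localization triangle applied to $\un_\sX$, and part (iv) from étale excision (\propref{prop:etale exc}) and proper excision (\thmref{thm:ipqnosdf}\itemref{item:ipqnosdf/excision}). Regarding your closing caveat: proper excision does not follow from the localization triangle alone but rather from localization combined with proper base change (see \lemref{lem:apsdfihq}, which checks the square after $i^*$ using proper base change and after $j^*$ using smooth base change), and since \thmref{thm:intro/six} also packages proper base change, your reduction is indeed closed.
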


    For a quotient stack $\sX = [X/G]$, where $G$ is a nice linear algebraic group acting on a qcqs algebraic space $X$, the genuine cohomology of $\sX$ in particular defines a notion of genuine $G$-equivariant cohomology of $X$.
    Moreover, $X$ can itself be a scalloped stack (e.g. a tame Deligne--Mumford stack with separated diagonal) so that we can also make sense of genuine equivariant cohomology of scalloped stacks.

    We are primarily interested in the cohomology theories arising from three examples of motivic spectra over $\sX$: the algebraic K-theory spectrum $\KGL_\sX$, the (integral) motivic cohomology spectrum $\bZ_\sX$, and the algebraic cobordism spectrum $\MGL_\sX$.
    
    Cohomology with coefficients in $\KGL_\sX$ recovers (the $\A^1$-invariant version of) the well-known algebraic K-theory of stacks: for every scalloped stack $\sX$, there is a canonical isomorphism
    \begin{equation}\label{eq:intro/KGL}
      \Ccoh(\sX, \KGL) \simeq \KH(\sX),
    \end{equation}
    where the right-hand side is the homotopy invariant K-theory spectrum $\KH(\sX)$ as studied in \cite{KrishnaRavi,HoyoisKrishna} (and in \cite{WeibelKH} for schemes).
    In particular, if $\sX$ is nonsingular, then we have
    \[ \Ccoh(\sX, \KGL) \simeq \K(\sX) \simeq \G(\sX) \]
    where $\K(\sX)$ is the Thomason--Trobaugh K-theory spectrum of perfect complexes on $\sX$ and $\G(\sX)$ is the Quillen K-theory spectrum of coherent sheaves on $\sX$.
    See \ssecref{ssec:KH}.

    In the case of $\bZ_\sX$ and $\MGL_\sX$, we get new extensions of motivic cohomology and cobordism to stacks, which are ``genuine'' refinements of previously known cohomology theories even in the case of quotient stacks.
    The word ``genuine'' here is used in the sense of \emph{genuine} equivariant homotopy theory (see \cite{Segal,HillHopkinsRavenel,NikolausScholze}), as opposed to \emph{Borel}-equivariant homotopy theory; compare \ssecref{ssec:intro/limit} below and see also the discussion in \ssecref{ssec:related}.

    There are also ``quadratic'', $\mrm{SL}$-oriented, refinements of these three theories: hermitian K-theory $\mrm{KQ}_\sX$ (\remref{rem:KQ}), Milnor--Witt motivic cohomology $\widetilde{\bZ}_\sX$ (\remref{rem:Ztilde}), and special linear algebraic cobordism $\mrm{MSL}_\sX$ (\remref{rem:MSL}).

  \subsection{Fixed point localization}

    Let $k$ be a field and let $T = \bG_{m,k}$ be the multiplicative group over $\Spec(k)$.
    Given a motivic spectrum $\sF \in \SH(BT)$, we can define $T$-equivariant Borel--Moore homology with coefficients in $\sF$ by the formula:
    \[
      \CBM([X/T]_{/BT}, \sF)
      := \Maps_{\SH([X/T])}\big( \un, f^!(\sF) \big),
    \]
    where $X$ is a qcqs algebraic space with $T$-action and $f : [X/T] \to BT$ is the projection.
    We prove an analogue of Thomason's concentration theorem (see \cite[Thm.~2.1]{ThomasonLefschetz}) in this setting.
    It relates the equivariant Borel--Moore homology of an algebraic space with that of its fixed locus:

    \begin{thmX}[Concentration]\label{thm:intro/conc}
      Let $X$ be a $T$-equivariant algebraic space of finite type over $k$ and denote by $i : X^{T} \to X$ the inclusion of the locus of fixed points.
      Then for every motivic spectrum $\sF \in \SH(BT)$, the map
      \[ i_* : \CBM((X^T \times BT)_{/BT}, \sF)\vb{\ast} \to \CBM([X/T]_{/BT}, \sF)\vb{\ast} \]
      becomes invertible after inverting the Euler classes of powers of the tautological line bundle $[\A^1/\bG_m]$.
    \end{thmX}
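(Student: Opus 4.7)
The approach is to adapt Thomason's classical concentration argument for equivariant K-theory to the motivic six-functor framework of \thmref{thm:intro/six}. The two main inputs are the localization triangle and a stratification-by-stabilizer argument reducing the problem to an explicit computation on $B\mu_n$.

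First, I would apply the localization triangle for Borel--Moore homology (Corollary~\ref{cor:intro/cohomology}\ref{item:0a7sfd0p1}) to the closed immersion $i : X^T \hookrightarrow X$ and its open complement $j : U \hookrightarrow X$ with $U := X \setminus X^T$. The resulting cofiber sequence shows that the cofiber of $i_*$ as given is $\CBM([U/T]_{/BT}, \sF)\vb{\ast}$, so it suffices to prove that this cofiber vanishes after inverting the Euler classes $e(L^{\otimes k})$ for $k \ge 1$, where $L = [\A^1/\bG_m]$ is the tautological line bundle on $BT$.

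Next, by noetherian induction on $\dim U$ and iterated use of the localization triangle along $T$-invariant locally closed subspaces, reduce to the case where $U$ carries a $T$-action with constant stabilizer scheme $H$. Since $U$ contains no fixed points, $H$ is a proper closed subgroup of $\bG_m$, hence $H = \mu_n$ for some $n \ge 2$. In this case the $T$-action factors through a free action of $T' := T/\mu_n$, giving a cartesian square
\[ \begin{tikzcd}
{[U/T]} \ar{r}{\pi} \ar{d}{f} & Y \ar{d} \\
BT \ar{r}{\phi} & BT'
\end{tikzcd} \]
with $Y = U/T'$ an algebraic space and $\phi$ induced by the $n$-th power map $T \to T'$. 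The pullback $\phi^* L'$ of the tautological bundle on $BT'$ is $L^{\otimes n}$, so $e(L^{\otimes n}) = \phi^* e(L')$. The crucial observation is that the restriction of $L^{\otimes n}$ to the fiber $B\mu_n$ of $\phi$ is canonically trivial (the character $\chi^n$ of $\bG_m$ restricts trivially to $\mu_n$), so its Euler class already vanishes on $B\mu_n$. Combining this with base change along $\phi$ within the six-operation framework, one concludes that $\CBM([U/T]_{/BT}, \sF)\vb{\ast}$ is $e(L^{\otimes n})$-torsion as a module over $\Ccoh(BT, \sF)$, and hence annihilated by the localization.

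The principal obstacle is the last step: turning the vanishing of $e(L^{\otimes n})$ on $B\mu_n$ into the desired torsion statement for the BM homology of $[U/T]_{/BT}$. The map $\phi : BT \to BT'$ is a non-representable $\mu_n$-gerbe, so base change does not follow directly from \thmref{thm:intro/six}; it must be accessed either via a smooth representable presentation of $\phi$ (for instance by descent through $\Spec k \to B\mu_n$) or through an \emph{ad hoc} computation using the compactness/cellularity of $B\mu_n$. A secondary difficulty lies in the stratification step, which in positive characteristic must handle potentially non-reduced stabilizer subgroups of $\bG_m$ and guarantee the existence of a $T$-equivariant stratification with constant stabilizer type on an algebraic space; the linear reductivity of $\bG_m$ should suffice to carry this through.
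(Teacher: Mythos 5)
Your overall strategy — localization triangle, noetherian induction, reduction to constant-stabilizer locus, and the observation that $L^{\otimes n}$ trivializes on $B\mu_n$ — is close to the paper's, and you have correctly flagged the dangerous step. But the gap is deeper than the non-representability of $\phi : BT \to BT'$; the route through the \emph{quotient} $T' = T/\mu_n$ does not work even in principle. In your cartesian square $[U/T] \simeq BT \fibprod_{BT'} Y$ the pullback of $L^{\otimes n} = \phi^*L'$ to $[U/T]$ is the pullback from $Y$ of the line bundle $L'|_Y$, whose Euler class on the algebraic space $Y$ is in general nonzero; so factoring the $\Ccoh(BT)$-module structure through $\Ccoh(BT')$ gives you nothing, because $e(L')$ is a nonzero element of $\Ccoh(BT')$ and $e(L^{\otimes n}) = \phi^*e(L')$ is therefore not destined to die. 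The fact that $L^{\otimes n}$ restricts trivially to the \emph{fiber} $B\mu_n$ of $\phi$ is necessary but not sufficient; one cannot deduce torsion of the total Borel--Moore module from vanishing on a single fiber without genuinely new input, and the proposed remedies (a representable presentation of $\phi$, or a direct computation on $B\mu_n$) would not supply it.

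The paper's fix is to run the argument through the \emph{subgroup} $\mu_n \hookrightarrow T$ rather than the quotient $T \twoheadrightarrow T/\mu_n$. Concretely, one invokes Thomason's generic slice theorem (\cite[Thm.~4.10]{ThomasonComp}): after shrinking $X$ to a nonempty $T$-invariant open $U$, there is a proper diagonalizable subgroup $T' \subsetneq T$ (here $T' = \mu_n$) and a $T'$-space $V$ with $[U/T] \simeq [V/T']$, and — crucially — the structure map $[U/T] \to BT$ then factors through the \emph{representable} induction map $BT' = B\mu_n \to BT$. This is strictly stronger than a constant-stabilizer stratification: constant stabilizer $\mu_n$ only says $[U/T]$ is a (possibly nontrivial) $\mu_n$-gerbe over $Y$, whereas the slice theorem trivializes the gerbe on a dense open, precisely what is needed to reduce the structure group of the $T$-torsor $U$ to $\mu_n$. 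Once this factorization is in hand, the $\Ccoh(BT, \sF)\vb{\ast}$-module structure on $\CBM([U/T]_{/BT}, \sF)\vb{\ast}$ is restriction of scalars along $\Ccoh(BT, \sF)\vb{\ast} \to \Ccoh(B\mu_n, \sF)\vb{\ast}$, and $e(L^{\otimes n})$ maps to $0$ in the latter by \lemref{lem:eul=0} (since $L^{\otimes n}|_{B\mu_n}$ admits a nowhere-vanishing section), so the localization is identically zero and the noetherian induction closes. Two smaller points: the allowed range is $n \ge 1$ (the stabilizer may be trivial), and the paper first uses derived invariance and Nisnevich descent (via \thmref{thm:sumihiro}) to reduce to the classical, reduced, separated case before invoking Thomason.
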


    See \corref{cor:conc}.
    The notation $C\vb{\ast}$ denotes the direct sum of $C\vb{\alpha}$ over all K-theory classes $\alpha$ (see \remref{rem:graded}).

    As an application of \thmref{thm:intro/conc}, one can prove (virtual) Atiyah--Bott localization and wall-crossing formulas in this context, following the arguments of \cite{virloc}.
    This suggests that there are genuine-equivariant counterparts to Joyce's machine producing enumerative invariants out of abelian categories \cite{JoyceWall} (compare \cite{LiuJoyce}), which we plan to investigate in the future.

  \subsection{Lisse-extended cohomology theories}
  \label{ssec:intro/limit}

    A different way to extend $\SH$ and generalized cohomology theories to stacks is via the following formal procedure.

    Given any qcqs algebraic stack $\sX$ (not necessarily scalloped), denote by $\Lis_\sX$ the \inftyCat of pairs $(U, u : U \to \sX)$, where $U$ is a qcqs algebraic space and $u : U \to \sX$ is a smooth morphism.
    The \emph{lisse extension} $\SH_\Bor(\sX)$ is the homotopy limit of \inftyCats
    \[
      \SH_\Bor(\sX) = \lim_{(U,u) \in \Lis_\sX} \SH(U),
    \]
    over the $*$-inverse image functors.
    The motivic cohomology, algebraic K-theory, and algebraic cobordism spectra immediately give rise to lisse-extended motivic spectra
    \[ \bZ^\Bor_\sX, ~\KGL^\Bor_\sX, ~\MGL^\Bor_\sX \in \SH_\Bor(\sX) \]
    over $\sX$, simply by virtue of stability under $*$-inverse image.
    If $\sX$ is scalloped, then these are moreover the images of their genuine counterparts by a canonical functor (which commutes with colimits and $*$-inverse image)
    \[
      \SH(\sX) \to \SH_\Bor(\sX).
    \]
    However, this functor is far from being an equivalence, so that the corresponding cohomology theories are very different (as the example below of K-theory shows).

    In fact, we show that for quotient stacks, cohomology with coefficients in any of the lisse-extended cohomology theories above can be computed via Totaro and Morel--Voevodsky's algebraic approximation of the Borel construction (see \thmref{thm:Borel}).
    For example, in the case of motivic cohomology we have:

    \begin{thmX}\label{thm:intro/Bor}
      Let $G$ be a linear algebraic group over a field $k$ of characteristic zero.
      Let $X$ be a smooth $G$-quasi-projective $k$-scheme.
      Then for every $n,s\in\bZ$ there are canonical isomorphisms
      \begin{equation*}
        \pi_s \Ccoh_\Bor([X/G], \bZ)\vb{n}
        \simeq \CH^n_G(X, s)
      \end{equation*}
      where on the right-hand side are the Edidin--Graham equivariant higher Chow groups \cite{EdidinGraham}.
    \end{thmX}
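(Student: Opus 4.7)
The plan is to combine three ingredients: the computation of lisse-extended cohomology on quotient stacks via algebraic Borel approximations (\thmref{thm:Borel}), the comparison between motivic cohomology of smooth $k$-schemes and Bloch's higher Chow groups, and the Edidin--Graham stabilization argument.

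First I would choose a Totaro/Morel--Voevodsky system $(V_i, U_i)_{i \ge 0}$ for $G$: each $V_i$ is a $k$-rational representation of $G$, each $U_i \subseteq V_i$ is a $G$-invariant open subscheme on which $G$ acts freely, and $\codim_{V_i}(V_i \setminus U_i) \to \infty$. Since $X$ is $G$-quasi-projective, the geometric quotient $Y_i := (X \times U_i)/G$ exists as a smooth quasi-projective $k$-scheme. Applying \thmref{thm:Borel} to the motivic cohomology spectrum then yields
\[
  \Ccoh_\Bor([X/G], \bZ)\vb{n} \;\simeq\; \lim_{i} \Ccoh(Y_i, \bZ)\vb{n}.
\]
Next, using the canonical orientation on $\bZ$ to identify $\bZ\vb{n} \simeq \bZ(n)[2n]$, I would invoke the Voevodsky--Suslin comparison theorem (available since $k$ has characteristic zero) to obtain
\[
  \pi_s \Ccoh(Y_i, \bZ)\vb{n} \;\simeq\; H^{2n-s,n}(Y_i, \bZ) \;\simeq\; \CH^n(Y_i, s).
\]

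To pass from this tower to the limit, I would invoke the Edidin--Graham double fibration argument: once $\codim_{V_i}(V_i \setminus U_i) > n$, the transition maps between successive $\CH^n(Y_i, s)$, factored through common mixed quotients, decompose as vector bundle pullbacks and open immersions whose complements have codimension $> n$, both of which preserve $\CH^n(-, s)$ by homotopy invariance and localization for Bloch's higher Chow groups. Consequently the tower stabilizes, the Milnor $\lim^1$-term vanishes, and the resulting limit equals $\CH^n_G(X, s)$ by the very definition of the Edidin--Graham equivariant higher Chow groups.

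The hard part will be the first step, i.e.\ the invocation of \thmref{thm:Borel}. That structural result requires identifying the lisse-extended motivic category on $[X/G]$ with $\lim_i \SH(Y_i)$, which amounts to showing that the Borel-style smooth covers $Y_i \to [X/G]$ are cofinal (in an appropriate pro-sense) inside $\Lis_{[X/G]}$ for the purpose of computing the defining limit of $\SH_\Bor$, and that the lisse-extended motivic cohomology spectrum $\bZ^\Bor_{[X/G]}$ restricts to the usual motivic cohomology spectrum on each $Y_i$. Once that is granted, the remaining steps are a formal assembly of the Voevodsky--Suslin comparison and the classical Edidin--Graham stabilization.
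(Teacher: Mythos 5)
Your route matches the one the paper takes via \examref{exam:equiv Chow}: approximate via \thmref{thm:Borel}, compare with Bloch cycle complexes, and stabilize in codimension. Two remarks. On the $\lim^1$ issue: you propose disposing of it by separately invoking the Edidin--Graham double-fibration stabilization, but the paper gets this for free from its own proof of \thmref{thm:Borel}. Specifically, \ssecref{ssec:Borel mot} shows (and \remref{rem:Milnor mot} records) that each truncation
\[
\pi_s \Ccoh_\Bor([X/G],\Lambda)\vb{n} \longrightarrow \pi_s \Ccoh([X/G]\fibprod_{BG}U_i,\Lambda)\vb{n}
\]
is already a bijection once $\codim_{\sV_i}(\sW_i)>n$: after reducing to an affine $T\in\Lis^\mrm{aff}_{[X/G]}$, the fiber of the open-restriction $j_i^*$ is the Borel--Moore homology of $T\fibprod_{BG}\sW_i$, which vanishes in the relevant degrees in high codimension by the Bloch complex comparison. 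So the tower is eventually constant, there is nothing left for your step~(3) to do, and the stable term is $\CH^n_G(X,s)$ by the Edidin--Graham definition. Your version works too — an eventually constant tower has vanishing $\lim^1$ — but it re-derives something already inside the proof of \thmref{thm:Borel}. Second, your diagnosis of the ``hard part'' — that one must prove the Borel covers $Y_i\to[X/G]$ are pro-cofinal in $\Lis_{[X/G]}$ — misdescribes how \thmref{thm:Borel} is established. The $Y_i$ bear no cofinality relation to the lisse site; the only cofinality used is that of affine objects in $\Lis_{[X/G]}$ (\remref{rem:Bor aff}), after which the key step is the local vanishing of the fiber of $j_i^*$ just described (or, in the general case of \thmref{thm:Borel M}, a connectivity estimate in the homotopy t-structure). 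Since you invoke \thmref{thm:Borel} as a black box this does not affect the correctness of your proposal, but it is worth clearing up.
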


    See Example~\ref{exam:equiv Chow}.
    The result also holds for fields of characteristic $p>0$, up to inverting $p$.

    \begin{exam}\label{exam:0pa8hfds}
      In the case of $\KGL^\Bor_{[X/G]}$, the canonical map
      \[
        \piz \K([X/G]) \to \piz \Ccoh_\Bor([X/G], \KGL),
      \]
      induced by the functor $\SH({[X/G]}) \to \SH_\Bor({[X/G]})$ (for $G$ nice), is not bijective if $G$ is nontrivial.
      In fact, it exhibits the target as a completion of the source.
      See \examref{exam:equiv k}.
    \end{exam}

    \begin{rem}
      For general coefficients $\sF$, \thmref{thm:Borel} gives isomorphisms
      \[
        \Ccoh_\Bor([X/G], \sF)\vb{n}
        \simeq \lim_i \Ccoh([X/G] \fibprod_{BG} U_i, \sF)\vb{n}
      \]
      where $(U_i)_i$ is a sequence of algebraic approximations to the Borel construction.
      On $\pi_0$ we have surjections (see \eqref{eq:anfp1n0} for notation)
      \[
        \H^n_\Bor([X/G], \sF)
        \twoheadrightarrow \lim_i \H^n([X/G] \fibprod_{BG} U_i, \sF)
      \]
      which however we do not know to be injective for general $\sF$.\footnote{%
        Note added in final revision: In the subsequent paper \cite{equilisse} we show that this holds when \begin{inlinelist}
          \item $\sF$ is eventually coconnective with respect to the cohomological t-structure, or
          \item $\sF$ is the \emph{rationalized} algebraic cobordism or algebraic K-theory spectrum.
        \end{inlinelist}
      }
      For example, our proof that this holds in the case of motivic cohomology relies on a vanishing property which does not hold e.g. in algebraic cobordism.

      In particular, although the right-hand side has been considered in the case of algebraic cobordism in \cite{HellerMalagonLopez,KrishnaCobordism} (and in an abstract setting in \cite[Cor.~3.8]{KiemPark}), it is not known to admit right-exact localization sequences (see Footnote~\ref{fn:1390gy1}).
      In contrast, lisse-extended cobordism does have right-exact localization sequences which in fact even extend to long-exact sequences using the higher groups (see \propref{prop:07ug71}).\footnote{%
        Note added in final revision: In the subsequent paper \cite{equilisse} we show that lisse-extended bordism (of possibly singular schemes) agrees with the bordism theory of \cite{HellerMalagonLopez} with rational coefficients.
        In particular, the latter also admits right-exact localization sequences.
      }
      In general, we regard the lisse extension as a good way to define ``Borel-type'' extensions of arbitrary generalized cohomology theories.
    \end{rem}

    \begin{rem}
      \thmref{thm:Borel} is deduced from a stronger comparison of motivic stable homotopy types.
      For example, the lisse-extended motivic stable homotopy type of a classifying stack $BG$ is compared with the Morel--Voevodsky construction (see \thmref{thm:Borel M}).
      Moreover, we show that the lisse-extended motivic homotopy type recovers (and generalizes) all previous constructions of motives of stacks found in the literature \cite{ToenMotive,Choudhury,HoskinsPepinLehalleur,RicharzScholbach,ChoudhuryDeshmukhHogadi} (see Remarks~\ref{rem:CDH comparison}, \ref{rem:ysbobql} and \ref{rem:UO1b2hov}).
    \end{rem}

  \subsection{Derived stacks and virtual functoriality}

    Throughout the paper, we work in the setting of derived algebraic geometry.
    In particular, \thmref{thm:intro/six} and \corref{cor:intro/cohomology} remain valid for scalloped \emph{derived} stacks.
    Working in this generality allows us to construct an enhanced functoriality for our cohomology theories (see \thmref{thm:gys}):

    \begin{thmX}
      Let $f : \sX \to \sY$ be a quasi-smooth, representably smoothable\footnote{
        Here representably smoothable means that $f$ admits a global factorization through an unramified representable morphism followed by a smooth representable morphism (see \defnref{defn:smoothable}).
      } morphism of scalloped derived stacks with affine diagonal.
      Then there exists a natural transformation
      \[ \gys_{\sX/\sY} : f^*\vb{\sL_{\sX/\sY}} \to f^! \]
      of functors $\SH(\sY) \to \SH(\sX)$, where $\sL_{\sX/\sY}$ is the relative cotangent complex.
      If $f$ is smooth, then $\gys_{\sX/\sY}$ is the purity isomorphism (\thmref{thm:purity}).
      Moreover, for every motivic spectrum $\sF \in \SH(\sY)$, for $f : \sX \to \sY$ proper quasi-smooth and representably smoothable, the Gysin transformation yields Gysin maps
      \[
        f_! : \Ccoh(\sX, \sF) \to \Ccoh(\sY, \sF)\vb{-\sL_{\sX/\sY}}
      \]
      extending those of \corref{cor:intro/cohomology}.
    \end{thmX}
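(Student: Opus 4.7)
The plan is to construct $\gys_{\sX/\sY}$ by factoring $f$ through its given representably smoothable structure, handling the smooth and unramified pieces separately, and then checking the construction is independent of the chosen factorization.

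First, fix a factorization $\sX \xrightarrow{i} \sZ \xrightarrow{p} \sY$ where $i$ is representable unramified and $p$ is representable smooth. Since $f$ is quasi-smooth and $p$ is smooth, the morphism $i$ is automatically quasi-smooth; being unramified and quasi-smooth, its cotangent complex $\sL_{\sX/\sZ}$ is of Tor-amplitude $[-1,-1]$, so $\sL_{\sX/\sZ}[-1]$ is a vector bundle (the derived conormal sheaf). For the smooth factor $p$, the smooth purity isomorphism of \thmref{thm:purity} supplies a canonical equivalence $p^*\vb{\sL_{\sZ/\sY}} \xrightarrow{\sim} p^!$. It remains to build the Gysin map for $i$.

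For the quasi-smooth closed-immersion-like factor $i$, the natural strategy is deformation to the derived normal bundle. One forms the derived deformation space $D_{\sX/\sZ} \to \A^1$ whose fiber over $1$ is $\sZ$ and whose fiber over $0$ is the total space $N_{\sX/\sZ}$ of the vector bundle $\sL_{\sX/\sZ}[-1]^\vee$; these fibers are linked in a one-parameter family together with a compatible family of sections from $\sX \times \A^1$. Applying the localization triangles and homotopy invariance from \thmref{thm:intro/six} along this deformation converts the statement about $i^!$ into the analogous statement about the zero section $s : \sX \to N_{\sX/\sZ}$. For the zero section, the Thom isomorphism (which is built into the definition of the Thom twist $\vb{\cdot}$) identifies $s^!$ with the identity shifted by the Thom class of $\sL_{\sX/\sZ}$, yielding a canonical map $\gys_i : i^*\vb{\sL_{\sX/\sZ}} \to i^!$. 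One then sets
\[
  \gys_{\sX/\sY} := \gys_i \circ i^*(\gys_p)\vb{\sL_{\sX/\sZ}},
\]
using the exact triangle $i^*\sL_{\sZ/\sY} \to \sL_{\sX/\sY} \to \sL_{\sX/\sZ}$ of cotangent complexes to identify the twist by $\sL_{\sX/\sY}$ with the sum of twists by $i^*\sL_{\sZ/\sY}$ and $\sL_{\sX/\sZ}$ in the K-theory spectrum.

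The main obstacle will be independence of $\gys_{\sX/\sY}$ from the factorization, i.e.\ showing that the composite above is canonical in $f$ and compatible under composition of morphisms. The standard way out is to carry out the deformation-to-the-normal-bundle construction functorially on the whole category of such factorizations, exploiting that any two representably smoothable factorizations are dominated by a common refinement obtained by replacing $\sZ$ with its fiber product over $\sY$ with another smooth representable target; affineness of the diagonal of $\sY$ ensures the relevant fiber products stay in the scalloped world so that \thmref{thm:intro/six} applies at each step. When $f$ is itself smooth, the unramified piece $i$ is the inclusion of a section of a smooth morphism and its Gysin map reduces by base change to the identity twisted by the purity isomorphism, recovering \thmref{thm:purity}. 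Finally, if $f$ is moreover proper, applying $f_!$ to $\gys_{\sX/\sY}$ and using the $(f_!, f^!)$-adjunction together with the unit of $(f^*, f_*)$ produces the asserted Gysin map on cohomology spectra; the base change and projection identities from \thmref{thm:intro/six} guarantee this is functorial and extends the Gysin maps of \corref{cor:intro/cohomology}\itemref{item:iyzbho}.
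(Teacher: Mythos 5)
Your proposal follows essentially the same route as the paper: deform the unramified factor $i$ to its normal bundle to produce a specialization/Gysin map for $i$ via localization and Thom stability, compose it with the purity isomorphism for the smooth factor $p$, and check independence of the factorization by comparing against a common refinement (the paper delegates this last step to the argument of \cite[Thm.~3.3.2]{DegliseJinKhan}). Two small slips worth noting: the normal bundle in this paper's convention is the total space $\bV_\sX(\sN_{\sX/\sZ})$ of the \emph{conormal} sheaf $\sN_{\sX/\sZ} = \sL_{\sX/\sZ}[-1]$ (not its dual), so the Thom twist should be by $\sL_{\sX/\sZ}[-1]$ rather than $\sL_{\sX/\sZ}[-1]^\vee$; and the affine diagonal hypothesis is invoked to guarantee that purity (\thmref{thm:purity}) applies (its statement requires compactifiability or affine diagonal), rather than to keep the fiber products scalloped, which already follows from \corref{cor:scallop more}.
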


    \emph{Quasi-smoothness} is a derived version of the notion of local complete intersection morphism, which in particular gives rise on classical truncations to a relative perfect obstruction theory in the sense of \cite{BehrendFantechi}.
    For every scalloped derived stack $\sX$, the inclusion of the classical truncation $i : \sX_\cl \to \sX$ induces canonical isomorphisms
    \begin{equation}\label{eq:0ap-s-h}
      i^* : \Ccoh(\sX, \sF) \simeq \Ccoh(\sX_\cl, \sF),
    \end{equation}
    under which the Gysin functoriality for quasi-smooth morphisms can be interpreted as ``virtual'' functoriality at the level of classical truncations (cf. \cite{Manolache}).

    Note that the Gysin transformation $\gys_{\sX/\sY}$ for any quasi-smooth, representably smoothable, morphism $f : \sX \to \sY$, gives rise to a canonical ``bivariant'' \emph{virtual fundamental class} which lives in the relative Borel--Moore homology
    \[ \Ccoh(\sX, f^!(\sF))\vb{-\sL_{\sX/\sY}} \]
    for any motivic ring spectrum $\sF \in \SH(\sY)$.
    These groups define a bivariant theory in the sense of Fulton--MacPherson \cite{FultonMacPherson}, and the relative virtual fundamental class can be viewed as an ``orientation'' for $f$ in the sense of \emph{op. cit}.

  \subsection{Homotopy invariant K-theory}

    When applied to cohomology with coefficients in $\KGL$ \eqref{eq:intro/KGL}, the isomorphisms \eqref{eq:0ap-s-h} yield the following corollary (see also \remref{rem:0afsgdh013}), which was obtained in 2018 in the early stages of this project.
    Since then, it has also been independently reproven\footnote{%
      Although the result in \emph{op. cit.} is not stated in this generality, the same proof applies to scalloped derived stacks.
      In fact, \cite[Thm.~5.2.2]{ElmantoSosnilo} generalizes to show that any ``truncating'' invariant of stable \inftyCats satisfies derived nilpotent invariance for scalloped derived stacks.
    } via categorical methods by Elmanto and Sosnilo \cite{ElmantoSosnilo}.

    \begin{corX}\label{cor:intro/KH}
      For every scalloped derived stack $\sX$ with classical truncation $\sX_\cl$, there is a canonical isomorphism of spectra
      \[ \KH(\sX) \simeq \KH(\sX_\cl). \]
    \end{corX}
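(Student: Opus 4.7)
The plan is to combine two results already stated in the excerpt: the identification of homotopy-invariant K-theory with $\KGL$-cohomology (\examref{exam:intro/KGL}) and the derived nilpotent invariance of $\SH$-cohomology encoded in the isomorphism~\eqref{eq:0ap-s-h}. Neither input involves K-theoretic content directly; the corollary is essentially the projection of a general fact about $\SH(\sX)$ onto the particular coefficient object $\KGL_\sX$.

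Concretely, first I would invoke \examref{exam:intro/KGL} for both $\sX$ and its classical truncation $\sX_\cl$ to write
\[
  \KH(\sX) \simeq \Ccoh(\sX, \KGL_\sX),
  \qquad
  \KH(\sX_\cl) \simeq \Ccoh(\sX_\cl, \KGL_{\sX_\cl}).
\]
Next, I would apply the canonical isomorphism~\eqref{eq:0ap-s-h} along the closed immersion $i \colon \sX_\cl \hook \sX$ with coefficients in $\sF = \KGL_\sX \in \SH(\sX)$, yielding
\[
  i^* : \Ccoh(\sX, \KGL_\sX) \xrightarrow{\ \sim\ } \Ccoh(\sX_\cl, i^*\KGL_\sX).
\]
Finally, I would identify $i^*\KGL_\sX \simeq \KGL_{\sX_\cl}$ using the stability of the motivic K-theory spectrum under $*$-pullback (part of the six-functor package of \thmref{thm:intro/six}, and essentially the defining property of $\KGL$ as an absolute motivic spectrum). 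Splicing these three identifications together produces the claimed equivalence $\KH(\sX) \simeq \KH(\sX_\cl)$, and naturality in all three steps delivers its canonicity.

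The argument is almost entirely a transport of structure once \eqref{eq:0ap-s-h} is available, so the only real obstacle is that content itself: namely, showing that $\Ccoh(\sX, -)$ is insensitive to derived thickenings for scalloped $\sX$. That is presumably handled in the body of the paper via homotopy invariance of $\SH$ applied to the infinitesimal/Nisnevich-local structure of $i \colon \sX_\cl \hook \sX$, and I would simply cite it. No further K-theoretic input—such as a direct analysis of perfect complexes on derived stacks—is needed at this stage, which is precisely what makes the motivic-cohomological route cleaner than the categorical one of \cite{ElmantoSosnilo}.
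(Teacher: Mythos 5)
Your argument is correct as stated: $\KH(\sX)\simeq\Ccoh(\sX,\KGL_\sX)$ by the stable representability theorem, $i^*$ induces an equivalence on $\Ccoh(-,-)$ by nil-invariance, and $i^*\KGL_\sX\simeq\KGL_{\sX_\cl}$ by absoluteness of $\KGL$; splicing gives the claim. This is exactly the route sketched in the sentence preceding the statement of the corollary in the introduction. However, it is not the proof the paper actually records for the corollary. That proof is \remref{rem:0afsgdh013}, which the corollary statement points to, and it deliberately stays at the \emph{unstable} level: one combines the derived invariance theorem for the unstable motivic homotopy category (\thmref{thm:derinv}, asserting that $i^* : \MotSpc(\sX) \to \MotSpc(\sX_\cl)$ is an equivalence) with the fact that the motivic $S^1$-spectrum $\KH_\sX$, built in \constrref{constr:07g301} by $\A^1$-localizing the presheaf $\KB$, is stable under representable $*$-inverse image (\remref{rem:apysbqqef}). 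What that buys is logical economy: the unstable route requires neither the construction of $\SH(\sX)$, nor the stable representability theorem $\KGL_\sX\in\SH(\sX)$, nor the six-operations package (in particular it is independent of proper base change, \thmref{thm:ipqnosdf}), and so the corollary can be obtained much earlier in the development. Your route is fine but invokes essentially the entire stable apparatus to recover a statement the authors intentionally prove before that apparatus exists; if the goal were only this corollary, the bulk of the machinery you cite would be overkill.
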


    The original modest motivation for this work was to give a proof of the following theorem, a slight generalization of a result of Hoyois--Krishna (see \cite{HoyoisKH,HoyoisKrishna}, and \cite{CisinskiKH} for the case of schemes).

    \begin{corX}\label{cor:intro/KH cdh}
      The presheaf of spectra $\sX \mapsto \KH(\sX)$ satisfies cdh descent on the site of scalloped stacks.
    \end{corX}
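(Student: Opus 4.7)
The plan is to reduce cdh descent of $\KH$ on scalloped stacks to the Mayer--Vietoris statement of \corref{cor:intro/cohomology}\itemref{item:a07gsf01}, applied to the motivic spectrum $\KGL$, via the identification $\Ccoh(-, \KGL) \simeq \KH(-)$ of \examref{exam:intro/KGL}.

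First I would recall the combinatorial description of cdh descent: a presheaf of spectra on the site of scalloped stacks satisfies cdh descent if and only if it vanishes on the empty stack and sends to homotopy cartesian squares both (i) \emph{Nisnevich distinguished squares}, given by a representable étale morphism $f : \sX' \to \sX$ and a quasi-compact open immersion $\sY \hookrightarrow \sX$ with $f$ inducing an isomorphism over the closed complement of $\sY$, and (ii) \emph{abstract blow-up squares}, given by a proper representable morphism $f : \sX' \to \sX$ and a closed immersion $\sY \hookrightarrow \sX$ with $f$ inducing an isomorphism over the open complement of $\sY$. The extension of this Voevodsky-type reduction from qcqs algebraic spaces to the scalloped setting should follow along the same lines as \cite{HoyoisKH,HoyoisKrishna}, using only the representability of the generating squares and standard smooth descent arguments.

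Next I would observe that \corref{cor:intro/cohomology}\itemref{item:a07gsf01}, applied with $\sF = \KGL$, supplies homotopy cartesian squares for exactly these two kinds of distinguished squares: the étale clause handles (i) and the proper clause handles (ii). Via \examref{exam:intro/KGL}, which provides $\Ccoh(-, \KGL) \simeq \KH(-)$ naturally in pullback along representable morphisms (using that $\KGL$ is an absolute motivic spectrum, so $f^*(\KGL_\sX) \simeq \KGL_{\sX'}$ for every morphism $f : \sX' \to \sX$), the same homotopy cartesian squares hold for $\KH$. Vanishing on the empty stack is immediate.

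The main obstacle, as I see it, is the first step: verifying that the two generating classes of distinguished squares above really do suffice to cut out cdh descent on the scalloped site, without introducing additional squares involving non-representable morphisms or more exotic covering families. Once this site-theoretic reduction is established — modelled almost verbatim on the algebraic space case — the input from the six-functor formalism in \corref{cor:intro/cohomology} is exactly what is needed, and no further computation involving $\KH$ itself is required.
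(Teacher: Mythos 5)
Your proposal is correct and matches the paper's primary argument: the text states that \corref{cor:intro/KH cdh} ``is an immediate consequence of \corref{cor:intro/cohomology}\itemref{item:a07gsf01}'' via the identification $\Ccoh(-,\KGL)\simeq\KH(-)$. The ``main obstacle'' you flag is in fact a non-issue: the cdh topology on scalloped stacks is \emph{defined} (\defnref{defn:cdh}) to be the union of the Nisnevich topology and the proper cdh topology, both generated by squares of exactly the two representable types you list, and the reduction of \v{C}ech descent to checking these squares is a general criterion recorded in \cite[Thm.~2.2.7]{KhanLocalization} (cited in \propref{prop:Nis} and \remref{rem:0as71uug1}). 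For completeness, note that the paper also offers a second, more self-contained argument in \remref{rem:KH cdh} that avoids the six-functor formalism entirely: it combines the cdh-descent criterion of \cite[Thm.~E, Rem.~5.11(c)]{KhanKblow} with derived invariance of $\KH$ (\remref{rem:0afsgdh013}) and the unstable localization theorem (\thmref{thm:loc unstable}) to obtain closed descent directly; this route does not require stable representability of $\KH$ by $\KGL$ or the proper base change machinery.
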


    This result is an immediate consequence of the formula \eqref{eq:intro/KGL} and \corref{cor:intro/cohomology}\itemref{item:a07gsf01}.
    We also give a more direct argument, which only requires the unstable theory developed in \secref{sec:H}, by using the cdh descent criterion in \cite{KhanKblow} (see \remref{rem:KH cdh}).
  
  \subsection{Outline of Part I}
  \label{ssec:intro/pt1}

    The first part of the paper begins in \secref{sec:scallop} by introducing the class of \emph{scalloped} (derived) stacks.
    As a first approximation, scalloped stacks are those that are built, locally in some sense, out of quotient stacks of the form $[X/G]$ where $G$ is a nice embeddable group scheme over an affine scheme $S$ and $X$ is a \emph{quasi-affine} $S$-scheme with $G$-action.
    We recall (see \ssecref{ssec:nice}):
    \begin{itemize}
      \item
      Nice groups are a certain class of affine fppf group schemes which are linearly reductive (meaning that the functor of $G$-invariants is exact) which is stable under passage to closed subgroups and extensions.
      For example, algebraic tori are nice, as are finite étale groups of order invertible on $S$.
      
      \item 
      Embeddability means that $G$ is a closed subgroup of the general linear group of some vector bundle on $S$.
      For $G$ nice, this always holds Nisnevich-locally on $S$ (see \cite[Cor.~13.2]{AlperHallRydh}), and globally if $S$ is the spectrum of a field.
    \end{itemize}
    Such quotients will be called \emph{\qfund}.
    The class of scalloped stacks is then built as the closure of the class of \qfund stacks under the following property: given a quasi-compact algebraic stack $\sX$ with separated diagonal, and a representable étale neighbourhood $p : \sV \to \sX$ of a closed substack $\sZ\sub\sX$ with $\sV$ \qfund, we require that if $\sX\setminus\sZ$ is scalloped, then so is $\sX$.
    For example, this class includes quotients $[X/G]$ as above where now $X$ is a qcqs \emph{algebraic space}.
    It also includes tame Deligne--Mumford stacks, as well as tame Artin stacks in the sense of \cite{AbramovichOlssonVistoli}, with separated diagonal.
    In fact, the main result of \cite{AlperHallHalpernLeistnerRydh} implies that the class of scalloped stacks is precisely the class of qcqs stacks with separated diagonal and nice stabilizers.
    See Sects.~\ref{ssec:scallop} and \ref{ssec:scallop/tame}.
    In Appendix~\ref{sec:linearly} we also define \emph{linearly scalloped} stacks, which roughly speaking is a variant of the above definitions where arbitrary linearly reductive groups are allowed instead of only nice ones, and we explain how the theory developed in this paper can be extended to that setting.

    We conclude \secref{sec:scallop} by extending compact generation results for the derived category of quasi-coherent complexes on a qcqs scheme \cite{ThomasonTrobaugh,BondalVdB} to the class of scalloped derived stacks (see \thmref{thm:perfect}).
    This was previously known in the cases of tame quasi-Deligne--Mumford classical stacks (see \cite[Thm.~A]{HallRydhPerfect}) and quasi-compact derived stacks with affine diagonal and nice stabilizers (see \cite[Thm.~A.3.2]{BKRSMilnor}).\footnote{%
      It was also known by \cite[Thm.~0.3.4]{DrinfeldGaitsgoryFiniteness} that the derived $\infty$-category of quasi-coherent complexes on a scalloped stack is \emph{dualizable}, i.e., a \emph{retract} of a compactly generated stable \inftyCat (see \cite[Prop.~D.7.3.1]{LurieSAG}).
    }
    These proofs use the étale-local compact generation criterion of \cite[Thm.~C]{HallRydhPerfect}, which does not apply to general scalloped stacks.
    Our proof is in fact much more elementary and follows the same lines as the case of schemes or algebraic spaces (as in e.g. \cite[Thm.~3.1.1]{BondalVdB} and \cite[Thm.~9.6.1.1]{LurieSAG}), using étale neighbourhoods inductively to reduce to the case of \qfund stacks.

    In the rest of Part~I, which consists of Sects.~\ref{sec:H} and \ref{sec:SH}, we begin working towards the proof of \thmref{thm:intro/six} (which will be completed in Part~II) by giving the construction of the unstable and stable motivic homotopy categories over a scalloped derived stack.
    Recall that if $X$ is a qcqs algebraic space, the motivic homotopy category $\MotSpc(X)$ is the \inftyCat of $\A^1$-homotopy invariant Nisnevich sheaves on the site $\Sm_{/X}$ of smooth algebraic spaces of finite presentation over $X$.
    The stabilization $\SH(X)$ is defined by adjoining a $\otimes$-inverse of the Thom space $\Th_X(\sO_X) \simeq X\times (\P^1,\infty)$ of the trivial line bundle.
    See \cite{KhanSix}.

    Over a scalloped stack $\sX$, the unstable category $\MotSpc(\sX)$ is defined similarly as the \inftyCat of $\A^1$-invariant Nisnevich sheaves on $\Sm_{/\sX}$, where $\Sm_{/\sX}$ is the site of smooth representable morphisms of finite presentation over $\sX$ (see \ssecref{ssec:H/constr}).
    However, the correct definition of $\SH(\sX)$ is considerably more involved due to the fact that vector bundles on a stack need not be Nisnevich-locally trivial.
    When $\sX$ is the quotient of a qcqs algebraic space by a nice embeddable group, $\SH(\sX)$ is defined by adjoining $\otimes$-inverses of the Thom spaces of all vector bundles on $\sX$ (\remref{rem:invert quot}).
    For general $\sX$, the construction is less explicit, but is determined by the requirement that the assignment $\sX \mapsto \SH(\sX)$ satisfies Nisnevich descent.
    See \thmref{thm:SH}.

    For the quotient of a $G$-quasi-projective scheme $X$ by a nice group $G$, $\SH([X/G])$ recovers Hoyois's equivariant stable motivic homotopy category $\SH^G(X)$ (\remref{rem:Hoyois stable}).
    Thus in this case, our construction in particular removes the quasi-projectivity hypotheses in \cite{HoyoisEquivariant}.
    (Using the alternative theory in Appendix~\ref{sec:linearly} we can also allow $G$ to be linearly reductive, just as in \emph{op. cit.}, but in that case the quasi-projectivity hypotheses are necessary for us as well.)
    Due to the more involved construction of $\SH(\sX)$ for general $\sX$, the standard functoriality results such as the smooth base change formula (see \thmref{thm:funct SH}) require more work than in the quotient stack case.

  \subsection{Outline of Part II}

    In Part II we complete the proof of \thmref{thm:intro/six} by constructing the formalism of Grothendieck's six operations on the stable motivic homotopy category.

    We begin in \secref{sec:six} with an axiomatization of the system of categories $\SH(\sX)$ with its basic operations, via a structure called a $(*,\sharp,\otimes)$-formalism.
    We impose a basic set of axioms on such a structure, adapting Voevodsky's original conditions in the case of schemes \cite{VoevodskyCross}.
    The construction $\SH(-)$ is, by design, the universal one satisfying these axioms (see \propref{prop:univ}).

    In \secref{sec:prop} we prove the base change formula for proper representable morphisms (see \thmref{thm:ipqnosdf}).
    The general proof roughly follows \cite[\S 2.4]{CisinskiDegliseBook} in the case of schemes.
    A key new input necessary to reduce the case of a proper representable morphism $f : \sX \to \sY$ to the projective case is \thmref{thm:Chow}, which asserts that we can always find a covering $\widetilde{\sX} \to \sX$ in the proper cdh topology such that the composite $\widetilde{\sX} \to \sY$ becomes projective.
    This relies on a variant of Chow's lemma for stacks proven by Rydh.

    With proper base change in hand, we proceed in \secref{sec:shriek} to the construction of the $!$-operations.
    Following Deligne, the construction for \emph{compactifiable} morphisms is relatively straightforward.
    Using the $\infty$-categorical machinery of \cite{LiuZhengGluing}, we further extend this to morphisms that are only Nisnevich-locally compactifiable, such as arbitrary representable morphisms of finite type between scalloped stacks.
    We also prove purity for smooth representable morphisms between scalloped stacks with affine diagonal (\thmref{thm:purity}).

    Finally in \secref{sec:gys} we adapt the constructions of \cite{DegliseJinKhan} and \cite{KhanVirtual} to construct Euler and Gysin transformations in our setting.
    These will give rise in Part~III to Euler classes and Gysin maps in cohomology.

  \subsection{Outline of Part III}

    In Part~III we finally turn our attention towards generalized cohomology theories represented by motivic spectra.
    
    In \secref{sec:coh} we give the definitions and record the various operations and properties asserted in \corref{cor:intro/cohomology}.
    We also define relative Borel--Moore homology, for a representable morphism of finite type $f : \sX \to \sY$, via cohomology with coefficients in $f^!(\sF)$.
    When $f$ is smooth, this is related to cohomology via a Poincaré duality statement (\propref{prop:poincare}).

    \secref{sec:ex} contains the constructions of our main examples of motivic spectra mentioned in \ssecref{ssec:intro/coh}.
    For the construction of the algebraic K-theory spectrum $\KGL$ (\ssecref{ssec:KH}), we first use our compact generation result (\thmref{thm:perfect}) to show that algebraic K-theory satisfies Nisnevich descent on scalloped derived stacks (\thmref{thm:Thomason}).
    This already yields unstable representability of $\KH$ (see \constrref{constr:07g301}), which is enough to deduce Corollaries~\ref{cor:intro/KH} and \ref{cor:intro/KH cdh} (see Remarks~\ref{rem:0afsgdh013} and \ref{rem:KH cdh}).
    The stable representability (\thmref{thm:KGL}) is not much more difficult than in the case of schemes or algebraic spaces \cite{CisinskiKH,KhanKblow} and quotient stacks \cite{HoyoisKH}.

    The construction of the algebraic cobordism spectrum $\MGL$ is more subtle because the original definition of Voevodsky \cite[\S 6.3]{VoevodskyICM} in the case of schemes is reasonable for quotient stacks but not in general (see \remref{rem:apfsdin}).
    Instead, in \ssecref{ssec:coh/MGL} we give a different definition following \cite[\S 16]{BachmannHoyoisNorms}.

    The definition of the motivic cohomology spectrum is even less obvious.
    Indeed, even for schemes the construction is highly nontrivial as Voevodsky's theory of finite correspondences, for example, hinges on the intersection theory of relative cycles, which is delicate over general bases \cite{CisinskiDegliseBook}.
    Recently, Hoyois \cite{HoyoisFramedLoc} has given a definition of the Spitzweck motivic cohomology spectrum \cite{Spitzweck} that relies on the much more robust theory of \emph{framed} correspondences introduced in \cite{EHKSY}.
    In \ssecref{ssec:coh/Z}, we sketch an extension of this construction to stacks, although we do not undertake a full investigation of the theory of framed correspondences between stacks here.

    In \secref{sec:fix} we prove \thmref{thm:intro/conc}.
    In fact, we prove the statement more generally for $T$ any split torus over a connected noetherian affine base.
    Although our formulation (and proof) of this result is closest to \cite{ThomasonLefschetz}, such localization theorems are ubiquitous in the setting of equivariant cohomology: see also \cite{EdidinGrahamLocalization,Borel,SegalK}.

    \secref{sec:lim} deals with the lisse-extended variant of the stable motivic homotopy category, and explains how this recovers many previous constructions in the literature.
    We first show in \ssecref{ssec:Borel} that lisse-extended cohomology theories of quotient stacks can be computed by the Borel construction.\footnote{%
      Note added in final revision: A much more thorough version of this comparison is carried out in our subsequent paper \cite{equilisse}.
    }
    The comparison of \thmref{thm:intro/Bor} is then deduced in \ssecref{ssec:lim/equiv}.
    In \ssecref{ssec:lim/kan} we show that in many cases the lisse-extended stable motivic homotopy category can be computed by a Kan extension (\corref{cor:Chowdhury}).
    Finally in Subsect.~\ref{ssec:lim/mhtp} and \ref{ssec:lim/exh} we show that the lisse-extended motivic homotopy type recovers previous constructions of motives of stacks due to To\"en \cite{ToenMotive}, Choudhury \cite{Choudhury}, Richarz--Scholbach \cite{RicharzScholbach}, Hoskins--Pepin Lehalleur \cite{HoskinsPepinLehalleur}, and Choudhury--Deshmukh--Hogadi \cite{ChoudhuryDeshmukhHogadi}, whenever the latter are defined.

  \subsection{Related work}
  \label{ssec:related}

    Traditionally, equivariant cohomology theories in algebraic geometry are defined via algebraic versions of the Borel construction.
    For example, equivariant Chow groups \cite{EdidinGraham} and equivariant algebraic cobordism \cite{KrishnaCobordism,HellerMalagonLopez} are all defined this way.
    Kresch \cite{Kresch} has defined extensions of the Chow groups for algebraic stacks with affine stabilizers, which agree with the Edidin--Graham Chow groups for quotient stacks.
    We expect Kresch's Chow groups to agree with lisse-extended motivic Borel--Moore homology over a field.\footnote{%
      One can define a cycle class map from the former to the latter, which induces an isomorphism on quotient stacks (at least assuming resolution of singularities).
      The claim should then follow by stratifying the stack by quotient stacks and using the localization sequence.
      However, we have not checked that the (explicitly defined) boundary map in Kresch's localization sequence agrees with the one in motivic Borel--Moore homology.
      \emph{Note added in revision:} We have been informed that this will appear in forthcoming work of Younghan Bae and Hyeonjun Park.
    }
    
    The primary example of a non-Borel-type cohomology theory in algebraic geometry is algebraic K-theory, which is ``genuine'' by nature.
    Candidates for genuine (or ``Bredon-type'') equivariant Chow or motivic cohomology theories, for finite discrete group actions, have been given by Levine--Serpé \cite{LevineSerpe} and Heller--Voineagu--Østvær \cite{HellerVoineaguOstvaer}.
    In the forthcoming work \cite{HellerOstvaerSlice} it is claimed that the latter theory is equivalent to the zero slice of the equivariant motivic sphere spectrum, in the case of actions by the cyclic group of order two.
    On the other hand, while one would hope that the equivariant homotopy coniveau filtration of \cite{LevineSerpe} also computes the slice filtration on the equivariant motivic stable homotopy category (as in \cite{LevineConiveau}, non-equivariantly), this cannot be true because the Levine--Serpé theory fails to be homotopy invariant for nontrivial vector bundles (see \cite[Cor.~5.6]{LevineSerpe}).
    In particular, the Levine--Serpé theory cannot be representable by a genuine motivic spectrum.
    Some other Bredon-type cohomology theories for stacks have been constructed by Joshua \cite{Joshua}.

    A well-known heuristic is that in the presence of étale descent, there is no difference between genuine and Borel-type cohomology.
    For example, in étale cohomology, the ``genuine'' theory already satisfies étale descent, hence is Kan-extended from schemes, and is Borel-type (see \cite{LiuZheng,IllusieZheng}).
    Étale motives of stacks have also been compared with the Borel construction previously in \cite[Thm.~2.2.10]{RicharzScholbach} and \cite[App.~A]{HoskinsPepinLehalleur}.
    However, \thmref{thm:intro/Bor} (or rather \thmref{thm:Borel M}) shows that in general it is lisse extension, rather than étale sheafification, that computes the Borel construction.
    In the étale-local case the lisse extension already appeared in \cite{KhanVirtual} (cf. \examref{exam:0--81h}), where it was used to define Borel-type variants of \emph{rational} motivic cohomology, K-theory, and cobordism.

    The recent thesis of Chirantan Chowdhury \cite{Chowdhury} contains another construction of a stable motivic homotopy category over a certain class of algebraic stacks.
    This is a Borel-type construction and we show that it agrees with the lisse extension (see \ssecref{ssec:lim/kan}).

    There are several constructions of \emph{motives} of stacks in the literature \cite{ToenMotive,Choudhury,RicharzScholbach,HoskinsPepinLehalleur,ChoudhuryDeshmukhHogadi}.
    These are also all Borel-type and can be recovered as the image of the lisse-extended motivic stable homotopy type (\notatref{notat:as0fg1}) by the ``linearization'' functor
    \[
      \SH(k) \to \on{\mathbf{DM}}(k)
    \]
    to Voevodsky motives.
    See Remarks~\ref{rem:CDH comparison}, \ref{rem:ysbobql} and \ref{rem:UO1b2hov}.

    The two primary inspirations for our work are the genuine equivariant theories of cohomology and stable homotopy theory in algebraic topology (see e.g. \cite{HillHopkinsRavenel} or \cite[\S\S 2-4]{HillHopkinsRavenelSketch}), and the genuine equivariant motivic homotopy theory of quasi-projective schemes constructed by Hoyois \cite{HoyoisEquivariant}.
    In the topological setting, the distinction between Borel and genuine cohomology theories, and in particular the advantages of the latter, have long been well-understood: see for instance \cite{LewisMayMcClure} and \cite{CostenobleWaner} (as well as \cite[Thm.~II.2.7]{NikolausScholze}).
    Moreover, the Atiyah--Segal completion theorem \cite{AtiyahSegalCompletion} and related results like the Segal conjecture (proven by Carlsson \cite{Carlsson}) explain the precise manner in which the genuine theory is a refinement of the Borel one.
    
    In the algebraic setting, an analogue of the Atiyah--Segal theorem, describing Borel-equivariant K-theory as a completion of (genuine) equivariant K-theory, has been proven by Krishna \cite{KrishnaCompletion}.
    Conversely, in the case of actions with finite stabilizers, (genuine) equivariant K-theory can be described as the Borel-equivariant K-theory of the \emph{inertia} (a.k.a. ``twisted sectors''), up to rationalizing both sides and tensoring further with the maximal abelian extension $\bQ(\mu_\infty)$ (see \cite{BaumConnes,AtiyahSegalEuler,Vistoli,ToenGRR}).
    It would be interesting to know if these results have analogues in other genuine theories like motivic cohomology and algebraic cobordism.

    The six operations have been constructed in genuine equivariant motivic homotopy theory for quasi-projective $G$-schemes by Hoyois \cite{HoyoisEquivariant}.
    When $G$ is nice, our \thmref{thm:intro/six} extends his formalism in two orthogonal directions: we can take qcqs algebraic spaces with $G$-action (and arbitrary $G$-equivariant morphisms between them), and we also allow them to be derived.
    Note that some of our results, namely localization and Atiyah duality (Theorems~\ref{thm:loc unstable} and \ref{thm:ipqnosdf}\itemref{item:ipqnosdf/Atiyah}), are proven by dévissage arguments that eventually allow us to reduce to cases already considered by Hoyois.
    Further aspects of genuine equivariant motivic theory have been developed by Gepner--Heller \cite{GepnerHeller} and Bachmann \cite{BachmannMackey}.

    The six operational viewpoint on Euler classes and Gysin maps that we follow was introduced in \cite{DegliseJinKhan}, and extended to derived algebraic spaces in \cite{KhanVirtual}.
    In Appendix~A of \emph{loc. cit.}, the author also extended this construction to stacks by working in the lisse extension of the étale-local stable motivic homotopy category $\SH_{\et}$.
    Working in the lisse extension of $\SH$ itself, one can use the same approach to define virtual fundamental classes in generalized cohomology theories not satisfying étale descent (details will appear elsewhere).
    In \cite{LevineIntrinsic}, Levine has used Hoyois's equivariant motivic homotopy category to define virtual fundamental classes for quasi-projective $G$-schemes equipped with an equivariant perfect obstruction theory.
    When the latter arises from the cotangent complex of an equivariant quasi-smooth structure on the scheme, one can adapt the argument of \cite[\S 3.3]{KhanVirtual} to show that his construction agrees with \defnref{defn:fund smooth}.

  \subsection{Conventions}

    \begin{itemize}
      \item
      We freely use the language of \inftyCats as in \cite{LurieHTT}.
      
      \item
      We use the term \emph{\ani} (plural: \anis) as an abbreviation for animated sets in the sense of \cite[\S 5.1.4]{Anima}.\footnote{%
        This is a minor modification of the terminology introduced in \emph{loc. cit.}, where ``anima'' is both the singular and plural form.
        Our proposal of ``animum/anima'' is intended to match ``spectrum/spectra''.
      }
      The \inftyCat of \anis is equivalent to the \inftyCat of \inftyGrpds, and can be modelled by the homotopy theory of Kan complexes.

      \item
      We adopt the convention that a symmetric monoidal presentable \inftyCat is a presentable \inftyCat with a symmetric monoidal structure for which the monoidal product $\otimes$ commutes with colimits in both arguments.

      \item
      A colimit-preserving functor between presentable \inftyCats is called \emph{compact} if its right adjoint preserves filtered colimits (see e.g. \cite[Defn.~C.3.4.2]{LurieSAG}).

      \item
      A derived algebraic stack is a derived 1-Artin stack as in \cite[Chap.~2, 4.1]{GaitsgoryRozenblyum}.

      \item
      We often use the abbreviation ``qcqs'' for a (derived) algebraic space or stack to mean quasi-compact and quasi-separated.

      \item
      Given a derived algebraic stack $\sX$, we write $\K(\sX)$ for the K-theory \ani of $\sX$, defined by the Waldhausen $S_\bullet$-construction on the stable \inftyCat of perfect complexes on $\sX$.
      See e.g. \cite{Barwick} or \cite[\S 2.1]{KhanKstack}.
      By abuse of language, we will refer to points in $\K(\sX)$, rather than elements of $\pi_0\K(\sX)$, as ``K-theory classes''.
    \end{itemize}

  \subsection{Acknowledgments}

    We thank Tom Bachmann, Marc Hoyois, Marc Levine, and David Rydh for their interest in this work, for many helpful discussions, and for their valuable comments on previous drafts.
    We are especially grateful to David Rydh for explaining to us his stacky generalization of Chow's lemma, which is used in the proof of \thmref{thm:Chow}.
    We also thank Martin Gallauer for a comment on a previous version.

    The authors acknowledge partial support from SFB 1085 Higher Invariants, Universit\"at Regensburg.
    The first-named author was also supported by the Simons Collaboration on Homological Mirror Symmetry, Academia Sinica Grant AS-CDA-112-M01, and NSTC Grant 112-2628-M-001-0062030.


\section{Scalloped algebraic stacks}
\label{sec:scallop}

  \subsection{Nice group schemes}
  \label{ssec:nice}

    We recall the definition of nice and embeddable group schemes.
    We refer to \cite{AlperHallRydh} for more details.
    Fix an affine scheme $S$.

    \begin{defn}
      Let $G$ be an fppf affine group scheme over $S$.
      \begin{defnlist}
        \item
        We say $G$ is \emph{nice} if it is an extension of a finite étale group scheme, of order prime to the residue characteristics of $S$, by a group scheme of multiplicative type.

        \item
        We say $G$ is \emph{embeddable} if it is a closed subgroup of $\GL_S(\sE)$ for some locally free sheaf $\sE$ on $S$.
      \end{defnlist}
    \end{defn}

    \begin{exam}
      Any torus is nice and embeddable.
      If $S$ is the spectrum of a field $k$, then every finite étale group scheme over $S$ of order invertible in $k$ is nice and embeddable.
    \end{exam}

    \begin{rem}
      Any nice group scheme $G$ over $S$ is linearly reductive (see \cite[Rem.~2.2]{AlperHallRydh}), i.e., formation of derived global sections on $BG$ is t-exact.
    \end{rem}

  \subsection{The Nisnevich topology}

    For a qcqs derived algebraic stack $\sX$, denote by $\Asp_{/\sX}$ the \inftyCat of derived stacks $\sX'$ over $\sX$ for which the structural morphism $f : \sX' \to \sX$ is representable and of finite presentation.
    
    \begin{defn}\label{defn:Nis}
      A \emph{Nisnevich square} over $\sX$ is a cartesian square in $\Asp_{/\sX}$
      \[ \begin{tikzcd}
        \sW \ar{r}\ar{d}
        & \sV\ar{d}{p}
        \\
        \sU \ar{r}{j}
        & \sX'
      \end{tikzcd} \]
      where $j$ is an open immersion and $p$ is an étale morphism\footnote{%
        necessarily representable
      } which induces an isomorphism away from $\sU$.
      The \emph{Nisnevich topology} is the Grothendieck topology on $\Asp_{/\sX}$ generated by the following covering families:
      \begin{inlinelist}
        \item the empty family, covering the empty stack $\initial$;
        \item for every Nisnevich square as above, the family $\{j,p\}$ covering $\sX$.
      \end{inlinelist}
    \end{defn}

    \begin{prop}\label{prop:Nis}
      Let $\sF$ be a presheaf on the site $\Asp_{/\sX}$, with values in an \inftyCat $\sC$ that admits limits.
      Then $\sF$ satisfies \v{C}ech descent for the Nisnevich topology if and only if the following two conditions hold:
      \begin{thmlist}
        \item
        The object $\RGamma(\initial, \sF) \in \sC$ is terminal.

        \item
        For every Nisnevich square in $\Asp_{/\sX}$ as in \defnref{defn:Nis}, the induced square in $\sC$
        \[ \begin{tikzcd}
          \RGamma(\sX', \sF) \ar{r}{j^*}\ar{d}{p^*}
          & \RGamma(\sU, \sF) \ar{d}
          \\
          \RGamma(\sV, \sF) \ar{r}
          & \RGamma(\sW, \sF)
        \end{tikzcd} \]
        is cartesian.
      \end{thmlist}
    \end{prop}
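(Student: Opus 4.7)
The strategy is to exploit that the Nisnevich topology on $\Asp_{/\sX}$ is generated by a simple cd structure in the sense of Voevodsky, consisting of the empty cover of $\initial$ together with the two-element covers $\{j,p\}$ coming from Nisnevich squares. Since Čech descent for a topology reduces to Čech descent for a generating collection of covers (provided the collection is stable under pullback, which holds here because pullbacks of Nisnevich squares are Nisnevich squares), it suffices to analyze these two types of covers.

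\textbf{Necessity.} Čech descent applied to the empty cover of $\initial$ yields condition (i), since the Čech nerve of an empty covering family is the empty simplicial object, whose $\sF$-limit is the terminal object of $\sC$. For condition (ii), apply Čech descent to the cover $\{j,p\}$ of $\sX'$ and observe that, by the defining properties of a Nisnevich square ($p$ \'etale, $p$ restricting to an isomorphism over $\sX'\setminus\sU$, and $\sU\times_{\sX'}\sV=\sW$), the Nisnevich square is cocartesian in the $\infty$-topos of Nisnevich sheaves on $\Asp_{/\sX}$, i.e., $\sX'\simeq \sU\sqcup^{\sW}\sV$. This can be verified stalkwise, where either $\sV \to \sX'$ splits (collapsing the square to $\sU \to \sX'$) or $\sU = \sX'$ (collapsing it the other way). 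Consequently, for any Nisnevich sheaf, applying $\sF$ to this pushout square yields the cartesian square of (ii).

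\textbf{Sufficiency.} Conditions (i) and (ii) assert precisely that $\sF$ sends the distinguished squares of our cd structure (including the ``empty square'' over $\initial$) to cartesian squares in $\sC$. By Voevodsky's cd structure theorem, if the cd structure is complete, regular, and bounded, then this implies $\sF$ is a Nisnevich sheaf, hence satisfies Čech descent. Completeness (every Nisnevich cover is refined by a Nisnevich square cover) and regularity (the square is cartesian and the diagonal maps are monomorphisms or \'etale up to pullback) follow directly from the definition. Boundedness is established by a density function measuring the closed complement of $\sU$ in $\sX'$, mirroring the classical argument for schemes. The $\infty$-categorical version of the theorem, needed here because $\sC$ is an arbitrary $\infty$-category with limits, is by now standard in the motivic literature.

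The main technical obstacle is the verification of the cd structure axioms---especially boundedness---in the context of representable morphisms of finite presentation between qcqs derived algebraic stacks. The representability hypothesis baked into $\Asp_{/\sX}$ is essential: it allows one to reduce the verification to the corresponding statement on qcqs algebraic spaces (via a smooth atlas of $\sX$), where the classical proof proceeds by noetherian induction on the support of the complement. The derived enhancement does not create additional difficulty because Nisnevich squares, being representable and \'etale, are insensitive to truncation.
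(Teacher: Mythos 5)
Your proposal takes essentially the same high-level route as the paper's reference (the paper simply cites \cite[Thm.~2.2.7]{KhanLocalization}, which is a descent criterion for excision-type structures closely related to Voevodsky's cd-structure formalism). However, two steps in your sketch are problematic.

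First, the invocation of boundedness. You propose to verify boundedness by ``a density function measuring the closed complement of $\sU$ in $\sX'$, mirroring the classical argument for schemes'' and later say the classical proof ``proceeds by noetherian induction.'' But $\Asp_{/\sX}$ consists of arbitrary quasi-compact quasi-separated derived stacks over a qcqs base---there is no noetherian hypothesis anywhere, so noetherian induction is unavailable. More to the point, boundedness is the wrong tool for this statement: boundedness is what upgrades \v{C}ech descent to \emph{hyper}descent. The proposition asks only for \v{C}ech descent, and the standard modern form of the cd-structure/excision criterion (and in particular \cite[Thm.~2.2.7]{KhanLocalization}) gives the equivalence of \v{C}ech descent with conditions (i) and (ii) from completeness and regularity alone, with no boundedness hypothesis. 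Your argument would prove a stronger conclusion (hyperdescent) under a hypothesis you cannot verify, rather than the weaker conclusion under hypotheses you can.

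Second, in the necessity direction, the claim that a Nisnevich square is cocartesian in the $\infty$-topos of Nisnevich sheaves ``can be verified stalkwise'' is underspecified here. For a site of derived algebraic stacks you would need to identify the points of the Nisnevich $\infty$-topos and argue there are enough of them; this is not the familiar Hensel-local-ring picture from schemes, and you give no argument. The cleaner route is the direct one: the sieve generated by $\{j,p\}$ coincides with the sieve generated by $\sU \sqcup \sV \to \sX'$; one computes the \v{C}ech nerve, uses that $j$ is a monomorphism and that $p$ is an isomorphism away from $\sU$ (this is the ``regularity'' of the cd structure, which you should verify explicitly rather than gesture at), and shows the totalization collapses to the pullback in (ii). Furthermore, since $\sF$ is $\sC$-valued for a general $\sC$ admitting limits, one cannot literally ``apply $\sF$ to a pushout in the topos''---the reduction from $\sC$-valued descent to space-valued descent (via mapping spaces) needs to be spelled out, or one works directly with the cosimplicial diagram.

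In short: the cd-structure framing is correct, but drop boundedness and noetherian induction entirely (they are both inapplicable and unnecessary), verify completeness and regularity concretely for representable \'etale Nisnevich squares in $\Asp_{/\sX}$, and replace the stalkwise pushout claim by a direct analysis of the generated sieve.
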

    \begin{proof}
      Follows from \cite[Thm.~2.2.7]{KhanLocalization}.
    \end{proof}

    \begin{rem}
      It follows from \propref{prop:Nis} that the Nisnevich topology on $\Asp_{/\sX}$ (as defined in \defnref{defn:Nis}) coincides with the topology defined in \cite[\S 2C]{HoyoisKrishna}, and the one in \cite{KrishnaOstvaer} in the case of Deligne--Mumford stacks.
    \end{rem}

  \subsection{Scalloped stacks}
  \label{ssec:scallop}

    \begin{defn}\label{defn:scallop1}
      Let $\sX$ be a derived algebraic stack.
      A \emph{scallop decomposition} $(\sU_i, \sV_i, u_i)_i$ of $\sX$ is a finite filtration by quasi-compact open substacks
      \[
        \initial = \sU_0
        \hook \sU_1
        \hook \cdots
        \hook \sU_n = \sX,
      \]
      together with Nisnevich squares
      \[
        \begin{tikzcd}
          \sW_i \ar{r}\ar{d}
            & \sV_i \ar{d}{u_i}
          \\
          \sU_{i-1} \ar{r}
            & \sU_i
        \end{tikzcd}
      \]
      where $u_i$ are representable étale morphisms of finite presentation.
    \end{defn}

    \begin{rem}
      \defnref{defn:scallop1} is a variant of a notion introduced by Lurie (see \cite[Def.~2.5.3.1]{LurieSAG}).
      In the terminology of \emph{loc. cit.}, a scallop decomposition is as above but where the $\sV_i$ are required to be affine schemes.
      Thus if $\sX$ admits a scallop decomposition in Lurie's sense, then it must be a qcqs algebraic space (moreover, this turns out to be a sufficient condition, see \cite[Thm.~3.4.2.1]{LurieSAG}, \cite[I, Prop.~5.7.6]{RaynaudGruson}).
      For our purposes, namely in \defnref{defn:scallop} below, it is important to allow $\sV_i$ to be a stack.
    \end{rem}

    \begin{defn}\label{defn:scallop}
      Let $\sX$ be a quasi-compact quasi-separated derived algebraic stack.
      \begin{defnlist}
        \item
        We say that $\sX$ is \emph{nicely \fund} if it admits an \emph{affine} morphism $\sX \to BG$ for some nice embeddable group scheme $G$ over an affine scheme $S$.
        That is, $\sX$ is the quotient $[X/G]$ of an affine derived scheme $X$ over $S$ with $G$-action.

        \item
        We say that $\sX$ is \emph{nicely \qfund} if it admits a \emph{quasi-affine} morphism $\sX \to BG$ for some nice embeddable group scheme $G$ over an affine scheme $S$.
        That is, $\sX$ is the quotient $[X/G]$ of a quasi-affine derived scheme $X$ over $S$ with $G$-action.

        \item\label{item:70asdf}
        We say that $\sX$ is \emph{nicely scalloped} if it has separated diagonal and admits a scallop decomposition $(\sU_i, \sV_i, u_i)_i$ where $\sV_i$ are \qfund.
      \end{defnlist}
    \end{defn}

    \begin{rem}
      In Appendix~\ref{sec:linearly} we consider a notion of \emph{linearly scalloped} stack, which is roughly a variant of the above definition where nice groups are replaced by arbitrary linearly reductive groups.
      Throughout the main text, we will just drop the word ``nicely'' from \defnref{defn:scallop}.
    \end{rem}

    \begin{rem}
      Although we restrict ourselves to derived $1$-Artin stacks in this paper for simplicity\footnote{%
        Interested readers will find that many of our arguments, which proceed inductively to reduce to the \qfund case, would extend without modification.
      }, we note that the above definition admits a natural extension to the world of higher derived stacks.
      Namely, if \emph{$0$-scalloped} stacks are the class defined above, then a higher derived Artin stack $\sX$ is \emph{$n$-scalloped} if it has a scallop decomposition $(\sU_i,\sV_i,u_i)_i$ where $\sV_i$ are $(n-1)$-scalloped and $u_i$ are only required to be $n$-representable morphisms.
    \end{rem}

  \subsection{Properties of scalloped stacks}

    The next few results collect some of the main properties enjoyed by the class of scalloped stacks.

    \begin{thm}\label{thm:scallop}\leavevmode
      \begin{thmlist}
        \item\label{item:0a7dgfs}
        The classes of \fund, \qfund, and scalloped derived stacks are each stable under finite disjoint unions.

        \item\label{item:scallop/cover}
        Let $\sX$ be a qcqs derived algebraic stack.
        If $\sX$ has separated diagonal, then the following conditions are equivalent:
        \begin{thmlist}
          \item\label{item:scallop/cover/3}
          $\sX$ admits a Nisnevich cover $u : \sU \twoheadrightarrow \sX$ where $\sU$ is \fund.
          \item\label{item:scallop/cover/0}
          $\sX$ is scalloped.
          \item\label{item:scallop/cover/1}
          $\sX$ admits a scallop decomposition $(\sU_i, \sV_i, u_i)_i$ where the $\sV_i$ are scalloped.
          \item\label{item:scallop/cover/SNS}
          $\sX$ has nice stabilizers.
        \end{thmlist}

        \item\label{item:scallop/cover2}
        Let $\sX$ be a qcqs derived algebraic stack.
        If $\sX$ has affine diagonal, then the following conditions are equivalent:
        \begin{thmlist}
          \item\label{item:scallop/cover2/1}
          $\sX$ is scalloped.
          \item\label{item:scallop/cover2/3}
          $\sX$ admits an affine Nisnevich cover $u : \sU \twoheadrightarrow \sX$ where $\sU$ is \fund.
          \item\label{item:scallop/cover2/2}
          $\sX$ admits a scallop decomposition $(\sU_i, \sV_i, u_i)_i$ where the $\sV_i$ are \qfund and $u_i$ are quasi-affine.
        \end{thmlist}
      \end{thmlist}
    \end{thm}

    \begin{cor}\label{cor:scallop more}
      Let $f : \sX' \to \sX$ be a morphism of qcqs derived algebraic stacks.
      \begin{thmlist}
        \item
        If $\sX$ is \qfund and $f$ is quasi-affine, then $\sX'$ is \qfund.

        \item
        If $\sX$ is scalloped and $f$ is representable, then $\sX'$ is scalloped.
      \end{thmlist}
    \end{cor}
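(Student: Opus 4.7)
The plan is to treat the two parts separately, invoking the structural results just established. Part (i) is immediate: if $\sX$ admits a quasi-affine morphism $g \colon \sX \to BG$ for some nice embeddable group scheme $G$ over an affine base, then since quasi-affine morphisms are stable under composition, the composite $g \circ f \colon \sX' \to BG$ is quasi-affine, exhibiting $\sX'$ as \qfund. I foresee no obstacle here.

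For Part (ii), the cleanest route is via the equivalence \itemref{item:scallop/cover/2} $\Leftrightarrow$ \itemref{item:scallop/cover/SNS} in \thmref{thm:scallop}\itemref{item:scallop/cover}: among qcqs stacks with separated diagonal, being scalloped is equivalent to having nice stabilizers. Since $\sX'$ is qcqs by hypothesis, this reduces the problem to two stability checks. For the separated diagonal, I would factor
$$\Delta_{\sX'} \colon \sX' \xrightarrow{\Delta_f} \sX' \fibprod_\sX \sX' \to \sX' \fibprod \sX'.$$
The second map is the base change of $\Delta_\sX$ along $f \fibprod f$ and hence is separated by hypothesis. The first, the relative diagonal of the representable morphism $f$, pulls back along any morphism from an algebraic space $T \to \sX$ to the diagonal of the algebraic space $\sX' \fibprod_\sX T$ over $T$, so is a monomorphism of stacks and in particular separated.

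For nice stabilizers, let $x'$ be a geometric point of $\sX'$ with image $x$ in $\sX$. The induced map $\Aut(x') \to \Aut(x)$ of automorphism group schemes arises as a base change of $\Delta_f$ and is therefore a monomorphism; a monomorphism between finite-type group schemes over a field is automatically a closed immersion, placing $\Aut(x')$ as a closed subgroup of the nice group $\Aut(x)$. Stability of the class of nice groups under closed subgroups (see \ssecref{ssec:nice}) then shows $\Aut(x')$ is nice. I do not expect any genuine obstacle: the argument is essentially formal bookkeeping of how separatedness and niceness interact with representable base change, with \thmref{thm:scallop} doing the substantive work.
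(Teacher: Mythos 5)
Your proposal is correct and matches the paper's own argument: part (i) via composition of quasi-affine morphisms, and part (ii) via the characterization in \thmref{thm:scallop}\itemref{item:scallop/cover} of scalloped stacks as qcqs stacks with separated diagonal and nice stabilizers, together with stability of nice groups under closed subgroups. You actually fill in two points the paper leaves implicit — the verification that $\sX'$ inherits a separated diagonal, and the observation that the monomorphism $\Aut(x') \to \Aut(x)$ is automatically a closed immersion of group schemes over a field — which is a useful clarification rather than a deviation.
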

    \begin{proof}
      The first claim is obvious from the definition.
      The second follows immediately from the characterization in terms of nice stabilizers (\thmref{thm:scallop}), since the stabilizers of $\sX'$ will be subgroups of those of $\sX$, and the class of nice groups is stable under closed subgroups.
    \end{proof}

    The following can be regarded as a generalized version of Sumihiro's theorem \cite{Sumihiro}.

    \begin{thm}\label{thm:sumihiro}
      Let $\sX = [X/G]$ be the quotient of a qcqs derived algebraic space $X$ with $G$-action, where $G$ is a nice group scheme over an affine scheme $S$.
      Then we have:
      \begin{thmlist}
        \item\label{item:sumihiro/cover}
        $\sX$ admits a Nisnevich cover $u : \sU \twoheadrightarrow \sX$ where $\sU$ is of the form $[U/G]$ with $U$ an affine derived scheme over $S$ with $G$-action.
        Moreover, if $X$ has affine diagonal, then $u$ is affine.
        
        \item\label{item:sumihiro/scallop}
        $\sX$ admits a scallop decomposition $(\sU_i, \sV_i, u_i)_i$ where the $\sV_i$ are of the form $[V_i/G]$, for some quasi-affine derived schemes $V_i$ over $S$ with $G$-action; in particular, $\sX$ is scalloped.
      \end{thmlist}
    \end{thm}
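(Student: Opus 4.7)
The plan is to establish (i) directly by an étale-local construction at each point, and then obtain (ii) by a straightforward inductive scalloping procedure.

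For part \itemref{item:sumihiro/cover}, we argue pointwise. Since $\sX$ is qcqs, it suffices to produce, for each point $x \in |\sX|$, a $G$-equivariant étale Nisnevich neighborhood of $x$ that is the quotient of an affine $G$-scheme over $S$; the disjoint union of finitely many such charts will then supply $U$. The starting point is the étale-local structure theorem of Alper--Hall--Rydh \cite{AlperHallRydh}: since $\sX = [X/G]$ has nice stabilizers (\thmref{thm:scallop}\itemref{item:scallop/cover/SNS}), it provides a Nisnevich neighborhood of $x$ of the form $\sW_x = [\Spec(A_x)/G_x]$, where $G_x$ is the (nice) stabilizer at $x$, viewed as a closed subgroup of $G$. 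We then enlarge the group from $G_x$ to $G$ via the contracted product $V_x := G \times^{G_x} \Spec(A_x)$, which realizes $\sW_x \simeq [V_x/G]$. Since $G_x$ is nice (hence linearly reductive) and acts freely on the affine scheme $G \times \Spec(A_x)$, the quotient $V_x$ is again affine. The derived structure poses no real difficulty, as étale morphisms lift uniquely from classical truncations. Taking a finite subcover gives the desired $U$.

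For the affinity assertion when $X$ has affine diagonal, observe that for any $G$-equivariant morphism $f : V \to X$ with $V$ affine, the graph $\Gamma_f : V \to V \times X$ is the base change of $\Delta_X$ along $f \times \id_X$, hence affine; the projection $V \times X \to X$ is affine because $V$ is. Composing, $f$ is affine, and hence so is $[V/G] \to [X/G]$.

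Part \itemref{item:sumihiro/scallop} is deduced from (i) by an inductive scalloping construction. Starting from the finite Nisnevich cover $\sqcup_{i=1}^n [V_i/G] \twoheadrightarrow \sX$ produced above with each $V_i$ affine, set $\sU_0 = \initial$ and let $\sU_i \sub \sX$ be the union of $\sU_{i-1}$ with the quasi-compact open image of the structural map $u_i : [V_i/G] \to \sX$. Then $\sW_i := \sU_{i-1} \fibprod_{\sU_i} [V_i/G]$ is a quasi-compact open substack of $[V_i/G]$, necessarily of the form $[W_i/G]$ with $W_i \sub V_i$ a $G$-invariant quasi-compact open (hence quasi-affine). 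The resulting squares are Nisnevich squares by construction, and $\sU_n = \sX$, yielding a scallop decomposition in which each $\sV_i = [V_i/G]$ has $V_i$ affine (in particular quasi-affine), so $\sX$ is scalloped.

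The main technical obstacle is the very first step: producing the $[\Spec(A_x)/G_x]$-neighborhood, which is the content of the deep local structure theorem for stacks with nice stabilizers. Once this is available, the contracted-product construction and the scalloping are formal.
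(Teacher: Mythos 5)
Your approach to part (i) is genuinely different from the paper's. The paper first obtains a Nisnevich cover by a nicely fundamental stack $\sU$ via \thmref{thm:scallop}\itemref{item:scallop/cover}, and then applies \thmref{thm:locally affine} (i.e.\ \cite[Thm.~1.7]{AlperHallHalpernLeistnerRydh}) to the representable morphism $\sU \to BG$ to produce a further Zariski cover of $\sU$ that is \emph{affine over $BG$}, hence of the form $[U/G]$ with $U$ affine. You instead invoke a Luna-slice form of the local structure theorem and then apply the contracted-product construction $G\times^{G_x}(-)$. This can be made to work, but note two points that you gloss over: first, $G_x$ is a priori only a closed subgroup of the fiber $G_{\kappa(s)}$, so to form $G\times^{G_x}W$ over an affine base one must spread $G_x$ out as a closed subgroup of $G$ over a suitable affine \'etale neighbourhood of $s\in S$ --- this is precisely what the equivariant (``slice'') formulation of the local structure theorem gives, so your citation needs to target that version rather than the general one producing charts $[\Spec A/H]$ for an auxiliary group $H$. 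Second, for $[G\times^{G_x}W/G]\simeq[W/G_x]$ to be compatible with the given \'etale map to $[X/G]$, you need the slice to be $G_x$-equivariant into $X$. An alternative that sidesteps both issues is to observe directly that the pullback $V_x := W\times_{[X/G]}X$ is a $G$-torsor over $[W/G_x]$, hence affine over a cohomologically affine stack, hence a cohomologically affine qcqs algebraic space with affine diagonal, hence affine by Serre's criterion.

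Part (ii), however, has a genuine gap. You set $\sU_i = \sU_{i-1}\cup\mathrm{im}([V_i/G]\to\sX)$ and claim that the resulting cartesian squares are Nisnevich squares ``by construction.'' This is not so: for a Nisnevich square you need the \'etale map $u_i:[V_i/G]\to\sU_i$ to be an \emph{isomorphism} over $\sU_i\setminus\sU_{i-1}$, not merely surjective. Already with $G$ trivial and $X=\Spec k$, a single chart given by a nontrivial two-sheeted \'etale cover $V\to X$ is a Nisnevich cover but your procedure would require $V\to X$ to be an isomorphism, which it is not. Refining a Nisnevich cover into a scallop decomposition is a nontrivial result (for algebraic spaces it is \cite[Thm.~3.4.2.1]{LurieSAG}; for stacks the paper cites \cite[Prop.~2.9]{HoyoisKrishna}), and the proof proceeds via a filtration by closed substacks along which the cover splits, not via the naive union of images. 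You need to invoke that result rather than assert the Nisnevich property by construction.
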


    Theorems~\ref{thm:scallop} and \ref{thm:sumihiro} will be proven in \ref{ssec:scallop/proof} below.

  \subsection{Tame stacks}
  \label{ssec:scallop/tame}

    Let us recall a few classes of examples of scalloped stacks:

    \begin{exam}[Tame DM stacks]
      If $\sX$ is a tame derived Deligne--Mumford stack with separated diagonal, then its stabilizers are linearly reductive finite étale group schemes (in particular, they are nice).
      Hence $\sX$ is scalloped by \thmref{thm:scallop}\itemref{item:scallop/cover}.
    \end{exam}

    \begin{exam}[Tame Artin stacks]
      If $\sX$ is an algebraic stack with separated diagonal which is tame in the sense of \cite{AbramovichOlssonVistoli}, then it has nice stabilizers.
      Hence it is scalloped by \thmref{thm:scallop}\itemref{item:scallop/cover}.
    \end{exam}

  \subsection{Proof of \thmref{thm:scallop} and \thmref{thm:sumihiro}}
  \label{ssec:scallop/proof}

    The proof of \thmref{thm:scallop} will make use of the following result of Alper--Hall--Rydh.

    \begin{thm}\label{thm:locally affine}
      Let $f : \sX \to \sY$ be a representable morphism of derived algebraic stacks such that $\sX$ is \fund and $\sY$ has affine diagonal.
      Then there exists a Zariski cover $u : \sU \twoheadrightarrow \sX$ such that $\sU$ is \fund and the composite $f_0 = f \circ u: \sU \to \sY$ is affine.
    \end{thm}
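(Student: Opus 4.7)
Write $\sX = [X/G]$ as a fundamental presentation, with $X$ an affine derived scheme over an affine scheme $S$ and $G$ a nice embeddable group scheme over $S$. First observe that the composite $g : X \to \sX \xrightarrow{f} \sY$ is an affine morphism: for any $T \to \sY$ with $T$ affine, the pullback $X \times_\sY T$ is obtained by base changing the affine diagonal $\Delta_\sY : \sY \to \sY \times \sY$ along $X \times T \to \sY \times \sY$, hence is affine. In particular one may write $X = \mathrm{Spec}_\sY(\sA)$ for some $G$-equivariant quasi-coherent $\sO_\sY$-algebra $\sA$.

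The strategy is then to produce a finite $G$-invariant Zariski cover $\{U_i\}_{i=1}^n$ of $X$ by $G$-stable quasi-affine opens with the extra property that each induced map $[U_i/G] \to \sY$ is affine. Setting $\sU := \bigsqcup_i [U_i/G] = [(\bigsqcup_i U_i)/G]$, the map $u : \sU \twoheadrightarrow \sX$ is then a Zariski cover, $\sU$ is quasi-fundamental (a finite disjoint union of $G$-stable quasi-affine opens in the affine $X$ is again quasi-affine over $S$, and $G$ is nice and embeddable), and the composite $f \circ u$ is affine by construction.

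For the construction of the $U_i$, the plan is to invoke, at each point $x \in |X|$, the local structure theorem of Alper--Hall--Halpern-Leistner--Rydh for stacks with nice stabilizers. Using the linear reductivity of $G$ together with the affineness of $g$, one produces a $G$-invariant section $s \in \Gamma(X, \sA)^G$ with $s(x) \ne 0$ such that the non-vanishing locus $X_s \subseteq X$ is a $G$-stable quasi-affine open and $[X_s/G] \simeq \mathrm{Spec}_\sY\bigl((\sA[s^{-1}])^G\bigr) \to \sY$ is affine. Quasi-compactness of $X$ then extracts a finite subcover. The main obstacle is precisely the existence of such $G$-invariant sections producing affine quotients: this is a GIT-type input that requires both the niceness of $G$ (so that the functor of $G$-invariants is exact) and the representability of $f$ (so that the $G$-action on $X_s$ is sufficiently free relative to $\sY$ for the quotient to become affine), and is the content of the AHHR theorem in the form needed here.
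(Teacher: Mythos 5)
Your proposal takes essentially the same approach as the paper: both rely on the local structure theorem of Alper--Hall--Halpern-Leistner--Rydh (the paper cites \cite[Thm.~1.7]{AlperHallHalpernLeistnerRydh}) to produce, for each point of $\sX$, a Zariski open neighbourhood on which $f$ restricts to an affine morphism, and then use quasi-compactness to pass to a finite subcover. The differences are cosmetic, and in one place slightly misleading. The paper applies the AHHLR result directly at the level of the stack $\sX$, whereas you unwind to the atlas $X$ and insist on producing \emph{principal} $G$-invariant opens $X_s$; the AHHLR theorem supplies $G$-invariant opens $V\subseteq X$ with $[V/G]\to\sY$ affine, but these need not be principal invariant opens, and reducing to the principal case would require an additional argument (linear reductivity lets one separate the closed orbit of $x$ from an invariant closed complement, but that is genuinely extra work that the conclusion does not need, since any $G$-invariant Zariski open with affine quotient map suffices). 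Moreover, the displayed isomorphism $[X_s/G]\simeq\Spec_\sY\bigl((\sA[s^{-1}])^G\bigr)$ is not an independent criterion you can check in advance: for nice $G$ the right-hand side is precisely the relative affinization of $[X_s/G]\to\sY$, so the statement is \emph{equivalent} to the affineness you want to prove and does no work toward establishing it. In the end the entire force of the argument rests on the same AHHLR input the paper cites; your GIT unwinding correctly identifies where the weight lies but adds detail that is neither needed nor fully justified as written.
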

    \begin{proof}
      Applying \cite[Prop.~12.15]{AlperHallRydh}, we get around every closed point $x\in\abs{\sX}$ a Zariski open neighbourhood $\sU_x\sub\sX$ such that $\sU_x$ is \fund and the restriction $f|_{\sU_x} : \sU_x \to \sY$ is affine.
      Since $\sX$ is quasi-compact, the induced Zariski cover $\coprod_x \sU_x \twoheadrightarrow \sX$ admits a finite subcover, which we take to be $u : \sU \twoheadrightarrow \sX$.
    \end{proof}

    \begin{proof}[Proof of \thmref{thm:sumihiro}]
      Since $\sX$ has nice stabilizers, \thmref{thm:scallop}\itemref{item:scallop/cover} implies that there exists a Nisnevich cover $u : \sU \twoheadrightarrow \sX$ with $\sU$ \fund.
      By \thmref{thm:locally affine}, we can refine the cover $u$ such that $\sU$ is \emph{affine} over $BG$, i.e., of the form $[U/G]$ where $U$ is an affine derived scheme over $S$ with $G$-action.
      If $X$ has affine diagonal, then $U \to X$ is automatically affine, hence so is $u : \sU \to \sX$.
      This shows \itemref{item:sumihiro/cover}, and \itemref{item:sumihiro/scallop} now follows from \cite[Prop.~2.9]{HoyoisKrishna}.
    \end{proof}

    \begin{proof}[Proof of \thmref{thm:scallop}]\leavevmode

      \emph{Claim~\itemref{item:0a7dgfs}:}
      Follows immediately from the definitions.

      \emph{Claim~\itemref{item:scallop/cover}, \itemref{item:scallop/cover/3} $\Rightarrow$ \itemref{item:scallop/cover/0}:}
      Follows from \cite[Prop.~2.9]{HoyoisKrishna} and the fact that open substacks of \fund stacks are \qfund.

      \emph{Claim~\itemref{item:scallop/cover}, \itemref{item:scallop/cover/0} $\Rightarrow$ \itemref{item:scallop/cover/1}:}
      Obvious.

      \emph{Claim~\itemref{item:scallop/cover}, \itemref{item:scallop/cover/1} $\Rightarrow$ \itemref{item:scallop/cover/SNS}:}
      Follows from the fact that fundamental stacks have nice stabilizers (as subgroups of nice groups are nice).

      \emph{Claim~\itemref{item:scallop/cover}, \itemref{item:scallop/cover/SNS} $\Rightarrow$ \itemref{item:scallop/cover/3}:}
      This is \cite[Thm.~1.9]{AlperHallHalpernLeistnerRydh} (see also \cite[Thm.~A.1.8]{BKRSMilnor}).

      \emph{Claim~\itemref{item:scallop/cover2}, \itemref{item:scallop/cover2/1} $\Rightarrow$ \itemref{item:scallop/cover2/3}:}
      By claim~\itemref{item:scallop/cover} we get a Nisnevich cover $u : \sU \twoheadrightarrow \sX$ with $\sU$ \fund.
      Since $\sX$ has affine diagonal, we may apply \thmref{thm:locally affine} to $u$ to get a Nisnevich cover $\sU' \twoheadrightarrow \sU$ with the composite $\sU' \twoheadrightarrow \sX$ affine and $\sU'$ \fund.

      \emph{Claim~\itemref{item:scallop/cover2}, \itemref{item:scallop/cover2/3} $\Rightarrow$ \itemref{item:scallop/cover2/2}:}
      Apply \cite[Prop.~2.9]{HoyoisKrishna}.

      \emph{Claim~\itemref{item:scallop/cover2}, \itemref{item:scallop/cover2/2} $\Rightarrow$ \itemref{item:scallop/cover2/1}:}
      Obvious.
    \end{proof}

  \subsection{The resolution property}
  \label{ssec:resprop}

    For a derived algebraic stack $\sX$, we write $\Dqc(\sX)$ for the stable \inftyCat of quasi-coherent complexes on $\sX$ (see e.g. \cite[Chap.~3, 1.1.4]{GaitsgoryRozenblyum}).
    When $\sX$ is classical, this is equivalent to the derived \inftyCat of $\sO_\sX$-modules with quasi-coherent cohomology (see \cite[Prop.~1.3]{HallRydhPerfect}).
    We write $\Qcoh(\sX) \simeq \Dqc(\sX)^\heartsuit$ for the full subcategory of discrete quasi-coherent sheaves, i.e., the heart of the t-structure.

    \begin{defn}
      Let $\sX$ be a derived algebraic stack.
      \begin{defnlist}
        \item
        We say that $\sX$ has the \emph{resolution property} if, for every quasi-coherent sheaf $\sF \in \Qcoh(\sX)$, there exists a small collection $\{\sE_\alpha\}_\alpha$ of finite locally free sheaves and a surjective (on $\piz$) morphism
        \[ \bigoplus_\alpha \sE_\alpha \twoheadrightarrow \sF \]
        in $\Dqc(\sX)$.
        
        \item
        We say that $\sX$ has the \emph{derived resolution property} if the above holds moreover for every connective quasi-coherent \emph{complex} $\sF \in \Dqc(\sX)_{\ge 0}$.
      \end{defnlist}
    \end{defn}

    \begin{exam}\label{exam:0g11up0}
      Let $G$ be an embeddable linearly reductive group scheme over an affine scheme $S$.
      Then the classifying stack $BG$ has the derived resolution property.
      In fact, a resolving set is given by finite locally free $G$-modules on $S$.
      See \cite[Ex.~1.35]{KhanKstack}.
    \end{exam}

    \begin{prop}\label{prop:2rgoy10}
      Let $\sX$ be a \qfund derived stack.
      Then $\sX$ satisfies the derived resolution property.
    \end{prop}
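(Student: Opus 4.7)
The plan is to exploit the defining quasi-affine morphism $\pi : \sX \to BG$ in order to transfer the derived resolution property of $BG$---established in \examref{exam:0g11up0}---to $\sX$. Since the resolution property concerns only the abelian category $\Qcoh(\sX) = \Dqc(\sX)^\heartsuit$, which depends only on the classical truncation, I may reduce to the case where $\sX$ is classical. Factor $\pi$ as a quasi-compact open immersion $j : \sX \hook \overline{\sX}$ followed by the affine morphism $\overline{\pi} : \overline{\sX} := \Spec_{BG}(\pi_*\sO_\sX) \to BG$.

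First I would establish the resolution property for the intermediate stack $\overline{\sX}$. Given any $\sF \in \Qcoh(\overline{\sX})$, apply the resolution property of $BG$ to $\overline{\pi}_*\sF$, producing a surjection $\bigoplus_\alpha \sE_\alpha \twoheadrightarrow \overline{\pi}_*\sF$ by finite locally free sheaves on $BG$ (i.e. finite locally free $G$-modules over $S$). Pulling back by $\overline{\pi}^*$ and composing with the counit $\overline{\pi}^*\overline{\pi}_*\sF \to \sF$ yields $\bigoplus_\alpha \overline{\pi}^*\sE_\alpha \twoheadrightarrow \sF$. The counit is surjective because $\overline{\pi}$ is affine: writing $\overline{\sX} = \Spec_{BG}(\sA)$, it becomes the $\sA$-action map $\sA \otimes_{\sO_{BG}} \sF \to \sF$, which is split by $f \mapsto 1 \otimes f$. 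Since $\overline{\pi}^*$ preserves finite locally freeness, $\overline{\sX}$ has the resolution property.

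Next I would transfer the property across the open immersion $j$. By the equivalent formulation noted in the remark preceding the proposition, it suffices to exhibit, for every finite-type $\sF \in \Qcoh(\sX)$, a surjection $\sE \twoheadrightarrow \sF$ from a single finite locally free $\sE$. By \cite[Main~Thm.]{RydhApproxSheaves}, applied to the quasi-compact open immersion $j$ between qcqs stacks, $\sF$ extends to a finite-type quasi-coherent sheaf $\widetilde{\sF} \in \Qcoh(\overline{\sX})$. The resolution property of $\overline{\sX}$, combined again with the remark and with the quasi-compactness of $\overline{\sX}$, then produces a single finite locally free surjection $\sE \twoheadrightarrow \widetilde{\sF}$ on $\overline{\sX}$. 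Restricting along $j$ gives the required surjection $j^*\sE \twoheadrightarrow \sF$.

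The most delicate step is the invocation of Rydh's approximation theorem used to extend finite-type quasi-coherent sheaves across $j$; once this is granted, everything else is a routine manipulation of adjunctions. The affine-pushforward step in the middle, though elementary, is where the niceness and embeddability of $G$ enter in an essential way (via \examref{exam:0g11up0}).
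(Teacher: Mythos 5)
Your proof is correct, but it takes a more roundabout route than the paper's, which proves a cleaner and slightly more general statement. The paper's Lemma~\ref{lem:8phi112} establishes directly that for \emph{any} quasi-affine morphism $f : \sX \to \sY$ with $\sY$ having the resolution property, the pullbacks $\{f^*\sE_\alpha\}$ of a resolving set on $\sY$ form a resolving set on $\sX$. The key observation is that the counit $f^*f_*\sF \to \sF$ is surjective whenever $f$ is quasi-affine — no factorization or extension theorem needed. You can see this by factoring $f = g \circ j$ exactly as you do: the counit factors through $j^*(g^*g_*(j_*\sF)) \twoheadrightarrow j^*j_*\sF \xrightarrow{\sim} \sF$, where the first map is surjective by the split counit for the affine $g$ (your observation), and the second is an isomorphism because $j^*j_* \simeq \id$ for an open immersion. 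Given this, one simply pulls back a surjection onto $f_*\sF$ from $BG$ and composes with the counit. By contrast, your argument routes the open-immersion step through Rydh's approximation theorem — extending finite-type sheaves across $j$ and resolving on $\overline{\sX}$ — which is valid but heavier than necessary, and also buys you a little less: you only get that every finite-type sheaf has a finite locally free surjection, not the explicit description of a resolving set. One small imprecision: Rydh's Main Theorem is an approximation statement (every quasi-coherent sheaf on a qcqs stack is a filtered union of its finite-type quasi-coherent subsheaves), and the extension of a finite-type $\sF$ across a quasi-compact open immersion is a standard \emph{consequence} of it, requiring the additional (easy) step that a finite-type sheaf is already equal to one of the terms in a filtered union of subsheaves exhausting it; worth spelling out rather than treating the extension as a direct citation.
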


    \propref{prop:2rgoy10} follows from \examref{exam:0g11up0} and the following lemmas.

    \begin{lem}\label{lem:intriguess}
      Let $f : X' \to X$ be a quasi-affine morphism of derived algebraic stacks.
      Then for every connective quasi-coherent complex $\sF \in \Dqc(X')_{\ge 0}$, the canonical morphism
      \[
        f^* (f_*(\sF)_{\ge 0})
        \to f^* f_*(\sF)
        \xrightarrow{\mrm{counit}} \sF
      \]
      is surjective on $\pi_0$.
    \end{lem}
    \begin{proof}
      First note that if $f$ is affine, then the induced map on $\pi_0$ is the surjection
      \[ \pi_0 f^* (f_*(\sF)_{\ge 0}) \simeq f^*_\heartsuit f^\heartsuit_*(\pi_0 \sF) \to \pi_0 \sF \]
      since $\pi_0 f^* = f^*_\heartsuit \pi_0$ on connective objects, where $\heartsuit$ denotes the corresponding functors on the hearts of the t-structures (i.e. on abelian categories of quasi-coherent sheaves).

      In general, write $f = g \circ j$ where $j : X' \to Y$ is a quasi-compact open immersion and $g : Y \to X$ is affine.
      Since $g_*$ is t-exact, it commutes with connective covers and thus the morphism in question factors as
      \[
        j^*g^*(g_*j_*(\sF)_{\ge 0})
        \simeq j^*g^*g_*(j_*(\sF)_{\ge 0})
        \xrightarrow{\mrm{counit}} j^*(j_*(\sF)_{\ge 0})
        \to j^*j_*(\sF)
        \xrightarrow{\mrm{counit}} \sF.
      \]
      The first arrow is surjective on $\pi_0$ (since $g$ is affine), the second arrow is bijective on $\pi_0$ (since $j^*$ is t-exact), and the last arrow is invertible (by the base change formula).
    \end{proof}

    \begin{lem}\label{lem:8phi112}
      Let $f : \sX \to \sY$ be a quasi-affine morphism of derived algebraic stacks.
      If $\sY$ has the derived resolution property, then so does $\sX$.
      In fact, if $\{\sE_\alpha\}_\alpha$ is a resolving set for $\sY$, then $\{f^*(\sE_\alpha)\}_\alpha$ is a resolving set for $\sX$.
    \end{lem}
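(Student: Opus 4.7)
My plan is to reduce to two simpler cases using the standard factorization of a quasi-affine morphism. Since $f$ is quasi-affine, the pushforward $f_*\sO_\sX$ is a quasi-coherent sheaf of algebras on $\sY$, and $f$ factors canonically as
\[
  \sX \xrightarrow{\,j\,} Z \xrightarrow{\,g\,} \sY,
  \qquad Z := \Spec_\sY(f_*\sO_\sX),
\]
where $g$ is affine and $j$ is a quasi-compact open immersion. It then suffices to verify the analogous statement separately for $g$ and for $j$, and compose. Since the resolution property concerns only the heart $\Qcoh(\sX) = \Dqc(\sX)^\heartsuit$, which depends solely on the classical truncation, I am free to work with classical stacks throughout.

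For the affine morphism $g$, starting from any $\sG \in \Qcoh(Z)$, I would push forward to $g_*\sG \in \Qcoh(\sY)$ (quasi-coherent since $g_*$ is $t$-exact for $g$ affine). The resolution property of $\sY$ yields a surjection $\bigoplus_\alpha \sE_\alpha \twoheadrightarrow g_*\sG$; applying the right-exact functor $g^*$ and composing with the counit $g^*g_*\sG \to \sG$ gives $\bigoplus_\alpha g^*\sE_\alpha \to \sG$. The key point is that the counit is surjective for affine morphisms, which is checked smooth-locally on $\sY$, where it reduces to the tautological surjection $M \otimes_A B \twoheadrightarrow M$ for a $B$-module $M$.

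For the quasi-compact open immersion $j$, given $\sF \in \Qcoh(\sX)$, I would extend to $j_*\sF \in \Qcoh(Z)$ (quasi-coherent as $j$ is qcqs), choose a surjection $\bigoplus_\alpha \sE'_\alpha \twoheadrightarrow j_*\sF$ from a resolving set of $Z$, and apply the exact functor $j^*$. Since $j^*j_* \simeq \id$ for open immersions, this produces a surjection $\bigoplus_\alpha j^*\sE'_\alpha \twoheadrightarrow \sF$. Composing both cases: if $\{\sE_\alpha\}$ is resolving for $\sY$, then $\{g^*\sE_\alpha\}$ is resolving for $Z$ by the affine step, hence $\{j^*g^*\sE_\alpha\} = \{f^*\sE_\alpha\}$ is resolving for $\sX$ by the open immersion step.

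I do not anticipate any serious obstacle; the only points requiring care are quasi-coherence of $j_*\sF$ for a qcqs open immersion and surjectivity of the counit $g^*g_* \to \id$ for affine $g$, both of which are standard stacky facts (and harmless since we have already reduced to the classical heart).
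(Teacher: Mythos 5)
Your proof is correct. It uses the same underlying mechanism as the paper's argument — push $\sF$ forward, resolve on the base, pull back, and finish with the counit — but the paper's proof is a one-step version: it resolves $f_*(\sF)$ on $\sY$, applies $\bL f^*$ and then the counit $f^*f_*\sF \to \sF$, which it asserts is surjective because $f$ is quasi-affine. Your factorization $f = g \circ j$ through the affine hull $Z = \Spec_\sY(f_*\sO_\sX)$, treating the affine morphism $g$ (where the counit is the tautological surjection $M \otimes_A B \twoheadrightarrow M$) and the quasi-compact open immersion $j$ (where $j^*j_* \simeq \id$) separately, is essentially an unpacking of the proof that the counit is surjective for quasi-affine $f$. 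So the approach is the same in spirit; yours is the more elementary, self-contained version, while the paper's is more compressed and cites the classical case in Hall--Rydh. One thing worth spelling out in your reduction to the classical truncation is the identification $\pi_0(\bL f^* \sE_\alpha) \simeq (\bL f^*\sE_\alpha)\vert_{\sX_\cl} \simeq f_\cl^*(\sE_\alpha\vert_{\sY_\cl})$, so that the resolving set $\{f^*\sE_\alpha\}$ on $\sX$ passes correctly to the resolving set on $\sX_\cl$; this follows from $f \circ i_\sX = i_\sY \circ f_\cl$ and flatness of $\sE_\alpha$, and is the step that makes the reduction legitimate. (The paper sidesteps this by instead working directly with the conventions $f^* = \pi_0 \bL f^*$, $f_* = \pi_0 \bR f_*$ at the level of $\Qcoh$.)
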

    \begin{proof}
      For any connective complex $\sF \in \Dqc(\sX)_{\ge 0}$, the derived resolution property for $\sY$ implies that there exists a surjection $\phi : \bigoplus_\alpha \sE_\alpha \twoheadrightarrow f_*(\sF)_{\ge 0}$, where $(-)_{\ge 0}$ denotes the connective cover.
      This induces a surjection
      \[
        f^*(\phi) : \bigoplus_\alpha f^*(\sE_\alpha)
        \twoheadrightarrow f^*(f_*(\sF)_{\ge 0})
      \]
      Now consider the composite
      \[
        \bigoplus_\alpha f^*(\sE_\alpha)
        \twoheadrightarrow f^*(f_*(\sF)_{\ge0})
        \twoheadrightarrow \sF,
      \]
      where the second arrow is the surjection of \lemref{lem:intriguess}.
    \end{proof}

  \subsection{Compact generation of the derived category}

    Given a derived algebraic stack $\sX$, let $\Dqc(\sX)$ be the stable \inftyCat of quasi-coherent complexes on $\sX$ as in \ssecref{ssec:resprop}.

    \begin{defn}
      A derived algebraic stack $\sX$ is \emph{perfect} if $\Dqc(\sX)$ is compactly generated by the full subcategory $\Dperf(\sX)$ of perfect complexes.
    \end{defn}

    In this subsection we prove:

    \begin{thm}\label{thm:perfect}
      Every scalloped derived stack is perfect.
    \end{thm}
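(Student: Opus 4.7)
The plan is to argue by induction on the length $n$ of a scallop decomposition $(\sU_i, \sV_i, u_i)_{i \le n}$ of $\sX$, with base case $n=0$ (the empty stack) trivial. The inductive step glues compact generators along the Nisnevich square attached to the last piece of the decomposition, and the case of the inductive hypothesis that is not automatic---the \qfund case---is handled directly from the resolution property. This is the same scheme as the Bondal--Van den Bergh/Lurie proof for qcqs schemes, with scallop decompositions replacing Zariski covers.

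\emph{Base case: $\sX$ \qfund.} Write $f : \sX \to BG$ for the defining quasi-affine morphism, with $G$ nice and embeddable over an affine scheme $S$. By \examref{exam:0g11up0} the classifying stack $BG$ has the derived resolution property, with a resolving family given by the finite locally free $G$-modules on $S$. Each such object is dualizable in $\Dqc(BG)$ and therefore perfect; linear reductivity of $G$ (which holds since $G$ is nice) forces $\Gamma_{BG}\colon \Dqc(BG)\to\Dqc(S)$ to preserve colimits, so these perfect objects are moreover compact. Combined with the derived resolution property, they compactly generate $\Dqc(BG)$. To pass from $BG$ to $\sX$, pull back such a resolving family $\{\sE_\alpha\}$ along $f$: by \lemref{lem:8phi112} the collection $\{f^*\sE_\alpha\}$ still resolves $\Qcoh(\sX)$, and since $f$ is quasi-affine the functor $f_*$ is colimit-preserving and conservative on $\Dqc$, whence each $f^*\sE_\alpha$ is compact in $\Dqc(\sX)$. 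Hence $\sX$ is perfect.

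\emph{Inductive step.} Assume $\sU_{n-1}$ is perfect, and consider the last Nisnevich square
\[
\begin{tikzcd}
\sW_n \ar{r}\ar{d} & \sV_n \ar{d}{u_n} \\
\sU_{n-1} \ar{r}{j} & \sX = \sU_n.
\end{tikzcd}
\]
By the base case $\sV_n$ is perfect, and $\sW_n$, an open substack of $\sV_n$, is also \qfund and hence perfect. The Nisnevich descent satisfied by $\Dqc$ (\propref{prop:Nis}) makes the $*$-pullback square of stable $\infty$-categories
cartesian. One then invokes the standard Thomason--Neeman gluing procedure: compact generators of $\sU_{n-1}$ extend to perfect complexes on $\sX$ whose pullback to $\sV_n$ is supported on the image of $\sW_n$ (using extension by zero along the quasi-compact open immersion $j$ together with Koszul-type modifications to kill the boundary), while compact generators of $\sV_n$ are patched with the zero object on $\sU_{n-1}$ via the Nisnevich square, using that $u_n$ is an isomorphism over $\sX\setminus\sU_{n-1}$. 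The resulting family of perfect complexes on $\sX$ detects vanishing both on $\sU_{n-1}$ and on $\sV_n$, hence generates $\Dqc(\sX)$.

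\emph{Main obstacle.} The delicate point is producing, from a compact object on $\sV_n$ (or on $\sU_{n-1}$), a compact extension to $\sX$ whose restriction recovers a given perfect complex up to direct summand---the classical content of Thomason's extension theorem. In the scheme case this rests on the availability of ample line bundles; in our setting the substitute is precisely the derived resolution property on each \qfund piece $\sV_i$, which furnishes enough perfect complexes to realize the required extensions after Nisnevich patching. Verifying that the glued object remains perfect (equivalently, compact) on $\sX$, rather than only pseudo-coherent, is what makes the scalloped hypothesis—rather than an arbitrary Nisnevich cover by \fund opens—indispensable.
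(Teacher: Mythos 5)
Your high-level scheme — induction on the length of the scallop decomposition, gluing compact generators along the terminal Nisnevich square, and reducing to the \qfund case via the resolution property — is exactly the paper's strategy. However, the inductive step as you describe it contains a genuine gap, and a couple of the support conditions are stated incorrectly. The paper's proof hinges on a \emph{supported} compact generation statement for \qfund stacks (its \lemref{lem:spdufg0p1}): for $\sV$ \qfund and $Z = \abs{\sV}\setminus\abs{\sW}$ cocompact closed, the category $\Dqc(\sV~\mathrm{on}~Z)$ is compactly generated by perfect complexes supported on $Z$. Your base case only establishes unsupported compact generation of $\Dqc(\sX)$ (via pullback from $BG$), which is not what the inductive step consumes. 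It is precisely the supported generators $\sG_\beta$ on $\sV$ that get glued with the zero object on $\sU_{n-1}$ --- not arbitrary compact generators of $\sV_n$, as you write --- because only objects trivial on $\sW_n$ can be patched with $0$ on $\sU_{n-1}$ across the Nisnevich square. Conversely, when you extend a generator $\sF_\alpha$ of $\sU_{n-1}$ to $\sX$, its restriction to $\sV_n$ is a lift of $\sF_\alpha|_{\sW_n}$ (obtainable because $\Dperf(\sV_n) \to \Dperf(\sW_n)$ is a Karoubi projection by Thomason--Neeman applied to the supported generation lemma, after replacing $\sF_\alpha$ by $\sF_\alpha \oplus \sF_\alpha[1]$); there is no support condition on this restriction, so your assertion that its ``pullback to $\sV_n$ is supported on the image of $\sW_n$'' has the picture backwards. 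Likewise, ``extension by zero along $j$'' is not available: there is no exceptional left adjoint to $j^*$ on $\Dqc$, and the paper instead constructs the objects directly by gluing along the cartesian square of perfect-complex categories.

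Beyond the set-up, the final step --- that the glued family $\{\sF_{\sX,\alpha}\}\cup\{\sG_{\sX,\beta}\}$ actually generates $\Dqc(\sX)$ --- is the most delicate part and you assert it without proof. The paper's argument is not a formal consequence of ``detecting vanishing on $\sU$ and $\sV$'': one must first observe (the paper's claim $(\ast)$) that any $\sR$ orthogonal to the $\sG_{\sX,\beta}$ has restriction $\sR|_\sV$ lying in the essential image of $j'_* : \Dqc(\sW) \to \Dqc(\sV)$, and only then can one bootstrap to $\sR|_\sU \simeq 0$ and $\sR|_\sV \simeq 0$. Without the supported generation lemma, neither the Karoubi projection nor the claim $(\ast)$ is available, so the argument does not close. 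Also note that the glued objects being perfect is immediate (perfection is local), but their \emph{compactness} in $\Dqc(\sX)$ is a separate verification which the paper handles via its \lemref{lem:pa0sdhg1}, again by induction on the scallop decomposition --- a point you do not address.
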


    \begin{rem}
      In case $\sX$ has affine diagonal, \thmref{thm:perfect} can be proven using the criterion of \cite[Thm.~C]{HallRydhPerfect}.
      See \cite[Thm.~A.3.2]{BKRSMilnor}.
    \end{rem}

    \begin{lem}\label{lem:spdufg0p1}
      Let $\sX$ be a \qfund derived stack.
      Then for every cocompact closed subset $Z \sub \abs{\sX}$, the stable \inftyCat $\Dqc(\sX~\mrm{on}~Z)$, of quasi-coherent complexes supported on $Z$, is compactly generated by the full subcategory $\Dperf(\sX~\mrm{on}~Z)$ of perfect complexes supported on $Z$.
    \end{lem}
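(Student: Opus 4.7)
The plan is to build compact generators for $\Dqc(\sX~\mrm{on}~Z)$ by tensoring a compact generator of $\Dqc(\sX)$ with Koszul perfect complexes cut out by sections vanishing on $Z$.

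First, \propref{prop:2rgoy10} supplies the resolution property on $\sX$, from which it follows, as in \cite[Thm.~3.1.1]{BondalVdB} or \cite[Thm.~9.6.1.1]{LurieSAG}, that $\Dqc(\sX)$ itself is compactly generated by $\Dperf(\sX)$; fix a compact generator $G_0$. Let $j : \sU \hook \sX$ denote the quasi-compact open complement of $Z$ and $\sI \sub \sO_\sX$ a quasi-coherent ideal sheaf with $V(\sI) = Z$ set-theoretically.

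Using the resolution property and quasi-compactness of $\sU$, I would produce a finite locally free sheaf $\sE$ on $\sX$ with a map $s : \sE \to \sI \hook \sO_\sX$ whose restriction $s|_\sU$ is surjective (finitely many sections of $\sI$ suffice to generate the unit ideal $\sI|_\sU = \sO_\sU$ on $\sU$, and the resolution property packages them into a finite locally free sheaf on $\sX$). The Koszul complex $K := \on{Kos}(s) \in \Dperf(\sX)$ then satisfies $K|_\sU \simeq 0$ (contraction with a local section of $s|_\sU$ gives a null-homotopy), hence $K \in \Dperf(\sX~\mrm{on}~Z)$. The analogous construction applied to higher powers of the component sections yields ``thickened'' Koszul complexes $K^{(m)}$, each in the thick subcategory of $\Dperf(\sX)$ generated by $K$ via iterated cofiber sequences of the form $K^{(m-1)} \to K^{(m)} \to K$.

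To verify that the family $\{G_0 \otimes K^{(m)}\}_{m \ge 1}$ compactly generates $\Dqc(\sX~\mrm{on}~Z)$, I would note that since $j$ is a quasi-compact flat monomorphism, the projection formula identifies the right adjoint $\Gamma_Z = \on{fib}(\id \to j_*j^*)$ to the inclusion $\Dqc(\sX~\mrm{on}~Z) \hook \Dqc(\sX)$ with tensoring with the idempotent object $\Gamma_Z(\sO_\sX) \in \Dqc(\sX~\mrm{on}~Z)$. The stable Koszul identification $\Gamma_Z(\sO_\sX) \simeq \colim_m K^{(m)}$ then shows $\Gamma_Z(\sO_\sX)$ lies in the localizing subcategory of $\Dqc(\sX)$ generated by $K$. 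Hence any $F \in \Dqc(\sX~\mrm{on}~Z)$ satisfies
\[
  F \simeq F \otimes \Gamma_Z(\sO_\sX) \simeq \colim_m F \otimes K^{(m)},
\]
and since $F$ itself lies in the localizing subcategory of $\Dqc(\sX)$ generated by $G_0$, each $F \otimes K^{(m)}$ lies in the localizing subcategory generated by $G_0 \otimes K^{(m)} \in \Dperf(\sX~\mrm{on}~Z)$, as required.

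The main step to verify is the stable Koszul identification of $\Gamma_Z(\sO_\sX)$ in this $G$-equivariant/stacky setting. On an affine scheme this is the classical computation of local cohomology via the stable Koszul complex, and I would deduce the general case by descent from the presentation $\sX \simeq [X/G]$ with $X$ quasi-affine, the resolution property ensuring that all relevant sections and contractions can be arranged $G$-equivariantly.
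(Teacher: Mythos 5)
Your approach — building compact generators by tensoring a generator of $\Dqc(\sX)$ with Koszul complexes cut out by a cosection vanishing on $Z$ — is genuinely different from the paper's, which is a one-line citation: the paper invokes \cite[Cor.~1.44, Prop.~1.47]{KhanKstack}, which gives compact generation with support under two hypotheses, the \emph{derived} resolution property \emph{and} universal finite cohomological dimension, the latter holding here because $\sX$ is quasi-affine over $BG$ with $G$ linearly reductive. Your write-up has a gap exactly at this second hypothesis: you infer compact generation of $\Dqc(\sX)$ from the resolution property alone, citing \cite[Thm.~3.1.1]{BondalVdB} and \cite[Thm.~9.6.1.1]{LurieSAG}, but those arguments are for qcqs schemes, where $\sO_X$ is automatically compact. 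On a stack, the resolution property by itself does \emph{not} make perfect complexes compact (think of $B\bG_a$ in positive characteristic, which has the resolution property but whose structure sheaf is not compact); one needs bounded cohomological dimension of $\RGamma(\sX,-)$, i.e., the linear reductivity of $G$. Since compactness of $G_0$ and of $G_0 \otimes K^{(m)}$ is load-bearing for the rest of your argument, this hypothesis has to be invoked explicitly.

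There is a second, more structural gap in the stable-Koszul step. The thickened complexes $K^{(m)} = \mathrm{Kos}(s_1^m,\dots,s_n^m)$ require $s\colon \sE\to\sO_\sX$ to be decomposed into component sections $s_1,\dots,s_n$, i.e., a global splitting of $\sE$ into trivial line bundles. On a quotient stack $[X/G]$ such a splitting is a $G$-equivariant trivialization of a $G$-equivariant bundle, which will typically not exist; passing to an atlas loses the $G$-equivariance, so the ``deduce by descent'' plan in your last paragraph does not obviously produce the global objects $K^{(m)}$ or the identification $\Gamma_Z(\sO_\sX)\simeq\colim_m K^{(m)}$. The fix is to drop the $K^{(m)}$'s and recast the generating step as an orthogonality statement that \emph{can} be checked locally: show that if $F \in \Dqc(\sX~\mrm{on}~Z)$ and $\Hom(G_0 \otimes K[n], F)=0$ for all $n$, then $F=0$. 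Since $K$ is dualizable, this reduces to $K \otimes F = 0 \Rightarrow F = 0$ for $F$ supported on $Z$, and that implication is local for the smooth topology: pull back to an affine atlas, refine so that $\sE$ trivializes, and there the classical stable Koszul computation applies. With these two repairs your strategy is correct; in effect you would be reproducing the Thomason--Trobaugh style argument that the cited result in \cite{KhanKstack} packages.
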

    \begin{proof}
      By \cite[Cor.~1.44, Prop.~1.47]{KhanKstack}, it suffices to check that $\sX$ is universally of finite cohomological dimension (as in \cite[Def.~1.18]{KhanKstack}) and has the derived resolution property.
      The former holds because $\sX$ is representable over $BG$, for some linearly reductive $G$ (see \cite[Ex.~1.23, 1.24]{KhanKstack}), and the latter holds by \propref{prop:2rgoy10}.
    \end{proof}

    \begin{lem}\label{lem:pa0sdhg1}
      Let $\sX$ be a quasi-separated derived algebraic stack.
      Then every compact object $\sF \in \Dqc(\sX)$ is perfect.
      Conversely, if $\sX$ is scalloped, then every perfect complex on $\sX$ is compact.
    \end{lem}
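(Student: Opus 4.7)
The argument splits into two independent directions.

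\emph{Compact implies perfect.} The strategy is to work smooth-locally on $\sX$. Since perfectness of a quasi-coherent complex is smooth-local, it will suffice to pick a smooth surjection $p : U \to \sX$ from a qcqs algebraic space (after restricting $\sF$ to a quasi-compact open substack first if $\sX$ itself is not quasi-compact) and to show that $p^*\sF$ is perfect on $U$. By the Thomason--Trobaugh--Neeman theorem, compact and perfect coincide in $\Dqc(U)$ for a qcqs algebraic space $U$, so one is reduced to showing that $p^*\sF$ is compact. Since $p$ is smooth and quasi-compact, $p_*$ has finite cohomological dimension and commutes with filtered colimits in $\Dqc$, so $p^*$ preserves compact objects; combined with the hypothesis that $\sF$ is compact on $\sX$, this gives compactness of $p^*\sF$.

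\emph{Perfect implies compact on scalloped $\sX$.} I would induct on the length $n$ of a scallop decomposition $(\sU_i, \sV_i, u_i)_{i=1}^{n}$ of $\sX$ with the $\sV_i$ \qfund. The base case $n = 0$ is trivial, and in the \qfund case \lemref{lem:spdufg0p1} applied to $Z = \abs{\sX}$ shows that $\Dqc(\sX)$ is compactly generated by $\Dperf(\sX)$; in particular every perfect complex is compact. For the inductive step, fix $\sF \in \Dperf(\sU_i)$. Applying fppf descent for $\Dqc$ to the Nisnevich square
\[ \begin{tikzcd}
  \sW_i \ar{r}\ar{d} & \sV_i \ar{d}{u_i} \\
  \sU_{i-1} \ar{r} & \sU_i
\end{tikzcd} \]
exhibits the mapping spectrum $\Maps_{\sU_i}(\sF, -)$ as a finite limit of the three mapping spectra $\Maps_{\sU_{i-1}}(\sF|_{\sU_{i-1}}, -)$, $\Maps_{\sV_i}(\sF|_{\sV_i}, -)$, and $\Maps_{\sW_i}(\sF|_{\sW_i}, -)$. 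Now $\sU_{i-1}$ is scalloped with a shorter decomposition (inductive hypothesis), $\sV_i$ is \qfund (base case), and $\sW_i \sub \sV_i$ is an open substack of a \qfund stack, hence itself \qfund (base case). So each of the three mapping spectra commutes with filtered colimits in the second variable, and since finite limits commute with filtered colimits in spectra, so does $\Maps_{\sU_i}(\sF, -)$.

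The real content is already packaged in the base case of the second direction, namely \lemref{lem:spdufg0p1}. The first direction and the inductive step of the second are essentially formal consequences of, respectively, the finite cohomological dimension of smooth quasi-compact morphisms and fppf descent for $\Dqc$; the main care needed is in ensuring that perfectness can be checked smooth-locally and that open substacks of \qfund stacks remain \qfund.
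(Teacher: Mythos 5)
Your proof is correct and follows essentially the same strategy as the paper: for compact $\Rightarrow$ perfect, reduce to qcqs algebraic spaces via smooth $*$-pullback (which preserves compacts since the morphism is qcqs representable) and invoke Thomason–Trobaugh–Neeman; for perfect $\Rightarrow$ compact on scalloped stacks, induct on a scallop decomposition using the \qfund base case from \lemref{lem:spdufg0p1} and the Nisnevich excision square for $\Dqc$, noting that $\sW_i$ is \qfund as an open in $\sV_i$. The paper's version of the first direction is marginally leaner — it tests $u^*\sF$ for arbitrary smooth $u : X \to \sX$ with $X$ affine and thereby sidesteps the quasi-compactness detour entirely — but the content is the same.
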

    \begin{proof}
      For the first claim, let $\sF \in \Dqc(\sX)$ be a compact object.
      It is enough to show that $u^*(\sF) \in \Dqc(X)$ is compact for every smooth morphism $u : X \to \sX$ with $X$ affine.
      Since $\sX$ is quasi-separated, $u$ is qcqs and representable, so $u^*$ is a compact functor (see e.g. \cite[Thm.~1.20, Ex.~1.23]{KhanKstack}).
      In particular, $u^*(\sF)$ is compact and hence also perfect, since $X$ is perfect.

      Now suppose $\sX$ is scalloped.
      Let us first note that, given any Nisnevich square
      \[ \begin{tikzcd}
        \sW \ar{r}\ar{d}
        & \sV \ar{d}{p}
        \\
        \sU \ar{r}{j}
        & \sX,
      \end{tikzcd} \]
      we have that $\sF \in \Dqc(\sX)$ is compact if and only if $j^*(\sF) \in \Dqc(\sU)$ and $p^*(\sF) \in \Dqc(\sV)$ are compact.
      The condition is necessary since $j^*$ and $p^*$ are compact functors, and sufficiency follows immediately from the fact that the square of stable \inftyCats
      \[ \begin{tikzcd}
        \Dqc(\sX) \ar{r}{j^*}\ar{d}{p^*}
        & \Dqc(\sU) \ar{d}
        \\
        \Dqc(\sV) \ar{r}
        & \Dqc(\sW)
      \end{tikzcd} \]
      is cartesian (see e.g. \cite[Thm.~2.2.3]{BKRSMilnor}), in view of the definition of compact objects, the fact that filtered colimits of spectra are exact (i.e., commute with finite limits), and that formation of mapping spectra in a stable \inftyCat commutes with limits.

      Now by induction on the length of a scallop decomposition of $\sX$, we may assume that $\sX$ is \qfund.
      In that case, the claim follows from \lemref{lem:spdufg0p1}.
    \end{proof}

    \begin{proof}[Proof of \thmref{thm:perfect}]
      Let $\sX$ be a scalloped derived stack.
      By induction, it will suffice to show the following: suppose given a Nisnevich square
      \[ \begin{tikzcd}
        \sW \ar{r}{j'}\ar{d}{p'}
        & \sV \ar{d}{p}
        \\
        \sU \ar{r}{j}
        & \sX,
      \end{tikzcd} \]
      where $j$ is a quasi-compact open immersion and $p$ is a representable étale morphism of finite presentation inducing an isomorphism away from $\sU$.
      We claim that if $\sU$ is perfect and $\sV$ is \qfund, then $\sX$ is perfect.

      Let $\{\sF_\alpha\}_\alpha$ be a small set of compact perfect complexes on $\sU$ which generate $\Dqc(\sU)$.
      By \lemref{lem:spdufg0p1} applied to $\sV$ and $Z = \abs{\sV}\setminus\abs{\sW} \sub \abs{\sV}$, there also exists a small set of compact perfect complexes $\{\sG_\beta\}_\beta$ on $\sV$ which are supported on $Z$ and which generate $\Dqc(\sV~\mrm{on}~Z)$.
      By \lemref{lem:spdufg0p1} and the Thomason--Neeman localization theorem (see e.g. \cite[Thm.~A.3.11]{HermitianII}), the restriction functor
      \[ \Dperf(\sV) \to \Dperf(\sW) \]
      is a Karoubi projection in the sense of \cite[Def.~A.3.5]{HermitianII}.
      Therefore, by replacing every $\sF_\alpha$ by $\sF_\alpha\oplus\sF_\alpha[1]$ if necessary, we can assume that each $\sF_\alpha|_{\sW}$ lifts to a perfect complex $\sF_{\sV,\alpha} \in \Dperf(\sV)$.
      Using the cartesian square of stable \inftyCats (see e.g. \cite[Thm.~2.2.3]{BKRSMilnor})
      \begin{equation}\label{eq:0pg0pu1}
        \begin{tikzcd}
          \Dperf(\sX) \ar{r}{j^*}\ar{d}{p^*}
          & \Dperf(\sU) \ar{d}
          \\
          \Dperf(\sV) \ar{r}
          & \Dperf(\sW),
        \end{tikzcd}
      \end{equation}
      we construct objects $\sF_{\sX,\alpha} \in \Dperf(\sX)$ by gluing
      \[ \sF_{\alpha} \in \Dperf(\sU), \quad \sF_{\sV,\alpha} \in \Dperf(\sV) \]
      along some choice of isomorphisms $\sF_{\sV,\alpha}|_\sW \simeq \sF_{\alpha}|_\sW$.
      Similarly, we construct objects $\sG_{\sX,\beta} \in \Dperf(\sX)$ by gluing
      \[ 0 \in \Dperf(\sU), \quad \sG_\beta \in \Dperf(\sV) \]
      along the (unique) isomorphism $\sG_\beta|_\sW \simeq 0$.
      By \lemref{lem:pa0sdhg1}, $\sF_{\sX,\alpha}$ and $\sG_{\sX,\beta}$ are all compact objects of $\Dqc(\sX)$.

      It remains to show that the union of the sets $\{\sF_{\sX,\alpha}\}_\alpha$ and $\{\sG_{\sX,\beta}\}_\beta$ generate $\Dqc(\sX)$.
      That is, for every object $\sR \in \Dqc(\sX)$ which is right orthogonal to the $\sF_{\sX,\alpha}$ and $\sG_{\sX,\beta}$'s, we have $\sR \simeq 0$, or equivalently that $\sR|_{\sU} \simeq 0$ and $\sR|_{\sV} \simeq 0$.

      We begin with the following preliminary observation:
      \begin{enumerate}
        \item[$(\ast)$]
        If $\sR \in \Dqc(\sX)$ is right orthogonal to $\sG_{\sX,\beta}$ for every $\beta$, then its restriction $\sR|_{\sV} \in \Dqc(\sV)$ belongs to the essential image of $j'_* : \Dqc(\sW) \to \Dqc(\sV)$.
      \end{enumerate}
      Indeed, it follows from the cartesian square \eqref{eq:0pg0pu1} and the fact that $\sG_{\sX,\beta}|_\sU \simeq 0$ that there is an isomorphism of mapping spectra
      \[
        \Maps_{\Dqc(\sV)}(\sG_\beta, \sR|_\sV)
        \simeq \Maps_{\Dqc(\sX)}(\sG_{\sX,\beta}, \sR)
        \simeq 0.
      \]
      Therefore, $\sR|_\sV$ is right orthogonal to $\Dqc(\sV~\mrm{on}~Z)$.
      Since the right orthogonal subcategory to the latter is precisely the essential image of $j'_* : \Dqc(\sW) \to \Dqc(\sV)$, $(\ast)$ follows.

      Let us now show that $\sR|_\sU \simeq 0$.
      We use the cartesian square \eqref{eq:0pg0pu1} again to compute $\Maps(\sF_{\sX,\alpha}, \sR)$.
      Since $\sR|_\sV \simeq j'_*(\sR|_\sW)$ by claim~$(\ast)$, it follows that the restriction map
      \begin{multline*}
        \Maps_{\Dqc(\sV)}(\sF_{\sV,\beta}, \sR|_\sV)\\
        \simeq \Maps_{\Dqc(\sV)}(\sF_{\sV,\beta}, j'_*(\sR|_\sW))
        \xrightarrow{j'^*} \Maps_{\Dqc(\sW)}(\sF_{\beta}|_\sW, \sR|_\sW)
      \end{multline*}
      is invertible (by adjunction).
      Thus we get for every $\alpha$ an isomorphism of mapping spectra
      \[
        \Maps_{\Dqc(\sU)}(\sF_\alpha, \sR|_\sU)
        \simeq \Maps_{\Dqc(\sX)}(\sF_{\sX,\alpha}, \sR)
        \simeq 0.
      \]
      Since $\{\sF_\alpha\}_\alpha$ generate $\Dqc(\sU)$, it follows that $\sR|_\sU \simeq 0$.

      Finally, we deduce $\sR|_\sV \simeq j'_*(\sR|_\sW) \simeq j'_*(\sR|_{\sU}|_\sW) \simeq 0$ by claim~$(\ast)$.
      This concludes the proof that $\sR \simeq 0$.
    \end{proof}

\section{The unstable homotopy category}
\label{sec:H}

  \subsection{Construction}
  \label{ssec:H/constr}

    Let $\sX$ be a scalloped derived stack.
    Write $\Sm_{/\sX}$ for the full subcategory of $\Asp_{/\sX}$ spanned by $\sX' \in \Asp_{/\sX}$ for which $f : \sX' \to \sX$ is \emph{smooth} representable of finite presentation.

    \begin{rem}
      If $\sX$ is \emph{classical}, then every derived stack $\sX' \in \Sm_{/\sX}$ is also automatically classical.
      The reader only interested in classical stacks may therefore ignore the word ``derived'' throughout the text, for the most part.
    \end{rem}

    \begin{defn}\label{defn:H}
      Let $\sX$ be a scalloped derived stack.
      A \emph{$\Sm$-fibred \ani} over $\sX$ is a presheaf of \anis on $\Sm_{/\sX}$.
      A \emph{motivic $\Sm$-fibred \ani} over $\sX$ is a $\Sm$-fibred \ani satisfying Nisnevich descent and homotopy invariance.
      The latter condition means that for every $\sX' \in \Sm_{/\sX}$ and every vector bundle $\pi : \sV \to \sX'$, the map of \anis
      \[ \pi^* : \RGamma(\sX', \sF) \to \RGamma(\sV, \sF) \]
      is invertible.
      We write $\MotSpc(\sX)$ for the \inftyCat of motivic $\Sm$-fibred \anis over $\sX$.
    \end{defn}

    \begin{lem}
      Let $\sF$ be a $\Sm$-fibred \ani over $\sX$.
      If the canonical map
      \[ \RGamma(\sX', \sF) \to \RGamma(\sX' \times \A^1, \sF) \]
      is invertible for all $\sX' \in \Sm_{/\sX}$, then $\sF$ satisfies homotopy invariance.
      In particular, $\sF$ is motivic.
    \end{lem}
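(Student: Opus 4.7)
The plan is to deduce homotopy invariance for an arbitrary vector bundle $\pi : \sV \to \sX'$ from the $\A^1$-invariance hypothesis via the classical ``scalar multiplication'' homotopy. This route is essentially forced on us: one cannot first trivialize $\sV$ Nisnevich-locally on $\sX'$ and then reduce to $\A^1$-bundles, because vector bundles on an algebraic stack need not be Nisnevich-locally trivial. The scaling argument sidesteps this issue entirely.

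Fix $\sX' \in \Sm_{/\sX}$ and a vector bundle $\pi : \sV \to \sX'$, with zero section $s : \sX' \to \sV$. Observe first that $\sV$ and $\sV \times \A^1$ both lie in $\Sm_{/\sX}$, as compositions of smooth representable morphisms of finite presentation. Since $\pi \circ s = \id_{\sX'}$, we have $s^* \pi^* = \id$, so it suffices to show $\pi^* s^* = \id$ on $\RGamma(\sV,\sF)$.

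Introduce three maps out of $\sV \times \A^1$: the projection $p : \sV \times \A^1 \to \sV$, the scaling map $m : \sV \times \A^1 \to \sV$ given by $(v,t) \mapsto tv$ (well defined from the linear structure on $\sV \to \sX'$), and the sections $i_0, i_1 : \sV \to \sV \times \A^1$ at $t = 0, 1$. The identities
\[
  p \circ i_0 = p \circ i_1 = \id_\sV,
  \quad m \circ i_1 = \id_\sV,
  \quad m \circ i_0 = s \circ \pi
\]
hold by inspection. Applying the hypothesis with $\sX'$ replaced by $\sV \in \Sm_{/\sX}$, the map $p^*$ is invertible, so $i_0^* = i_1^* = (p^*)^{-1}$. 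Pulling back along $m$ yields $\pi^* s^* = i_0^* m^* = i_1^* m^* = \id$, which gives the desired homotopy invariance. The closing ``in particular'' then follows by combining this with the Nisnevich descent condition of \defnref{defn:H}.

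There is no substantive obstacle here; the only point worth double-checking is that the scaling morphism $m$ genuinely makes sense globally over a stacky base $\sX'$, which it does because it is the action map of the monoid $(\A^1,\times)$ on the total space of the locally free sheaf defining $\sV$.
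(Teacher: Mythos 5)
Your argument is correct and uses the same geometric input as the paper: the scalar-multiplication homotopy $m : \sV \times \A^1 \to \sV$ witnessing an $\A^1$-homotopy between $s \circ \pi$ and $\id_\sV$. The paper phrases this at the level of representable presheaves (``the canonical strict $\A^1$-homotopy contracting any vector bundle to the zero section'') while you carry it out directly on sections $\RGamma(-,\sF)$, but by Yoneda these are the same argument.
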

    \begin{proof}
      Recall that there is a canonical strict $\A^1$-homotopy contracting any vector bundle $\sV \to \sX$ to the zero section, hence $\Lhtp(\h_\sX(\sV)) \simeq \pt$.
      Alternatively, see the argument in the proof of \cite[Thm.~5.2]{KrishnaRavi}.
    \end{proof}

    \begin{rem}\label{rem:loc}
      The \inftyCat $\MotSpc(\sX)$ is an accessible left Bousfield localization of the \inftyCat of $\Sm$-fibred \anis and in particular is presentable.
      The localization functor can be computed as the transfinite composite
      \begin{equation}\label{eq:description of Lmot}
        \L(\sF) \simeq \colim_{n \ge 0} (\Lhtp \circ \LNis)^{\circ n}(\sF),
      \end{equation}
      where $\LNis$ and $\Lhtp$ are the Nisnevich and $\A^1$-localization functors, respectively.
      Recall that $\Lhtp$ can be computed by the formula
      \[
        \RGamma(\sX', \Lhtp(\sF)) = \colim_{[n]\in\bDelta^\op} \RGamma(\sX' \times \A^n, \sF)
      \]
      for every $\sX' \in \Sm_{/\sX}$, where $\A^\bullet$ is the cosimplicial affine scheme defined e.g. as in \cite[p.~45]{MorelVoevodsky}.
    \end{rem}

    \begin{exam}\label{exam:h(X)}
      Any $\sX' \in \Sm_{/\sX}$ represents a motivic $\Sm$-fibred \ani
      $$\L \h_\sX(\sX') \in \MotSpc(\sX)$$
      where $\sX' \mapsto \h_\sX(\sX')$ is the Yoneda embedding.
      When there is no risk of confusion, we will sometimes write simply $\sX'$ instead of $\L \h_\sX(\sX')$.
      These objects are compact, since the conditions of Nisnevich descent and homotopy invariance are stable under filtered colimits.
    \end{exam}

  \subsection{Generation}

    \begin{prop}\label{prop:gen}
      Let $\sX$ be a scalloped derived stack.
      \begin{thmlist}
        \item
        The \inftyCat $\MotSpc(\sX)$ is generated under sifted colimits by objects of the form $\L \h_\sX(\sX')$, where $\sX' \in \Sm_{/\sX}$ is \qfund.

        \item\label{item:0h103n}
        Suppose $\sX = [X/G]$, where $G$ is a nice group scheme over an affine scheme $S$ and $X$ is a quasi-compact derived algebraic space over $S$ with $G$-action.
        Then $\MotSpc(\sX)$ is generated under sifted colimits by objects of the form $\L \h_\sX([U/G])$, where $U$ is a \emph{quasi-affine} derived $G$-scheme, smooth over $X$.

        \item\label{item:0as7df}
        For every $\sX'\in\Sm_{/\sX}$, the object $\L \h_\sX(\sX') \in \MotSpc(\sX)$ is compact.
      \end{thmlist}
    \end{prop}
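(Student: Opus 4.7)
The plan is to dispatch (iii) first, since it is essentially the observation already made in \examref{exam:h(X)}: the conditions of Nisnevich descent and $\A^1$-invariance on a $\Sm$-fibred \ani are each preserved under filtered colimits, so $\MotSpc(\sX) \subset \mathrm{PSh}(\Sm_{/\sX}, \text{Anis})$ is a reflective subcategory whose inclusion preserves filtered colimits. Consequently the localization $\L$ preserves compact objects, and the compactness of $\h_\sX(\sX')$ as a representable presheaf transfers to its image $\L\h_\sX(\sX')$.

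For (i), I would start from the fact that $\{\L\h_\sX(\sX')\}_{\sX'\in\Sm_{/\sX}}$ generates $\MotSpc(\sX)$ under all colimits, which is immediate from the Bousfield-localization description in \remref{rem:loc}. To cut the generating set down to quasi-fundamental $\sX'$, observe that any $\sX'\in\Sm_{/\sX}$ is itself scalloped by \corref{cor:scallop more} and therefore admits by \thmref{thm:scallop} a scallop decomposition $(\sU_j,\sV_j,u_j)_{j=0}^n$ with each $\sV_j$ quasi-fundamental. The Nisnevich squares appearing in this decomposition give, via \propref{prop:Nis}, pushout presentations
\[
  \L\h_\sX(\sU_j)
  \simeq \L\h_\sX(\sU_{j-1}) \sqcup_{\L\h_\sX(\sW_j)} \L\h_\sX(\sV_j)
\]
in $\MotSpc(\sX)$, where $\sW_j=\sU_{j-1}\times_{\sU_j}\sV_j$ is a quasi-compact open substack of the quasi-fundamental $\sV_j$, and is hence itself quasi-fundamental. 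Induction on $n$ starting from $\sU_0=\initial$ then exhibits $\L\h_\sX(\sX')=\L\h_\sX(\sU_n)$ as a finite iterated colimit of representables of quasi-fundamental stacks, so the restricted class already generates under all colimits. To promote this to generation under \emph{sifted} colimits alone, I invoke the standard fact that any set of colimit-generators closed under finite coproducts suffices for sifted-colimit generation, combined with the stability of the quasi-fundamental class under finite disjoint unions recorded in \thmref{thm:scallop}.

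Part (ii) proceeds by exactly the same template, with the appeal to \thmref{thm:scallop} replaced by the finer \thmref{thm:sumihiro}: for $\sX=[X/G]$ and any $\sX'=[X'/G]$ in $\Sm_{/\sX}$, the latter produces a scallop decomposition whose $\sV_j=[V_j/G]$ are quotients of quasi-affine $G$-schemes, and smoothness of $V_j\to X$ follows from smoothness of $[V_j/G]\to[X/G]$ after base change along the smooth cover $X\to[X/G]$. The iterated-pushout induction and the finite-coproduct upgrade to sifted colimits go through verbatim, using that quasi-affine $G$-schemes are stable under finite disjoint union and that open subschemes of quasi-affine schemes are again quasi-affine. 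The only mildly delicate point across both parts is the bookkeeping of the scallop induction, so that the successive pushouts assemble into a genuine finite colimit diagram whose vertices are all of the asserted quasi-fundamental form; no geometric input beyond the scallop and Sumihiro results already established in this section is required.
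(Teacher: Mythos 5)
Your proof is correct and follows essentially the same route as the paper's. For part (iii) you observe, as the paper does, that the subcategory of motivic $\Sm$-fibred anima is closed under filtered colimits in presheaves, so $\L$ preserves compactness of representables. For parts (i) and (ii) you perform the same scallop induction on $\sX'$ that the paper performs (each $\sX' \in \Sm_{/\sX}$ being scalloped by \corref{cor:scallop more}, with a scallop decomposition into \qfund pieces by \thmref{thm:scallop}, resp. a $G$-quasi-affine one by \thmref{thm:sumihiro}), using that the maps from the Nisnevich-square pushouts to $\h_\sX(\sU_j)$ are Nisnevich-local equivalences, and then you close the argument by the observation that a generating set stable under finite coproducts already generates under sifted colimits; this last step is exactly what the paper delegates to \cite[Lem.~5.5.8.14]{LurieHTT}. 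The only cosmetic difference is presentational: the paper fixes the sifted-colimit-closure $\sC$ at the start and shows every representable lies in it, whereas you first establish generation under all colimits and then pass to sifted colimits, but the two orders of quantification are equivalent given the finite-coproduct stability of the \qfund class from \thmref{thm:scallop}\itemref{item:0a7dgfs}. Your parenthetical justification in (ii) that $V_j \to X$ is smooth because $[V_j/G] \to [X/G]$ is (after pulling back along $X \to [X/G]$) is a nice touch that the paper leaves implicit.
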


    \begin{rem}\label{rem:Hoyois}
      Let $G$ be a nice group scheme over an affine scheme $S$ and let $X$ be a $G$-quasi-projective scheme over $S$.
      Then it follows from \propref{prop:gen}\itemref{item:0h103n} that there is a canonical equivalence
      \[ \MotSpc([X/G]) \simeq \MotSpc^G(X), \]
      where the right-hand side is the $G$-equivariant motivic homotopy category of \cite{HoyoisEquivariant}.
      Note that homotopy invariance for affine bundles is automatic in our setting by \cite[Rem.~3.13]{HoyoisEquivariant}.
    \end{rem}

    \begin{proof}[Proof of \propref{prop:gen}]
      Let $\sC$ be the full subcategory of $\MotSpc(\sX)$ generated under sifted colimits by objects of the form $\L \h_\sX(\sX')$, where $\sX' \in \Sm_{/\sX}$ is \qfund.

      Suppose we are given a Nisnevich square in $\Sm_{\sX}$
      \[ \begin{tikzcd}
        \sW \ar{r}\ar{d}
        & \sV\ar{d}{p}
        \\
        \sU \ar{r}{j}
        & \sX'
      \end{tikzcd} \]
      where $j$ is an open immersion and $p$ is an étale morphism inducing an isomorphism away from $\sU$.
      Then by definition, the canonical map
      \[
        \h_\sX(\sU) \fibcoprod_{\h_\sX(\sW)} \h_\sX(\sV)
        \to \h_\sX(\sX')
      \]
      is a Nisnevich-local equivalence.
      Thus if $\L \h_\sX(\sU) \in \sC$ and $\sV$ (and hence $\sW$) is \qfund, then also $\L \h_\sX(\sX') \in \sC$.
      By induction and \thmref{thm:scallop}\itemref{item:scallop/cover}, it follows that for every $\sX' \in \Sm_{\sX}$, we have $\L \h_\sX(\sX') \in \sC$.
      Finally, it now follows from \cite[Lem.~5.5.8.14]{LurieHTT} that $\sC = \MotSpc(\sX)$.

      The second statement follows similarly by \thmref{thm:sumihiro}.
      For the last one, note that $\L$ preserves compact objects because the full subcategory of motivic $\Sm$-fibred \anis is closed under filtered colimits.
    \end{proof}

  \subsection{Functoriality}

    We record the basic functorialities.

    \begin{prop}\label{prop:f^*}
      Let $f : \sX \to \sY$ be a morphism of scalloped derived stacks.
      Then there exists a canonical functor
      \begin{align*}
        f^* : \MotSpc(\sY) \to \MotSpc(\sX)
      \end{align*}
      satisfying the following properties.
      \begin{thmlist}
        \item
        $f^*$ commutes with colimits, hence in particular admits a right adjoint
        \[ f_* : \MotSpc(\sX) \to \MotSpc(\sY). \]

        \item
        For any $\sV \in \Sm_{/\sY}$ and $n \ge 0$, there is a canonical isomorphism
        \[
          f^*\big(\L\h_\sY(\sV)\big)
          \simeq \L \h_\sX(\sV \fibprod_\sY \sX).
        \]

        \item
        $f^*$ is compact, i.e., its right adjoint $f_*$ preserves filtered colimits.

        \item
        $f^*$ is symmetric monoidal.
      \end{thmlist}
    \end{prop}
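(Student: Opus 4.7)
The construction begins from the base-change functor at the level of sites. Since smooth representable morphisms of finite presentation are stable under base change, the assignment $\sV \mapsto \sV \fibprod_\sY \sX$ defines a functor
\[
  f^{-1} : \Sm_{/\sY} \to \Sm_{/\sX}.
\]
Three properties of $f^{-1}$ drive the whole proof: it preserves Nisnevich squares (since pullbacks preserve open immersions, étale maps, and cartesian squares); it sends a vector bundle $\sE \to \sV$ to the vector bundle $\sE \fibprod_\sY \sX \to \sV \fibprod_\sY \sX$; and it preserves the fiber products $\sV \fibprod_\sY \sV'$ (which compute the cartesian monoidal product on representables).

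Precomposition with $f^{-1}$ gives a restriction functor $f_*^\mrm{pre}$ between the presheaf \inftyCats, with left adjoint $f^{*,\mrm{pre}}$ obtained by left Kan extension. The first observation above, combined with \propref{prop:Nis}, shows that $f_*^\mrm{pre}$ preserves Nisnevich-local presheaves; the second shows that it preserves homotopy-invariant presheaves. Hence $f_*^\mrm{pre}$ restricts to a functor $f_* : \MotSpc(\sX) \to \MotSpc(\sY)$, and I would define
\[ f^* := \L \circ f^{*,\mrm{pre}} \circ \iota : \MotSpc(\sY) \to \MotSpc(\sX), \]
where $\iota$ is the inclusion into presheaves and $\L$ is the motivic localization of \remref{rem:loc}. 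The adjunction $(f^{*,\mrm{pre}},f_*^\mrm{pre})$ then restricts along these localizations to an adjunction $(f^*, f_*)$, giving~(i).

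For~(ii), the left Kan extension of the representable $\h_\sY(\sV)$ along the Yoneda embedding composed with $f^{-1}$ is the representable $\h_\sX(f^{-1}(\sV)) = \h_\sX(\sV \fibprod_\sY \sX)$, and applying $\L$ yields the assertion. For~(iii), filtered colimits in $\MotSpc$ are computed in presheaves, since both Nisnevich descent and homotopy invariance are preserved under filtered colimits; hence $f_*$, being restriction along $f^{-1}$ at the presheaf level, commutes with them. For~(iv), the symmetric monoidal structure on $\MotSpc(\sX)$ is the cartesian one, inherited from presheaves (which is compatible with colimits, and whose localizing class is stable under $X \times (-)$). Preservation of binary products holds on representables by the third bullet above, and extends to all objects using that $f^*$ commutes with colimits (by (i)) and that products distribute over colimits in each variable.

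The main subtlety is not any single step but rather keeping the derived structure consistent with the (1-categorical) Nisnevich descent machinery, and ensuring the restriction $f_*^\mrm{pre}$ really descends; this is handled precisely by the three properties of $f^{-1}$ listed at the outset, together with the $\infty$-categorical descent criterion of \propref{prop:Nis}. Everything else is formal from the adjoint functor theorem and the Yoneda calculus for left Kan extensions.
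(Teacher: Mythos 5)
Your proof is correct and follows the standard construction of inverse image for motivic homotopy categories (the paper's proof is simply a reference to \cite[Prop.~1.22]{KhanSix}, which proceeds the same way). The three bullet-point observations about $f^{-1}$ — preserving Nisnevich squares, vector bundles, and fiber products over the base — are exactly the right levers, and your handling of~(iii) (filtered colimits in $\MotSpc$ are computed in presheaves because Nisnevich descent on a qcqs site and homotopy invariance are both filtered-colimit-closed conditions, and precomposition along $f^{-1}$ commutes with all pointwise colimits) is the standard argument. For~(iv) it is worth being slightly more explicit that the universal property of the localization hands you the monoidal structure on $f^*$ once you know that the localizing morphisms (Nisnevich hypercovers and $\A^1$-projections) are stable under product with representables — which you do note — rather than checking product-preservation by hand after the fact; but this is a stylistic point, not a gap.
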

    \begin{proof}
      Compare \cite[Prop.~1.22]{KhanSix}.
    \end{proof}

    \begin{prop}\label{prop:f_sharp}
      Let $f : \sX \to \sY$ be a \emph{smooth} representable morphism of scalloped derived stacks.
      Then the inverse image functor $f^*$ admits a left adjoint
      \begin{align*}
        f_\sharp : \MotSpc(\sX) \to \MotSpc(\sY).
      \end{align*}
      This functor is characterized uniquely by commutativity with colimits and the formula
      \begin{align*}
        f_\sharp \big(\L\h_\sX(\sU)\big) &\simeq \L\h_\sY(\sU)
      \end{align*}
      for any $\sU \in \Sm_{/\sX}$.
      Moreover, it is $\MotSpc(\sY)$-linear; in particular, we have the projection formula
      \[ f_\sharp(\sF) \times \sG \simeq f_\sharp (\sF \times f^*(\sG)) \]
      for every $\sF \in \MotSpc(\sX)$ and $\sG \in \MotSpc(\sY)$.
    \end{prop}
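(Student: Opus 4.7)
The plan is to construct $f_\sharp$ first at the level of presheaves of anima on $\Sm_{/\sX}$, and then to descend it along the motivic localization. Since $f$ is smooth representable of finite presentation, composition with $f$ yields a well-defined functor $c_f : \Sm_{/\sX} \to \Sm_{/\sY}$ (smoothness, representability, and finite presentation are all stable under composition). Left Kan extension along $c_f$ gives a colimit-preserving functor $f_\sharp^{\on{psh}}$ on presheaves of anima, uniquely characterized by $f_\sharp^{\on{psh}}(\h_\sX(\sU)) \simeq \h_\sY(\sU)$ for $\sU \in \Sm_{/\sX}$, whose right adjoint is the restriction functor $c_f^*$. I would next identify $c_f^*$ with the presheaf-level avatar of the $f^*$ of \propref{prop:f^*}: both functors preserve all colimits, so it suffices to match them on representables, where $c_f^*\h_\sY(\sV) \simeq \h_\sX(\sV \fibprod_\sY \sX)$ amounts to the universal property of the fiber product in $\Sm_{/\sX}$.

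By the standard formalism of accessible left Bousfield localizations, $f_\sharp^{\on{psh}}$ descends to a colimit-preserving left adjoint $f_\sharp : \MotSpc(\sX) \to \MotSpc(\sY)$ of $f^*$ as soon as $c_f^*$ preserves motivic presheaves. For Nisnevich descent, any Nisnevich square in $\Sm_{/\sX}$ is sent by $c_f$ to a Nisnevich square in $\Sm_{/\sY}$, since its defining data (an open immersion together with a representable étale map of finite presentation inducing an isomorphism away from the open) is preserved under post-composition with the representable smooth morphism $f$. For homotopy invariance, any vector bundle $\pi : \sW \to \sU$ with $\sU \in \Sm_{/\sX}$ remains a vector bundle when $\sU$ is viewed in $\Sm_{/\sY}$ via $c_f$. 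The asserted formula $f_\sharp(\L\h_\sX(\sU)) \simeq \L\h_\sY(\sU)$ then follows from the commutation of motivic localization with $f_\sharp^{\on{psh}}$ on representables.

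For the projection formula, both sides are functors that commute with colimits in each variable, so by \propref{prop:gen} it suffices to compare them for $\sF = \L\h_\sX(\sU)$ and $\sG = \L\h_\sY(\sV)$ with $\sU \in \Sm_{/\sX}$ and $\sV \in \Sm_{/\sY}$. Using the computations of $f^*$ and $\otimes$ on representables, together with the canonical identification $\sU \fibprod_\sX (\sV \fibprod_\sY \sX) \simeq \sU \fibprod_\sY \sV$ (where on the right $\sU$ is viewed over $\sY$ via $c_f$), both sides reduce to $\L\h_\sY(\sU \fibprod_\sY \sV)$. The main, albeit mild, obstacle is the verification that $c_f^*$ preserves motivic presheaves; but this is formal given representability and smoothness of $f$, and beyond it the construction is a mechanical application of the Kan extension and Bousfield localization formalism.
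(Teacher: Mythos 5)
Your proof is correct and follows the standard argument that the paper defers to via the reference to \cite[Prop.~1.23]{KhanSix}: left Kan extension along $c_f$ at the presheaf level, identification of its right adjoint with restriction (hence with $f^*$ on representables), descent through the motivic localization using that $c_f^*$ preserves Nisnevich sheaves and homotopy-invariant presheaves, and reduction of the projection formula to representables via colimit-preservation. No gaps.
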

    \begin{proof}
      Compare \cite[Prop.~1.23]{KhanSix}.
    \end{proof}

    \begin{prop}[Smooth base change]\label{prop:smooth bc}
      Suppose given a cartesian square of scalloped derived stacks
      \[ \begin{tikzcd}
        \sX' \ar{r}{q}\ar{d}{f}
        & \sY' \ar{d}{g}
        \\
        \sX \ar{r}{p}
        & \sY
      \end{tikzcd} \]
      where $p$ and $q$ are smooth representable.
      Then there are canonical isomorphisms
      \begin{align*}
        q_\sharp f^* &\to g^* p_\sharp,\\
        p^* g_* &\to f_* q^*.
      \end{align*}
    \end{prop}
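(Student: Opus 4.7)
The plan is to reduce the two isomorphisms to a single verification on compact generators. Since $q_\sharp f^* \to g^*p_\sharp$ and $p^*g_* \to f_*q^*$ are mates of each other with respect to the adjunctions $(p_\sharp, p^*)$ and $(q_\sharp, q^*)$, the second is invertible as soon as the first is, so I would focus exclusively on the first.

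First, construct the natural transformation $\alpha : q_\sharp f^* \to g^* p_\sharp$ as the standard Beck--Chevalley composite
\[
  q_\sharp f^* \xrightarrow{\eta} q_\sharp f^* p^* p_\sharp \xrightarrow{\sim} q_\sharp q^* g^* p_\sharp \xrightarrow{\varepsilon} g^* p_\sharp,
\]
where $\eta$ is the unit of $p_\sharp \dashv p^*$, $\varepsilon$ is the counit of $q_\sharp \dashv q^*$, and the middle isomorphism is the canonical identification $f^*p^* \simeq q^*g^*$ coming from the functoriality of $(-)^*$ established in \propref{prop:f^*}.

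Next, observe that both sides commute with colimits (by Propositions~\ref{prop:f^*} and \ref{prop:f_sharp}), so by \propref{prop:gen} it suffices to test $\alpha$ on objects of the form $\L\h_\sX(\sU)$ with $\sU \in \Sm_{/\sX}$. For such a $\sU$, the explicit formulas in Propositions~\ref{prop:f^*} and \ref{prop:f_sharp} give
\[
  q_\sharp f^*\big(\L\h_\sX(\sU)\big) \simeq \L\h_{\sY'}(\sU \fibprod_\sX \sX'),
  \qquad
  g^* p_\sharp\big(\L\h_\sX(\sU)\big) \simeq \L\h_{\sY'}(\sU \fibprod_\sY \sY'),
\]
and the universal property of fiber products, together with the assumption that the square is cartesian, supplies a canonical isomorphism $\sU \fibprod_\sX \sX' \simeq \sU \fibprod_\sY \sY'$ of smooth representable derived stacks over $\sY'$.

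The only real work is to check that this geometric isomorphism is the one induced by the abstract map $\alpha$. This is a purely formal Yoneda bookkeeping, tracing the unit $\eta$ and counit $\varepsilon$ through the identifications in Propositions~\ref{prop:f^*} and \ref{prop:f_sharp}, and proceeds exactly as in the case of qcqs algebraic spaces treated in \cite{KhanSix}; the scalloped stack setting introduces no new difficulty once \propref{prop:gen} is available.
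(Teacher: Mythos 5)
Your proof is correct and follows essentially the same strategy as the paper, which cites the analogous statement \cite[Prop.~1.26]{KhanSix} for qcqs algebraic spaces: reduce via the adjoint mate, note both sides preserve colimits, and check on representable generators using the explicit formulas for $f^*$ and $f_\sharp$ together with the geometric identification $\sU \fibprod_\sX \sX' \simeq \sU \fibprod_\sY \sY'$. The one step you flag as ``Yoneda bookkeeping'' — verifying that the abstract Beck–Chevalley map, evaluated on $\L\h_\sX(\sU)$, really produces this canonical geometric isomorphism rather than merely an abstract one — is exactly the content deferred to \cite{KhanSix}, so your account faithfully expands what the paper leaves implicit.
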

    \begin{proof}
      Compare \cite[Prop.~1.26]{KhanSix}.
    \end{proof}

    \begin{prop}\label{prop:Nis sep}
      Let $(u_\alpha : \sX_\alpha \to \sX)_\alpha$ be a Nisnevich covering family of a scalloped derived stack $\sX$.
      Then the family of functors
      \[ u_\alpha^* : \MotSpc(\sX) \to \MotSpc(\sX_\alpha) \]
      is jointly conservative as $\alpha$ varies.
    \end{prop}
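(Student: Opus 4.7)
The plan is to show that a morphism $\phi : \sF \to \sG$ in $\MotSpc(\sX)$ such that each $u_\alpha^*(\phi)$ is invertible must itself be invertible. By \propref{prop:gen}, the representables $\L\h_\sX(\sX')$ with $\sX' \in \Sm_{/\sX}$ generate $\MotSpc(\sX)$ under colimits, so equivalently it suffices to show that $\RGamma(\sX', \phi)$ is an equivalence of \anis for every such $\sX'$.

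The first step is to translate the hypothesis into a statement about the \emph{absolute} sections of $\sF$ and $\sG$. For any $\sV \in \Sm_{/\sX_\alpha}$, the composite $\sV \to \sX_\alpha \to \sX$ is smooth and representable (since $u_\alpha$ is representable étale), so $\sV$ may be regarded as an object of $\Sm_{/\sX}$. Using \propref{prop:f_sharp} applied to $u_\alpha$, together with the identity $u_{\alpha,\sharp}(\L\h_{\sX_\alpha}(\sV)) \simeq \L\h_\sX(\sV)$ and the adjunction $u_{\alpha,\sharp} \dashv u_\alpha^*$, one obtains a natural identification $\RGamma(\sV, \sH) \simeq \RGamma(\sV, u_\alpha^*(\sH))$ for every $\sH \in \MotSpc(\sX)$. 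Consequently, $\phi$ induces an equivalence on $\RGamma(\sV, -)$ whenever $\sV \in \Sm_{/\sX}$ factors smoothly through some $\sX_\alpha$.

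The second step reduces the general case to this one via Nisnevich descent. Given an arbitrary $\sX' \in \Sm_{/\sX}$, the pulled-back family $(\sX' \fibprod_\sX \sX_\alpha \to \sX')_\alpha$ is still a Nisnevich covering family of $\sX'$, and since $\sF$ and $\sG$ are motivic they are in particular Nisnevich sheaves, hence satisfy Čech descent for this cover by \propref{prop:Nis}. Every term of the associated Čech nerve is an iterated fiber product over $\sX'$ of the pullbacks $\sX' \fibprod_\sX \sX_\alpha$, and in particular is smooth and representable over its first factor $\sX_{\alpha_0}$. By the previous step, $\phi$ induces an equivalence on each such term, whence $\RGamma(\sX', \phi)$ is an equivalence as a limit of equivalences.

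No serious obstacle is expected: the argument is the standard Nisnevich-descent pattern, and the only point needing verification is the compatibility between $u_\alpha^*$ and restriction of sections along the forgetful functor $\Sm_{/\sX_\alpha} \to \Sm_{/\sX}$, which follows formally from the explicit description of $u_{\alpha,\sharp}$ on representables in \propref{prop:f_sharp}.
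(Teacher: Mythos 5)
Your argument is correct and fills in the details of the standard Nisnevich-separation argument that the paper delegates to a reference (to \cite[Prop.~2.5.7]{KhanLocalization}). All three steps check out: since $\MotSpc(\sX)$ is a full subcategory of presheaves on $\Sm_{/\sX}$, a map is invertible iff its sections over every $\sX'\in\Sm_{/\sX}$ are; the adjunction $u_{\alpha,\sharp}\dashv u_\alpha^*$ together with $u_{\alpha,\sharp}(\L\h_{\sX_\alpha}(\sV))\simeq\L\h_\sX(\sV)$ from \propref{prop:f_sharp} identifies sections of $\sH$ over $\sV$ with sections of $u_\alpha^*(\sH)$ whenever $\sV$ factors smoothly through some $\sX_\alpha$; and every term of the \v{C}ech nerve of the pulled-back cover indeed lands in $\Sm_{/\sX}$ and projects smoothly onto its first factor $\sX_{\alpha_0}$, so \v{C}ech descent (via \propref{prop:Nis}) closes the loop. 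One point worth making explicit: the reduction to the \v{C}ech nerve implicitly uses that \v{C}ech descent for the generating Nisnevich squares of \defnref{defn:Nis} implies \v{C}ech descent for arbitrary Nisnevich covering families, which is the content of the theorem cited in the proof of \propref{prop:Nis}; you gesture at this but could state it. Aside from that, this is the same route the paper intends.
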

    \begin{proof}
      Compare e.g. \cite[Prop.~2.5.7]{KhanLocalization}.
    \end{proof}

  \subsection{Exactness of \texorpdfstring{$i_*$}{i\_*}}

    Recall from \propref{prop:f^*} that the direct image functor commutes with filtered colimits.
    For closed immersions, it commutes with almost all colimits:

    \begin{prop}\label{prop:i_* colimits}
      Let $i : \sX \to \sY$ be a closed immersion of scalloped derived stacks.
      Then the direct image functor
      \begin{equation*}
        i_* : \MotSpc(\sX) \to \MotSpc(\sY)
      \end{equation*}
      commutes with contractible\footnote{
        An \inftyCat is contractible if the \inftyGrpd/\ani obtained by formally adjoining inverses to all its morphisms is contractible.
      } colimits.
    \end{prop}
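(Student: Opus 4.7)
The plan is to reduce to the quasi-fundamental case via Nisnevich descent, and then argue using a pointwise formula for $i_*$ together with an open/closed decomposition.

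First, I would establish that for every $\sY' \in \Sm_{/\sY}$ and $F \in \MotSpc(\sX)$, the section $(i_* F)(\sY')$ is canonically identified with $F(\sY' \fibprod_\sY \sX)$. By Yoneda together with the formula $i^* \L \h_\sY(\sY') \simeq \L \h_\sX(\sY' \fibprod_\sY \sX)$ from \propref{prop:f^*}, the section equals $\Maps(\L \h_\sX(\sY' \fibprod_\sY \sX), F) \simeq F(\sY' \fibprod_\sY \sX)$. The assignment $\sY' \mapsto F(\sY' \fibprod_\sY \sX)$ is itself a motivic $\Sm$-fibred \ani on $\sY$ --- Nisnevich squares and vector bundles over $\sY$ pull back along $i$ to the same structures over $\sX$ --- so this pointwise formula does indeed compute $i_*$. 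In particular, whenever $\sY' \fibprod_\sY \sX = \initial$ (i.e., $\sY'$ lies inside the open complement of $\sX$), the section evaluates to the terminal object $\pt$ by Nisnevich descent.

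Next, using smooth base change (\propref{prop:smooth bc}) and Nisnevich joint conservativity (\propref{prop:Nis sep}), together with the cover provided by \thmref{thm:scallop}\itemref{item:scallop/cover}, I would reduce to the case where $\sY$ is quasi-fundamental. Writing $j : \sU \hook \sY$ for the open complement of $i$, the pointwise formula immediately yields that $j^* i_*$ is canonically the constant terminal functor, while $i^* i_* \simeq \id$. Applying $i^*$ and $j^*$ to the canonical map $\colim_k i_* F_k \to i_* \colim_k F_k$ then yields, respectively, the identity on $\colim_k F_k$ (since $i^*$ preserves colimits) and the map $\colim_I \pt \to \pt$, the latter being an equivalence precisely because $I$ is contractible. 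To conclude that the original comparison map is itself an equivalence, I would use that in the quasi-fundamental case $\MotSpc(\sY)$ is generated under sifted colimits by the representables $\L \h_\sY([V/G])$ with $V$ a $G$-quasi-affine scheme (\propref{prop:gen}\itemref{item:0h103n}), each of which is assembled via a Nisnevich square from its pullbacks to $\sX$ and to $\sU$.

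The main obstacle is this last joint conservativity step, which has the flavor of the unstable localization theorem (\thmref{thm:loc unstable}) proved only in the next section. In the quasi-fundamental case I expect to circumvent it either by verifying the comparison map directly on sections over each $\sY' \in \Sm_{/\sY}$ via the pointwise formula --- where contractibility of $I$ is precisely what makes the terminal sections over $\sY' \subset \sU$ match up --- or, in the quasi-projective quotient case, by appealing via \remref{rem:Hoyois} to the corresponding result in Hoyois's equivariant motivic homotopy theory.
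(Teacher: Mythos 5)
Your overall skeleton (the pointwise formula for $i_*$, the observation that $j^*i_*$ is terminal, the reduction to the \qfund case by Nisnevich descent) is on the right track, but the argument as written does not close, and the two fixes you propose do not repair it.

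The central gap is precisely the one you flag: concluding from ``$i^*\phi$ and $j^*\phi$ are equivalences'' to ``$\phi$ is an equivalence'' is the joint conservativity of $(i^*, j^*)$, i.e.\ the unstable localization theorem (\thmref{thm:loc unstable}). In the paper this is \emph{downstream} of \propref{prop:i_* colimits}: its proof first invokes derived invariance (\thmref{thm:derinv}), and \thmref{thm:derinv} is proven \emph{using} \propref{prop:i_* colimits}. So appealing to that conservativity here is circular. Also note that $i^*i_*\simeq \id$ is itself part of the localization package, so even the ``$i^*$'' half of your check is not free at this stage.

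Your fix (1), ``verify the comparison map directly on sections'', does not work as stated, because colimits in $\MotSpc(\sY)$ are \emph{not} computed pointwise: $\colim_k i_*F_k$ means the motivic localization $\L_\sY$ applied to the presheaf colimit. Writing $\tilde{i}_*$ for the pointwise restriction functor $F \mapsto F(-\fibprod_\sY\sX)$, the comparison map is a map between $\L_\sY\tilde{i}_*(\colim^{\mathrm{presh}}F_k)$ and $\tilde{i}_*\L_\sX(\colim^{\mathrm{presh}}F_k)$, so what you actually need is that $\tilde{i}_*$ commutes with $\L$ on this class of presheaves. The $\A^1$-part of $\L$ commutes with $\tilde{i}_*$ essentially for free, since $(\sY'\times\A^n)\fibprod_\sY\sX \simeq (\sY'\fibprod_\sY\sX)\times\A^n$. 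It is the Nisnevich sheafification $\LNis$ that does not commute with $\tilde{i}_*$ in general (already the initial object gives a counterexample: $\tilde{i}_*\LNis(\varnothing)$ evaluates to $\pt$ on every $\sY'\subset\sU$, whereas $\LNis\tilde{i}_*(\varnothing)$ does not). Contractibility of $I$ fixes the section over $\initial$, but getting $\LNis^\sY \tilde{i}_* \simeq \tilde{i}_*\LNis^\sX$ on this class of presheaves requires a genuine geometric input: a lifting statement saying that Nisnevich squares over objects of $\Sm_{/\sX}$ lift, Nisnevich-locally, to Nisnevich squares over objects of $\Sm_{/\sY}$. That input is \thmref{thm:lifting closed}, which does not appear anywhere in your proposal and is the real heart of the matter.

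Your fix (2), appeal to Hoyois via \remref{rem:Hoyois}, would plausibly give the \emph{classical} \qfund case (quasi-affine $X$ is in particular $G$-quasi-projective), but the proposition is stated for \emph{derived} stacks, and the reduction to classical relies on derived invariance (\thmref{thm:derinv}), which again is downstream of what you're trying to prove. So this route is also circular for the case you actually need.

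The paper's proof instead combines \propref{prop:gen} (generation of $\MotSpc(\sX)$ under sifted colimits by representables) with \thmref{thm:lifting closed}, following the argument of \cite[Thm.~3.1.1]{KhanLocalization}: the lifting theorem is what guarantees that the pointwise $\tilde{i}_*$ interacts correctly with the Nisnevich localization so that $i_*\L\simeq \L\,\tilde{i}_*$ on contractible colimits of motivic objects, and contractibility enters exactly because $\tilde{i}_*$ fails to preserve the \emph{initial} object after localization. You should rebuild the argument around that lifting theorem rather than around the open/closed conservativity statement.
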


    \propref{prop:i_* colimits} follows from the following result of Alper--Hall--Halpern-Leistner--Rydh:

    \begin{thm}\label{thm:lifting closed}
      Let $i : \sZ \hookrightarrow \sX$ be a closed immersion of scalloped derived stacks.
      Let $f_0 : \sZ' \to \sZ$ be a smooth (resp. étale) representable morphism.
      Then, up to passing to a Nisnevich cover of $\sZ'$, there exists a smooth (resp. étale) representable morphism $f : \sX' \to \sX$ with $\sX'$ \fund and a cartesian square
      \[ \begin{tikzcd}
        \sZ' \ar{r}\ar{d}{f_0}
        & \sX' \ar{d}{f}
        \\
        \sZ \ar{r}{i}
        & \sX.
      \end{tikzcd} \]
      Moreover, if $\sX$ has affine diagonal then $f$ can be taken to be affine.
    \end{thm}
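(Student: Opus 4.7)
The plan is to reduce to the fundamental case and then invoke the local structure theorem of Alper--Hall--Halpern-Leistner--Rydh together with standard techniques for lifting smooth/\'etale morphisms across closed immersions.

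First, since $\sX$ is scalloped, \thmref{thm:scallop}\itemref{item:scallop/cover} provides a Nisnevich cover of $\sX$ by \fund stacks. The conclusion is Nisnevich-local on $\sX$ (both the existence of the cartesian square and the \fund property of $\sX'$), so we may assume $\sX = [X/G]$ is \fund, with $G$ nice embeddable and $X$ affine over some base $S$. The closed substack $\sZ$ is then automatically \qfund by \corref{cor:scallop more}.

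Next, I would treat the \'etale case pointwise on $\sZ'$. Given $z' \in \abs{\sZ'}$, let $z \in \abs{\sZ}$ be its image under $f_0$ and $x \in \abs{\sX}$ its image under $i \circ f_0$, with residual gerbe $\sG_x$. Since $x$ has a nice stabilizer, the local structure theorem for stacks with nice stabilizers from \cite{AlperHallHalpernLeistnerRydh} yields a pointed \'etale neighborhood $p : (\sX', x') \to (\sX, x)$ with $\sX'$ \fund, inducing an isomorphism on residual gerbes at $x'$. The pullback $p^{-1}(\sZ) \to \sZ$ is then a \fund \'etale neighborhood of $\sG_z$. By Grothendieck's topological invariance of the small \'etale site (applied to the universal homeomorphism given by passage to residual gerbes, or more concretely by henselization along $\sG_z$ combined with Artin approximation for the algebraization), one can arrange that after a further \'etale refinement around $x'$, the map $p^{-1}(\sZ) \to \sZ$ factors through an \'etale neighborhood of $z'$ in $\sZ'$, identifying it as a pullback of $f_0$. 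Ranging $z'$ over a set of closed points covering $\abs{\sZ'}$ and taking the disjoint union of the resulting $\sX'$'s produces the Nisnevich cover.

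The smooth case reduces to the \'etale one: locally on $\sZ'$, a smooth representable morphism factors as an \'etale representable morphism to a relative affine space $\A^n_\sZ$, and $\A^n$ extends trivially to the ambient stack, after which the \'etale case applies. The ``affine diagonal'' refinement follows from a final application of \thmref{thm:locally affine} to the morphism $f : \sX' \to \sX$ so constructed. The main obstacle is the simultaneous control in the \'etale case: producing a lift $p : \sX' \to \sX$ that is \emph{both} representably \'etale and has $\sX'$ \fund; this is exactly the content that must be extracted from the AHHLR local structure theorem for stacks with nice stabilizers, and is where the niceness hypothesis is indispensable.
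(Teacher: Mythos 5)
The paper's proof is a two-line reduction: noting that the conclusion is Nisnevich-local on \emph{both} $\sZ'$ and $\sX$, one reduces to the case where $\sZ'$, $\sX$ (and hence $\sZ$) are all \fund, and then cites \cite[Thms.~6.3, 1.7]{AlperHallHalpernLeistnerRydh} directly. Your proposal only localizes on $\sX$ and keeps $\sZ'$ general, which is not an equivalent reduction and forces you into a pointwise argument that does not close. Specifically, the step ``by Grothendieck's topological invariance of the small \'etale site (applied to the universal homeomorphism given by passage to residual gerbes)\ldots one can arrange that $p^{-1}(\sZ)\to\sZ$ factors through an \'etale neighborhood of $z'$'' misapplies the invariance theorem: $\sG_z \hookrightarrow \sZ$ is a (typically non-surjective) closed immersion, not a universal homeomorphism, so its small \'etale sites are not equivalent and there is nothing to transport. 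The actual content needed here --- lifting an \'etale (or smooth) representable morphism to $\sZ$ across the closed immersion $\sZ\hookrightarrow\sX$, with control on the fundamental property and, under affine diagonal, affineness of the lift --- is precisely what \cite[Thm.~6.3]{AlperHallHalpernLeistnerRydh} provides after the reduction to the fundamental case, and its proof involves genuine deformation-theoretic and algebraization input (cf. the sketch in Appendix~\ref{sec:linearly} of the paper for the linearly scalloped analogue, which builds the lift degree by degree along Postnikov truncations using linear reductivity). Your gesture toward henselization and Artin approximation points roughly in that direction, but as stated it is not a proof and would need to be replaced by a citation of, or a reconstruction of, that theorem.

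Two smaller remarks. The reduction of the smooth case to the \'etale case (factoring a smooth representable morphism Nisnevich-locally on the source through an \'etale map to $\A^n_\sZ$ and lifting $\A^n$ trivially) is fine and is the standard move. And the final application of \thmref{thm:locally affine} to arrange affineness when $\sX$ has affine diagonal is the right idea; the paper folds this into the simultaneous appeal to \cite[Thm.~1.7]{AlperHallHalpernLeistnerRydh}.
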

    \begin{proof}
      Note that the claim is Nisnevich-local both on $\sZ'$ and $\sX$.
      Therefore, we may assume $\sZ'$ and $\sX$ (and hence $\sZ$) are \fund (\thmref{thm:scallop}\itemref{item:scallop/cover}).
      In that case, the claim follows by combining Theorems~1.13 and 1.5 in \cite{AlperHallHalpernLeistnerRydh}.
    \end{proof}

    \begin{proof}[Proof of \propref{prop:i_* colimits}]
      This follows from \propref{prop:gen} and \thmref{thm:lifting closed}, exactly as in the proof of \cite[Thm.~3.1.1]{KhanLocalization}.
    \end{proof}

  \subsection{Derived invariance}

    \begin{thm}\label{thm:derinv}
      Let $\sX$ be a scalloped derived stack and let $i : \sX_\cl \to \sX$ denote the inclusion of the classical truncation.
      Then the functor $i^* : \MotSpc(\sX) \to \MotSpc(\sX_\cl)$ is an equivalence.
    \end{thm}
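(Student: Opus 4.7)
The plan is to deduce the statement from an equivalence at the level of smooth sites. Specifically, I will show that pullback along $i$ induces an equivalence
\[ i^* : \Sm_{/\sX} \xrightarrow{\ \sim\ } \Sm_{/\sX_\cl}, \quad \sX' \mapsto \sX' \fibprod_\sX \sX_\cl, \]
which moreover preserves Nisnevich squares (in the sense of \defnref{defn:Nis}) as well as vector bundles. Granting this, the induced equivalence on presheaves of \anis over these sites restricts to an equivalence on Nisnevich-local presheaves and on homotopy-invariant presheaves, hence on the motivic subcategories, which is the claim.

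The essential input is the rigidity of smooth morphisms under derived nil-immersions. For an affine derived scheme $X = \Spec(A)$ with classical truncation $X_\cl = \Spec(\pi_0(A))$, truncation induces an equivalence between smooth affine derived $X$-schemes and smooth affine classical $X_\cl$-schemes. This is a standard consequence of derived deformation theory: the cotangent complex of a smooth morphism is concentrated in degree zero and locally free of finite rank, so the obstructions (and ambiguities) for lifting along a derived square-zero thickening vanish. This upgrades to an equivalence between smooth morphisms of derived schemes and smooth morphisms of their classical truncations.

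To reach the scalloped case, I would induct along a scallop decomposition. By \thmref{thm:sumihiro} and \thmref{thm:scallop}, the stack $\sX$ admits a Nisnevich cover by quotients $[U/G]$ of an affine derived $G$-scheme $U$ by a nice embeddable $G$, and smooth representable morphisms over such a quotient correspond to $G$-equivariant smooth morphisms of affine derived $U$-schemes; the affine deformation-theoretic rigidity then applies $G$-equivariantly. Nisnevich separation (\propref{prop:Nis sep}) together with \propref{prop:i_* colimits} allows one to glue this along the Nisnevich cover, and induction along the scallop decomposition yields the equivalence $\Sm_{/\sX} \simeq \Sm_{/\sX_\cl}$ globally. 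Because classical truncation commutes with fiber products and because open immersions, representable étale morphisms, and vector bundles are all smooth (hence rigid), the equivalence automatically preserves Nisnevich squares and vector bundles. The main technical obstacle is the stacky rigidity statement for smooth morphisms; once the affine case is in hand, the scalloped structure reduces the global statement to this case in a routine fashion, leaving only the bookkeeping of matching Nisnevich squares and of transporting $\A^1$-bundles across truncation.
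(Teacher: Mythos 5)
The central claim of your plan — that $i^*\colon \Sm_{/\sX}\to\Sm_{/\sX_\cl}$ is an equivalence of \inftyCats — is false, and this is exactly the difficulty the paper's proof is designed to handle. Deformation theory does show that smooth morphisms lift uniquely (up to contractible choice) along derived nilpotent thickenings, so $i^*$ is essentially surjective on objects, at least Nisnevich-locally (this is \thmref{thm:lifting closed}). But it is not fully faithful. For $p\colon\sX'\to\sX$ and $p''\colon\sX''\to\sX$ smooth, the comparison map of mapping \anis
\[
  \Maps_{/\sX}(\sX'',\sX') \longrightarrow \Maps_{/\sX_\cl}(\sX''_\cl,\sX'_\cl)
\]
has nontrivial fibres. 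Concretely, if $X=\Spec(A)$ is a derived affine scheme that is not classical, and $Y=Z=\A^1_X$, then the source $\Maps_{/X}(\A^1_X,\A^1_X)$ has higher homotopy (coming from the positive-degree homotopy groups of $A$) which is killed in the target. The deformation-theoretic vanishing you invoke controls \emph{existence} of lifts along square-zero extensions; the lifts form a torsor under a generally nonzero group $\Hom(f^*\Omega_{Z/X},\sI)$, and these torsors assemble to a nontrivial fibre for the map on mapping spaces. So the sites are not equivalent, and the student's reduction collapses.

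What \emph{is} true — and what \propref{prop:contract} establishes — is that this fibre, the \ani $\h_\sX(\sX',t)$ of $t$-trivialized maps (\constrref{constr:U,t}), becomes contractible after motivic localization. This is where the real work lies: Lemmas~\ref{lem:Lifting-sec-of-sm.mor} and \ref{lem:approx-of-sec} identify the fibre, Nisnevich-locally, with the fibre for the normal bundle, and then the $\A^1$-scaling action contracts the latter to a point. Only after applying $\L$ (Nisnevich plus $\A^1$-localization) do the two sides match up. Your outline also leans on \propref{prop:i_* colimits} for gluing, which is correct, but the paper uses it precisely to deduce that the unit $\id\to i_*i^*$ is invertible \emph{given} the contractibility of $\h_\sX(\sX',t)$; there is no way around first proving that contractibility. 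The rest of your outline (reduce to the \fund case via a scallop decomposition, use \thmref{thm:lifting closed} for essential surjectivity) matches the paper and is fine, but the proof cannot stand without replacing the false site-equivalence claim with the genuine motivic-contractibility argument.
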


    \begin{constr}\label{constr:U,t}
      Let $\sX' \in \Sm_{/\sX}$ with structural morphism $p : \sX' \to \sX$.
      Given a section $t : \sX_\cl \to \sX'_\cl$ of the classical truncation $p_\cl : \sX'_\cl \to \sX_\cl$, we define the $\Asp$-fibred \ani over $\sX$ of ``$t$-trivialized maps'' to $\sX'$,
      \[
        \h_\sX(\sX', t).
      \]
      This is the presheaf on $\Asp_{/\sX}$ whose sections over $\sX'' \in \Asp_{/\sX}$ are pairs $(f, \alpha)$ with $f : \sX'' \to \sX'$ an $\sX$-morphism and $\alpha$ a ``$t$-trivialization'' of $f$, i.e., a commutative triangle
      \[ \begin{tikzcd}
        \sX'' \fibprod_\sX \sX_{\cl} \ar{rr}{f\fibprod_\sX \sX_\cl}\ar{rd}
        & & \sX' \fibprod_\sX \sX_{\cl} \simeq \sX'_\cl,\\
        & \sX_\cl \ar[hookrightarrow]{ru}{t}
      \end{tikzcd} \]
      where the isomorphism $\sX' \fibprod_\sX \sX_{\cl} \simeq \sX'_\cl$ is by flatness of $\sX'$ over $\sX$.
      More precisely, $\RGamma(\sX'', \h_\sX(\sX',t))$ is the homotopy fibre of the canonical map
      \[ \Maps_\sX(\sX'', \sX') \to \Maps_{\sX_\cl}(\sX''\fibprod_\sX \sX_\cl, \sX'_\cl) \]
      at the point defined by the morphism $\sX''\fibprod_\sX \sX_\cl \to \sX_\cl \stackrel{t}{\hook} \sX'_\cl$.
      See \cite[Constr.~4.1.3]{KhanLocalization} or \cite[\S 1]{HoyoisFramedLoc}.
    \end{constr}

    \begin{prop}\label{prop:contract}
      Let $(\sX', t)$ be as in \constrref{constr:U,t}.
      Then, Nisnevich-locally on $\sX'$, the $\Asp$-fibred \ani $\h_\sX(\sX', t)$ is motivically contractible, i.e.,
      \[ \L \h_\sX(\sX', t) \simeq \pt \]
      in $\MotSpc(\sX)$.
    \end{prop}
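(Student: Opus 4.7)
The plan is to reduce, Nisnevich-locally on $\sX'$ near the closed locus $t(\sX_\cl)$, to a ``linear'' model in which $\sX'$ is replaced by the total space of a vector bundle $\pi \colon \sV \to \sX$ and $t$ by the zero section $0 \colon \sX \to \sV$, and then contract via $\A^1$-scaling.

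Step one is to produce, Nisnevich-locally on $\sX'$ near $t(\sX_\cl)$, a representable étale morphism $\phi \colon \sY \to \sV$ to such a vector bundle, arranged so that $\phi \circ (t|_\sY) = 0$. For algebraic spaces this is the standard local structure of smooth morphisms --- étale factorizations through affine space --- post-composed with a translation to move the section to the zero section. In the scalloped setting I would combine \thmref{thm:scallop}\itemref{item:scallop/cover} with \thmref{thm:lifting closed} to lift the relevant (equivariant) étale coordinates from the classical truncation.

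Step two observes that $\phi$ induces an isomorphism of presheaves
\[ \phi_\ast \colon \h_\sX(\sY, t|_\sY) \xrightarrow{\sim} \h_\sX(\sV, 0). \]
Indeed, for any $\sX'' \in \Sm_{/\sX}$, a map $g \colon \sX'' \to \sV$ with $g_\cl = 0$ admits a forced classical lift $g'_\cl = t|_\sY \circ (\sX''_\cl \to \sX_\cl)$ along $\phi_\cl$ (any two candidates agree after composing with $\phi_\cl$), and by étaleness of $\phi$ this classical datum extends uniquely to a derived lift $g' \colon \sX'' \to \sY$.

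Step three shows $\L \h_\sX(\sV, 0) \simeq \pt$ via $\A^1$-scaling: the map $(\lambda, v) \mapsto \lambda v$ on $\sV$ preserves the subfunctor $\{v_\cl = 0\}$, so it induces an $\A^1$-parameterized endomorphism of $\h_\sX(\sV, 0)$ which is the identity at $\lambda = 1$ and is the constant map at the zero section at $\lambda = 0$. The latter factors through $\h_\sX(\sX) \to \h_\sX(\sV, 0)$, so after $\Lhtp$ the identity is homotopic to a map factoring through $\L\h_\sX(\sX) \simeq \pt$, yielding the contractibility. I anticipate the main obstacle to be step one, namely producing the étale factorization $\phi$ compatible with $t$ in the scalloped setting; steps two and three are then formal consequences of étale rigidity and homotopy invariance.
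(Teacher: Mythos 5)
Your overall plan matches the paper's proof exactly in architecture: produce (Nisnevich-locally) an étale morphism $\phi$ to a vector bundle over $\sX$ carrying the classical section to the zero section, use étale rigidity to deduce $\h_\sX(\sY, t|_\sY) \simeq \h_\sX(\sV, 0)$, then contract by the $\A^1$-scaling action. Your Steps~2 and 3 are essentially identical to the paper's appeals to \cite[Lem.~4.2.5, 4.2.6]{KhanLocalization}, and are correct.

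The gap is in Step~1, which you rightly flag as the main obstacle but do not resolve. Two sub-problems are being conflated. First, the vector bundle $\sV$ must live over $\sX$, not just over $\sX_\cl$; in the paper this is achieved by first lifting the classical section $t : \sX_\cl \to \sX'_\cl$ to a genuine derived section $s : \sX \to \sX'$ (\lemref{lem:Lifting-sec-of-sm.mor}), so that $\sV := N_{\sX/\sX'}$ is the total space of the conormal sheaf $\sN_{\sX/\sX'}$ on $\sX$. That lemma already needs to lift a finite locally free sheaf along $\sX'_\cl \hookrightarrow \sX'$, which uses \cite[Erratum, Thm.~B]{HoyoisEquivariant} and \cite[Lem.~A.2.6]{BKRSMilnor} together with the projectivity of such sheaves over \fund stacks. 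Second, once the derived section is available, producing the étale map $\phi$ with $\phi \circ s = 0$ (\lemref{lem:approx-of-sec}) again uses the cohomological niceness of \fund stacks --- one splits the surjection $p_{\cl,*}(\sI) \twoheadrightarrow \piz(\sN_{\sX/\sX'})$ using projectivity of finite locally free sheaves, and lifts the splitting from $\sX_\cl$ to $\sX$. Your proposed substitutes do not give this: \thmref{thm:scallop}\itemref{item:scallop/cover} only furnishes the Nisnevich-local reduction to the \fund case, and \thmref{thm:lifting closed} lifts a smooth/étale morphism \emph{into} a closed substack $\sZ \hookrightarrow \sX$ to one into $\sX$ --- the opposite direction of what you need, which is to lift data defined \emph{on} the classical truncation $\sX'_\cl$ (a sheaf, a section of a bundle, a surjection) to data on $\sX'$ itself. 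So the citations do not supply the key ingredient; the two dedicated lemmas, whose proofs genuinely exploit linear reductivity, are what is required.
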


    The proof of \propref{prop:contract} will require a few geometric preliminaries.

    \begin{lem}\label{lem:Lifting-sec-of-sm.mor}
      Let $p : \sY \to \sX$ be a smooth representable morphism of derived stacks where $\sX$ is \fund.
      Suppose given a commutative triangle as on the left-hand side below:
      \begin{equation*}
        \begin{tikzcd}
          \sZ_0 \ar{r}{i_0}\ar[swap]{rd}{q_0}
            & \sY_\cl \ar{d}{p_\cl}
          \\
            & \sX_\cl
        \end{tikzcd}
        \qquad
        \begin{tikzcd}
          \sZ \ar[dashed]{r}{i}\ar[swap,dashed]{rd}{q}
            & \sY \ar{d}{p}
          \\
            & \sX,
        \end{tikzcd}
      \end{equation*}
      where $i_0$ is a closed immersion and $q_0$ is smooth (resp. étale).
      Then there exists, Nisnevich-locally on $\sY$, a quasi-smooth closed immersion $i : \sZ \to \sY$ lifting $i_0$ such that the composite $q : \sZ \to \sY \to \sX$ is a smooth (resp. étale) morphism lifting $q_0$.
    \end{lem}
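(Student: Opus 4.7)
My plan is to construct the lift in three stages: first lift $q_0$ using derived invariance of the smooth/étale site of $\sX$, then lift $i_0$ to a morphism of derived stacks via a Nisnevich-local étale factorization of $p$, and finally verify that the resulting morphism is a quasi-smooth closed immersion via the cotangent fiber sequence.

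For the first two stages, derived invariance of the smooth (resp. étale) site lifts $q_0 : \sZ_0 \to \sX_\cl$ canonically to a smooth (resp. étale) morphism $q : \sZ \to \sX$ with $\sZ_\cl = \sZ_0$. Since the claim is Nisnevich-local on $\sY$, I would next reduce to the case where $p$ admits a factorization $\sY \xrightarrow{e} \A^m_\sX \xrightarrow{\pi} \sX$ with $e$ étale, via the Nisnevich-local structure theorem for smooth representable morphisms, combined with \thmref{thm:sumihiro} applied to the \fund base $\sX$ to produce the factorization in the stacky setting. The composite $g_0 := e_\cl \circ i_0 : \sZ_0 \to \A^m_{\sX_\cl}$ lifts tautologically to $g : \sZ \to \A^m_\sX$ over $\sX$, since $\A^m$ is classical and maps to it from any derived stack are determined by an $m$-tuple of sections of $\pi_0$ of the structure sheaf (and $\pi_0 \sO_\sZ = \pi_0 \sO_{\sZ_0}$). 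By formal étaleness of $e$, the map $i_0$ (which is a lift of $g_0$ through $e_\cl$) then extends uniquely to a morphism $i : \sZ \to \sY$ over $\A^m_\sX$, a fortiori over $\sX$, with $i_\cl = i_0$.

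For the third stage, $i_\cl = i_0$ is a closed immersion, and since both $\sZ$ and $\sY$ have derived structures determined by their classical truncations via the smoothness of $q$ and $p$, $i$ itself is a closed immersion. Quasi-smoothness follows from the cotangent fiber sequence
\[
  i^* \sL_{\sY/\sX} \to \sL_{\sZ/\sX} \to \sL_{\sZ/\sY}:
\]
the first two terms are locally free sheaves in degree $0$ since $p$ and $q$ are smooth, and the induced map on classical $\Omega^1$'s is surjective because its specialization is a regular closed immersion between $\sX_\cl$-stacks smooth over $\sX_\cl$. Hence $\sL_{\sZ/\sY}$ is concentrated in degree $-1$ and locally free there, which characterizes $i$ as a quasi-smooth closed immersion. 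The main obstacle is the second stage, namely producing the Nisnevich-local étale factorization $\sY \to \A^m_\sX \to \sX$ in the stacky setting; this requires some care with $G$-equivariance since $\sY$, being smooth representable over $\sX = [X/G]$, corresponds to a $G$-equivariant algebraic space smooth over the affine scheme $X$, and one must linearize the structure theorem using that $G$ is nice (hence linearly reductive). Once this reduction is in place, the construction of $i$ is canonical and the remaining verification is formal.
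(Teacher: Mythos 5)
Your approach is genuinely different from the paper's: the paper builds $\sZ$ \emph{inside} $\sY$ as the derived zero locus of a cosection $\psi : \sE \to \sO_\sY$ of a finite locally free sheaf $\sE$ extending the conormal sheaf $\sN_{i_0}$ (Nisnevich-locally, via \cite[Erratum, Thm.~B]{HoyoisEquivariant}), whereas you lift $\sZ_0$ abstractly to some derived $\sZ$ and then construct the embedding $i$. Your plan can be made to work, but the write-up has two real gaps.

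First, there is no ``derived invariance of the smooth site.'' The étale site is derived-invariant, so the étale case is fine, but lifting a \emph{smooth} $q_0 : \sZ_0 \to \sX_\cl$ to a smooth $q : \sZ \to \sX$ is an obstruction-theoretic statement which holds here only because $\sX$, being \fund, is cohomologically affine (so the obstruction groups, valued in $\on{Ext}^i$ of a projective cotangent complex, vanish). You should either spell out this argument or cite \thmref{thm:lifting closed}, which is stated earlier and is exactly this lifting statement; invoking a ``derived invariance'' is not correct as written.

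Second, and more seriously, the factorization $\sY \xrightarrow{e} \A^m_\sX \to \sX$ with $e$ étale and $\A^m$ carrying the trivial $G$-action does \emph{not} exist $G$-equivariantly, even Nisnevich-locally. This is not a matter of ``care with $G$-equivariance'': for instance with $\sX = B\bG_m$ and $\sY = [\A^1/\bG_m]$, the sheaf $\Omega^1_{\sY/\sX}$ has $\bG_m$-weight $1$ near the fixed point, so it cannot be trivialized as a $G$-equivariant sheaf by any Nisnevich localization, and no $G$-invariant coordinate $y$ with $dy$ generating $\Omega^1$ exists there. What does exist Nisnevich-locally is an étale factorization $\sY \to \bV_\sX(\sE) \to \sX$ for some finite locally free $\sE$ pulled back from $BG$ (using that \fund stacks have the resolution property). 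But then $\bV_\sX(\sE)$ is a derived stack whenever $\sX$ is, so your ``tautological lift'' argument collapses: a map $\sZ \to \bV_\sX(\sE)$ over $\sX$ is a cosection of $q^*\sE$, and lifting such a cosection from $\sZ_0$ to $\sZ$ requires $\Gamma(\sZ,-)$ to be $t$-exact, i.e., that $\sZ$ be cohomologically affine. This can be arranged by further Nisnevich-localizing so that $\sZ_0$ is affine over $BG$ (possible since Nisnevich-localizing $\sY$ via \thmref{thm:sumihiro} localizes $\sZ_0$ as well), but this is a separate argument you never make — the lift is not tautological. (Relatedly, even for genuine $\A^m$, maps $\sZ \to \A^m$ are points of $\Omega^\infty\Gamma(\sZ,\sO_\sZ)$, not elements of $\Gamma(\sZ_0,\pi_0\sO_{\sZ_0})$; these agree only after using $t$-exactness of $\Gamma$.) Once both gaps are filled, your verification of quasi-smoothness of $i$ via the cotangent fiber sequence is correct.
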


    \begin{proof}
      Let $\sI$ denote the ideal of definition of $i_0: \sZ_0 \hookrightarrow \sY_\cl$.
      Since $q_0$ and $p_\cl$ are smooth, $i_0$ is quasi-smooth and hence has finite locally free conormal sheaf $\sN_{i_0}$.
      By \cite[Erratum, Thm.~B]{HoyoisEquivariant}, there exists an affine Nisnevich cover $\sY'_0 \twoheadrightarrow \sY_\cl$ and a finite locally free $\sE_0$ on $\sY'_0$ lifting $\sN_{i_0}$.
      By derived invariance of the étale site and \cite[Lem.~A.2.6]{BKRSMilnor}, we can lift this data to an affine Nisnevich cover $\sY' \twoheadrightarrow \sY$ and a finite locally free $\sE$ on $\sY'$.
      As the statement is Nisnevich-local on $\sY$, we may replace $\sY$ by $\sY'$ and thereby assume that there exists a finite locally free $\sE$ on $\sY$ such that $i_0^*k^* (\sE) \simeq \sN_{i_0}$, where $k : \sY_\cl \to \sY$ denotes the inclusion of the classical truncation.

      Consider the unit morphism $\phi : \sE \to k_* i_{0,*} i_0^* k^* (\sE) \simeq k_* i_{0,*}(\sN_{i_0})$.
      Since $\sE$ is projective on $\sY$ by \cite[Prop.~A.3.4]{BKRSMilnor}, $\phi$ lifts along the $\piz$-surjection $\sI \twoheadrightarrow \sI/\sI^2 \simeq k_* i_{0,*}(\sN_{i_0})$ to a morphism
      \[ \psi : \sE \to \sI \sub \sO_\sY. \]
      Up to passing to a Zariski cover of $\sY_\cl$, the Nakayama lemma implies that $\sZ_0$ is the zero locus of $\piz(\psi) : \sE_0 \to \sO_{\sY_\cl}$.
      In other words, the square
      \[ \begin{tikzcd}
        \sZ_0 \ar{r}{i_0}\ar{d}{i_0}
        & \sY_\cl \ar{d}{\psi_\cl}
        \\
        \sY_\cl \ar{r}{0}
        & \bV_{\sY_\cl}(\sE_\cl)
      \end{tikzcd} \]
      is (classically) cartesian.
      We define $\sZ$ as the derived zero locus of $\psi$, so that there is a homotopy cartesian square
      \[ \begin{tikzcd}
        \sZ \ar{r}{i}\ar{d}{i}
        & \sY \ar{d}{\psi}
        \\
        \sY \ar{r}{0}
        & \bV_{\sY}(\sE).
      \end{tikzcd} \]
      In particular, $\sZ_\cl \simeq \sZ_0$ and $i$ lifts $i_0$.
      It remains only to show that $q := p \circ i : \sZ \to \sX$ is smooth.

      Recall that smoothness can be checked after derived base change to the classical truncation of the target (e.g. \cite[Cor.~11.2.2.8]{LurieSAG}).
      Thus it will suffice to show that the following squares are homotopy cartesian:
      \[ \begin{tikzcd}
        \sZ_0 \ar{r}{i_0}\ar{d}
        & \sY_\cl \ar{r}{p_\cl}\ar{d}{k}
        & \sX_\cl \ar{d}
        \\
        \sZ \ar{r}{i}
        & \sY \ar{r}{p}
        & \sX.
      \end{tikzcd} \]
      The right-hand square is homotopy cartesian because $p$ is smooth, and the left-hand square is classically cartesian by construction.
      We claim the latter square is in fact homotopy cartesian.
      For this it is enough to show that the morphism $\sZ_0 \to \widetilde{\sZ} := \sZ \fibprodR_\sY \sY_\cl$ is étale.
      But in the transitivity triangle
      $$
      \sL_{\widetilde{\sZ}/\sY_\cl}|_{\sZ_0} \to \sL_{\sZ_0/\sY_\cl} \to \sL_{\sZ_0/\widetilde{\sZ}},
      $$
      the first map is identified with the canonical isomorphism $\sL_{\widetilde{\sZ}/\sY_\cl}|_{\sZ_0} \simeq \sE|_{\sZ_0}[1] \simeq \sN_{i_0}[1] \simeq \sL_{\sZ_0/\sY_\cl}$, so the relative cotangent complex $\sL_{\sZ_0/\widetilde{\sZ}}$ vanishes.
    \end{proof}

    \begin{lem}\label{lem:approx-of-sec}
      Let $p : \sY \to \sX$ be a smooth affine morphism of \fund derived stacks.
      Then for any section $s : \sX \to \sY$, there exists a morphism $f : \sY \to N_{\sX/\sY}$, étale on the image of $s$, and a homotopy cartesian square
      \begin{equation*}
        \begin{tikzcd}
          \sX \ar{r}{s}\ar[equal]{d}
            & \sY \ar{d}{f}
          \\
          \sX \ar{r}{0}
            & N_{\sX/\sY}.
        \end{tikzcd}
      \end{equation*}
      Here $N_{\sX/\sY} = \bV_{\sX}(\sN_{\sX/\sY})$ is the normal bundle, total space of the conormal sheaf $\sN_{\sX/\sY} = \sL_{\sX/\sY}[-1]$.
    \end{lem}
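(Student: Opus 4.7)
The plan is to realise $f$ as a first-order approximation of $s$, in the spirit of the algebraic tubular neighbourhood theorem. First observe that, since $p$ is smooth affine with section $s$, the closed immersion $s : \sX \hookrightarrow \sY$ is quasi-smooth with finite locally free conormal sheaf $\sN = \sN_{\sX/\sY}$; moreover, since $p$ is affine and $\sX$ is \fund, $\sY$ is also \fund, so $\Qcoh(\sY)^{\heartsuit}$ has enough projectives of the form finite locally free sheaves by \cite[Prop.~A.3.4]{BKRSMilnor}.

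Next I construct $f$. The pullback $p^*\sN$ is a finite locally free sheaf on $\sY$ with $s^*p^*\sN \simeq \sN$ (since $p\circ s = \id$). Under the identification $s_*\sN \simeq \sI/\sI^2$, where $\sI$ is the ideal sheaf of $s$, the unit map $p^*\sN \to s_*s^*p^*\sN$ becomes a map $p^*\sN \to \sI/\sI^2$. By projectivity of $p^*\sN$, this lifts along the $\pi_0$-surjection $\sI \twoheadrightarrow \sI/\sI^2$ to a morphism $\tilde\alpha : p^*\sN \to \sI$. Composing with $\sI \hookrightarrow \sO_\sY$ yields $\alpha : p^*\sN \to \sO_\sY$, which corresponds under the adjunction $\Hom_{\sO_\sY}(p^*\sN, \sO_\sY) \simeq \Hom_{\sO_\sX}(\sN, p_*\sO_\sY)$ to an $\sX$-morphism $f : \sY \to \bV_\sX(\sN) = N_{\sX/\sY}$. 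The relation $f \circ s = 0$-section holds because $\alpha$ factors through $\sI$ and $s^*\sI \simeq 0$ in the derived sense (via the counit $s^*s_*\sO_\sX \simeq \sO_\sX$ for the closed immersion $s$).

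Finally, I check étaleness and cartesianness. The transitivity triangle for $\sY \xrightarrow{f} N_{\sX/\sY} \xrightarrow{\pi} \sX$ gives a cofibre sequence
\[
  s^*f^*\sL_{N_{\sX/\sY}/\sX} \to s^*\sL_{\sY/\sX} \to s^*\sL_{\sY/N_{\sX/\sY}},
\]
in which both left-hand terms are canonically identified with $\sN$ (the first via $\sL_{N_{\sX/\sY}/\sX} \simeq \pi^*\sN$ and $\pi f = p$; the second via $\sL_{\sX/\sY} \simeq \sN[1]$ and the transitivity triangle for $ps = \id$, giving $s^*\sL_{\sY/\sX} \simeq \sN$). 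By construction of $\tilde\alpha$ as a lift of the unit, the induced endomorphism of $\sN$ is the identity, so $s^*\sL_{\sY/N_{\sX/\sY}} \simeq 0$. Derived Nakayama (as in the proof of \lemref{lem:Lifting-sec-of-sm.mor}) then implies $f$ is étale on a Zariski neighbourhood $\sU$ of $s(\sX)$; on $\sU$, the same Nakayama argument shows $\tilde\alpha$ is surjective onto $\sI|_\sU$, whence the derived zero locus of $\alpha|_\sU$ coincides with the quasi-smooth closed immersion $s$, yielding the homotopy cartesian square after restricting $\sY$ to $\sU$ (which remains \fund). The main obstacle is this last step --- making rigorous that the Koszul-type derived zero locus of the regular section $\alpha$ agrees with $\sX$ as a derived substack --- and it is here that quasi-smoothness of $s$ is used essentially.
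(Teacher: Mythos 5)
Your construction of $f$ matches the paper's in substance: both lift the unit $p^*\sN \to \pi_0(s_*\sN)$ along a $\pi_0$-surjection using projectivity of $p^*\sN$ and then convert to a morphism $\sY \to N_{\sX/\sY}$ by the affine adjunction (the paper carries out the lift on classical truncations and then invokes \cite[Lem.~A.2.6]{BKRSMilnor} to pass to $\Dqc(\sY)$; you work directly in $\Dqc(\sY)$, which is fine since maps out of a discrete projective into a connective complex factor through $\pi_0$). One local error: $s^*\sI \not\simeq 0$ for a nontrivial quasi-smooth closed immersion --- it is the desuspension of the cofibre of the unit $\sO_\sX \to s^*s_*\sO_\sX$, which is nonzero whenever $\sN \ne 0$. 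What is true, and suffices, is that the \emph{map} $s^*\sI \to \sO_\sX$ is nullhomotopic: it is the fibre of the split monomorphism $\sO_\sX \to s^*s_*\sO_\sX$ (split by the counit), and the fibre of a split monomorphism maps to the source by zero. This gives $s^*\alpha \simeq 0$ and hence $f \circ s = 0$.

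The genuine gap is the last step, which you acknowledge. Your cotangent computation is correct and shows $s^*\sL_f \simeq 0$, so $f$ is \'etale on a Zariski neighbourhood $\sU$ of $s(\sX)$ by Nakayama. But \'etaleness near $s$ does not by itself identify the derived fibre product $\widehat\sX := \sX \times_{N_{\sX/\sY}} \sY$ with $\sX$, and restricting to $\sU$ proves a weaker statement than the lemma asserts (the cartesian square is asked for with $\sY$, not with a shrinking of it). The paper argues in the opposite order: it records that the square is cartesian on classical truncations and then uses the transitivity triangle for $\sX \hookrightarrow \widehat\sX \hookrightarrow \sY$ to show $\sL_{\sX/\widehat\sX} \simeq 0$. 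Since $\sX \to \widehat\sX$ is then an \'etale closed immersion that is a $\pi_0$-isomorphism, it is an equivalence, so the square is homotopy cartesian, and \'etaleness of $f$ at $s$ falls out afterwards from $s^*\sL_f \simeq \sL_{\sX/\sX} \simeq 0$. The differential computation (your observation that the induced endomorphism of $\sN$ is the identity) is exactly the same input in both arguments, but deployed on $\sX \hookrightarrow \widehat\sX$ there is no residual step to close, whereas deployed on $f$ alone it leaves open the identification of the zero locus that you flag as the obstacle.
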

    \begin{proof}
      Since $s$ is a section of $p$, we have $\sN_{\sX/\sY} \simeq \sL_{\sY/\sX}|\sX$.
      Since $p$ is smooth, the latter is finite locally free and we have
      \[ \piz(\sN_{\sX/\sY}) \simeq \sN_{\sX/\sY}|_{\sX_\cl} \simeq \sN_{\sX_\cl/\sY_\cl} \simeq \sI/\sI^2, \]
      where $\sI$ is the ideal sheaf of the closed immersion $s_\cl: \sX_\cl \hookrightarrow \sY_\cl$.
      In particular, there is a canonical $\sO_{\sX_\cl}$-module surjection $p_{\cl,*}(\sI) \twoheadrightarrow \piz(\sN_{\sX/\sY})$.
      Since $\piz(\sN_{\sX/\sY})$ is projective (see \cite[Lem.~2.17]{HoyoisEquivariant} or \cite[Prop.~A.3.4]{BKRSMilnor}), this admits a splitting and the resulting morphism $\phi_0 : \piz(\sN_{\sX/\sY}) \to p_{\cl,*}(\sI) \sub p_{\cl,*}(\sO_{\sY_\cl})$ lifts from $\sX_\cl$ to $\sX$ by \cite[Lem.~A.2.6]{BKRSMilnor}:
      \[ \phi : \sN_{\sX/\sY} \to p_*(\sO_{\sY}). \]
      This determines a morphism $f: \sY \to N_{\sX/\sY}$ (recall that $p$ is affine) which fits in a commutative square
      \begin{equation*}
        \begin{tikzcd}
          \sX \ar{r}{s}\ar[equal]{d}
            & \sY \ar{d}{f}
          \\
          \sX \ar{r}{0}
            & N_{\sX/\sY}.
        \end{tikzcd}
      \end{equation*}
      which is cartesian on classical truncations by construction.
      To show that the square is homotopy cartesian, it will suffice to show that the induced morphism $\sX \to \widehat{\sX} := \sX \times_{\sN_{\sX/\sY}} \sY$ is étale.
      For this consider the exact triangle 
      $$
      \sL_{\widehat{\sX}/\sY}|\sX \to \sL_{\sX/\sY} \to \sL_{\sX/\widehat{\sX}}
      $$
      where the first arrow is identified with the canonical isomorphism
      $$\sL_{\widehat{\sX}/\sY}|\sX \simeq \sL_{\sX/N_{\sX/\sY}} \simeq \sN_{\sX/\sY}[1] \simeq \sL_{\sX/\sY}.$$
      Finally, the fact that $f$ is étale on the image of $s$ follows from the isomorphism $s^*\sL_{f} \simeq \sL_{\sX/\sX} \simeq 0$, which comes from the homotopy cartesianness of the above square.
    \end{proof}

    The preparations for the proof of \propref{prop:contract} are complete.

    \begin{proof}[Proof of \propref{prop:contract}]
      By \lemref{lem:Lifting-sec-of-sm.mor}, we may pass to a Nisnevich cover of $\sX'$ (if necessary) to assume that $t$ lifts to a section $s : \sX \to \sX'$ of $p : \sX' \to \sX$.
      By \lemref{lem:approx-of-sec} there exists a homotopy cartesian square
      \[ \begin{tikzcd}
        \sX \ar{r}{s}\ar[equals]{d}
        & \sX' \ar{d}{f}
        \\
        \sX \ar{r}{0}
        & N_{\sX/\sX'}
      \end{tikzcd} \]
      where $f$ is étale on a Zariski neighbourhood $\sX'_0 \sub \sX'$ of $s$.
      This gives rise to canonical isomorphisms of motivic $\Asp$-fibred \anis
      \[ \L \h_\sX(\sX', s) \simeq \L \h_\sX(\sX'_0, s) \simeq \L \h_\sX(N_{\sX/\sX'}, 0) \]
      by \cite[Lem.~4.2.6]{KhanLocalization} or \cite[Lem.~2]{HoyoisFramedLoc} (whose proofs extend to our setting without modification).
      Now note that, for any vector bundle $\sV \to \sX$, the $\Asp$-fibred \ani $\h_\sX(\sV, 0)$ is contracted to the zero section $0 : \sX \to \sV$ via the scaling action of $\A^1$ on $\sV$ (see \cite[Lem.~4.2.5]{KhanLocalization}).
    \end{proof}

    \begin{proof}[Proof of \thmref{thm:derinv}]
      By \thmref{thm:scallop}\itemref{item:scallop/cover} and \propref{prop:Nis sep}, we are immediately reduced to the case where $\sX$ is \fund.
      Since every $\sX'_0 \in \Sm_{/\sX_\cl}$ lifts (up to Nisnevich-localizing on $\sX'_0$) to some $\sX' \in \Sm_{/\sX}$ (\thmref{thm:lifting closed}), \propref{prop:gen} implies that $i_*$ generates its codomain $\MotSpc(\sX_\cl)$ under colimits.
      It will thus suffice to show that the unit morphism
      \[ \id \to i_* i^* \]
      is invertible, i.e., that $i^* : \MotSpc(\sX) \to \MotSpc(\sX_\cl)$ is fully faithful.
      
      We will show that for every $\sX' \in \Sm_{/\sX}$, the canonical map
      \begin{equation}\label{eq:Fefhaho}
        \h_\sX(\sX') \to i_* i^* \h_\sX(\sX') \simeq i_* \h_{\sX_\cl}(\sX'_\cl)
      \end{equation}
      is a motivic equivalence on $\Sm$-fibred anima (where $\sX'_\cl \simeq \sX' \fibprod_\sX \sX_\cl$ since $\sX'$ is flat over $\sX$).
      In \eqref{eq:Fefhaho} and the following, all operations (such as $i^*$) are at the level of ``unlocalized'' fibred anima.
      Since $i_*$ commutes with sifted colimits (\propref{prop:i_* colimits}) and motivic $\Sm$-fibred anima on $\sX$ are generated under sifted colimits by motivic localizations of $\h_\sX(\sX')$ (\propref{prop:gen}), the claim will follow.
      In fact, we claim that \eqref{eq:Fefhaho} is a motivic equivalence on all $\Asp$-fibred anima.

      By universality of colimits, it is enough to show that for every $\sX''\in\Asp_{/\sX}$ and every morphism $\h_\sX(\sX'') \to i_*\h_{\sX_\cl}(\sX'_\cl)$, corresponding to a morphism $t : \sX'' \fibprod_{\sX} \sX_\cl \to \sX'_\cl$ over $\sX_\cl$, the base change
      \begin{equation}\label{eq:Wazrerfod}
        \h_\sX(\sX') \fibprod_{i_*\h_{\sX_\cl}(\sX'_\cl)} \h_\sX(\sX'') \to \h_{\sX}(\sX'')
      \end{equation}
      is invertible.
      On $\Asp$-fibred anima, we can write $\h_\sX(\sX'') \simeq p_\sharp \h_{\sX''}(\sX'')$ where $p : \sX'' \to \sX$ is the projection.
      We moreover have the projection formula\footnote{%
        The reader is warned that the analogous formula does not hold on $\Sm$-fibred anima (even when $p$ is smooth), see \remref{rem:wrongproj}, which is why we work with $\Asp$-fibred anima in this proof.
        We thank the referee for pointing out this mistake in a previous revision.
      }
      \begin{equation}
        \sF \fibprod_{\sG} p_\sharp \h_{\sX''}(\sX'')
        \simeq p_\sharp (p^* \sF \fibprod_{p^* \sG} \h_{\sX''}(\sX'')),
      \end{equation}
      for any morphism $\sF \to \sG$ over $\sX$, and the base change formula $p^*i_* \simeq i''_* p_\cl^*$ where $i'' : \sX'' \fibprod_\sX \sX_\cl \to \sX''$ is the base change of $i$.
      It follows that \eqref{eq:Wazrerfod} is identified with the image by $p_\sharp$ of
      \[
        \h_{\sX''}(\sX'\fibprod_\sX\sX'', t')
        := \h_{\sX''}(\sX' \fibprod_\sX \sX'') \fibprod_{i''_*\h_{\sX''_\cl}(\sX'_\cl \fibprod_{\sX_\cl} \sX''_\cl)} \h_{\sX''}(\sX'')
        \to \h_{\sX''}(\sX''),
      \]
      where $t' : \sX'' \fibprod_\sX \sX_\cl \to (\sX'\fibprod_\sX \sX'')_\cl \simeq \sX'_\cl \fibprod_{\sX_\cl} \sX'_\cl$ is the section induced by $t : \sX'' \fibprod_{\sX} \sX_\cl \to \sX'_\cl$.
      Since $\h_{\sX''}(\sX'\fibprod_\sX\sX'', t')$ is motivically contractible by \propref{prop:contract}, the claim follows.
    \end{proof}

  \subsection{Localization}

    \begin{thm}\label{thm:loc unstable}
      Suppose given a complementary closed-open pair
      \[ \begin{tikzcd}
        \sZ \ar[hookrightarrow]{r}{i}
        & \sX \ar[hookleftarrow]{r}{j}
        & \sU
      \end{tikzcd} \]
      of scalloped derived stacks.
      Then the commutative square of endofunctors of $\MotSpc(\sX)$
      \[ \begin{tikzcd}
        j_\sharp j^* \ar{r}{\mrm{counit}}\ar{d}{\mrm{unit}}
        & \id \ar{d}{\mrm{unit}}
        \\
        j_\sharp j^* i_* i^* \ar{r}{\mrm{counit}}
        & i_* i^*,
      \end{tikzcd} \]
      where $j_\sharp j^* i_* i^*$ is contractible by the smooth base change formula (\propref{prop:smooth bc}), is homotopy cocartesian.
    \end{thm}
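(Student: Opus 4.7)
The plan is to reduce to the situation where Hoyois's equivariant localization theorem applies, through a chain of three reductions: Nisnevich descent, derived invariance, and Hoyois's comparison.

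First, I would show the claim is local on $\sX$ in the Nisnevich topology. Fix $\sF \in \MotSpc(\sX)$; by \propref{prop:Nis sep}, the square evaluated at $\sF$ is a pushout iff it becomes one after applying each $u_\alpha^*$ for some Nisnevich cover $(u_\alpha : \sX_\alpha \to \sX)_\alpha$, since each $u_\alpha^*$ preserves colimits by \propref{prop:f^*}. By smooth base change (\propref{prop:smooth bc}) applied to the cartesian squares obtained by restricting $i$ and $j$ along $u_\alpha$, the pulled-back square is the analogous localization square for the closed-open pair $(\sZ \fibprod_\sX \sX_\alpha,\; \sU \fibprod_\sX \sX_\alpha)$ inside $\sX_\alpha$. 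By \thmref{thm:scallop}\itemref{item:scallop/cover} one may choose the cover so that each $\sX_\alpha$ is \fund, reducing to the case $\sX = [X/G]$ with $X$ an affine derived scheme over some affine base $S$ and $G$ a nice embeddable group scheme over $S$.

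Second, I would reduce to classical stacks via derived invariance (\thmref{thm:derinv}): the inclusion of the classical truncation $\sX_\cl \hookrightarrow \sX$ induces an equivalence of motivic categories, and similarly for $\sZ$ and $\sU$. These equivalences are compatible with $j^*, j_\sharp$ by smooth base change, and with $i^*, i_*$ via the cartesian square formed by the truncations of $\sZ$ and $\sX$; hence the localization square for $(\sZ, \sX, \sU)$ is the transport of the corresponding one for $(\sZ_\cl, \sX_\cl, \sU_\cl)$. Thus we may further assume $X$ is a classical affine $G$-scheme.

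Finally, in this fundamental classical setting, embeddability of $G$ provides a $G$-equivariant closed immersion of $X$ into a linear $G$-representation over $S$, so $X$ is $G$-quasi-projective over $S$. By \remref{rem:Hoyois}, $\MotSpc([X/G])$ identifies with Hoyois's equivariant motivic homotopy category $\MotSpc^G(X)$, under which our square corresponds to the analogous localization square for a $G$-invariant closed-open pair in $X$. The latter is homotopy cocartesian by the equivariant localization theorem of \cite{HoyoisEquivariant}. The main obstacle is checking that each of the three reductions is genuinely compatible with all four functors $j^*, j_\sharp, i^*, i_*$; these compatibilities are formal given the base change machinery already in place, but some care is needed to confirm that the square transported along each equivalence is literally the localization square for the reduced data.
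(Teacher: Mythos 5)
Your proposal is correct and takes essentially the same approach as the paper's proof: reduction to the \fund case via Nisnevich descent (\thmref{thm:scallop}\itemref{item:scallop/cover}, \propref{prop:Nis sep}, \propref{prop:smooth bc}), reduction to classical stacks via derived invariance (\thmref{thm:derinv}), and finally invocation of Hoyois's equivariant localization theorem through the identification of \remref{rem:Hoyois}. The paper performs the two reductions in the opposite order (derived invariance first, then Nisnevich), but this is immaterial, and your remark about the compatibility of the reduction steps with the four functors is a fair acknowledgment of details that the paper likewise leaves implicit.
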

    \begin{proof}
      By \thmref{thm:derinv} we may assume that $\sX$ and $\sZ$ are classical.
      By \thmref{thm:scallop}\itemref{item:scallop/cover}, \propref{prop:Nis sep} and \propref{prop:smooth bc}, we may assume that $\sX$ is \fund.
      Now the statement is \cite[Thm.~4.18]{HoyoisEquivariant}, modulo the equivalence mentioned in \remref{rem:Hoyois}.
    \end{proof}

    \begin{rem}\label{rem:wrongproj}
      As pointed out to us by the referee, the proof of the localization theorem in \cite[Thm.~4.18]{HoyoisEquivariant} contains a subtle error.
      Namely, it makes use of the projection formula
      \[
        \sF \fibprod_\sG p_\sharp(\sF')
        \simeq p_\sharp (p^*\sF \fibprod_{p^*\sG} \sF')
      \]
      for a smooth morphism $p$, asserted in \cite[Prop.~4.3]{HoyoisEquivariant}, which in fact does not hold on $\Sm$-fibred \anis.\footnote{
        The same mistake is present in the proof of localization in \cite[Thm.~2.21]{MorelVoevodsky} (for classical schemes) and \cite[Thm.~B]{KhanLocalization} (for derived schemes).
        The incorrect projection formula is used implicitly in the proof of \cite[Thm.~2.21]{MorelVoevodsky}, and stated explicitly in \cite[Prop.~2.5.13]{KhanLocalization}.
        These can be fixed in the same way, by noting that the motivic contractibility statement in \cite[Thm.~4.1.6]{KhanLocalization} holds at the level of $\mrm{Sch}$-fibred anima.
      }
      However, it does hold on $\mrm{Sch}$-fibred or $\Asp$-fibred \anis, and one can moreover show that the motivic contractibility of $\h_S(X,t)$ (Assertion~$(\ast)$ in the proof of \cite[Thm.~4.18]{HoyoisEquivariant}) holds at the level of $\Asp$-fibred \anis; indeed, one easily observes that the proof of Assertion~$(\ast)$ in \cite[Thm.~4.18]{HoyoisEquivariant} demonstrates this stronger statement.
      This refinement of $(\ast)$ is enough to correct the gap in \cite[Thm.~4.18]{HoyoisEquivariant}.
      Thanks to the referee's suggestion, we have carried out the analogous fix in our proof here of \thmref{thm:derinv} (which is the special case of localization for the inclusion of the classical truncation), see \propref{prop:contract} for the refined contractibility statement in our context.
      See also \cite[\S 1]{HoyoisFramedLoc}, where the same fix is carried out in the setting of classical schemes.
    \end{rem}

\section{The stable homotopy category}
\label{sec:SH}

  \subsection{Thom \anis}

    The stabilization $\SH(\sX)$ of $\MotSpc(\sX)$ is defined, at least Nisnevich-locally on $\sX$, by adjoining $\otimes$-inverses of certain pointed objects in $\MotSpc(\sX)$.

    \begin{defn}[Thom \anis]\label{exam:Thom}
      Let $\sX$ be a scalloped derived stack.
      For any finite locally free sheaf $\sE$ on $\sX$, write $\sV = \bV_\sX(\sE)$ for its total space and $\sV \setminus \sX$ for the complement of the zero section.
      The \emph{Thom \ani} of $\sE$ is the pointed motivic \ani
      \[ \Th_\sX(\sE) := \L \h_\sX(\sV)/\L \h_\sX(\sV \setminus \sX), \]
      i.e., the cofibre of the inclusion $\sV \setminus \sX \hook \sV$ taken in the \inftyCat $\MotSpc(\sX)$.
      Note that this is compact by \propref{prop:gen}\itemref{item:0as7df}.
    \end{defn}

    \begin{exam}
      The Thom \ani of the free sheaf of rank one is
      $$
        \Th_\sX(\sO_\sX)
        \simeq \Sigma_{S^1} \L \h_\sX(\sX \times \A^1\setminus\{0\})
        \simeq \L \h_\sX(\P^1 \times \sX)
        \in \MotSpc(\sX)_\bullet,
      $$
      where $\Sigma_{S^1}$ denotes topological suspension, $\A^1\setminus\{0\}$ is pointed at $1$, and $\P^1$ is pointed at $\infty$.
    \end{exam}

    \begin{rem}\label{rem:Thf^*}
      For any morphism $f : \sX' \to \sX$ of scalloped derived stacks and any finite locally free $\sE$ on $\sX$, we have a canonical isomorphism $f^*(\Th_\sX(\sE)) \simeq \Th_{\sX'}(f^*\sE)$.
    \end{rem}

    \begin{rem}\label{rem:Thom exact}
      Given an exact triangle $\sE' \to \sE \to \sE''$ of finite locally free sheaves on $\sX$, there is a canonical isomorphism
      \[ \Th_\sX(\sE) \simeq \Th_\sX(\sE') \wedge \Th_\sX(\sE'') \]
      in $\MotSpc(\sX)_\bullet$.
      This follows by the arguments of \cite[2.4.10]{CisinskiDegliseBook} or \cite[\S 3.5]{HoyoisEquivariant}.
    \end{rem}

  \subsection{Characterization}

    The stable motivic homotopy category $\SH(\sX)$, as a functor in $\sX$, will be characterized uniquely as follows.

    \begin{thm}\label{thm:SH}
      For every scalloped derived stack $\sX$, there exists a symmetric monoidal colimit-preserving functor
      \[ \Sigma^\infty : \MotSpc(\sX)_\bullet \to \SH(\sX) \]
      satisfying the following properties:
      \begin{thmlist}
        \item\label{item:SH/Thom}
        For every scalloped derived stack $\sX$ and every finite locally free sheaf $\sE$ on $\sX$, the object $\Sigma^\infty \Th_\sX(\sE) \in \SH(\sX)$ is $\otimes$-invertible.
        Moreover, the assignment $\sE \mapsto \Sigma^\infty \Th_\sX(\sE)$ induces a canonical map of $\Einfty$-groups
        \[ \Th_\sX : \K(\sX) \to \Pic(\SH(\sX)), \]
        from the algebraic K-theory \ani of perfect complexes on $\sX$ to the Picard \ani of $\otimes$-invertible objects in $\SH(\sX)$.

        \item\label{item:SH/f^*}
        For every morphism $f : \sX \to \sY$ of scalloped derived stacks, there is a symmetric monoidal colimit-preserving functor
        \[ f^* : \SH(\sY) \to \SH(\sX) \]
        which commutes with $\Sigma^\infty$ and $\Th$.
        
        \item\label{item:SH/Nis}
        The assignments
        \[ \sX \mapsto \SH(\sX), \quad f \mapsto f^* \]
        of \itemref{item:SH/f^*} determine a presheaf $\SH^*$ with values in the \inftyCat of symmetric monoidal presentable \inftyCats and symmetric monoidal colimit-preserving functors, which satisfies Nisnevich descent.

        \item\label{item:SH/tensor}
        For every \emph{representable} morphism $f : \sX \to \sY$ of scalloped derived stacks, there is a canonical isomorphism of $\SH(\sY)$-modules
        \[ \MotSpc(\sX)_\bullet \otimes_{\MotSpc(\sY)_\bullet} \SH(\sY) \to \SH(\sX). \]

        \item\label{item:SH/BG}
        If $\sX = BG$ is the classifying stack of an embeddable nice group scheme $G$ over an affine scheme $S$, then
        \[ \Sigma^\infty : \MotSpc(\sX)_\bullet \to \SH(\sX) \]
        is the \emph{universal} symmetric monoidal colimit-preserving functor which $\otimes$-inverts the Thom \anis $\Th_\sX(\sE)$ of all finite locally free sheaves $\sE$ on $\sX$ (i.e., finite representations of $G$ over $S$).
      \end{thmlist}
    \end{thm}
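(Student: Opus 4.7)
The plan is to first construct $\SH(\sX)$ on the subclass of \qfund stacks by a universal $\otimes$-inversion procedure, and then to extend the assignment $\sX \mapsto \SH(\sX)$ to all scalloped derived stacks by forcing Nisnevich descent along a scallop decomposition. The reason to treat the \qfund case first is that every such $\sX$ is representable (indeed quasi-affine) over a classifying stack $BG$ for $G$ nice and embeddable, and on $BG$ the resolution property (\propref{prop:2rgoy10}, \examref{exam:0g11up0}) guarantees that the set of Thom \anis of finite locally free sheaves is small and stable under the relevant operations. This already gives a clean universal characterization at $BG$ (property \itemref{item:SH/BG}), from which the theory over quasi-affine-over-$BG$ stacks is obtained by a base change, and the Nisnevich descent condition (property \itemref{item:SH/Nis}) then dictates a unique extension to all scalloped stacks.

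\textbf{The \qfund case.} On $BG$, I would define
\[
  \Sigma^\infty_{BG} : \MotSpc(BG)_\bullet \to \SH(BG)
\]
as the universal presentable symmetric monoidal functor that inverts $\Th_{BG}(\sE)$ for every finite locally free $\sE$ on $BG$ (equivalently, every finite $G$-representation on $S$), using the machinery of symmetric monoidal localizations, e.g. via the $\sE_\infty$-monoid of Thom \anis acting by $\otimes$. This forces property \itemref{item:SH/BG}. For a general \qfund stack $\sX = [X/G]$ with structure map $\pi : \sX \to BG$ quasi-affine (hence representable), set
\[
  \SH(\sX) := \MotSpc(\sX)_\bullet \otimes_{\MotSpc(BG)_\bullet} \SH(BG)
\]
in presentable symmetric monoidal $\infty$-categories. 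One must check that this is well-defined up to canonical equivalence independent of the presentation as a quotient: if $\sX$ is also quasi-affine over $BG'$, the two tensor products agree because both are obtained by inverting the same class of Thom \anis, namely all $\Th_\sX(\pi^*\sE)$, and by \remref{rem:Thom exact} together with the resolution property this class is insensitive to the chosen presentation. For a representable morphism $f : \sX \to \sY$ of \qfund stacks over a common $BG$, the unit of the adjunction $f^* \dashv f_*$ on $\MotSpc(-)_\bullet$ gives rise, after base change, to a symmetric monoidal colimit-preserving $f^* : \SH(\sY) \to \SH(\sX)$, and property \itemref{item:SH/tensor} holds by construction.

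\textbf{Descent extension.} Given a scalloped $\sX$, choose a scallop decomposition $(\sU_i, \sV_i, u_i)$ with $\sV_i$ \qfund (\thmref{thm:scallop}\itemref{item:scallop/cover}); then define $\SH(\sX)$ as the limit, in presentable symmetric monoidal $\infty$-categories with symmetric monoidal colimit-preserving functors, of the diagram of $\SH(\sV_i)$, $\SH(\sU_{i-1})$, and the fibre products $\SH(\sV_i \times_{\sU_i} \sU_{i-1})$ obtained from the Nisnevich squares. Equivalently, $\SH$ is the right Kan extension along the inclusion of the site of \qfund scalloped stacks (with representable morphisms) into all scalloped stacks, and one verifies a posteriori that the resulting presheaf satisfies Nisnevich descent. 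To make this well-posed one must show that the construction is independent of the chosen scallop decomposition; this follows by induction and by comparing two decompositions against a common refinement obtained via \cite[Prop.~2.9]{HoyoisKrishna}, exactly as in the argument of \propref{prop:gen}. The pullback functoriality $f^*$ for an arbitrary morphism $f$ then descends from the \qfund level via this limit description.

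\textbf{Verifying the axioms and the main obstacle.} Properties \itemref{item:SH/f^*} and \itemref{item:SH/Nis} are built into the construction; \itemref{item:SH/BG} is tautological. For \itemref{item:SH/Thom}, invertibility of $\Sigma^\infty \Th_\sX(\sE)$ can be checked Nisnevich-locally on $\sX$, reducing to the \qfund case where it is built in; additivity \remref{rem:Thom exact} upgrades the assignment to an $\sE_\infty$-map from the group completion of the $\sE_\infty$-monoid of finite locally free sheaves, which factors through $\K(\sX)$. The most delicate point, and the one I expect to be the main obstacle, is verifying property \itemref{item:SH/tensor} for a representable morphism $f : \sX \to \sY$ between \emph{arbitrary} scalloped stacks: one must show that the formation of the tensor product $\MotSpc(\sX)_\bullet \otimes_{\MotSpc(\sY)_\bullet} \SH(\sY)$ commutes with the Nisnevich descent limit defining $\SH(\sY)$ and $\SH(\sX)$. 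The natural approach is to pick a compatible scallop decomposition of $\sY$ whose pullback along $f$ gives a scallop decomposition of $\sX$ (possible because representable pullbacks of Nisnevich squares are Nisnevich squares, and \qfundness is preserved under representable morphisms by \corref{cor:scallop more}), and then to use that $\MotSpc(-)_\bullet$ is itself a Nisnevich sheaf together with compact generation (\propref{prop:gen}) to interchange the colimit defining the tensor product with the limit defining descent. The bookkeeping with $\infty$-categorical tensor products of presentable symmetric monoidal categories is where the technical content lies.
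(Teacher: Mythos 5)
Your overall architecture matches the paper's: do the \qfund case first, then define $\SH(\sX)$ for general scalloped $\sX$ as a limit over \qfund étale neighbourhoods, and finally verify this is right Kan extension from \qfund stacks (which is the paper's \lemref{lem:9sydagy} and \remref{rem:0s7gf1}). The variation is in how the \qfund case is set up. The paper defines $\SH(\sX)$ intrinsically as $\MotSpc(\sX)_\bullet[\sT_\sX^{\otimes-1}]$, inverting Thom \anis of all locally free sheaves on $\sX$, so there is no well-definedness issue; the tensor-product formula of \itemref{item:SH/tensor} is then a theorem (\lemref{lem:basic stab}). You instead take the tensor-product formula over $\MotSpc(BG)_\bullet$ as the definition and then must verify independence of the presentation. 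These are equivalent endpoints, but your route makes the hardest step look like a triviality when it is not.

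\textbf{The gap.} You write that, for a representable morphism $f : \sX \to \sY$ of \qfund stacks, property \itemref{item:SH/tensor} ``holds by construction.'' This is only true when $f$ is quasi-affine (or, equivalently for your setup, when $f$ is a morphism over a common $BG$). But a representable morphism between \qfund stacks need not be quasi-affine, and a representable morphism $\sX \to \sY$ with $\sY$ quasi-affine over $BG'$ does not make $\sX$ quasi-affine over $BG'$; one only gets that $\sX$ is scalloped. This is precisely what \corref{cor:scallop more} says, and you miscite it: part (a) there asserts that \qfundness is preserved under \emph{quasi-affine} morphisms, while for general representable morphisms only scallopedness is preserved (part (b)). So pulling back a scallop decomposition of $\sY$ along $f$ gives $\sV_i \fibprod_\sY \sX$ that are scalloped but not necessarily \qfund, and the ``compatible scallop decomposition'' you invoke does not exist in the form you need. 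The paper's \lemref{lem:basic stab} closes this gap with a genuine argument: first the quasi-affine case via the resolution property and \lemref{lem:8phi112}, and then an inductive reduction along a scallop decomposition of $\sX$ (over $\sY$, via \thmref{thm:sumihiro}\itemref{item:sumihiro/scallop}), using Nisnevich separation (\propref{prop:Nis sep}) and the fact that $\otimes$-invertibility of $\Th_\sX(\sE)$ can be tested via the evaluation morphism $\Th_\sX(\sE) \wedge \uHom(\Th_\sX(\sE), \pt_+) \to \pt_+$, which is local because smooth $*$-pullback commutes with $\Th$, $\otimes$, and $\uHom$. That scallop-induction step is the content your proposal is missing, and without it your verification of \itemref{item:SH/tensor}---and hence the well-definedness of your pullback functoriality $f^*$ for non-quasi-affine representable morphisms---does not go through.
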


    \begin{rem}\label{rem:-31g0g0p1}
      The proof of \thmref{thm:SH} will in fact construct morphisms of presheaves
      \begin{align*}
        &\Sigma^\infty : \MotSpc_\bullet^* \to \SH^*,\\
        &\Th : \K \to \Pic(\SH^*).
      \end{align*}
      Moreover, $\Sigma^\infty : \MotSpc^*_\bullet \to \SH^*$ will be the unique morphism of presheaves out of $\MotSpc_\bullet^*$, on the site of scalloped derived stacks, satisfying properties \itemref{item:SH/Nis}, \itemref{item:SH/tensor} and \itemref{item:SH/BG}.
    \end{rem}

    \begin{notat}
      We will write $\Omega^\infty : \SH(\sX) \to \MotSpc(\sX)_\bullet$ for the right adjoint of $\Sigma^\infty$.
      For a morphism $f : \sX \to \sY$, we will write $f_* : \SH(\sX) \to \SH(\sY)$ for the right adjoint of $f^*$.
    \end{notat}

    \begin{rem}\label{rem:invert quot}
      Let $G$ be an embeddable nice group scheme over an affine scheme $S$, and $X$ a qcqs derived algebraic space over $S$ with $G$-action.
      Then $f : [X/G] \to BG$ is a representable morphism of scalloped derived stacks (\thmref{thm:sumihiro}).
      Combining claims \itemref{item:SH/BG} and \itemref{item:SH/tensor} of \thmref{thm:SH}, we find that $\SH([X/G])$ is obtained from $\MotSpc([X/G])_\bullet$ by formally adjoining $\otimes$-inverses of the objects
      \[ f^*\Th_{BG}(\sE) \simeq \Th_{[X/G]}(f^*(\sE)) \]
      where $\sE$ ranges over finite locally free sheaves on $BG$.
    \end{rem}

    \begin{rem}\label{rem:Hoyois stable}
      In the situation of \remref{rem:invert quot}, it follows from Remarks~\ref{rem:Hoyois} and \ref{rem:invert quot} that there is a canonical equivalence of symmetric monoidal stable \inftyCats
      \[ \SH([X/G]) \simeq \SH^G(X) \]
      where the right-hand side is Hoyois's $G$-equivariant stable motivic homotopy category (see \cite[\S 6]{HoyoisEquivariant}), when the latter is defined (i.e., when $X$ is a $G$-quasi-projective scheme).
      For general $X$, the left-hand side can be taken as the definition of $\SH^G(X)$.
    \end{rem}

  \subsection{Functoriality}

    Before proceeding to the construction of $\SH^*$, let us also record its functorial properties:

    \begin{thm}\label{thm:funct SH}
      Let $f : \sX \to \sY$ be a morphism of scalloped derived stacks.
      \begin{thmlist}
        \item\label{item:funct SH/smooth}
        If $f$ is smooth representable, then $f^* : \SH(\sY) \to \SH(\sX)$ satisfies the following properties:
        \begin{thmlist}
          \item\label{item:funct SH/smooth/adj}
          \emph{Left adjoint}.
          It admits a left adjoint $f_\sharp$ which commutes with $\Sigma^\infty$.
          In particular, $f^*$ commutes with $\Omega^\infty$.

          \item\label{item:funct SH/smooth/bc}
          \emph{Smooth base change}.
          The functor $f_\sharp$ commutes with arbitrary $*$-inverse image.
          Equivalently, $f^*$ commutes with arbitrary $*$-direct image.

          \item\label{item:funct SH/smooth/proj}
          \emph{Smooth projection formula}.
          The functor $f_\sharp$ is a morphism of $\SH(\sY)$-modules.
        \end{thmlist}

        \item\label{item:funct SH/closed}
        If $f$ is a closed immersion, then $f_* : \SH(\sX) \to \SH(\sY)$ satisfies the following properties:
        \begin{thmlist}
          \item\label{item:funct SH/closed/bc}
          \emph{Closed base change.}
          It commutes with arbitrary $*$-inverse image.

          \item\label{item:funct SH/closed/bc2}
          \emph{Smooth-closed base change.}
          It commutes with $\sharp$-direct image by smooth representable morphisms.

          \item\label{item:funct SH/closed/proj}
          \emph{Closed projection formula.}
          It is a morphism of $\SH(\sY)$-modules.

          \item\label{item:funct SH/closed/loc}
          \emph{Localization.}
          It is fully faithful.
          If the complementary open immersion $j : \sU \to \sX$ is quasi-compact, then the essential image of $f_*$ is spanned by the kernel of $j^* : \SH(\sX) \to \SH(\sU)$.
          (For example, $f_*$ is an equivalence if $f$ is surjective.)
        \end{thmlist}

        \item\label{item:funct SH/comp}
        The functor $f^* : \SH(\sY) \to \SH(\sX)$ is compact, i.e., its right adjoint $f_*$ commutes with colimits.
      \end{thmlist}
    \end{thm}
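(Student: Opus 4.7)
The plan is to derive each item from the unstable analogues in \secref{sec:H} by exploiting the Nisnevich descent property of $\SH^*$ (\thmref{thm:SH}\itemref{item:SH/Nis}) and, for representable morphisms, the tensor product identification (\thmref{thm:SH}\itemref{item:SH/tensor}). Since the morphisms $f$ in both \itemref{item:funct SH/smooth} and \itemref{item:funct SH/closed} are representable, we have canonical equivalences $\SH(\sX) \simeq \MotSpc(\sX)_\bullet \otimes_{\MotSpc(\sY)_\bullet} \SH(\sY)$, so that most of the functors involved are obtained by tensoring their unstable counterparts with $\SH(\sY)$ over $\MotSpc(\sY)_\bullet$; throughout, verifications on $\SH$ may then be reduced, via the generation statement \propref{prop:gen} and \remref{rem:invert quot}, to a set of generators of the form $\Sigma^\infty \L\h(\sX')_+ \otimes \Th(\sE)^{\otimes -1}$.

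For part~\itemref{item:funct SH/smooth}, the unstable left adjoint of \propref{prop:f_sharp}, which is already $\MotSpc(\sY)_\bullet$-linear, induces upon tensoring with $\SH(\sY)$ a colimit-preserving left adjoint $f_\sharp : \SH(\sX) \to \SH(\sY)$ of $f^*$ that tautologically commutes with $\Sigma^\infty$, establishing \itemref{item:funct SH/smooth/adj}. Both the smooth base change formula \itemref{item:funct SH/smooth/bc} and the smooth projection formula \itemref{item:funct SH/smooth/proj} are inherited from the unstable versions in \propref{prop:smooth bc} and \propref{prop:f_sharp} by checking on the above generators, using that Thom \anis are stable under $*$-pullback (\remref{rem:Thf^*}).

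For part~\itemref{item:funct SH/closed}, the tensor product formula identifies $i_*$ at the stable level with the tensoring of the unstable $i_*$ of \propref{prop:f^*} with $\SH(\sY)$. The closed and smooth-closed base change statements \itemref{item:funct SH/closed/bc}--\itemref{item:funct SH/closed/bc2} reduce via Nisnevich descent and \thmref{thm:lifting closed} to the \qfund case, where they are consequences of Propositions~\ref{prop:i_* colimits} and~\ref{prop:smooth bc}; the closed projection formula \itemref{item:funct SH/closed/proj} then follows by a standard check on generators using closed base change. For the localization statement \itemref{item:funct SH/closed/loc}, the unit $\id \to i_* i^*$ is invertible at the unstable level by the argument of \thmref{thm:derinv} (combining \propref{prop:i_* colimits} with \propref{prop:contract}), and this stabilizes to give fully faithfulness of $i_*$ on $\SH$; the description of its essential image as $\Ker(j^*)$ comes from applying $\Sigma^\infty$ to the homotopy cocartesian square of \thmref{thm:loc unstable} and stabilizing termwise.

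Finally, for \itemref{item:funct SH/comp}, Nisnevich descent and \propref{prop:gen}\itemref{item:0as7df} together show that $\SH(\sY)$ is compactly generated by objects of the form $\Sigma^\infty \L\h_\sY(\sY')_+ \otimes \Th_\sY(\sE)^{\otimes -1}$ with $\sY' \in \Sm_{/\sY}$ \qfund and $\sE$ a finite locally free sheaf on $\sY$. The functor $f^*$ sends such a generator to $\Sigma^\infty \L\h_\sX(\sY' \fibprod_\sY \sX)_+ \otimes \Th_\sX(f^*\sE)^{\otimes -1}$, which is again compact, so $f^*$ preserves compact objects and its right adjoint $f_*$ commutes with filtered colimits. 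The most delicate step in the whole argument is Part~\itemref{item:funct SH/closed/loc}: transferring the unstable localization cocartesian square cleanly through the tensor product identification, while maintaining control over the essential image of $i_*$ in the presence of the Thom inversions, is where the representability and scalloped hypotheses are fully used and where the main care is required.
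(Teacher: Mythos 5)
Your strategy is to deduce everything from the unstable statements by extending scalars along $\Sigma^\infty$, appealing to the tensor identification $\SH(\sX) \simeq \MotSpc(\sX)_\bullet \otimes_{\MotSpc(\sY)_\bullet} \SH(\sY)$ for arbitrary representable morphisms of scalloped stacks. This is in fact exactly the mechanism the paper uses to prove the theorem in the \qfund case (\remref{rem:SH basic proof}), where the tensor identification is available via \lemref{lem:basic stab}. But for general scalloped stacks the paper's proof deliberately does \emph{not} go this route: it constructs $\SH(\sX)$ as a limit over \qfund Nisnevich neighbourhoods (\constrref{constr:SH global}) and then proves each part of \thmref{thm:funct SH} by a careful multi-stage Nisnevich induction (Cases 0--3 for smooth base change, a parallel argument for the projection formula, and a reduction to the \qfund case for the closed-immersion and compactness claims). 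There are two reasons you cannot shortcut this.

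First, \thmref{thm:SH}\itemref{item:SH/tensor} is only established in the paper for \qfund source and target (\lemref{lem:basic stab}). The ``Proof of \thmref{thm:SH}'' subsection verifies Nisnevich descent and the extension of $\Sigma^\infty$ and $\Th$ via right Kan extension, but does not check the tensor identification for arbitrary scalloped stacks; and there is a genuine obstruction to doing so cheaply, since Lurie tensor products of presentable \inftyCats do not commute with limits, so the identification cannot simply be propagated along the defining limit in \constrref{constr:SH global}. Moreover, to produce the tensor identification in general one would already need smooth base change to be available in order to commute the Nisnevich transition functors past the tensoring, so your argument is circular at this point. This is precisely the difficulty the paper's Cases 0--3 and \lemref{lem:a0s7dft} are designed to handle; a bare ``check on generators'' will not do, since the generators live over different stacks in the base change square and the commutation of $q_\sharp$ with pulled-back Thom twists is itself part of what has to be established.

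Second, the treatment of the closed immersion case does not make sense as stated. The unstable direct image $i_* : \MotSpc(\sX) \to \MotSpc(\sY)$ is a right adjoint, and the Lurie tensor product is a functor on presentable \inftyCats with left adjoint (colimit-preserving) morphisms. To ``tensor $i_*$ with $\SH(\sY)$'' one must first know that $i_*$ is $\MotSpc(\sY)_\bullet$-linear (the closed projection formula), which is one of the things you are trying to prove; alternatively one can use that $i_*$ commutes with contractible colimits (\propref{prop:i_* colimits}), but that requires a bespoke argument, which is exactly what the paper supplies. Similarly, for part \itemref{item:funct SH/comp}, the compact generation of $\SH(\sY)$ that you invoke is only proven in the paper for \qfund $\sY$ (\lemref{cor:gen SH}); for general scalloped $\sY$ the paper instead argues by reducing to the \qfund case via Nisnevich covers (using the already-established smooth base change) rather than by asserting compact generation. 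Finally, the paper's proof of \itemref{item:funct SH/smooth/adj} is structurally quite different from yours: rather than extending scalars, it observes that over \qfund stacks the presheaf $\SH^*$ restricted to smooth representable morphisms takes values in presentable \inftyCats and right adjoint functors, and then applies \cite[Thm.~5.5.3.18]{LurieHTT} to the limit defining $\SH(\sX)$ to produce $f_\sharp$ in general.

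In short, the blueprint is sound for \qfund stacks, but the step from \qfund to scalloped requires a genuine Nisnevich-descent induction that your proposal replaces with an appeal to a tensor identification that is not (and cannot easily be) available at this stage.
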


  \subsection{The \qfund case}

    We will construct $\SH(\sX)$ in increasingly greater generality, starting with the \qfund case:

    \begin{constr}\label{constr:SH basic}
      Let $\sX$ be a \qfund derived stack.

      \begin{defnlist}
        \item
        Consider the \inftyCat $\MotSpc(\sX)_\bullet$ of pointed motivic \anis, with the symmetric monoidal structure given by smash product.
        Let $\sT_\sX$ denote the (small) set of objects
        \[ \Th_\sX(\sE) \in \MotSpc(\sX)_\bullet \]
        where $\sE$ ranges over all finite locally free sheaves on $\sX$.
        Now formally adjoin to $\MotSpc(\sX)_\bullet$ the $\otimes$-inverse of every object in $\sT_\sX$ (in the sense of \cite{RobaloBridge}, \cite[\S 6.1]{HoyoisEquivariant}) to get the symmetric monoidal presentable stable \inftyCat
        \[ \SH(\sX) = \MotSpc(\sX)_\bullet[\sT_\sX^{\otimes -1}] \]
        together with the canonical symmetric monoidal colimit-preserving functor $\Sigma^\infty : \MotSpc(\sX)_\bullet \to \SH(\sX)$.\footnote{%
          Note that the objects in $\sT_\sX$ are \emph{symmetric} in the sense of \cite{RobaloBridge}.
          Indeed, this follows by functoriality from the case of $BG$, for any nice embeddable group scheme $G$ over an affine $S$, which is a special case of \cite[Lem.~6.3]{HoyoisEquivariant}.
        }

        \item
        Let $f : \sX \to \sY$ be a morphism of \qfund derived stacks.
        Note that the functor $f^* : \MotSpc(\sY)_\bullet \to \MotSpc(\sX)_\bullet$ preserves Thom \anis, i.e.,
        \[ f^*( \Th_\sY(\sE) ) = \Th_\sX(f^*(\sE)) \]
        for every finite locally free $\sE$ over $\sY$.
        By universal properties it follows that there is a unique extension of $f^*$ to a symmetric monoidal colimit-preserving
        \[ f^* : \SH(\sY) \to \SH(\sX) \]
        such that $f^*$ commutes with $\Sigma^\infty$ and $f_*$ commutes with $\Omega^\infty$.
      \end{defnlist}

      This defines a morphism of presheaves $\Sigma^\infty : \MotSpc_\bullet^* \to \SH^*$ on the site of \qfund derived stacks.
    \end{constr}

    \begin{rem}\label{rem:a0s7dg}
      For a \qfund derived stack $\sX$, the assignment
      \[ \sE \mapsto \Sigma^\infty\Th_\sX(\sE) \]
      defines by construction a functor from the \inftyGrpd of finite locally free sheaves on $\sX$ to the Picard $\Einfty$-group of $\otimes$-invertible objects in $\SH(\sX)$.
      Moreover, since it sends direct sums to tensor products (\remref{rem:Thom exact}), it is a map of $\Einfty$-monoids.
      Since the target is group-complete, the map factors through the group completion of the source, which is the algebraic K-theory \ani of $\sX$ (since \qfund stacks have the resolution property, see \propref{prop:2rgoy10}).
      Thus we have a map of presheaves
      \[ \Th : \K \to \Pic(\SH^*) \]
      on the site of \qfund derived stacks.
    \end{rem}

    \begin{lem}\label{lem:basic stab}
      Let $f : \sX \to \sY$ be a representable morphism of \qfund derived stacks.
      Then the morphism of $\SH(\sX)$-modules
      \[ \MotSpc(\sX)_\bullet \otimes_{\MotSpc(\sY)_\bullet} \SH(\sY) \to \SH(\sX) \]
      is an equivalence.
    \end{lem}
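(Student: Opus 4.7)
First, I would unwind the left-hand side using the general theory of symmetric monoidal $\otimes$-inversion. The relative tensor product is identified with the localization $\MotSpc(\sX)_\bullet[(f^*\sT_\sY)^{\otimes -1}]$ obtained by formally inverting only the pullbacks of Thom \anis from $\sY$. The natural comparison map to $\SH(\sX) = \MotSpc(\sX)_\bullet[\sT_\sX^{\otimes -1}]$ is then a further symmetric monoidal localization inverting any remaining $\Th_\sX(\sE')$ for $\sE'$ finite locally free on $\sX$, and the lemma reduces to showing each such $\Th_\sX(\sE')$ is already invertible after inverting $f^*\sT_\sY$.

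Next, I would use the elementary fact that in a symmetric monoidal category, $A$ is invertible as soon as $A \otimes B$ is invertible for some $B$: this reduces the problem to producing, for each $\sE'$ on $\sX$, a finite locally free $\sF$ on $\sX$ and a finite locally free $\sE$ on $\sY$ together with a direct-summand decomposition $\sE' \oplus \sF \simeq f^*\sE$. Given such a decomposition, \remref{rem:Thom exact} yields $\Th_\sX(\sE') \wedge \Th_\sX(\sF) \simeq f^*\Th_\sY(\sE)$, whose right-hand side is invertible by construction of the localization.

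The production of the direct-summand decomposition would rest on two features of \qfund stacks. First, since $\sY$ has the resolution property (\propref{prop:2rgoy10}) and, when $f$ is quasi-affine, \lemref{lem:8phi112} propagates this along $f^*$, one obtains a surjection $f^*\sE \twoheadrightarrow \sE'$ from the pullback of some finite locally free $\sE$ on $\sY$. Second, any \qfund stack $\sX = [X/G]$ with $X$ quasi-affine and $G$ linearly reductive satisfies $H^i(\sX, -) = 0$ for $i \ge 1$ on discrete quasi-coherent sheaves, because both taking global sections on the quasi-affine $X$ and taking $G$-invariants on $BG$ are t-exact. Consequently $\mathrm{Ext}^1(\sE', \sK) = 0$ for the kernel $\sK$, the surjection splits, and we obtain the desired presentation $\sE' \oplus \sF \simeq f^*\sE$.

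The main obstacle is the quasi-affineness assumption on $f$: a representable morphism of \qfund derived stacks is not automatically quasi-affine (already $BB \hookrightarrow B\GL_n$, whose fibre is a flag variety, is not). To circumvent this, I would first Nisnevich-refine $\sY$ via \thmref{thm:sumihiro} to the case of an affine quotient, and then further refine $\sX$ so that the structure map becomes affine. Nisnevich-locality of the displayed isomorphism in $\sX$ follows from Nisnevich descent for $\SH$ (\thmref{thm:SH}\itemref{item:SH/Nis}) together with the smooth base change formula (\propref{prop:smooth bc}), so this local verification suffices; alternatively, one can package everything in terms of the extension $\Th : \K \to \Pic(\SH^*)$ from \remref{rem:a0s7dg} and reduce to a K-theoretic statement about direct summands of pullback classes.
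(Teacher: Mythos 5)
Your overall strategy — identify the relative tensor product with $\MotSpc(\sX)_\bullet[(f^*\sT_\sY)^{\otimes -1}]$, reduce to showing each $\Th_\sX(\sE')$ is already invertible there, and produce this from a resolution-property presentation plus the elementary fact that a $\wedge$-factor of an invertible object is invertible — matches the paper's. There are, however, two substantive problems with the execution.

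First, the Nisnevich-locality step is circular. You invoke Nisnevich descent for $\SH^*$ (\thmref{thm:SH}\itemref{item:SH/Nis}) to reduce to the case where $f$ is affine, but at the point in the logical order where \lemref{lem:basic stab} is stated, \thmref{thm:SH} has not yet been established for \qfund stacks: \remref{rem:SH basic proof} deduces \thmref{thm:SH}\itemref{item:SH/tensor} (and then the rest, including \itemref{item:SH/Nis}) precisely from \lemref{lem:basic stab}. The paper avoids this by observing that $\otimes$-invertibility of $\Th_\sX(\sE)$ in $\MotSpc(\sX)_\bullet[f^*\sT_\sY^{\otimes -1}]$ is equivalent to invertibility of the evaluation morphism $\mathrm{ev} : \Th_\sX(\sE) \wedge \uHom(\Th_\sX(\sE),\pt_+) \to \pt_+$, a claim about a single morphism that can be checked after smooth $*$-pullback because $u^*$ commutes with $\Th$ (\remref{rem:Thf^*}), $\otimes$ (\propref{prop:f^*}), and $\uHom$ (by adjunction from \propref{prop:f_sharp}). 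One then applies the \emph{unstable} Nisnevich separation (\propref{prop:Nis sep}), which is already available. Without this reformulation, the Nisnevich-local reduction from general representable $f$ to affine $f$ (via \thmref{thm:sumihiro}) is not justified.

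Second, your production of a direct-summand decomposition $\sE' \oplus \sF \simeq f^*\sE$ is both unnecessary and, as stated, relies on a false claim. \remref{rem:Thom exact} applies to any exact triangle of finite locally free sheaves, split or not, so once you have a surjection $f^*\sE \twoheadrightarrow \sE'$ with (finite locally free) kernel $\sK$, you already get $\Th_\sX(f^*\sE) \simeq \Th_\sX(\sK) \wedge \Th_\sX(\sE')$ and can conclude. The splitting argument via $\mathrm{Ext}^1(\sE',\sK)=0$ is moreover not valid on \qfund stacks: quasi-affine $X$ need not have vanishing higher quasi-coherent cohomology (consider $\A^2\setminus\{0\}$), so the claimed $H^i(\sX,-)=0$ for $i\ge 1$ only holds when $X$ is affine, i.e., when $\sX$ is \fund. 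Your remedy (Nisnevich-refine to the affine case) would make the vanishing true, but then you are back to relying on the unjustified locality step. It is simpler to drop the splitting and use the exact triangle directly, as the paper does.
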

    \begin{proof}
      First assume that the morphism $f$ is quasi-affine.
      In this case, for every finite locally free sheaf $\sE$ on $\sX$, the proof of \lemref{lem:8phi112} shows that one can find a finite locally free sheaf $\sE'$ on $\sY$ and a surjection
      \[ f^*(\sE') \twoheadrightarrow \sE. \]
      If $\sK$ denotes its kernel, then we get a canonical isomorphism (\remref{rem:Thom exact})
      \[ \Th_{\sX}(f^*\sE')  \simeq \Th_{\sX}(\sE) \wedge \Th_{\sX}(\sK) \]
      in $\MotSpc(\sX)_\bullet$.
      Thus $\Th_{\sX}(\sE)$ is invertible in $\MotSpc(\sX)_\bullet[f^*(\sT_\sY)^{\otimes-1}]$.

      Next consider the case of a general representable morphism.
      Since $\sX$ has affine diagonal, we can apply \thmref{thm:sumihiro}\itemref{item:sumihiro/scallop} to get a scallop decomposition $(\sU_i,\sV_i,u_i)$, where $\sV_i$ are quasi-affine over $\sY$.
      Hence by induction and the quasi-affine case above, it will suffice to show that for any Nisnevich square
      \[ \begin{tikzcd}
        \sW \ar{r}\ar{d}
        & \sV \ar{d}{p}
        \\
        \sU \ar{r}{j}
        & \sX
      \end{tikzcd} \]
      where $j$ is an open immersion and $p$ is an étale morphism inducing an isomorphism away from $\sU$, we have that if the claim holds for $f_\sU = f|_\sU : \sU \to \sY$, $f_\sV = f|_\sV$ and $f_\sW = f|_\sW$, then it also holds for $f$.
      Indeed, $\otimes$-invertibility of $\Th_\sX(\sE)$ in $\MotSpc(\sX)_\bullet[f^*(\sT_\sY)^{\otimes -1}]$ (for any finite locally free $\sE$ on $\sX$) is equivalent to invertibility of the canonical evaluation morphism
      \[
        \mrm{ev} : \Th_\sX(\sE) \wedge \uHom(\Th_\sX(\sE), \pt_+)
        \to \pt_+
      \]
      where $\pt_+$ is the monoidal unit, image of $\L\h_\sX(\sX)_+ \in \MotSpc(\sX)_\bullet$.
      Since smooth inverse image commutes with $\Th$, $\otimes$ and $\uHom$ (by \remref{rem:Thf^*}, \propref{prop:f^*}, and by adjunction from the projection formula of \propref{prop:f_sharp}, respectively), this immediately follows from Nisnevich separation (\propref{prop:Nis sep}).
    \end{proof}

    \begin{lem}\label{cor:gen SH}
      Let $\sX$ be a \qfund derived stack.
      \begin{thmlist}
        \item
        For every $\sX'\in \Sm_{/\sX}$ and every $\alpha \in \K(\sX)$, the object
        \[ \Sigma^\infty_+ (\sX') \otimes \Th_\sX(\alpha) \in \SH(\sX) \]
        is compact.
        
        \item
        Choose a quasi-affine morphism $f : \sX \to BG$ where $G$ is an embeddable nice group scheme over an affine scheme $S$.
        Then $\SH(\sX)$ is compactly generated by the set of objects
        $$\Sigma^{\infty}_+(\sX') \otimes \Th_\sX(f^*(\sE))^{\otimes -1},$$
        where $\sX' \in \Sm_{/\sX}$, and $\sE$ is a finite locally free sheaf on $BG$.
      \end{thmlist}
    \end{lem}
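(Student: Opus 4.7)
The plan is to derive both claims from the explicit construction of $\SH(\sX) = \MotSpc(\sX)_\bullet[\sT_\sX^{\otimes -1}]$ in \constrref{constr:SH basic}, together with the compact generation of $\MotSpc(\sX)$ from \propref{prop:gen}. For claim~(i), the Thom twist $\Th_\sX(\alpha)$ is $\otimes$-invertible for every $\alpha \in \K(\sX)$ by construction (\remref{rem:a0s7dg}), so tensoring with it is an autoequivalence of $\SH(\sX)$ and in particular preserves compactness; it therefore suffices to show that $\Sigma^\infty_+(\sX')$ is compact. For this I would invoke the realization of the localization $\MotSpc(\sX)_\bullet \to \SH(\sX)$ as a filtered colimit in $\mathrm{Pr}^L$ of copies of $\MotSpc(\sX)_\bullet$ along tensoring with the symmetric Thom spaces $\Th_\sX(\sE)$, each of which is compact in $\MotSpc(\sX)_\bullet$ by \propref{prop:gen}\itemref{item:0as7df} (being the cofibre of a morphism between compact objects). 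Since each tensoring functor is then itself a compact functor, the colimit takes place in the subcategory of compactly generated presentable $\infty$-categories and compact functors, so $\Sigma^\infty$ preserves compact objects; applied to the compact object $\L\h_\sX(\sX')_+$ this yields~(i).

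For claim~(ii), I would apply \lemref{lem:basic stab} to the quasi-affine morphism $f : \sX \to BG$ to identify
\[
  \SH(\sX)
  \simeq \MotSpc(\sX)_\bullet \otimes_{\MotSpc(BG)_\bullet} \SH(BG)
  \simeq \MotSpc(\sX)_\bullet\big[\{\Th_\sX(f^*\sE)\}^{\otimes -1}\big],
\]
where $\sE$ runs over finite locally free sheaves on $BG$; here the second equivalence uses that $\SH(BG)$ is itself obtained from $\MotSpc(BG)_\bullet$ by inverting the Thom spaces $\Th_{BG}(\sE)$ (cf.~\thmref{thm:SH}\itemref{item:SH/BG}). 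The standard compact generation criterion for symmetric monoidal localizations then supplies a set of compact generators of the form
\[
  \Sigma^\infty_+(\sX') \otimes \Th_\sX(f^*\sE_1)^{\otimes -1} \otimes \cdots \otimes \Th_\sX(f^*\sE_k)^{\otimes -1},
\]
for $\sX' \in \Sm_{/\sX}$ and $\sE_i$ finite locally free on $BG$. Multiplicativity of Thom spaces (\remref{rem:Thom exact}) applied to split direct sums collapses each such tensor monomial to $\Th_\sX(f^*(\sE_1 \oplus \cdots \oplus \sE_k))^{\otimes -1}$, yielding exactly the stated generating family; compactness of each generator is part~(i).

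I expect the bulk of the work to be in setting up claim~(i) rigorously, namely unwinding the \cite{RobaloBridge} machinery to verify that inversion of a set of symmetric compact objects in a compactly generated symmetric monoidal $\infty$-category preserves compact generation and that the universal functor $\Sigma^\infty$ preserves compact objects. All hypotheses (symmetry of the Thom spaces, noted in \constrref{constr:SH basic}; their compactness, from \propref{prop:gen}\itemref{item:0as7df}; and compact generation of $\MotSpc(\sX)_\bullet$) are already in place from earlier in the section, so this is more a matter of careful bookkeeping than a genuine obstruction. Given~(i), claim~(ii) is essentially formal.
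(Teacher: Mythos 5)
Your proposal is correct and follows essentially the same route as the paper's proof: both establish compactness of $\Sigma^\infty_+(\sX')\otimes\Th_\sX(\alpha)$ via the observation that $\Sigma^\infty$ is a compact functor (unwinding the filtered colimit presentation of the Robalo inversion, using compactness of the Thom \anis from \propref{prop:gen}\itemref{item:0as7df}) together with invertibility of $-\otimes\Th_\sX(\alpha)$, and both derive generation from the standard criterion for symmetric monoidal $\otimes$-inversion (the paper cites \cite[Prop.~6.4]{HoyoisEquivariant}) combined with \lemref{lem:basic stab} and multiplicativity of Thom \anis. The only difference is presentational: you spell out the filtered-colimit-in-$\mathrm{Pr}^L$ bookkeeping and the collapse of tensor monomials of Thom twists somewhat more explicitly than the paper does.
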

    \begin{proof}
      It is a formal consequence of \propref{prop:gen} and general facts about $\otimes$-inversion of objects (see e.g. the proof of \cite[Prop.~6.4]{HoyoisEquivariant}) that objects of the form $\Sigma^{\infty}_+(\sX') \otimes \Th_\sX(\sE)^{\otimes -1}$, where $\sX' \in \Sm_{/\sX}$ and $\sE$ is a finite locally free sheaf on $\sX$, generate $\SH(\sX)$ under sifted colimits.
      The second claim follows by combining this with the canonical equivalence $\MotSpc(\sX)_\bullet[f^*(\sT_{BG})^{\otimes -1}] \simeq \SH(\sX)$ (\lemref{lem:basic stab}).

      Recall that the object $\Th_\sX(\sE) \in \MotSpc(\sX)$ is compact for every finite locally free $\sE$ on $\sX$ (\examref{exam:Thom}).
      By construction of $\SH(\sX)$, this implies that $\Sigma^\infty$ is a compact functor.
      The functor $- \otimes \Th_\sX(\alpha) : \SH(\sX) \to \SH(\sX)$ is also compact, for every $\alpha \in \K(\sX)$, since it is an equivalence.
      Thus the claim follows from the fact that the object $\L \h_\sX(\sX') \in \MotSpc(\sX)$ is compact for every $\sX'\in\Sm_{/\sX}$ (\propref{prop:gen}\itemref{item:0as7df}).
    \end{proof}

    \begin{rem}\label{rem:SH basic proof}
      At this point, we can already prove Theorems~\ref{thm:SH} and \ref{thm:funct SH} on the site of \qfund derived stacks.
      Indeed, \thmref{thm:SH}\ref{item:SH/BG} holds by construction, \ref{item:SH/Thom} by \remref{rem:a0s7dg}, and \ref{item:SH/tensor} by \lemref{lem:basic stab}.
      The last property \ref{item:SH/Nis} is a standard consequence of the properties asserted in \thmref{thm:funct SH} (cf. \cite[Prop.~6.24]{HoyoisEquivariant}).
      
      \thmref{thm:funct SH}\ref{item:funct SH/comp} follows from \lemref{cor:gen SH}.
      \thmref{thm:funct SH}\ref{item:funct SH/smooth} follows immediately from the analogous unstable statements (Propositions~\ref{prop:f_sharp}, \ref{prop:smooth bc}, \ref{prop:Nis sep}) by extending scalars along $\Sigma^\infty$ (in view of \thmref{thm:SH}).
      For example, for smooth base change we argue as follows.
      By the unstable statement we have the commutative square
      \[
        \begin{tikzcd}
          \MotSpc(\sX) \ar{r}{p_\sharp}\ar{d}{f^*}
          & \MotSpc(\sY) \ar{d}{g^*}
          \\
          \MotSpc(\sX') \ar{r}{q_\sharp}
          & \MotSpc(\sY').
        \end{tikzcd}
      \]
      Then by \lemref{lem:basic stab}, extension of scalars along $\Sigma^\infty$ gives the upper commutative square in the diagram
      \[
        \begin{tikzcd}
          \SH(\sX) \ar{r}{p_\sharp}\ar{d}{f^*}
          & \SH(\sY) \ar{d}{g^*}
          \\
          \MotSpc(\sX') \otimes_{\MotSpc(\sX)} \SH(\sX) \ar{r}{q_\sharp}\ar{d}
          & \MotSpc(\sY') \otimes_{\MotSpc(\sX)} \SH(\sX)\ar{d}
          \\
          \MotSpc(\sX') \otimes_{\MotSpc(\sX')} \SH(\sX') \ar{r}\ar[equals]{d}
          & \MotSpc(\sY') \otimes_{\MotSpc(\sX')} \SH(\sX') \ar[equals]{d}
          \\
          \SH(\sX') \ar{r}{q_\sharp}
          & \SH(\sY')
        \end{tikzcd}
      \]
      where the middle square commutes tautologically and the lower right-hand isomorphism is \lemref{lem:basic stab}.

      In \thmref{thm:funct SH}\ref{item:funct SH/closed}, localization is deduced similarly from the unstable statement (\thmref{thm:loc unstable}, compare the proof of \cite[Thm.~1.36]{KhanSix}).
      The other properties are immediate consequences of localization, just as in \cite[Lems~2.19, 2.20, 2.21]{KhanSix}.
    \end{rem}

  \subsection{Proof of Theorem~\ref{thm:SH}}

    We now extend \constrref{constr:SH basic} to general scalloped stacks.

    \begin{constr}\label{constr:SH global}
      Let $\sX$ be a scalloped derived stack.
      We define the \inftyCat $\SH(\sX)$ by the formula
      \[ \SH(\sX) = \lim_{(\sU,\sU\to\sX)} \SH(\sU) \]
      where the limit is taken over the \inftyCat of pairs $(\sU, u)$ where $\sU$ is \qfund and $u : \sU \to \sX$ is a representable étale morphism.
    \end{constr}

    To show that this determines a presheaf $\SH^*$ satisfying Nisnevich descent, we will show that this construction is right Kan extended from the subcategory of \qfund derived stacks.

    \begin{lem}\label{lem:9sydagy}
      Let $\sC$ be the \inftyCat of scalloped derived stacks and $\sC_0$ the full subcategory of \qfund derived stacks.
      Regard $\sC$ and $\sC_0$ as sites with the representable Nisnevich topology.
      Then restriction along the inclusion $i : \sC_0 \hookrightarrow \sC$ induces an equivalence from the \inftyCat of sheaves on $\sC$ to the \inftyCat of sheaves on $\sC_0$.
      In particular, every sheaf on $\sC$ is right Kan extended from $\sC_0$.
    \end{lem}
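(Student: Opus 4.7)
The plan is to invoke an $\infty$-categorical comparison lemma for sites: given a fully faithful inclusion $i : \sC_0 \hookrightarrow \sC$ of small $\infty$-sites satisfying an appropriate density condition together with compatibility of the two topologies, restriction $i^*$ induces an equivalence $\Shv(\sC) \xrightarrow{\sim} \Shv(\sC_0)$ whose inverse is right Kan extension $\RKE_i$. Both inputs should come essentially for free from the structure theorems for scalloped stacks established earlier.

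First I would verify the density condition: every object of $\sC$ admits a Nisnevich covering sieve generated by morphisms from objects of $\sC_0$. This is immediate from \thmref{thm:scallop}\itemref{item:scallop/cover}, which produces for each scalloped $\sX$ a representable Nisnevich cover $u : \sU \twoheadrightarrow \sX$ with $\sU$ fundamental, and hence in particular quasi-fundamental.

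Next I would check that the Nisnevich topology on $\sC_0$ coincides with the topology induced from $\sC$. The nontrivial direction is that every representable Nisnevich covering sieve in $\sC$ on an object $\sU \in \sC_0$ can be refined to a covering sieve generated by quasi-fundamental stacks. Given such a cover $\{\sV_\alpha \to \sU\}$, each $\sV_\alpha$ is representable étale over the quasi-fundamental stack $\sU$ and therefore scalloped by \corref{cor:scallop more}. Applying \thmref{thm:scallop}\itemref{item:scallop/cover} to each $\sV_\alpha$ yields a Nisnevich cover by a fundamental stack, and composing these with the original cover provides the desired refinement within $\sC_0$.

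With both conditions in hand, the comparison lemma supplies the desired equivalence $i^* : \Shv(\sC) \xrightarrow{\sim} \Shv(\sC_0)$. Its inverse is forced to agree with the right adjoint of $i^*$, which on presheaves is right Kan extension along $i$; this immediately gives the ``in particular'' clause that every sheaf on $\sC$ is right Kan extended from its restriction to $\sC_0$. The main obstacle is less geometric than bibliographic: one must cite a sufficiently general $\infty$-categorical version of the SGA 4 comparison lemma applicable here (e.g. arguing via the induced geometric morphism of $\infty$-topoi and using that the functor of presheaf categories $i_! \dashv i^* \dashv \RKE_i$ restricts compatibly to sheaves once the two conditions above are verified).
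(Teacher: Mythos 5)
Your proposal is essentially the paper's argument, just packaged differently: the paper verifies the same two geometric inputs (density of qfund stacks via \thmref{thm:scallop}\itemref{item:scallop/cover}, and that representable Nisnevich covers of a qfund stack refine within $\sC_0$ — this is the ``cocontinuity'' the paper invokes), then concludes directly by showing $\L_{\mrm{Nis}} i_!$ is fully faithful and generates under colimits, rather than citing an abstract comparison lemma. Both routes are correct; your version delegates the final step to a general $\infty$-categorical comparison lemma while the paper proves that step inline (which sidesteps your ``bibliographic'' concern at the modest cost of rerunning the generation argument from \propref{prop:gen}).
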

    \begin{proof}
      Note that $i$ is both topologically continuous and cocontinuous, so that restriction $i^*$ and its right adjoint $i_*$ (right Kan extension) preserve sheaves.
      Its left adjoint $\L_{\mrm{Nis}} i_!$ is Nisnevich-localized left Kan extension.
      Since $i$ is fully faithful, it is clear that $\L_{\mrm{Nis}} i_!$ is fully faithful.
      Hence the claim follows from the fact that $\L_{\mrm{Nis}} i_!$ generates the \inftyCat of sheaves on $\sC$ under colimits, which is proven exactly as in \propref{prop:gen}.
    \end{proof}

    \begin{rem}\label{rem:0s7gf1}
      Let $\SH'^*$ denote the right Kan extension of the presheaf $\SH^*$, defined on the site of \qfund derived stacks (\constrref{constr:SH basic}), to the site of scalloped derived stacks.
      By \lemref{lem:9sydagy}, $\SH'^*$ satisfies Nisnevich descent.
      Recall that for any scalloped $\sX$, $\SH'(\sX)$ can be computed by the same limit as in \constrref{constr:SH global}, except taken over the \inftyCat of pairs $(\sU, u)$ where $\sU$ is \qfund and $u : \sU \to \sX$ is \emph{any} morphism.
      We claim that the canonical functor $\SH'(\sX) \to \SH(\sX)$ is an equivalence.
      This is clear when $\sX$ is \qfund.
      In general, $\sX$ admits a scallop decomposition $(\sU_i,\sV_i,u_i)_i$ of $\sX$ where $\sV_i$ are \qfund (\thmref{thm:scallop}\itemref{item:scallop/cover}).
      By induction, it will therefore suffice to show that if $\sX$ is covered by a Nisnevich square
      \[ \begin{tikzcd}
        \sW \ar{r}\ar{d}
        & \sV\ar{d}{p}
        \\
        \sU \ar{r}{j}
        & \sX
      \end{tikzcd} \]
      such that the claim is true for $\sU$, $\sV$ and $\sW$, then it is also true for $\sX$.
      By Nisnevich descent, we have a canonical equivalence
      \[
        \SH'(\sX)
        \simeq \SH'(\sU) \fibprod_{\SH'(\sW)} \SH'(\sV)
        \simeq \SH(\sU) \fibprod_{\SH(\sW)} \SH(\sV).
      \]
      Since each of the maps $\sU \to \sX$, $\sV \to \sX$ and $\sW \to \sX$ are representable and étale, there is a projection functor $\SH(\sX) \to \SH'(\sX)$, which one checks is inverse to the canonical one.
    \end{rem}

    \begin{constr}\leavevmode
      \begin{defnlist}
        \item
        It follows from \remref{rem:0s7gf1} that \constrref{constr:SH global} lifts to a presheaf $\SH^*$, given by
        \[ \sX \mapsto \SH(\sX), \quad f \mapsto f^*, \]
        on the site of scalloped derived stacks.
        This presheaf takes values in the \inftyCat of symmetric monoidal presentable stable \inftyCats and symmetric monoidal colimit-preserving functors (since the forgetful functor from the latter to the \inftyCat of large \inftyCats is limit-preserving).
        
        \item
        Since $\MotSpc^*_\bullet$ is also right Kan extended from \qfund stacks (by \lemref{lem:9sydagy}), it follows that $\Sigma^\infty : \MotSpc_\bullet^* \to \SH^*$ also admits a unique extension to the site of scalloped derived stacks.

        \item
        Similarly, right Kan extension\footnote{%
          Algebraic K-theory satisfies Nisnevich descent on scalloped derived stacks (see \thmref{thm:Thomason}), hence is also right Kan extended from \funds.
        } produces an extension of the map $\Th : \K \to \Pic(\SH^*)$ (\remref{rem:a0s7dg}) to the site of scalloped derived stacks.
      \end{defnlist}
    \end{constr}

  \subsection{Proof of Theorem~\ref{thm:funct SH}\itemref{item:funct SH/smooth}}

    \sssec{Left adjoint \itemref{item:funct SH/smooth/adj}.}

      Note that, over the site of \fund derived stacks and smooth representable morphisms, $\SH^*$ takes values in the \inftyCat of presentable \inftyCats and \emph{right adjoint} functors (by \thmref{thm:funct SH}\ref{item:funct SH/smooth}, which we have already proven in the \fund case).
      It follows from \cite[Thm.~5.5.3.18]{LurieHTT} that the same holds for the extension of $\SH^*$ to the site of scalloped derived stacks, since the transition functors in the limit in \constrref{constr:SH global} are in particular smooth representable.

      It follows that, for every smooth representable morphism $p : \sX \to \sY$ of scalloped derived stacks $p^*$ admits a left adjoint
      \[ p_\sharp : \SH(\sX) \to \SH(\sY). \]
      It follows from the construction that it commutes with $\Sigma^\infty$.

    \sssec{Smooth base change \itemref{item:funct SH/smooth/bc}.}
    \label{sssec:sf0g61oyl1}

      Let $p : \sX \to \sY$ be a smooth representable morphism of scalloped derived stacks.
      For any morphism of scalloped derived stacks $g : \sY' \to \sY$, form the base change square
      \[ \begin{tikzcd}
        \sX' \ar{r}{q}\ar{d}{f}
        & \sY' \ar{d}{g}
        \\
        \sX \ar{r}{p}
        & \sY.
      \end{tikzcd} \]
      The claim is that the exchange transformation
      \begin{equation*}
        \Ex^*_\sharp : q_\sharp f^* \to g^* p_\sharp,
      \end{equation*}
      is invertible.
      For the proof, we will require the following lemma.

      \begin{lem}\label{lem:a0s7dft}
        Let the notation be as above.
        Suppose that $\sX$ is covered by a Nisnevich square
        \[ \begin{tikzcd}
          \sW \ar{r}\ar{d}\ar{rd}{w}
          & \sV\ar{d}{v}
          \\
          \sU \ar{r}{u}
          & \sX
        \end{tikzcd} \]
        such that $(p \circ u)_\sharp$, $(p \circ v)_\sharp$, and $(p \circ w)_\sharp$ satisfy base change against $g^*$.
        Then $p_\sharp$ satisfies base change against $g^*$.
      \end{lem}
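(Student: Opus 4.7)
The plan is to use Nisnevich descent for $\SH^*$, already established in \thmref{thm:SH}\itemref{item:SH/Nis}, to reduce the global base-change assertion for $p_\sharp$ to the three local assertions furnished by the hypothesis. Specifically, Nisnevich descent supplies, for every $\sF \in \SH(\sX)$, a pushout decomposition
\[
  \sF \simeq u_\sharp u^* \sF \fibcoprod_{w_\sharp w^* \sF} v_\sharp v^* \sF
\]
in $\SH(\sX)$, where the $\sharp$-functors exist because $u$, $v$, $w$ are smooth representable (\thmref{thm:funct SH}\itemref{item:funct SH/smooth/adj}, already in hand). Since both $p_\sharp$ and $g^*$ are left adjoints and therefore preserve colimits, and since $\sharp$-pushforwards compose along smooth representable morphisms ($p_\sharp u_\sharp \simeq (pu)_\sharp$, etc.), applying $g^* p_\sharp$ to the above pushout yields
\[
  g^* p_\sharp \sF \simeq g^*(pu)_\sharp u^* \sF \fibcoprod_{g^*(pw)_\sharp w^* \sF} g^*(pv)_\sharp v^* \sF.
\]

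On the other side, the pullback of the Nisnevich square along $f : \sX' \to \sX$ is again a Nisnevich square over $\sX'$, with corners $\sU' = \sU \fibprod_\sX \sX'$, $\sV'$, $\sW'$ and structure maps $u', v', w'$; applying descent on $\sX'$ together with the pseudofunctoriality isomorphisms $u'^* f^* \simeq f_\sU^* u^*$ (and similarly for $v$ and $w$) transforms the resulting decomposition into
\[
  q_\sharp f^* \sF \simeq (qu')_\sharp f_\sU^* u^* \sF \fibcoprod_{(qw')_\sharp f_\sW^* w^* \sF} (qv')_\sharp f_\sV^* v^* \sF,
\]
where $f_\sU, f_\sV, f_\sW$ denote the base changes of $f$ along $u$, $v$, $w$. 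The hypothesis that $(pu)_\sharp$, $(pv)_\sharp$, $(pw)_\sharp$ satisfy base change against $g^*$ identifies the three terms on the right-hand side with the corresponding terms in the previous display, and passing to the pushout yields the desired isomorphism $q_\sharp f^* \sF \simeq g^* p_\sharp \sF$.

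The principal obstacle I expect is coherence: one must verify that the termwise identifications above are natural enough to assemble into the canonical exchange transformation $\Ex^*_\sharp : q_\sharp f^* \to g^* p_\sharp$ itself, rather than merely producing some abstract isomorphism between source and target. This reduces to a diagrammatic verification via the mate calculus, tracing $\Ex^*_\sharp$ through the adjunctions $(u_\sharp, u^*)$, $(v_\sharp, v^*)$, $(w_\sharp, w^*)$ and the analogous adjunctions over $\sX'$; it is of the same formal type as the scheme-level verifications in \cite[\S 2.3]{CisinskiDegliseBook} or \cite[Prop.~1.26]{KhanSix}, but needs to be phrased carefully enough to remain valid at the level of \inftyCats.
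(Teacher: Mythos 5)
Your proof is correct and is essentially the argument the paper gives: decompose both $q_\sharp f^*$ and $g^*p_\sharp$ into Nisnevich pushouts and match the pieces using the hypothesis. The coherence worry you flag at the end is genuine for the objectwise formulation you use, but it dissolves if (as the paper does, citing Proposition~\ref{prop:etale exc}) one writes the descent isomorphism at the level of \emph{functors}, $p_\sharp \simeq p_{\sU,\sharp}u^* \fibcoprod_{p_{\sW,\sharp}w^*} p_{\sV,\sharp}v^*$ and likewise for $q_\sharp$; then $\Ex^*_\sharp$ becomes a map of pushout squares of functors whose constituents are invertible by hypothesis, and no separate mate-calculus verification is needed.
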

      \begin{proof}
        The assumption is that the exchange transformation
        \begin{equation*}
          \Ex^*_\sharp : q_{\sU',\sharp} f_\sU^* \to g^* p_{\sU,\sharp}
        \end{equation*}
        associated to the composite square
        \[ \begin{tikzcd}
          \sU' \ar{r}{u'}\ar{d}{f_\sU}
          & \sX' \ar{r}{q}\ar{d}{f}
          & \sY' \ar{d}{g}
          \\
          \sU \ar{r}{u}
          & \sX \ar{r}{p}
          & \sY,
        \end{tikzcd} \]
        is invertible, where $\sU' = \sU \fibprod_\sY \sY'$, $f_\sU : \sU' \to \sU$ is the base change of $f$, $p_\sU = p\circ u : \sU \to \sX$, and $q_{\sU'} = q\circ u' : \sU' \to \sY'$ is the base change of $p_\sU$.
        Similarly for $\Ex^*_\sharp : q_{\sV',\sharp} f_\sV^* \to g^* p_{\sV,\sharp}$ and $\Ex^*_\sharp : q_{\sW',\sharp} f_\sW^* \to g^* p_{\sW,\sharp}$.

        To show that the exchange transformation $\Ex^*_\sharp : q_\sharp f^* \to g^* p_\sharp$ is invertible, combine the above isomorphisms with the descent isomorphisms,
        \begin{align*}
          p_\sharp &\simeq p_{\sU,\sharp} u^* \fibcoprod_{p_{\sW,\sharp} w^*} p_{\sV,\sharp} v^*\\
          q_\sharp &\simeq q_{\sU',\sharp} u'^* \fibcoprod_{q_{\sW',\sharp} w'^*} q_{\sV',\sharp} v'^*,
        \end{align*}
        cf. \propref{prop:etale exc}.
      \end{proof}

      We now return to the proof of smooth base change.

      \emph{Case 0.}
      If $\sX$, $\sY$, $\sX'$ and $\sY'$ are \qfund, then the claim is \remref{rem:SH basic proof}.

      \emph{Case 1.}
      Assume $\sY$ and $\sY'$ are \qfund and $\sX$ and $\sX'$ are scalloped.
      Since $\sX$ is representable over $\sY$, we have $\sX = [X/G]$ where $\sY = [Y/G]$, $G$ is an embeddable nice group scheme over an affine scheme $S$, $Y$ is a quasi-affine derived $G$-scheme, and $X$ is a quasi-compact derived algebraic space over $Y$ with $G$-action.
      Then by \thmref{thm:sumihiro}\itemref{item:sumihiro/scallop} and \lemref{lem:a0s7dft}, we may reduce to the case where $X$ is quasi-affine.
      In this case $p : \sX \to \sY$ is quasi-affine (since $U \to Y$ is quasi-affine), hence $\sX$ is \qfund (since $\sY$ is \qfund).

      \emph{Case 2.}
      Assume $\sY$ is \qfund and $\sX$, $\sX'$, $\sY'$ are scalloped.
      By \thmref{thm:scallop}\itemref{item:scallop/cover} and Case~1 it will suffice to show that if we have a Nisnevich square
      \[ \begin{tikzcd}
        \sW \ar{r}\ar{d}\ar{rd}{w}
        & \sV \ar{d}{v}
        \\
        \sU \ar{r}{u}
        & \sY'
      \end{tikzcd} \]
      such that the claim holds after replacing $g : \sY' \to \sY$ by any of the composites $g\circ u$, $g\circ v$, or $g\circ w$, then it also holds for $g$.
      This follows immediately from Nisnevich descent and \cite[Pt.~I, Chap.~1, Lem.~2.6.4]{GaitsgoryRozenblyum}.

      \emph{Case 3.}
      Let $\sX$, $\sX'$, $\sY$, and $\sY'$ be scalloped.
      By \thmref{thm:scallop}\itemref{item:scallop/cover} and Case~2 it will suffice to show that if we have a Nisnevich square
      \[ \begin{tikzcd}
        \sW \ar{r}\ar{d}\ar{rd}{w}
        & \sV \ar{d}{v}
        \\
        \sU \ar{r}{u}
        & \sY
      \end{tikzcd} \]
      such that the claim holds after base changing $p : \sX \to \sY$ (and hence the whole square) along any of $u$, $v$, or $w$, then it also holds for $p$ itself.
      This follows immediately from Nisnevich descent and \cite[Pt.~I, Chap.~1, Lem.~2.6.4]{GaitsgoryRozenblyum}.

    \sssec{Smooth projection formula \itemref{item:funct SH/smooth/proj}.}
    \label{sssec:a0sgu0p1}

      By adjunction, $p_\sharp$ inherits from $p^*$ a canonical structure of colax morphism of $\SH(\sY)$-modules.
      In particular, there are canonical morphisms
      \[ p_\sharp (\sF \otimes p^*(\sG)) \to p_\sharp(\sF) \otimes \sG \]
      for every object $\sF \in \SH(\sX)$ and $\sG \in \SH(\sY)$, which we claim are invertible.
      This claim is Nisnevich-local on $\sY$, in view of smooth base change \itemref{item:funct SH/smooth/bc}.
      Arguing as in \lemref{lem:a0s7dft}, we also see that it is local on $\sX$.
      Hence we can reduce to the case where $\sX$ and $\sY$ are \qfund, proven in \remref{rem:SH basic proof}.

  \subsection{Proof of Theorem~\ref{thm:funct SH}\itemref{item:funct SH/closed}}

    As in \itemref{item:funct SH/smooth}, we can argue Nisnevich-locally on the target.
    When $\sY$ is \qfund, then so is $\sX$, and we are reduced to the situation of \remref{rem:SH basic proof}.

  \subsection{Proof of Theorem~\ref{thm:funct SH}\itemref{item:funct SH/comp}}

    Let $f : \sX \to \sY$ be a morphism of scalloped derived stacks.
    The claim is that for every filtered diagram $(\sF_\alpha)_\alpha$ of objects of $\SH(\sX)$, the canonical morphism in $\SH(\sY)$
    \[ \colim_\alpha f_*(\sF_\alpha) \to f_*\big( \colim_\alpha \sF_\alpha \big) \]
    is invertible.
    If $v : \sY_0 \twoheadrightarrow \sY$ is a representable Nisnevich cover by a \fund derived stack $\sY_0$, then since $v^*$ commutes with colimits and with $f_*$ (\thmref{thm:funct SH}\itemref{item:funct SH/smooth}), we may reduce to the case where $\sY = \sY_0$ is \qfund.

    The claim is also local on $\sX$ in the sense that if
    \[ \begin{tikzcd}
      \sW \ar{r}\ar{d}\ar{rd}{w}
      & \sV \ar{d}{v}
      \\
      \sU \ar{r}{u}
      & \sX
    \end{tikzcd} \]
    is a Nisnevich square such that the claim holds for $f_U = f \circ u$, $f_V = f \circ v$ and $f_W = f \circ w$ in place of $f$, then it also holds for $f$.
    Indeed by Nisnevich descent (see \propref{prop:etale exc}), there is a canonical isomorphism
    \[ f_* \simeq f_{U,*}u^* \fibprod_{f_{W,*}w^*} f_{V,*}v^*, \]
    and filtered colimits commute with finite limits in $\SH(\sY)$.

    Thus by induction (on the length of an appropriate scallop decomposition of $\sX$), we eventually reduce to the case where $\sX$ and $\sY$ are both \qfund, which was proven in \remref{rem:SH basic proof}.


\section{Axiomatization}
\label{sec:six}

  We now make a brief interlude to give an axiomatic description of the results proven so far.

  \subsection{\texorpdfstring{$(*,\sharp,\otimes)$}{(*,\#,(x))}-formalisms}

    \begin{notat}\label{notat:as07fgafo}\leavevmode
      \begin{defnlist}
        \item
        Given a presheaf of \inftyCats $\D^*$ on the \inftyCat of scalloped derived stacks, we will write
        \[ \D(\sX) := \D^*(\sX) \]
        for every scalloped derived stack $\sX$.
        For every morphism $f : \sX \to \sY$, we denote by
        \[ f^* := \D^*(f) : \D(\sY) \to \D(\sX) \]
        the functor of \emph{inverse image} along $f$.

        \item
        If $\D^*$ takes values in \emph{presentable} \inftyCats and colimit-preserving functors, then we say simply that $\D^*$ is a \emph{presheaf of presentable \inftyCats}.
        In this case, every inverse image functor $f^*$ admits a right adjoint $f_*$ called \emph{direct image} along $f$.

        \item
        If $\D^*$ moreover factors through the \inftyCat of \emph{symmetric monoidal} presentable \inftyCats, then we say that $\D^*$ is a \emph{presheaf of symmetric monoidal presentable \inftyCats}.
        We write
        $$\otimes : \D(\sX) \otimes \D(\sX) \to \D(\sX)$$
        for the monoidal product and $\un_\sX \in \D(\sX)$ for the monoidal unit over any $\sX$.
        Since $\otimes$ commutes with colimits in each argument (recall our conventions), it admits as right adjoint an internal hom bifunctor
        $$\uHom : \D(\sX)^\op \times \D(\sX) \to \D(\sX).$$
      \end{defnlist}
    \end{notat}

    \begin{defn}\label{defn:isyvq}
      A \emph{$(*,\sharp,\otimes)$-formalism} (on scalloped derived stacks) is a presheaf $\D^*$ of symmetric monoidal presentable \inftyCats on the site of scalloped derived stacks satisfying the following properties.
      \begin{defnlist}
        \item
        For every smooth representable morphism $f : \sX \to \sY$, the inverse image functor $f^*$ admits a left adjoint
        \[ f_\sharp : \D(\sY) \to \D(\sX). \]
        (Cf. \thmref{thm:funct SH}\itemref{item:funct SH/smooth}\itemref{item:funct SH/smooth/adj}.)

        \item
        The $\sharp$-direct image functors satisfy base change against arbitrary $*$-inverse image.
        (Cf. \thmref{thm:funct SH}\itemref{item:funct SH/smooth}\itemref{item:funct SH/smooth/bc}.)

        \item
        The $\sharp$-direct image functors satisfy the projection formula.
        That is, $f_\sharp : \D(\sX) \to \D(\sY)$ is a morphism of $\D(\sY)$-modules, where $\D(\sX)$ is regarded as a $\D(\sY)$-module via the symmetric monoidal functor $f^*$.
        (Cf. \thmref{thm:funct SH}\itemref{item:funct SH/smooth}\itemref{item:funct SH/smooth/proj}.)

        \item\emph{Additivity}.
        For any finite family $(\sX_\alpha)_\alpha$ of scalloped derived stacks, the canonical functor
        \[ \D \big(\coprod_\alpha \sX_\alpha \big) \to \prod_\alpha \D(\sX_\alpha) \]
        is an equivalence.
      \end{defnlist}
    \end{defn}

    \begin{rem}
      In practice, it is useful to consider the variant of \defnref{defn:isyvq} where the site of scalloped derived stacks is replaced with the site of scalloped derived stacks over some base algebraic space $S$, or some nice full subcategory thereof.
    \end{rem}

    \begin{defn}[Thom twist]
      Let $\D^*$ be a $(*,\sharp,\otimes)$-formalism.
      Let $\sE$ be a finite locally free sheaf on a scalloped derived stack $\sX$.
      The \emph{Thom twist} $\vb{\sE}$ is the endofunctor of $\D(\sX)$ given by
      \[ \sF \mapsto \sF\vb{\sE} := p_\sharp 0_*(\sF), \]
      where $p$ is the total space of $\sE$ and $0$ is the zero section.
    \end{defn}

  \subsection{The Voevodsky conditions}
  \label{ssec:six/Voev}

    The following key conditions were singled out by Voevodsky in the case of schemes (see \cite[\S 2, 1.2.1]{VoevodskyCross}).

    \begin{defn}\label{defn:0paguh107}
      Let $\D^*$ be a $(*,\sharp,\otimes)$-formalism.
      We say that $\D^*$ \emph{satisfies the Voevodsky conditions} if the following all hold:
      \begin{defnlist}
        \item\emph{Homotopy invariance}.
        For every vector bundle $p : \sV \to \sX$, the unit map
        \[ \id \to p_* p^* \]
        is invertible.

        \item\emph{Localization}.\label{item:0paguh107/localization}
        For every complementary closed-open pair
        \[ \begin{tikzcd}
          \sZ \ar[hookrightarrow]{r}{i}
          & \sX \ar[hookleftarrow]{r}{j}
          & \sU
        \end{tikzcd} \]
        of scalloped derived stacks, the functor $i_*$ is fully faithful with essential image spanned by objects in the kernel of $j^*$.

        \item\emph{Thom stability}.
        For every finite locally free sheaf $\sE$ on a scalloped derived stack $\sX$, the endofunctor $\vb{\sE}: \D(\sX) \to \D(\sX)$ is an equivalence.
      \end{defnlist}
    \end{defn}

    \begin{rem}\label{rem:iadsfp}
      The base change formula for $\sharp$-direct image implies that for any complementary closed-open pair
      \[ \begin{tikzcd}
        \sZ \ar[hookrightarrow]{r}{i}
        & \sX \ar[hookleftarrow]{r}{j}
        & \sU,
      \end{tikzcd} \]
      we have the identities
      \begin{equation*}
        j^*j_\sharp = \id,
        \quad j^*j_* = \id,
        \quad j^*i_* \simeq 0,
        \quad i^* j_\sharp \simeq 0.
      \end{equation*}
      The localization property can be reformulated as exactness of the triangle
      \[
        j_\sharp j^*
        \xrightarrow{\mrm{counit}} \id
        \xrightarrow{\mrm{unit}} i_*i^*,
      \]
      or by passing to right adjoints,
      \[
        i_*i^! \simeq i_!i^!
        \xrightarrow{\mrm{counit}} \id
        \xrightarrow{\mrm{unit}} j_*j^*.
      \]
    \end{rem}

    \begin{rem}[Tate twists]
      Given a scalloped derived stack $\sX$, let $q : (\A^1\setminus \{0\}) \times \sX \to \sX$ denote the projection.
      The unit section of $\A^1$ defines a canonical morphism $\un_\sX \to q_\sharp (\un_\sX)$ whose fibre we denote $\un_\sX(1)$.
      Since $\un_\sX\vb{1} \simeq \un_\sX(1)[2]$, Thom stability implies that $\un_\sX(1)$ is $\otimes$-invertible.
      The \emph{Tate twist} by $n\in\bZ$ is the endofunctor of $\D(\sX)$ given by
      \[ \sF \mapsto \sF(n) := \sF \otimes (\sF(1))^{\otimes n}. \]
    \end{rem}

  \subsection{Constructible separation}

    Let $\D^*$ be a $(*,\sharp,\otimes)$-formalism satisfying the Voevodsky conditions.
    The following is an immediate consequence of the localization property (see e.g. \cite[2.11, 2.13]{KhanSix}).

    \begin{prop}\label{prop:constrsep}
      For any constructible covering\footnote{
        i.e., a family that generates a covering for the constructible topology
      } $(j_\alpha : \sX_\alpha \to \sX)_\alpha$ of a scalloped derived stack $\sX$, the inverse image functors
      \[ j_\alpha^* : \D(\sX) \to \D(\sX_\alpha) \]
      are jointly conservative.
    \end{prop}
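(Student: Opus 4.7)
The plan is to reduce to a finite family of locally closed immersions and then iterate the Voevodsky localization triangle along a compatible filtration. Since $\sX$ is qcqs and the constructible topology is generated by locally closed immersions, I may first refine and assume the covering family $(j_\alpha : \sX_\alpha \to \sX)_\alpha$ is finite and that each $j_\alpha$ is a locally closed immersion, with $|\sX| = \bigcup_\alpha |j_\alpha(\sX_\alpha)|$.

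Next I would stratify. Working in the spectral space $|\sX|$, I partition it as $T_1 \sqcup \cdots \sqcup T_m$ into locally closed subsets such that $F_k := T_1 \cup \cdots \cup T_k$ is closed for every $k$ and each $T_k$ lies inside $|j_{\alpha(k)}(\sX_{\alpha(k)})|$ for some $\alpha(k)$ (such a flag exists by iterating the fact that in a spectral space any constructible set contains an open of its closure, up to applying a noetherian-style induction on the boolean algebra generated by the $|j_\alpha(\sX_\alpha)|$). Let $\sZ_k \hookrightarrow \sX$ be the reduced closed substack with $|\sZ_k| = F_k$, and $\sT_k \hookrightarrow \sZ_k$ the reduced locally closed substack with underlying space $T_k$. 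These are all scalloped by \corref{cor:scallop more} since they are representable over $\sX$. Given $\sF \in \D(\sX)$ with $j_\alpha^*(\sF) \simeq 0$ for every $\alpha$, I would prove by induction on $k$ that $\sF|_{\sZ_k} \simeq 0$. The base case $k=0$ is trivial by additivity. For the inductive step, I apply the localization triangle (\remref{rem:iadsfp}) inside $\sZ_k$ to the complementary closed-open pair $\sZ_{k-1} \hookrightarrow \sZ_k \hookleftarrow \sT_k$: the restriction $\sF|_{\sZ_{k-1}}$ vanishes by induction, while $\sF|_{\sT_k}$ vanishes because the reduced locally closed immersion $\sT_k \hookrightarrow \sX$ factors through $j_{\alpha(k)}$, so that $\sF|_{\sT_k}$ is a further restriction of $j_{\alpha(k)}^*(\sF) \simeq 0$. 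Finally, at $k=m$ we have $\sZ_m = \sX^{\mrm{red}}$, whose open complement in $\sX$ is empty; the localization triangle then gives $\sF \simeq (\sX^{\mrm{red}} \hookrightarrow \sX)_*(\sF|_{\sX^{\mrm{red}}}) \simeq 0$.

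The main obstacle is the factorization used in the inductive step, namely that the reduced locally closed substack $\sT_k$ with $|\sT_k| \subseteq |j_{\alpha(k)}(\sX_{\alpha(k)})|$ factors through $j_{\alpha(k)}$. Since $j_{\alpha(k)}$ is a locally closed immersion (hence a monomorphism) and $\sT_k$ is reduced, this reduces to the analogous statement for the closed-then-open factorization of $j_{\alpha(k)}$ and is a universal property of reduced substacks. Once this is in hand, the rest of the argument is a formal manipulation of the localization triangle together with the additivity axiom to handle the empty stratum.
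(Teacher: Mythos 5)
Your argument is essentially the standard stratification-and-localization proof, which is exactly what the paper is invoking (without repeating it) via the citation to \cite[2.11, 2.13]{KhanSix}: reduce to a finite stratification of $|\sX|$ by locally closed constructible substacks, each landing inside some $j_\alpha$, then iterate the localization triangle. The structure of your argument (reducing to finite locally closed immersions, noting that closed substacks are scalloped via \corref{cor:scallop more}, the factorization of the reduced stratum through the locally closed immersion, and the final step for the surjective closed immersion $\sX_\red \hookrightarrow \sX$) is correct.

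The one step that needs tightening is the existence of the flag: it is not true that a given finite partition of a spectral space by locally closed constructibles can always be \emph{reordered} so that the cumulative unions $F_k$ are closed; one may have to \emph{refine} the partition. Concretely, take the finite spectral space $\{a,b,c,d\}$ with $a,b$ generic, $c,d$ closed and every closed point specializing from both generic points; the partition $\{a,c\} \sqcup \{b,d\}$ is by locally closed constructibles, but neither piece is closed, so no reordering of this two-element partition works — one must subdivide further (e.g. into singletons). This does not affect the validity of your proof, since a finer partition subordinate to the $|j_\alpha(\sX_\alpha)|$ always exists and still factors through the $j_\alpha$, but the sentence claiming the flag ordering should be adjusted. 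A clean way to state it (avoiding the combinatorics entirely) is: let $\mathcal{Z}$ be the set of closed constructible $Z \subseteq |\sX|$ with $\sF|_{\sZ} \simeq 0$; then $\mathcal{Z}$ contains $\emptyset$, is stable under passing to $Z' \supseteq Z$ whenever $Z'\setminus Z$ lies inside some $|j_\alpha(\sX_\alpha)|$ (localization on $\sZ'$), and since the boolean algebra generated by the $|j_\alpha(\sX_\alpha)|$ is finite and covers $|\sX|$, a noetherian induction shows $|\sX| \in \mathcal{Z}$.
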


    \begin{cor}[Nil invariance]\label{cor:nil inv}
      Let $i : \sX' \to \sX$ be a surjective closed immersion of scalloped derived stacks.
      Then the pair of adjoint functors
      \[
        i^* : \D(\sX) \to \D(\sX'),
        \quad
        i_* : \D(\sX') \to \D(\sX)
      \]
      is an equivalence of \inftyCats.
      In particular, for every scalloped derived stack $\sX$, there are canonical equivalences
      \[ \D(\sX) \simeq \D(\sX_\cl) \simeq \D(\sX_{\cl,\red}) \]
      where $\sX_\cl$ is the classical truncation and $\sX_{\cl,\red}$ is its reduction.
    \end{cor}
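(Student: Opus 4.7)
The plan is to apply the localization axiom (\defnref{defn:0paguh107}\itemref{item:0paguh107/localization}) to the complementary closed-open pair determined by $i : \sX' \to \sX$. Denote by $j : \sU \hookrightarrow \sX$ the open complement of $i$; since $i$ is surjective, $\sU = \initial$ is the empty stack. By the additivity axiom applied to the empty family, $\D(\initial)$ is the terminal \inftyCat, so $j^*$ factors through the zero object, and in particular the composite $j_\sharp j^*$ vanishes.

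I would then invoke the exact triangle $j_\sharp j^* \to \id \to i_* i^*$ recorded in \remref{rem:iadsfp}, which is just the localization property in its reformulated form. Since the left term vanishes, the unit $\id \to i_* i^*$ is an isomorphism. Combined with the fully faithfulness of $i_*$ (i.e., $i^* i_* \simeq \id$) that is directly asserted by the localization axiom, this shows that the adjoint pair $(i^*, i_*)$ is an equivalence.

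For the second assertion, I would observe that both canonical closed immersions $\sX_{\cl,\red} \hookrightarrow \sX_\cl$ and $\sX_\cl \hookrightarrow \sX$ are surjective: the former because reduction annihilates only classical nilpotents, the latter because classical truncation corresponds to the surjection $\sO_\sX \twoheadrightarrow \pi_0(\sO_\sX)$, which induces a homeomorphism on underlying topological spaces. Since scalloped-ness descends through representable morphisms (\corref{cor:scallop more}), both $\sX_\cl$ and $\sX_{\cl,\red}$ are again scalloped derived stacks, and applying the first part twice yields the chain $\D(\sX) \simeq \D(\sX_\cl) \simeq \D(\sX_{\cl,\red})$. I do not anticipate any real obstacle here: once the empty-complement case is identified, the argument reduces to a direct application of the localization and additivity axioms.
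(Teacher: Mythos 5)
Your proof is correct and is essentially the argument the paper intends: the paper states \corref{cor:nil inv} as an immediate consequence of the localization axiom (referring to \cite{KhanSix}), and your reduction to the vanishing of $j_\sharp j^*$ for the empty open complement, together with full faithfulness of $i_*$ from localization and the scalloped-ness of $\sX_\cl$, $\sX_{\cl,\red}$ via \corref{cor:scallop more}, is precisely the standard proof.
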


  \subsection{Nisnevich descent}

    Let $\D^*$ be a $(*,\sharp,\otimes)$-formalism satisfying the Voevodsky conditions.
    We have the following (see e.g. \cite[4.26, 4.52]{KhanSix}):

    \begin{prop}[Étale excision]\label{prop:etale exc}
      Let $f : \sX' \to \sX$ be a representable étale morphism which induces an isomorphism away from a quasi-compact open immersion $j : \sU \to \sX$.
      Then the commutative square
      \begin{equation*}
        \begin{tikzcd}
          \id \ar{r}\ar{d}
          & j_*j^* \ar{d}
          \\
          f_*f^* \ar{r}
          & g_*g^*,
        \end{tikzcd}
      \end{equation*}
      is homotopy cartesian, where $g : f^{-1}(\sU) \to \sX$.
    \end{prop}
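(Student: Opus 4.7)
The plan is to check the square pointwise. Fix $\sF\in\D(\sX)$, and let $i:\sZ\hookrightarrow\sX$ denote the reduced closed complement of $\sU$, with pullback $i':\sZ'\hookrightarrow\sX'$ of $i$ along $f$ and $f_\sZ:\sZ'\to\sZ$ the base change of $f$. By hypothesis $f_\sZ$ is étale and a bijection on points, hence an isomorphism on classical reductions, and by nil invariance (\corref{cor:nil inv}) $\D$ is insensitive to this identification. By the localization axiom (\defnref{defn:0paguh107}\itemref{item:0paguh107/localization}), the top-row fibre at $\sF$ is $i_*i^!\sF$, and similarly the bottom-row fibre is $f_*i'_*i'^!f^*\sF$. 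It suffices to show that the natural comparison map between them, induced by the unit $\sF\to f_*f^*\sF$, is an equivalence.

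Two base-change identities provide the key input. First, applying the smooth base change axiom (\defnref{defn:isyvq}) to the Nisnevich square with $f$ smooth gives $g'_\sharp j'^*\simeq j^*f_\sharp$; taking right adjoints yields $f^*j_*\simeq j'_*g'^*$. Second, I establish closed-étale base change: the exchange $i'_*f_\sZ^*\to f^*i_*$ is an equivalence. Objects in the essential image of $i_*$ are characterized by $j^*$-acyclicity (localization), and by smooth base change $j'^*f^*\simeq g'^*j^*$, so their $f^*$-pullbacks are $j'^*$-acyclic, hence lie in the essential image of $i'_*$. Writing $f^*i_*\sH\simeq i'_*\sH'$ for some $\sH'\in\D(\sZ')$, applying $i'^*$ and using $i'^*f^*\simeq f_\sZ^*i^*$ together with the counit isomorphisms $i^*i_*\simeq\id$ and $i'^*i'_*\simeq\id$ identifies $\sH'\simeq f_\sZ^*\sH$ naturally.

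With these in hand, apply $f_*f^*$ to the localization triangle $i_*i^!\sF\to\sF\to j_*j^*\sF$ on $\sX$. Using $f^*i_*\simeq i'_*f_\sZ^*$ together with $f_*i'_*\simeq i_*f_{\sZ,*}$ (from $f\circ i'=i\circ f_\sZ$) and $f_{\sZ,*}f_\sZ^*\simeq\id$ (since $f_\sZ$ is invertible), the first term simplifies to $i_*i^!\sF$; using $f^*j_*\simeq j'_*g'^*$ together with $f_*j'_*=g_*$ and $j'^*f^*\simeq g'^*j^*$, the third term becomes $g_*g^*\sF$. We obtain the exact triangle
\[ i_*i^!\sF \to f_*f^*\sF \to g_*g^*\sF \]
in $\D(\sX)$, which directly exhibits $i_*i^!\sF$ as the bottom-row fibre $f_*i'_*i'^!f^*\sF$. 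A brief diagram chase, using that the unit $i_*i^!\sF\to f_*f^*(i_*i^!\sF)$ corresponds under closed-étale base change to the (invertible) unit of $f_\sZ^*\dashv f_{\sZ,*}$, identifies this isomorphism with the natural comparison map.

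The main technical ingredient is the closed-étale base change in the second paragraph, which is the one step requiring the Voevodsky localization property beyond pure formalism; the remainder is formal manipulation via adjunctions, the localization triangle, and the invertibility of $f_\sZ$.
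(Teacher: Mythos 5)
Your argument is correct and is essentially the standard one; the paper itself only cites \cite[4.26, 4.52]{KhanSix} here, so there is no internal proof to compare against, but your direct route is what one expects to find there: compute the row fibres via the localization triangle $i_*i^!\to\id\to j_*j^*$, use closed base change to identify $f_*f^*i_*i^!\sF$ with $i_*f_{\sZ,*}f_\sZ^*i^!\sF$, and use that $f_\sZ$ is invertible up to nilpotents. Your derivation of closed-étale base change directly from the localization and smooth-base-change axioms is appropriate here, since the axiomatic $(*,\sharp,\otimes)$-framework of \defnref{defn:isyvq}--\ref{defn:0paguh107} does not postulate closed base change.

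Two small remarks. First, the justification "étale and a bijection on points, hence an isomorphism on classical reductions" is a red herring and not quite right as stated (an étale morphism inducing a bijection on topological points need not be an isomorphism on reductions; think of a nontrivial separable residue-field extension). You should instead simply observe that "induces an isomorphism away from $\sU$" in \defnref{defn:Nis} means precisely that $(f^{-1}(\sZ))_\red\to\sZ_\red$ is an isomorphism, and then invoke \corref{cor:nil inv}. Second, the closing "brief diagram chase" does carry content: you need both (a) that under the exchange isomorphism $f^*j_*\simeq j'_*g'^*$ the map $f_*f^*(\eta_{j_*j^*})$ agrees with the bottom arrow $f_*f^*\to g_*g^*$ of the square, and (b) that under $f^*i_*\simeq i'_*f_\sZ^*$ the unit of $(f^*,f_*)$ on $i_*i^!\sF$ corresponds to the unit of $(f_\sZ^*,f_{\sZ,*})$. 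Both are standard compatibilities of units with exchange transformations, but since they are exactly what turns "the two fibres are abstractly isomorphic" into "the induced map of fibres is an isomorphism," it would be cleaner to state them explicitly rather than leave them as an unspecified chase. An alternative that avoids this bookkeeping is to reduce to checking after applying $j^*$ (where both rows become isomorphisms by $j^*j_*\simeq\id$ and smooth base change) and after applying $i^*$, using \propref{prop:constrsep}.
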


    \begin{prop}[Nisnevich descent]\label{prop:Nis desc}
      The presheaf of \inftyCats $\D^*$ satisfies Nisnevich descent on the \inftyCat of scalloped derived stacks.
      In particular, for any scalloped derived stack $\sX$ and Nisnevich covering family $(f_\alpha : \sX_\alpha \to \sX)_\alpha$, the inverse image functors
      \[ f_\alpha^* : \D(\sX) \to \D(\sX_\alpha) \]
      are jointly conservative.
    \end{prop}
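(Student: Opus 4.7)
The plan is to apply the Nisnevich descent criterion of \propref{prop:Nis} to $\D^*$, which takes values in presentable \inftyCats (a target admitting all small limits). The criterion reduces Nisnevich descent to two conditions: (i) $\D(\initial)$ is terminal, and (ii) every Nisnevich square becomes a cartesian square of \inftyCats. Condition~(i) is immediate from the additivity axiom of a $(*,\sharp,\otimes)$-formalism applied to the empty indexing family.

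For (ii), consider a Nisnevich square with open immersion $j : \sU \to \sX$ and representable étale morphism $p : \sV \to \sX$; write $j' : \sW \to \sV$ and $p' : \sW \to \sU$ for the base changes and $g = j \circ p' = p \circ j' : \sW \to \sX$ for the composite. I will show that the canonical functor
\[ \D(\sX) \to \D(\sU) \fibprod_{\D(\sW)} \D(\sV) \]
is an equivalence. For full faithfulness, apply $\Maps_{\D(\sX)}(\sF, -)$ to the étale excision cartesian square of \propref{prop:etale exc} at any $\sG \in \D(\sX)$ and invoke the $(j^*,j_*)$, $(p^*,p_*)$ and $(g^*,g_*)$ adjunctions; this yields the desired pullback description of $\Maps_{\D(\sX)}(\sF,\sG)$ in terms of mapping anima between the various $*$-pullbacks. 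For essential surjectivity, given gluing data $(\sF_\sU, \sF_\sV, \phi)$ with $\phi : j'^*\sF_\sV \simeq p'^*\sF_\sU$, I will form the object
\[ \sF := j_*\sF_\sU \fibprod_{g_* p'^*\sF_\sU} p_*\sF_\sV \in \D(\sX), \]
where the map out of $p_*\sF_\sV$ is built from $\phi$, and verify via étale excision that $j^*\sF \simeq \sF_\sU$ and $p^*\sF \simeq \sF_\sV$ compatibly with $\phi$.

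For joint conservativity of a Nisnevich covering family $(f_\alpha)_\alpha$ of $\sX$, suppose $f_\alpha^*\sF \simeq 0$ for every $\alpha$. By quasi-compactness one may refine the cover to a finite iteration of Nisnevich squares, and the descent property just established expresses $\sF$ as an iterated finite limit of objects all of which vanish by hypothesis, whence $\sF \simeq 0$.

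The hard part will be essential surjectivity in step~(ii): verifying that the glued object restricts correctly and that the gluing datum $\phi$ is faithfully recovered. This rests on the interaction between $*$-pushforward along $j, p, g$ and $*$-pullback along $j, p$, which follows by passing to right adjoints in the smooth and étale base change formulae that are automatic from the $(*,\sharp,\otimes)$-formalism axioms together with the localization property (cf.\ \remref{rem:iadsfp}).
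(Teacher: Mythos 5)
Your overall strategy is correct and matches the standard one (reduce to Nisnevich squares via the cd-structure criterion, full faithfulness from étale excision, construct the inverse via the right adjoint $R(\sF_\sU,\sF_\sV,\phi)=j_*\sF_\sU\fibprod_{g_*p'^*\sF_\sU}p_*\sF_\sV$), and you correctly identify smooth base change and the localization identities of \remref{rem:iadsfp} as the crucial inputs. The $j^*$-check is indeed a direct computation: applying $j^*$ (which commutes with the pullback, being both a left and a right adjoint) and using $j^*j_*\simeq\id$, $j^*p_*\simeq p'_*j'^*$ and $\phi$ collapses the pullback to $\sF_\sU$.

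The gap is in the $p^*$-check, which is \emph{not} a direct computation and is not accurately described by ``verify via étale excision.'' After applying $p^*$ one gets $p^*p_*\sF_\sV\simeq \pi_{1,*}\pi_2^*\sF_\sV$ for the two projections of $\sV\fibprod_\sX\sV$; there is no a priori identification of $\pi_1^*\sF_\sV$ with $\pi_2^*\sF_\sV$ (that would already be descent), so the pullback does not visibly simplify to $\sF_\sV$. The correct route is through the cofibre $(\sG_\sU,\sG_\sV,\psi)$ of the counit: since étale excision makes the unit $\id\to RL$ invertible, the triangle identity forces $R(\sG_\sU,\sG_\sV,\psi)=0$. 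Applying $j^*$ as above gives $\sG_\sU=0$; then $j'^*\sG_\sV\simeq p'^*\sG_\sU=0$, so localization puts $\sG_\sV\simeq i'_*\sH$ supported on the closed complement $\sZ'=\sV\setminus\sW$. Finally, $p\circ i'$ factors through the isomorphism $p|_{\sZ'}:\sZ'\xrightarrow{\sim}\sZ$ followed by $i:\sZ\hook\sX$, whence $R(\sG_\sU,\sG_\sV,\psi)\simeq p_*\sG_\sV\simeq i_*(p|_{\sZ'})_*\sH=0$; applying $i^*$ and $i^*i_*\simeq\id$ gives $\sH=0$ and hence $\sG_\sV=0$. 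This step — invoking that $p$ restricts to an isomorphism over the closed complement, i.e.\ the defining property of a Nisnevich square — is the one your sketch omits, and it is exactly what is needed to close the argument. With it, the counit is invertible, $L$ is an equivalence, and joint conservativity follows by induction over a scallop decomposition as you indicate.
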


  \subsection{The example of \texorpdfstring{$\SH^*$}{SH}}

    \begin{exam} \label{exam:SH}
      Consider the presheaf $\SH^*$, given by $\sX \mapsto \SH(\sX)$, $f \mapsto f^*$ (see \thmref{thm:SH}).
      This is a $(*,\sharp,\otimes)$-formalism satisfying the Voevodsky conditions:
      \begin{defnlist}
        \item
        The existence of $\sharp$-direct images for smooth representable morphisms, satisfying the base change and projection formula, was proven in \thmref{thm:funct SH}\itemref{item:funct SH/smooth}.

        \item
        The additivity property follows from Nisnevich descent (\thmref{thm:SH}\itemref{item:SH/Nis}).

        \item
        For homotopy invariance, use Nisnevich descent to reduce to the case of $\sX$ \fund, in which case it holds by construction.
        
        \item
        For Thom stability, observe that by localization (\thmref{thm:funct SH}\itemref{item:funct SH/closed}\itemref{item:funct SH/closed/loc}) and the smooth and closed projection formulas (\thmref{thm:funct SH}\itemref{item:funct SH/smooth}\itemref{item:funct SH/smooth/proj} and \itemref{item:funct SH/closed}\itemref{item:funct SH/closed/proj}), there are canonical isomorphisms of functors
        \[ \vb{\sE} \simeq (-) \otimes \Sigma^\infty\Th_\sX(\sE) \]
        for every finite locally free $\sE$ on $\sX$.
        These are equivalences by \thmref{thm:SH}\itemref{item:SH/Thom}.
      \end{defnlist}
    \end{exam}

    This example is universal in the following sense:

    \begin{prop}\label{prop:univ}
      Let $\D^*$ be a $(*,\sharp,\otimes)$-formalism on scalloped derived stacks satisfying the Voevodsky conditions.
      Then there exists a unique system of colimit-preserving functors
      \[ R_\sX : \SH(\sX) \to \D(\sX) \]
      for every scalloped derived stack $\sX$, which commute with $\sharp$-direct images (along smooth representable morphisms), inverse images (along arbitrary morphisms), tensor products, and arbitrary Thom twists.
    \end{prop}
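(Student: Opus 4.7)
The plan is to first construct $R_\sX$ when $\sX$ is \qfund by exploiting the explicit description $\SH(\sX) \simeq \MotSpc(\sX)_\bullet[\sT_\sX^{\otimes -1}]$ from \constrref{constr:SH basic}, and then extend to all scalloped derived stacks by Nisnevich descent.

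For $\sX$ \qfund, define the presheaf $F_\sX \colon \Sm_{/\sX} \to \D(\sX)$ on representables by $F_\sX(p : \sX' \to \sX) := p_\sharp(\un_{\sX'})$. Left Kan extension along the Yoneda embedding gives a colimit-preserving functor from presheaves of \anis on $\Sm_{/\sX}$ to $\D(\sX)$. Nisnevich descent for $\D^*$ (\propref{prop:Nis desc}) forces this to factor through Nisnevich sheaves, while the homotopy invariance Voevodsky condition forces further factorization through $\A^1$-invariant objects. After adding disjoint basepoints, one obtains a colimit-preserving functor $\MotSpc(\sX)_\bullet \to \D(\sX)$, which is symmetric monoidal: this follows because on representables it agrees with $\sX' \mapsto p_\sharp(\un_{\sX'})$, and the smooth projection formula yields $p_\sharp(\un_{\sX'}) \otimes q_\sharp(\un_{\sX''}) \simeq (p \times q)_\sharp(\un_{\sX' \times_\sX \sX''})$. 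By Thom stability, every Thom \ani $\Th_\sX(\sE)$ is sent to an invertible object (namely, to $p_\sharp 0_*(\un_\sX) = \un_\sX\vb{\sE}$); invoking the universal property of $\otimes$-inversion used in \constrref{constr:SH basic}, one obtains the desired factorization $R_\sX \colon \SH(\sX) \to \D(\sX)$.

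Next I check pseudofunctoriality in the \qfund case: for any morphism $f : \sX \to \sY$ of \qfund stacks the canonical natural transformation $R_\sX \circ f^* \to f^* \circ R_\sY$ is an equivalence, which reduces (by colimit-preservation and \propref{prop:gen}) to representables $\sY' \in \Sm_{/\sY}$, where it becomes the smooth base change formula $f^* q_\sharp(\un_{\sY'}) \simeq q'_\sharp(\un_{\sY' \times_\sY \sX})$ on the $\D$-side. Compatibility with $f_\sharp$ for $f$ smooth representable is tautological from the definition of $F$, and compatibility with Thom twists is built in. The resulting assembly promotes $R_\bullet$ to a morphism of presheaves $\SH^*|_{\qfund} \to \D^*|_{\qfund}$ on the site of \qfund derived stacks.

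To extend to all scalloped derived stacks: by \lemref{lem:9sydagy} together with \remref{rem:0s7gf1}, the presheaf $\SH^*$ is right Kan extended from \qfund stacks, and the same is true of $\D^*$ because $\D^*$ satisfies Nisnevich descent (\propref{prop:Nis desc}) and every scalloped stack has a Nisnevich cover by a \fund one (\thmref{thm:scallop}\itemref{item:scallop/cover}). Right Kan extension of $R_\bullet$ therefore produces a unique extension to a morphism of presheaves on scalloped derived stacks, and by construction it commutes with $f^*$ along arbitrary morphisms. Compatibility with smooth representable $f_\sharp$ (resp. Thom twists, tensor products) on the extension follows by reducing Nisnevich-locally on source and target to the \qfund case, using the smooth base change formula to identify the two sides on generators. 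For uniqueness, any other such $R'_\sX$ must, by compatibility with $f_\sharp$ and Thom twists, send each generator $\Sigma^\infty_+(\sX')\vb{\alpha}$ of \lemref{cor:gen SH} to $p_\sharp(\un_{\sX'})\vb{\alpha}$, whence $R'_\sX \simeq R_\sX$ by colimit-preservation. The main obstacle is verifying the symmetric monoidality of the intermediate functor $\MotSpc(\sX)_\bullet \to \D(\sX)$ cleanly enough to feed it into Robalo's universal property of $\otimes$-inversion; the projection formula for $\sharp$-direct images is precisely what makes this work.
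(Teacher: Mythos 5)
Your proposal follows essentially the same route as the paper: construct $R_\sX$ on \qfund stacks by exploiting the explicit presentation $\SH(\sX) \simeq \MotSpc(\sX)_\bullet[\sT_\sX^{\otimes-1}]$ together with the universal property of $\otimes$-inversion, then extend to all scalloped stacks by right Kan extension using Nisnevich descent (\propref{prop:Nis desc}); the paper's own proof is just a terse version of this, citing \remref{rem:invert quot} and the construction of $\MotSpc^*$. Your worry about symmetric monoidality of $\MotSpc(\sX)_\bullet \to \D(\sX)$ is handled by Day convolution (i.e.\ by the universal property of $\MotSpc(\sX)_\bullet$ as the free presentable symmetric monoidal $\infty$-category on the site subject to descent and $\A^1$-invariance), and the object-level uniqueness check at the end should really be phrased via this universal property rather than merely by agreement on generators, since pointwise agreement on a generating set does not by itself determine a natural equivalence of functors at the $\infty$-categorical level.
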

    \begin{proof}
      By Nisnevich descent (\propref{prop:Nis desc}), both $\SH^*$ and $\D^*$ are right Kan extended from the subcategory of \fund stacks.
      For \fund stacks, this holds by \remref{rem:invert quot} and the construction of $\MotSpc^*$; compare \cite[Rem.~2.14]{KhanSix}.
    \end{proof}

    \begin{exam}[Étale motivic spectra]
      The étale-local stable motivic homotopy category $\SH_\et^*$ defines a $(*,\sharp,\otimes)$-formalism satisfying the Voevodsky conditions.
      In fact, this formalism extends to the site of all derived algebraic stacks (not necessarily scalloped).
      More generally, any constructible \inftyCat (as in \cite{KhanSix}) satisfying étale descent on the site of schemes or algebraic spaces admits a canonical extension to the site of algebraic stacks.
      Moreover, $\sharp$-direct image also exists for non-representable smooth morphisms.
      See \cite[App.~A]{KhanVirtual} and \cite{LiuZheng}, and also \examref{exam:0--81h}.
    \end{exam}

    \begin{cor}\label{cor:0pagh0p1}
      Let $\D^*$ be a $(*,\sharp,\otimes)$-formalism on scalloped derived stacks satisfying the Voevodsky conditions.
      Then for every scalloped derived stack $\sX$, the assignment $\sE \mapsto \vb{\sE}$ extends from finite locally frees to a map of \anis
      \[ \K(\sX) \to \Aut_{\D(\sX)}(\D(\sX)) \simeq \Pic(\D(\sX)), \]
      to the \ani of $\D(\sX)$-linear autoequivalences of $\D(\sX)$, i.e., the Picard \ani of $\otimes$-invertible objects in $\D(\sX)$.
    \end{cor}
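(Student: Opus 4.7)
The plan is to bootstrap from the universal example $\SH^*$, where the analogous map has already been constructed in \thmref{thm:SH}\itemref{item:SH/Thom}. First, I would invoke \propref{prop:univ} to obtain the canonical symmetric monoidal colimit-preserving functor $R_\sX : \SH(\sX) \to \D(\sX)$, which commutes with Thom twists. Since any symmetric monoidal functor preserves invertible objects, $R_\sX$ induces a morphism of Picard \anis $\Pic(\SH(\sX)) \to \Pic(\D(\sX))$; composing with the map $\Th_\sX^{\SH} : \K(\sX) \to \Pic(\SH(\sX))$ from \thmref{thm:SH}\itemref{item:SH/Thom} yields the desired map $\K(\sX) \to \Pic(\D(\sX))$.

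Next I would check that the composite restricts to the assignment $\sE \mapsto \vb{\sE}$ on the subspace of K-theory coming from finite locally free sheaves. Exactly as in \examref{exam:SH}, the smooth and closed projection formulas in $\D^*$ force the endofunctor $\sF \mapsto \sF\vb{\sE}$ to coincide with tensoring by the invertible object $\un_\sX\vb{\sE} = p_\sharp 0_*(\un_\sX)$; since $R_\sX$ preserves the monoidal unit and commutes with Thom twists, it carries the $\SH$-side Thom object $\un_{\SH(\sX)}\vb{\sE} = \Sigma^\infty \Th_\sX(\sE)$ to $\un_\sX\vb{\sE}$ in $\D(\sX)$. The standard identification $\Aut_{\D(\sX)}(\D(\sX)) \simeq \Pic(\D(\sX))$---every colimit-preserving $\D(\sX)$-linear endofunctor is of the form $(-) \otimes X$ for a unique $X$, and is an equivalence precisely when $X$ is $\otimes$-invertible---then lets me view the target indifferently as autoequivalences or as invertible objects.

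There is no substantive obstacle in the corollary itself: the delicate points have already been handled in \thmref{thm:SH}\itemref{item:SH/Thom} (additivity of Thom twists via \remref{rem:Thom exact}, the resolution property for \qfund stacks, Nisnevich descent, and group completion, all used to extend the assignment from finite locally frees to arbitrary K-theory classes), and in \propref{prop:univ}. The content of the corollary is purely formal: the universal extension constructed in $\SH^*$ descends to any $(*,\sharp,\otimes)$-formalism satisfying the Voevodsky conditions via the unique realization functor $R_\sX$.
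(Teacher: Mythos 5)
Your proposal is correct and follows essentially the same route as the paper: compose the Thom-space map $\K \to \Pic(\SH^*)$ from \remref{rem:-31g0g0p1} (the presheaf-level version of \thmref{thm:SH}\itemref{item:SH/Thom}) with the map on Picard \anis induced by the realization $R : \SH^* \to \D^*$ from \propref{prop:univ}. The extra verifications you spell out---that the composite restricts to $\sE \mapsto \vb{\sE}$ on finite locally frees and that $\Aut_{\D(\sX)}(\D(\sX)) \simeq \Pic(\D(\sX))$---are implicit in the paper's two-line proof.
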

    \begin{proof}
      In fact, we have the canonical map of presheaves
      \[ \K \xrightarrow{\Th} \Pic(\SH^*) \xrightarrow{\Pic(R)} \Pic(\D^*) \]
      where the first map is as in \remref{rem:-31g0g0p1} and the second is induced by the (symmetric monoidal) realization $R : \SH^* \to \D^*$ of \propref{prop:univ}.
    \end{proof}

    \begin{notat}\label{notat:aps-8hfp01h}
      In the situation of \corref{cor:0pagh0p1}, every K-theory class $\alpha \in \K(\sX)$ induces a canonical auto-equivalence of $\D(\sX)$ which we denote by
      \[ \vb{\alpha} : \D(\sX) \to \D(\sX) \]
      and continue to call the \emph{Thom twist} by $\alpha$.
    \end{notat}

\section{Proper base change}
\label{sec:prop}

  \subsection{Statement}
  \label{ssec:07ag014}

    For the duration of this section, we fix a $(*,\sharp,\otimes)$-formalism $\D^*$ satisfying the Voevodsky conditions on the site of scalloped derived stacks.
    The following theorem summarizes our results about direct image along proper representable morphisms:

    \begin{thm}\label{thm:ipqnosdf}
      Let $f : \sX \to \sY$ be a proper representable morphism of scalloped derived stacks.
      Then we have:
      
      \begin{thmlist}
        \item\label{item:ipqnosdf/compact}
        \emph{Compactness.}
        The functor $f^* : \D(\sY) \to \D(\sX)$ is compact, i.e., its right adjoint $f_*$ commutes with colimits.
        Equivalently, $f_*$ admits a right adjoint $f^!$.

        \item\label{item:ipqnosdf/bc}
        \emph{Proper base change.}
        For any commutative square of scalloped derived stacks
        \[ \begin{tikzcd}
          \sX' \ar{r}{g}\ar{d}{u}
          & \sY' \ar{d}{v}
          \\
          \sX \ar{r}{f}
          & \sY
        \end{tikzcd} \]
        which is cartesian on classical truncations, there is a canonical isomorphism
        \[ \Ex^*_* : v^* f_* \to g_* u^* \]
        of functors $\D(\sX) \to \D(\sY')$.

        \item\label{item:ipqnosdf/smprop}
        \emph{Smooth-proper base change.}
        For any cartesian square
        \[ \begin{tikzcd}
          \sX' \ar{r}{g}\ar{d}{p}
          & \sY' \ar{d}{q}
          \\
          \sX \ar{r}{f}
          & \sY,
        \end{tikzcd} \]
        where $p$ and $q$ are smooth representable, there is a canonical isomorphism
        \[ \Ex_{\sharp,*} : q_\sharp g_* \to f_* p_\sharp \]
        of functors $\D(\sX') \to \D(\sY)$.

        \item\label{item:ipqnosdf/Atiyah}
        \emph{Atiyah duality.}
        If $f$ is moreover \emph{smooth}, then the canonical morphism of functors $\D(\sX) \to \D(\sY)$ (see \constrref{constr:epsilon})
        \[ \varepsilon_f : f_\sharp\vb{-\sL_f} \to f_* \]
        is invertible.
        In particular, $f_*\vb{\sL_f}$ is left adjoint to $f^*$.

        \item\label{item:ipqnosdf/excision}
        \emph{Proper excision.}
        If $f$ is an isomorphism away from a closed substack $\sZ \sub \sY$, then the commutative square
        \begin{equation*}\label{eq:ofybo}
          \begin{tikzcd}
            \id \ar{r}\ar{d}
            & i_*i^* \ar{d}
            \\
            f_*f^* \ar{r}
            & g_*g^*
          \end{tikzcd}
        \end{equation*}
        is homotopy cartesian in $\D(\sY)$, where $i : \sZ \to \sY$ is the inclusion and $g : f^{-1}(\sZ) \sub \sX \to \sY$ is the induced morphism.
      \end{thmlist}
    \end{thm}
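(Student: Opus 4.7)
The plan is to follow Cisinski--Déglise \cite[\S 2.4]{CisinskiDegliseBook}: establish Atiyah duality \itemref{item:ipqnosdf/Atiyah} and proper base change \itemref{item:ipqnosdf/bc} first in the projective case, then extend to arbitrary proper representable morphisms via the Chow-type lemma \thmref{thm:Chow}. The remaining statements will be formal consequences.

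First, I would verify Atiyah duality for the projection $\pi : \bP(\sE) \to \sX$ of a projective bundle. The morphism $\varepsilon_\pi : \pi_\sharp\vb{-\sL_\pi} \to \pi_*$ is defined via the purity isomorphism \thmref{thm:purity} (see \constrref{constr:epsilon}). Its invertibility can be checked after passing to a base scheme via Nisnevich descent (\propref{prop:Nis desc}) and nil-invariance (\corref{cor:nil inv}), reducing to the classical computation via the projective bundle formula (itself a consequence of localization, smooth base change, and Thom stability). Combined with the localization property \itemref{item:0paguh107/localization} (which identifies $i_! i^! \simeq i_* i^*$ for closed immersions $i$), this yields \itemref{item:ipqnosdf/Atiyah}, \itemref{item:ipqnosdf/compact}, and \itemref{item:ipqnosdf/bc} for all projective morphisms, by factoring each such $f$ through a closed immersion into a projective bundle and applying the closed base change formula \thmref{thm:funct SH}\itemref{item:funct SH/closed}\itemref{item:funct SH/closed/bc}.

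To bootstrap from projective to proper representable morphisms, I would apply \thmref{thm:Chow} to obtain, for a given proper representable $f : \sX \to \sY$, a proper cover $\pi : \widetilde{\sX} \to \sX$ with $f \circ \pi$ projective and $\pi$ an isomorphism over a dense open substack $\sU \sub \sX$. The essential observation is that the presheaf $\sX \mapsto \D(\sX)$ satisfies proper cdh descent, which is precisely proper excision \itemref{item:ipqnosdf/excision} in the projective case; this in turn is a formal consequence of the projective case of \itemref{item:ipqnosdf/bc} together with the localization triangle \itemref{item:0paguh107/localization}. Granting this descent, \itemref{item:ipqnosdf/bc} and \itemref{item:ipqnosdf/compact} extend from $f \circ \pi$ to $f$ by Noetherian induction on the dimension of the non-isomorphism locus $\sX \setminus \sU$, using nil-invariance to pass to classical truncations where Rydh's variant of Chow's lemma applies. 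Once \itemref{item:ipqnosdf/bc} is established for $f$, claim \itemref{item:ipqnosdf/smprop} follows by standard mate calculus from the adjunctions $p_\sharp \dashv p^*$ and $q_\sharp \dashv q^*$, and \itemref{item:ipqnosdf/excision} is the usual consequence of \itemref{item:ipqnosdf/bc} combined with the localization triangle.

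The principal obstacle I anticipate is the bootstrapping in the second step: turning \thmref{thm:Chow} into honest proper cdh descent for $\D^*$ requires proving \itemref{item:ipqnosdf/excision} in the projective case and then using it to extend all of the other claims to the proper case, which has a somewhat circular appearance that must be broken by a careful Noetherian induction on the non-isomorphism locus of $\pi$. A secondary technical point will be ensuring that the projective-bundle calculation in Step 1 behaves well with respect to arbitrary Thom twists $\vb{\alpha}$ in the derived stack setting, which should be manageable via \corref{cor:0pagh0p1} and \remref{rem:Thom exact}.
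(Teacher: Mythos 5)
Your overall architecture matches the paper's: prove the projective case by factoring through a closed immersion into a projective bundle (Atiyah duality for the bundle plus localization for the immersion), then bootstrap to general proper representable morphisms using \thmref{thm:Chow} and proper cdh descent. The way you organize the bootstrapping step (proper excision for projective morphisms generates proper cdh descent for the $h_*h^*$ presheaf via \remref{rem:0sftd07g} and \remref{rem:0as71uug1}, after which the other claims reduce to the projective case) is essentially what the paper does; the noetherian induction you flag lives inside \thmref{thm:Chow} rather than in the reduction itself, which is how the apparent circularity is actually broken.

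However, there is a genuine gap in Step~1, in the projective bundle case. You claim that the invertibility of $\varepsilon_\pi$ for $\pi : \bP(\sE) \to \sS$ ``can be checked after passing to a base \emph{scheme} via Nisnevich descent,'' reducing to the classical projective bundle formula. This fails: a scalloped stack is \emph{not} Nisnevich-locally a scheme (e.g. $BG$ for $G$ a nontrivial nice or linearly reductive group has no Nisnevich cover by schemes), so Nisnevich descent and nil-invariance only let you reduce to the case where $\sS$ is a classical \fund stack. In that setting the classical projective bundle computation does not apply as stated, since Thom spaces of nontrivial $G$-representations are not Tate twists; the projective bundle formula is not a formal consequence of localization, smooth base change, and Thom stability alone. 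The paper bridges this by invoking the universality of $\SH^*$ (\propref{prop:univ}) to transport the Pontryagin--Thom collapse map from Hoyois's equivariant stable homotopy category to $\D$, reducing the triangle identities to \cite[Thm.~6.9]{HoyoisEquivariant}. Your plan would need this genuine equivariant input; without it, the projective bundle case does not close.

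A minor slip: closed base change for a general $(*,\sharp,\otimes)$-formalism $\D^*$ should be cited from the localization axiom in \defnref{defn:0paguh107}, not from \thmref{thm:funct SH}, which is specific to $\SH^*$.
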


  \subsection{Cdh descent}

    Before proceeding, let us mention a reformulation (\remref{rem:0as71uug1}) of proper excision as a descent statement for the $*$-direct image functor.

    \begin{defn}\label{defn:cdh}\leavevmode
      \begin{defnlist}
        \item
        The \emph{proper cdh topology} on the site of scalloped derived stacks is the Grothendieck topology associated to the pretopology generated by the following covering families:
        \begin{inlinelist}
          \item
          the empty family, covering the empty stack $\initial$;
          \item
          for every scalloped derived stack $\sX$ and every representable proper morphism $f : \sX' \to \sX$ inducing an isomorphism away from a closed immersion $i : \sZ \hook \sX$, the family $\{i,f\}$ covering $\sX$.
        \end{inlinelist}
        
        \item
        The \emph{cdh topology} on the site of scalloped derived stacks is the union of the Nisnevich (Definition~\ref{defn:Nis}) and proper cdh topologies.
      \end{defnlist}
    \end{defn}

    \begin{rem}\label{rem:0sftd07g}
      In the definition of the proper cdh topology, it suffices to take only families $\{i,f\}$ where $f$ is a \emph{projective} morphism inducing an isomorphism away from $i$.
      This follows from \cite[Cor.~2.4]{HoyoisKrishna}.
    \end{rem}

    \begin{rem}\label{rem:0as71uug1}
      Fix a scalloped derived stack $\sY$ and a coefficient $\sF \in \D(\sY)$, and let $\sC$ be (a full subcategory of) the \inftyCat of scalloped derived stacks over $\sY$.
      Proper excision, for all proper representable morphisms $f : \sX' \to \sX$ in $\sC$, is equivalent to the assertion that the presheaf on $\sC$ given by $(h : \sX \to \sY) \mapsto h_*h^*(\sF)$ satisfies descent for the proper cdh topology.
      This follows from \cite[Thm.~2.2.7]{KhanLocalization}.
    \end{rem}

    \begin{cor}\label{cor:D^* descent}
      Let $\D^*$ be a $(*,\sharp,\otimes)$-formalism satisfying the Voevodsky conditions.
      Then the presheaf of \inftyCats $\D^*$ satisfies cdh descent.
    \end{cor}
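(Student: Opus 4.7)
The plan is to separate cdh descent into its two generating parts. By \defnref{defn:cdh}, the cdh topology is the join of the Nisnevich and proper cdh topologies, so it suffices to establish descent against each. Nisnevich descent is already given by \propref{prop:Nis desc}, reducing the task to proper cdh descent. Concretely, given a cartesian square
\[ \begin{tikzcd}
\sW \ar{r}{i'}\ar{d}{f'} & \sX' \ar{d}{f} \\
\sZ \ar{r}{i} & \sY
\end{tikzcd} \]
of scalloped derived stacks with $i$ a closed immersion and $f$ a proper representable morphism inducing an isomorphism over $\sY \setminus \sZ$, I must show that the comparison
\[ \Phi : \D(\sY) \longrightarrow \D(\sZ) \fibprod_{\D(\sW)} \D(\sX') \]
is an equivalence.

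For full faithfulness, given $\sF, \sG \in \D(\sY)$, adjunction (using $i'^* f^* = g^* = f'^* i^*$ where $g := f \circ i' = i \circ f'$) identifies the mapping space in the target pullback at $(\Phi(\sF), \Phi(\sG))$ with the limit of $\Maps(\sF, i_*i^*\sG)$ and $\Maps(\sF, f_*f^*\sG)$ over $\Maps(\sF, g_*g^*\sG)$. Proper excision (\thmref{thm:ipqnosdf}\itemref{item:ipqnosdf/excision}) asserts exactly that the square with corners $\sG$, $i_*i^*\sG$, $f_*f^*\sG$, $g_*g^*\sG$ is cartesian in $\D(\sY)$, so this limit recovers $\Maps(\sF, \sG)$.

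For essential surjectivity, given a compatible triple $(\sF_\sZ, \sF_{\sX'}, \alpha : i'^* \sF_{\sX'} \simeq f'^* \sF_\sZ)$, the candidate gluing is
\[ \sF := i_*\sF_\sZ \fibprod_{g_* f'^* \sF_\sZ} f_*\sF_{\sX'}, \]
where the left-hand arrow is the unit $i_*\sF_\sZ \to g_* g^* i_* \sF_\sZ$ after identifying $g^* i_* \simeq f'^*$ via proper base change (\thmref{thm:ipqnosdf}\itemref{item:ipqnosdf/bc}), and the right-hand arrow is $f_*$ applied to the unit $\sF_{\sX'} \to i'_* i'^* \sF_{\sX'}$ composed with $\alpha$. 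Applying $i^*$ and $f^*$ to this defining pullback and invoking proper base change again to compute $i^* f_*$ and $f^* i_*$, the localization identities of \remref{rem:iadsfp} on the open complement $\sY \setminus \sZ$, together with the fact that $f$ restricts to an isomorphism $\sX' \setminus \sW \to \sY \setminus \sZ$, collapse the resulting terms to give $i^*\sF \simeq \sF_\sZ$ and $f^*\sF \simeq \sF_{\sX'}$ compatibly with $\alpha$.

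The main obstacle is purely bookkeeping in the essential surjectivity step: tracking naturality of the gluing in $\alpha$ and verifying the two restriction identities organize themselves correctly against the recollement of $\D(\sY)$ along $(\sZ, \sY \setminus \sZ)$. No new geometric input beyond proper excision, proper base change, and localization is required.
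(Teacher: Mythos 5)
Your proposal takes the same route as the paper: decompose cdh descent into Nisnevich plus proper cdh, invoke \propref{prop:Nis desc} for the former, and establish the latter by verifying that $\D(\sY) \to \D(\sZ) \fibprod_{\D(\sW)} \D(\sX')$ is an equivalence, with full faithfulness coming from proper excision (equivalently, \remref{rem:0as71uug1}) and essential surjectivity from the explicit gluing using proper base change. The paper delegates exactly this verification to \cite[Thm.~2.52]{KhanSix}, so your sketch is spelling out the details behind that citation rather than offering an alternative argument.
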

    \begin{proof}
      Nisnevich descent is \propref{prop:Nis desc}.
      Proper cdh descent follows from proper base change (\thmref{thm:ipqnosdf}) and proper cdh descent for $f_*f^*$ (\remref{rem:0as71uug1}); see \cite[Thm.~2.52]{KhanSix} for details.
    \end{proof}

  \subsection{Relative purity}

    We begin with a construction of the morphism which is asserted to be invertible in \thmref{thm:ipqnosdf}\itemref{item:ipqnosdf/Atiyah}.
    This will require a preliminary result:

    \begin{thm}[Relative purity]\label{thm:relpur closed}
      Let $i : \sX \to \sY$ be a closed immersion of scalloped derived stacks which are smooth and representable over a scalloped derived stack $\sS$.
      If $\sN_{\sX/\sY}$ denotes the conormal sheaf of $i$, and $p : \sX \to \sS$ and $q : \sY \to \sS$ denote the structural morphisms, then there is a canonical isomorphism
      \[ q_\sharp i_* \simeq p_\sharp \vb{\sN_{\sX/\sY}} \]
      of functors $\D(\sX) \to \D(\sS)$.
    \end{thm}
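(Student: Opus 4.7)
My plan is to reduce the statement to the quotient-stack case by Nisnevich descent on $\sS$, then to the classical case by derived invariance, and finally to appeal to Hoyois's equivariant relative purity theorem \cite{HoyoisEquivariant} via the comparison in \remref{rem:Hoyois stable}.

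First, I would construct a canonical purity transformation $q_\sharp i_* \to p_\sharp\vb{\sN_{\sX/\sY}}$, natural under $\sharp$-base change along smooth representable morphisms $\sS' \to \sS$. The standard Morel--Voevodsky recipe is to use the deformation to the normal bundle $D = D_{\sX/\sY}$, a smooth representable $\sS$-stack mapping to $\sS \times \A^1$ whose fibre over $1$ recovers $(\sY, \sX \hook \sY)$ and whose fibre over $0$ recovers $(N_{\sX/\sY}, \sX \xrightarrow{0} N_{\sX/\sY})$, and which contains $\sX \times \A^1$ as a closed substack. Combining the smooth base change of \thmref{thm:funct SH}\itemref{item:funct SH/smooth}\itemref{item:funct SH/smooth/bc}, the closed base change of \thmref{thm:funct SH}\itemref{item:funct SH/closed}\itemref{item:funct SH/closed/bc}, and homotopy invariance along $\A^1$ yields the desired natural transformation and identifies its invertibility with the analogous statement for the zero section $0 : \sX \to N_{\sX/\sY}$, which is tautological since $\pi_\sharp 0_* = \vb{\sN_{\sX/\sY}}$ is the very definition of the Thom twist.

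Next, by \propref{prop:Nis desc} the claim is Nisnevich-local on $\sS$, so \thmref{thm:scallop}\itemref{item:scallop/cover} lets me assume $\sS$ is \fund, whence $\sY$ and $\sX$ are \qfund by \corref{cor:scallop more} and take the form $[Y/G]$, $[X/G]$ for $Y, X$ quasi-affine $G$-schemes over an affine base and $G$ nice embeddable. Applying \thmref{thm:derinv} reduces to the classical case, where $\sN_{\sX/\sY}$ is identified with the usual conormal sheaf since $i$ is a quasi-smooth closed immersion of smooth $\sS$-stacks. The comparison of \remref{rem:Hoyois stable}, together with the universal realization of \propref{prop:univ}, then reduces the assertion to relative purity in Hoyois's equivariant setting, which is established in \cite[\S 3.5]{HoyoisEquivariant}.

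The principal obstacle is the first paragraph: constructing the purity transformation and checking the relevant functoriality in the derived scalloped setting. In the \qfund case this can be carried out $G$-equivariantly via the Rees algebra construction; since invertibility is what remains to check, the practical approach is to establish existence and invertibility simultaneously in the \qfund case via Hoyois, and then extend the existence of a canonical purity transformation to general scalloped $\sS$ by right Kan extension from the \fund subcategory (\lemref{lem:9sydagy}), at which point invertibility propagates by the jointly conservative family of pullbacks to a Nisnevich covering \fund atlas.
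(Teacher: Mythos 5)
Your overall blueprint matches the paper's: deform to the normal bundle, reduce by Nisnevich descent and derived/nil invariance, and appeal to Hoyois. However, there is a concrete error in the reduction step, and the paper reduces along a different variable than you do.

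After making $\sS$ \fund by Nisnevich descent, you claim that $\sY$ and $\sX$ become \qfund "by \corref{cor:scallop more}." This is not what that corollary says: it concludes \qfund only when the morphism to the \qfund base is \emph{quasi-affine}, whereas $q : \sY \to \sS$ is merely smooth, representable, and of finite presentation. In general $\sY = [Y/G]$ with $Y$ a quasi-compact derived algebraic space over the affine base of $\sS = [S/G]$, and there is no reason for $Y$ to be quasi-affine. So your reduction lands you in a case that Hoyois's equivariant theory (which works with $G$-quasi-projective schemes) does not directly cover, and you would still need a further reduction. The paper's proof sidesteps this by instead reducing so that $\sY$ itself (and hence $\sX$) is \fund; this requires checking that the comparison morphism obtained from the deformation space is compatible with representable Nisnevich covers of $\sY$, but it puts you directly in the situation where the argument of \cite[Prop.~5.7]{HoyoisEquivariant} applies.

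A second, smaller issue: the appeal to \thmref{thm:derinv} for derived invariance is the wrong reference. That theorem is an unstable statement about $\MotSpc^*$; the statement at hand is about an arbitrary $(*,\sharp,\otimes)$-formalism $\D^*$, and the correct tool is \corref{cor:nil inv}, which is the nil/derived invariance consequence of the localization axiom. Relatedly, the paper does not transfer the result from $\SH^*$ via \propref{prop:univ}; it re-runs Hoyois's deformation argument directly in $\D^*$ using only the axioms. Your transfer-by-realization route could also be made to work, but it requires verifying that the specific purity transformation is built from operations commuting with $R_\sX$, which is an extra verification you have not addressed.
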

    \begin{proof}
      For any $i : \sX \to \sY$ as in the statement, set $P_\sS(\sX,\sY) := q_\sharp i_*$.
      Then we have
      \[ P_\sS(\sX, N_{\sX/\sY}) = p_\sharp \pi_\sharp s_* \simeq p_\sharp \vb{\sN_{\sX/\sY}}, \]
      where $\pi : N_{\sX/\sY} \to \sX$ is the projection of the normal bundle (i.e., the total space of $\sN_{\sX/\sY}$) and $s : \sX \to N_{\sX/\sY}$ is the zero section.
      The deformation to the normal bundle $D_{\sX/\sY}$ (see \cite[Thm.~4.1.13]{KhanRydh}) is a scalloped derived stack (as $D_{\sX/\sY} \to \sY$ is representable), smooth over $\sS$ and equipped with canonical morphisms of pairs (i.e., homotopy cartesian squares)
      \[ \begin{tikzcd}
        (\sX, \sY) \to (\sX \times \A^1, D_{\sX/\sY}) \gets (\sX, N_{\sX/\sY})
      \end{tikzcd} \]
      given by the inclusions of the fibres over $0$ and $1$ of $D_{\sX/\sY} \to \A^1$.
      It will suffice to show that the induced morphism (cf.~\cite[2.4.32]{CisinskiDegliseBook})
      \[ P_\sS(\sX,\sY) \to P_\sS(\sX\times\A^1, D_{\sX/\sY}) \circ \pr^* \gets P_\sS(\sX, N_{\sX/\sY}) \]
      is invertible, where $\pr : \sX \times \A^1 \to \sX$ is the projection.
      By Nisnevich descent (\propref{prop:Nis desc}) and derived invariance (\corref{cor:nil inv}) we may reduce to the case where $\sY$ (and hence $\sX$) is \fund.
      In that case the result is proven as in \cite[Prop.~5.7]{HoyoisEquivariant}.
    \end{proof}

    \begin{constr}\label{constr:epsilon}
      Let $f : \sX \to \sY$ be a smooth proper representable morphism of scalloped derived stacks.
      Its diagonal is a closed immersion $\Delta_f : \sX \to \sX \fibprod_\sY \sX$ (since $f$ is separated and representable) whose conormal sheaf is canonically identified with the cotangent complex $\sL_f$.
      The exchange transformation $\Ex_{\sharp,*}$ associated to the square
      \[ \begin{tikzcd}
        \sX \fibprod_\sY \sX \ar{r}{\pr_1}\ar{d}{\pr_2}
        & \sX \ar{d}{f}
        \\
        \sX \ar{r}{f}
        & \sY
      \end{tikzcd} \]
      gives rise to a natural transformation
      \[
        \varepsilon_f
        : f_\sharp
        = f_\sharp \pr_{2,*} \Delta_{f,*}
        \xrightarrow{\Ex_{\sharp,*}} f_* \pr_{1,\sharp} \Delta_{f,*}
        \simeq f_* \vb{\sL_f}
      \]
      where the isomorphism is relative purity (\thmref{thm:relpur closed}).
      By the smooth base change formula, formation of $\varepsilon_f$ commutes with smooth representable inverse image.
    \end{constr}

  \subsection{Reductions}

    We discuss some of the relationships between the various assertions in \thmref{thm:ipqnosdf}.

    \begin{lem}\label{lem:psdfij}
      Let $f : \sX \to \sY$ be a smooth proper representable morphism.

      \begin{thmlist}
        \item
        If $\D^*$ satisfies Atiyah duality (\thmref{thm:ipqnosdf}\itemref{item:ipqnosdf/Atiyah}) for any base change of $f$, then it also satisfies compactness, proper base change, and smooth-proper base change (\thmref{thm:ipqnosdf}\itemref{item:ipqnosdf/compact},\itemref{item:ipqnosdf/bc},\itemref{item:ipqnosdf/smprop}) for $f$.
      
        \item
        If $\D^*$ satisfies smooth-proper base change (\thmref{thm:ipqnosdf}\itemref{item:ipqnosdf/smprop}) for $f$, then it also satisfies Atiyah duality for $f$ (\thmref{thm:ipqnosdf}\itemref{item:ipqnosdf/Atiyah}).
      \end{thmlist}
    \end{lem}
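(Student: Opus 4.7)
The plan is to establish (ii) directly from the construction of $\varepsilon_f$ and then derive (i) by using Atiyah duality to trade $*$-direct image for $\sharp$-direct image.

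For part (ii), I would use that by the very definition in Construction~\ref{constr:epsilon}, $\varepsilon_f$ factors as
\[
  \varepsilon_f : f_\sharp
  \simeq f_\sharp \pr_{2,*}\Delta_{f,*}
  \xrightarrow{\Ex_{\sharp,*}} f_*\pr_{1,\sharp}\Delta_{f,*}
  \simeq f_*\vb{\sL_f},
\]
where $\pr_1,\pr_2 : \sX \fibprod_\sY \sX \to \sX$ are the projections and $\Delta_f$ the (closed) diagonal. The left-hand isomorphism uses $\pr_2 \circ \Delta_f = \id$, and the right-hand one is relative purity (Theorem~\ref{thm:relpur closed}) applied to $\Delta_f$ over the base $\sX$, with smooth structural morphisms $\id$ and $\pr_1$. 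By the smooth-proper base change hypothesis applied to the cartesian diagonal square, $\Ex_{\sharp,*}$ is invertible, so $\varepsilon_f$ is too.

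For part (i), compactness is automatic: Atiyah duality identifies $f_*$ with $f_\sharp\vb{-\sL_f}$, which preserves colimits as the composition of a left adjoint with a $\otimes$-invertible functor, so $f^!$ exists by the adjoint functor theorem. For proper base change, in any square cartesian on classical truncations with $f$ on the bottom, smoothness (hence flatness) of $f$ makes the square derived cartesian, so the base change $g$ is again smooth, proper, and representable. Invoking Atiyah duality for both $f$ and $g$, together with smooth base change for $f_\sharp$ against $v^*$, gives
\[
  v^* f_*
  \simeq v^* f_\sharp \vb{-\sL_f}
  \xrightarrow{\sim} g_\sharp u^* \vb{-\sL_f}
  \simeq g_\sharp \vb{-\sL_g} u^*
  \simeq g_* u^*,
\]
where the third step uses the symmetric-monoidal compatibility $u^*\vb{-\sL_f} \simeq \vb{-\sL_g}u^*$ (coming from $\sL_g \simeq u^*\sL_f$). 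The smooth-proper case is analogous; one additionally invokes the smooth projection formula for $p_\sharp$ to commute the Thom twist past $p_\sharp$, yielding $p_\sharp\vb{-\sL_g} \simeq \vb{-\sL_f}p_\sharp$, and then
\[
  q_\sharp g_* \simeq q_\sharp g_\sharp \vb{-\sL_g}
  \simeq f_\sharp p_\sharp \vb{-\sL_g}
  \simeq f_\sharp \vb{-\sL_f} p_\sharp
  \simeq f_* p_\sharp.
\]

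The main point to verify carefully will be that the explicit isomorphisms constructed above coincide with the canonical exchange transformations $\Ex^*_*$ and $\Ex_{\sharp,*}$, but this is a routine mate computation using the triangle identities of the relevant adjunctions and is not the difficult part of the argument.
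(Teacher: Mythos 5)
Your proposal takes essentially the same approach the paper does, which simply defers to \cite[Lem.~2.28, 2.29]{KhanSix}: for (ii), read off invertibility of $\varepsilon_f$ from its very construction as a specific instance of $\Ex_{\sharp,*}$; for (i), chain Atiyah duality for $f$ and its base change together with the smooth base change and projection formulas already present in the $(*,\sharp,\otimes)$-formalism.

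Two cautions. First, the claim that ``smoothness (hence flatness) of $f$ makes the square derived cartesian'' is not correct as stated: the square in \thmref{thm:ipqnosdf}\itemref{item:ipqnosdf/bc} is only assumed cartesian on classical truncations, and flatness of $f$ does not prevent $\sX'$ from being a derived thickening of the derived fiber product (a trivial square-zero extension, for instance). The consequences you need --- that the base change is smooth proper representable and that its cotangent complex is pulled back from $\sL_f$ --- hold on the nose for $\sX'' := \sX \fibprodR_\sY \sY'$; one then factors through the surjective closed immersion $\sX' \to \sX''$ and invokes nil invariance (\corref{cor:nil inv}) to reduce to the derived cartesian case. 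Second, your closing point that the constructed isomorphisms must be identified with the canonical $\Ex^*_*$ and $\Ex_{\sharp,*}$ is precisely what the citation packages; the mate computation relies on the naturality of $\varepsilon$ recorded at the end of \constrref{constr:epsilon}, and this hypothesis does real work --- an arbitrary chain of isomorphisms does not by itself show that the distinguished exchange transformation is invertible.
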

    \begin{proof}
      Immediate consequence of the definition of $\varepsilon$, see \cite[Lem.~2.28, 2.29]{KhanSix}.
    \end{proof}

    \begin{lem}\label{lem:obqobu}
      Let $\sS$ be a scalloped derived stack.
      For any finite locally free sheaf $\sE$ on $\sS$, let $f : \P(\sE) \to \sS$ denote the associated projective bundle.
      Then $\D^*$ satisfies Atiyah duality for $f$.
    \end{lem}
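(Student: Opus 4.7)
By \lemref{lem:psdfij}, it suffices to prove Atiyah duality for $f$. Since $\varepsilon_f$ commutes with smooth representable inverse image (\constrref{constr:epsilon}) and $\D^*$ satisfies Nisnevich descent (\propref{prop:Nis desc}), the statement is Nisnevich-local on $\sS$. Any finite locally free sheaf is Zariski-locally trivial, so we may assume $\sE \simeq \sO_\sS^{n+1}$, reducing to Atiyah duality for the projection $f : \P^n_\sS \to \sS$ from a trivial projective bundle. We argue by induction on $n \ge 0$; the case $n = 0$ is trivial since $f$ is an isomorphism.

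For $n \ge 1$, consider the standard closed-open decomposition
\[ i : \P^{n-1}_\sS \hookrightarrow \P^n_\sS \hookleftarrow \A^n_\sS : j, \]
where $i$ is the inclusion of the hyperplane at infinity, with conormal sheaf $\sN_i \simeq \sO_{\P^{n-1}_\sS}(1)$. Write $g := f \circ i$ and $h := f \circ j$. Applying the localization triangle $j_\sharp j^* \to \id \to i_* i^*$ of \remref{rem:iadsfp} termwise to $\varepsilon_f$ produces a morphism of exact triangles in $\D(\sS)$; by naturality of $\varepsilon$, it suffices to show that $\varepsilon_f$ is invertible on objects of the form $i_* \sG$ for $\sG \in \D(\P^{n-1}_\sS)$ and $j_\sharp \sH$ for $\sH \in \D(\A^n_\sS)$.

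For the closed piece, relative purity (\thmref{thm:relpur closed}) applied to $i$ inside $\P^n_\sS$ (both smooth representable over $\sS$) gives $f_\sharp i_* \simeq g_\sharp \vb{\sN_i}$; meanwhile $f_* i_* = g_*$ tautologically. The cotangent triangle $i^*\sL_f \to \sL_g \to \sL_i \simeq \sN_i[1]$ together with additivity of Thom twists (\remref{rem:Thom exact}) yields $f_\sharp \vb{-\sL_f} i_* \simeq g_\sharp \vb{-\sL_g}$, and a naturality check against \constrref{constr:epsilon} identifies $\varepsilon_f \circ i_*$ with $\varepsilon_g$, invertible by the induction hypothesis.

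The main obstacle is the open piece: the smooth projection formula gives $f_\sharp \vb{-\sL_f}(j_\sharp \sH) \simeq h_\sharp \vb{-\sL_h}(\sH)$, but since $h$ is not proper, there is no direct formula for $f_*(j_\sharp \sH)$ with which to compare. The cleanest resolution is to bootstrap: combine the closed-piece computation with Thom stability to deduce the projective bundle formula, decomposing $\D(\P^n_\sS)$ as a direct sum of $n+1$ copies of $f^*\D(\sS)$ via twists by tautological line bundles. On each such summand, $\varepsilon_f$ can then be checked directly using $f^*$-linearity (consequence of the smooth and closed projection formulas) and the trivial case of Atiyah duality for $\id_\sS$.
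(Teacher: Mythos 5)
There is a fundamental gap at the very first reduction. You claim that ``any finite locally free sheaf is Zariski-locally trivial, so we may assume $\sE \simeq \sO_\sS^{n+1}$.'' This is true for schemes and algebraic spaces, but it \emph{fails} for stacks: a nontrivial representation of $G$ defines a vector bundle on $BG$ that is not Zariski- (or Nisnevich-) locally trivial on $BG$, because $BG$ has no nontrivial Nisnevich covers refining it to a point. Indeed the entire construction of $\SH(\sX)$ in this paper is built around precisely this phenomenon (see \ssecref{ssec:intro/pt1}). Nisnevich descent together with \thmref{thm:scallop} only lets you reduce to $\sS$ \fund, i.e.\ of the form $[X/G]$ with $X$ affine; the sheaf $\sE$ then corresponds to a $G$-equivariant vector bundle on $X$, which is in general not trivializable. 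Your induction on $n$ therefore never gets off the ground for the bundles that actually matter.

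Even granting the reduction, the treatment of the open piece is circular. You propose to deduce the projective bundle formula (as a semiorthogonal decomposition of $\D(\P^n_\sS)$) from the closed-piece computation, and then to check $\varepsilon_f$ on each summand ``directly.'' But the projective bundle formula controls $f^*$ and $f_\sharp$; computing $f_*$ on each summand — which is precisely the content of Atiyah duality — is not a formal consequence of the projection formulas and requires independent input (a duality or properness statement for $\P^n$), which is what the lemma is meant to supply. By contrast, the paper's proof sidesteps both problems: it reduces to $\sS$ \fund and classical, imports the Pontryagin--Thom collapse map $\eta_f : \un_\sS \to f_\sharp(\un_{\P(\sE)}\vb{-\sL_f})$ from Hoyois's equivariant construction \cite[\S 5.3]{HoyoisEquivariant} (which is designed exactly for non-locally-trivial $G$-equivariant bundles), transports it to $\D(\sS)$ via the universal realization $R : \SH^* \to \D^*$ (\propref{prop:univ}), and reduces the triangle identities for the would-be adjunction $(f^*, f_\sharp\vb{-\sL_f})$ to the known case $\D^* = \SH^*$. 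The collapse map is the key ingredient that your argument is missing.
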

    \begin{proof}
      Set $\sX := \P(\sE)$ to simplify the notation.
      By Nisnevich descent (\propref{prop:Nis desc}) and the fact that formation of $\varepsilon_f$ commutes with inverse image by representable étale morphisms, we may reduce to the case where $\sS$ is \fund.
      We may also assume $\sS$ is classical by derived invariance (\corref{cor:nil inv}).
      In this case the Pontryagin--Thom collapse map $\eta_f : \un_\sS \to f_\sharp (\un_\sX \vb{-\sL_f})$ in $\SH(\sS)$, constructed in \cite[\S 5.3]{HoyoisEquivariant}, induces by \propref{prop:univ} a morphism of the same form in $\D(\sS)$, which in turn induces by the smooth projection formula a natural transformation
      \[ \eta_f : \id_{\D(\sS)} \to f_\sharp f^* \vb{-\sL_f}. \]
      To show that $\varepsilon_f : f_* \to f_\sharp \vb{\sL_f}$ is invertible it will suffice to show that its left transpose $\varepsilon'_f : f^* f_\sharp \vb{-\sL_f} \to \id$ and $\eta_f$ are the counit and unit of an adjunction $(f^*, f_\sharp \vb{-\sL_f})$.
      The verification of the triangle identities reduces to showing that the composite
      \[
        f^* \xrightarrow{\eta_f} f^* f_\sharp \vb{-\sL_f} f^* \xrightarrow{\varepsilon'_{f}} f^*
      \]
      induces the identity when evaluated on the unit $\un_\sS$, as in the beginning of the proof of \cite[Thm.~5.22]{HoyoisEquivariant}.
      Again by \propref{prop:univ} it will suffice to show this in the case $\D^* = \SH^*$, which is done in \cite[Thm.~6.9]{HoyoisEquivariant}.
    \end{proof}

    \begin{lem}\label{lem:apsdfihq}
      Let $f : \sX \to \sY$ be a proper representable morphism of scalloped derived stacks.
      If $\D^*$ satisfies proper base change for $f$ (\thmref{thm:ipqnosdf}\itemref{item:ipqnosdf/bc}), then it also satisfies proper excision for $f$ (\thmref{thm:ipqnosdf}\itemref{item:ipqnosdf/excision}).
    \end{lem}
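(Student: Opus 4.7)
The plan is to verify the square is homotopy cartesian by checking separately on the open complement $\sU := \sY \setminus \sZ$ and on $\sZ$. Write $j : \sU \hookrightarrow \sY$ for the open immersion, $i' : f^{-1}(\sZ) \hookrightarrow \sX$ and $f' : f^{-1}(\sZ) \to \sZ$ for the base changes of $i$ and $f$, so that $g = i \circ f' = f \circ i'$. By the constructible separation property (\propref{prop:constrsep}), applied to the cover $\{i,j\}$, it suffices to prove that the square becomes cartesian after applying $j^*$ and after applying $i^*$.

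After applying $j^*$: the localization property \defnref{defn:0paguh107}\itemref{item:0paguh107/localization} (cf.~\remref{rem:iadsfp}) yields $j^* i_* \simeq 0$, hence $j^* i_* i^* \simeq 0$ and (since $g$ factors through $i$) $j^* g_* g^* \simeq 0$. Meanwhile the hypothesis that $f$ is an isomorphism away from $\sZ$ gives, via proper base change, $j^* f_* f^* \simeq (f_\sU)_*(f_\sU)^* j^* \simeq j^*$ where $f_\sU : f^{-1}(\sU) \to \sU$ is an equivalence. One checks the induced map $j^* \to j^* f_* f^* \simeq j^*$ is the identity, so the right column becomes $0 \to 0$ and the left becomes an equivalence, which is trivially cartesian.

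After applying $i^*$: using $i^* i_* \simeq \id$ (localization) and proper base change for the square with corners $f^{-1}(\sZ), \sX, \sZ, \sY$, one computes
\[
  i^*(\id) \simeq i^*, \qquad
  i^*(i_* i^*) \simeq i^*, \qquad
  i^*(f_* f^*) \simeq f'_* (f')^* i^*, \qquad
  i^*(g_* g^*) \simeq f'_* (f')^* i^*,
\]
where in the last identification we further use $(i')^* i'_* \simeq \id$. A chase of units and counits (triangle identities) shows that both horizontal arrows in the resulting square are equivalences, so again the square is cartesian.

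The main technical check is therefore the $i^*$-computation: one must unwind the natural transformation $f_* f^* \to g_* g^*$ (which is $f_*$ applied to the unit $\id \to i'_*(i')^*$) and verify that after applying $i^*$ and using both proper base change and localization-type identities, the resulting map is the inverse of the canonical isomorphism identifying both sides with $f'_*(f')^* i^*$. This is the only place where proper base change for $f$ (i.e., hypothesis \thmref{thm:ipqnosdf}\itemref{item:ipqnosdf/bc}) is really used; every other step is a formal consequence of the $(*,\sharp,\otimes)$-formalism and the Voevodsky conditions.
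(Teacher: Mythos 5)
Your proof is correct and follows the same route as the paper: use constructible separation (\propref{prop:constrsep}) to reduce to checking cartesianness after $i^*$ and after $j^*$, then handle the $i^*$ case with localization plus proper base change and the $j^*$ case with base change along the open immersion. The only (harmless) cosmetic difference is that for the $j^*$ case the paper invokes the smooth base change formula — already an axiom of the $(*,\sharp,\otimes)$-formalism — whereas you invoke proper base change for $f$, which is the hypothesis; both identify $j^*f_*$ with $(f_\sU)_*$ applied after the appropriate inverse image and give the same conclusion.
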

    \begin{proof}
      Let $i : \sZ \hook \sY$ be a closed immersion, with complementary open immersion $j : \sU \to \sX$, such that $f$ is an isomorphism over $\sU$.
      By \propref{prop:constrsep} it will suffice to show the square in question is homotopy cartesian after applying either $i^*$ or $j^*$.
      The $i^*$ case follows easily from the localization property and proper base change (\thmref{thm:ipqnosdf}\itemref{item:ipqnosdf/bc}).
      The $j^*$ case follows immediately from the smooth base change formula.
    \end{proof}

  \subsection{Proof of \thmref{thm:ipqnosdf}, projective case}
  \label{ssec:as08dfg}

    Let $f : \sX \to \sY$ be a projective morphism between scalloped derived stacks.
    
    Let us prove the proper base change formula \itemref{item:ipqnosdf/bc} and smooth-proper base change formula \itemref{item:ipqnosdf/smprop} for $f$.
    By Nisnevich descent and the smooth base change formula (\thmref{thm:funct SH}\itemref{item:funct SH/smooth}\itemref{item:funct SH/smooth/bc}), both claims are local on $\sY$ (for proper base change, choose a Nisnevich cover of $\sY$ and apply $*$-inverse image by the induced cover of $\sY'$).
    Therefore, we may assume that $\sY$ has the resolution property (e.g., that $\sY$ is \qfund, by \thmref{thm:scallop}\itemref{item:scallop/cover} and \propref{prop:2rgoy10}).

    Since $\sY$ has the resolution property, the projective morphism $f$ factors through a closed immersion into a projective bundle over $\sY$.
    For the closed immersion the claims are standard consequences of the localization property (\defnref{defn:0paguh107}\itemref{item:0paguh107/localization}), see e.g. \cite[\S 2.2]{KhanSix}.
    For the projective bundle the claims follow from Lemmas~\ref{lem:psdfij} and \ref{lem:obqobu}.
    The claims for $f$ then follow immediately.
    
    Atiyah duality \itemref{item:ipqnosdf/Atiyah} and proper excision \itemref{item:ipqnosdf/excision} then follow in view of Lemmas~\ref{lem:psdfij} and \ref{lem:apsdfihq}.

  \subsection{Proof of \thmref{thm:ipqnosdf}, general case}
    
    Combining Remarks~\ref{rem:0sftd07g} and \ref{rem:0as71uug1} with the projective case of \thmref{thm:ipqnosdf} (proven in \ssecref{ssec:as08dfg}), we have both proper cdh descent and proper excision \itemref{item:ipqnosdf/excision} for all proper representable morphisms.

    For compactness \itemref{item:ipqnosdf/compact}, proper base change \itemref{item:ipqnosdf/bc}, and smooth-proper base change \itemref{item:ipqnosdf/smprop}, we can use descent along a proper cdh cover of the following type to reduce to the case of a projective morphism.
    
    \begin{thm}\label{thm:Chow}
      Let $f : \sX \to \sY$ be a separated representable morphism of finite type between quasi-compact quasi-separated derived algebraic stacks.
      Assume that $\sY$ is scalloped.
      Then there exists a proper cdh cover $g : \widetilde{\sX} \to \sX$ such that the composite $f \circ g : \widetilde{\sX} \to \sY$ is quasi-projective.
    \end{thm}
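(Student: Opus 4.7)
The plan is to combine Rydh's stacky Chow lemma with a noetherian induction on $\sX$, after first reducing to a suitable noetherian situation. First, by the derived invariance of the proper cdh topology (the notion depending only on classical truncations), I would pass to classical $\sX$ and $\sY$. In the case $\sY$ is scalloped, \corref{cor:scallop more} ensures that $\sX$ is also scalloped; in either hypothesis of the theorem the argument can then be reduced, by a standard absolute noetherian approximation writing $f$ as a cofiltered limit of noetherian finite-type models, to the case where both $\sX$ and $\sY$ are noetherian (quasi-projectivity descends along such limits).

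Next, I would invoke Rydh's stacky generalization of Chow's lemma: given the separated, representable, finite-type morphism $f : \sX \to \sY$ with $\sX$ noetherian, there exists a projective birational morphism $g_1 : \widetilde{\sX}_1 \to \sX$, which is an isomorphism over a dense quasi-compact open $j : \sU \hookrightarrow \sX$, such that the composite $f \circ g_1 : \widetilde{\sX}_1 \to \sY$ is quasi-projective. Let $i : \sZ \hookrightarrow \sX$ be the reduced closed complement of $\sU$; by construction $\{i, g_1\}$ already forms a proper cdh covering family of $\sX$ in the sense of \defnref{defn:cdh}.

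By noetherian induction on an appropriate measure of $\sX$ (e.g.\ dimension, or the length of a stratification by locally closed substacks) the theorem applies to the strictly smaller separated representable finite-type morphism $f \circ i : \sZ \to \sY$: there is a proper cdh cover $g_Z : \widetilde{\sZ} \to \sZ$ such that $f \circ i \circ g_Z$ is quasi-projective over $\sY$. Taking the disjoint union
\[
  \widetilde{\sX} := \widetilde{\sX}_1 \sqcup \widetilde{\sZ}
  \xrightarrow{(g_1,\, i \circ g_Z)} \sX,
\]
refines $\{i, g_1\}$ into a proper cdh cover (proper cdh covers are stable under refinement of one of the legs), and $f \circ g$ is quasi-projective on each component, hence quasi-projective after taking disjoint union.

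The main obstacle is of course the input from Rydh, namely the existence of the projective birational modification $g_1$: this is where the geometric hypotheses (noetherian, and the presence of enough line bundles furnished Nisnevich-locally on $\sY$ by the scalloped/resolution-property assumption) are essential, and where the proof departs from the scheme-theoretic Chow lemma. A secondary technical point is ensuring that the noetherian approximation in the scalloped case is compatible with the scalloped structure so that Rydh's lemma applies at finite level; this should follow from the fact that \qfund stacks are limits of noetherian \qfund stacks by approximating the base affine scheme.
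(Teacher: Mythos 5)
Your proposal follows essentially the same strategy as the paper: reduce to classical stacks, invoke Rydh's stacky Chow lemma to produce a projective modification that is quasi-projective over $\sY$ and an isomorphism over a non-empty open, pair it with the complementary closed immersion to get a proper cdh cover, and close the argument by noetherian induction in the noetherian case. One clarification on the non-noetherian reduction, where you flag a ``secondary technical point'' about compatibility with the scalloped structure: this concern is a red herring. Once you are in the noetherian case, the scalloped hypothesis plays no further role --- Rydh's Chow lemma and noetherian induction apply to arbitrary noetherian algebraic stacks, and the scalloped hypothesis on $\sY$ is used \emph{only} to justify that $\sX$ and $\sY$ are of global type (\thmref{thm:scallop}\itemref{item:scallop/cover} and \corref{cor:scallop more}), which is the hypothesis needed to run Rydh's noetherian approximation \cite[Thm.~D]{RydhApprox}. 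The paper's execution of this step is somewhat subtler than a single ``cofiltered limit of noetherian models'': it first applies \cite[Thm.~D]{RydhApprox} to $f : \sX \to \sY$ to factor $f$ through an affine $\sX \to \sX_0$ with $\sX_0 \to \sY$ of finite \emph{presentation} (replacing $\sX$ by $\sX_0$), and only then applies \cite[Thm.~D]{RydhApprox} to $\sY \to \Spec(\bZ)$ to descend $f$ to a noetherian base; the order matters because a finite-type morphism need not descend along such an approximation until it has been upgraded to finite presentation. Finally, you don't need the open $\sU$ in Rydh's lemma to be dense --- non-empty suffices for the noetherian induction, since all that matters is that the closed complement is a proper closed substack.
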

    \begin{proof}
      The inclusion of the classical truncation is a proper cdh cover, so we may assume that $\sX$ and $\sY$ are classical.

      By the generalization of Chow's lemma proven in \cite{RydhFunctFlat}, we can find a projective morphism $\pi : \widetilde{\sX} \to \sX$ which is an isomorphism over a non-empty open $\sU \sub \sX$, such that $f \circ \pi : \widetilde{\sX} \to \sX \to \sY$ is quasi-projective.
      Then the family $\{\pi,i\}$, where $i : \sZ \to \sX$ is any closed immersion complementary to $j$, generates a proper cdh cover of $\sX$.
      Thus if $\sX$ is noetherian, then the claim follows by noetherian induction.

      In general, since $\sY$ is scalloped it is of global type in the sense of \cite[Def.~2.1]{RydhApprox} by \thmref{thm:scallop}\itemref{item:scallop/cover}, as is $\sX$ by \corref{cor:scallop more}.
      Thus we may apply noetherian approximation in the form of \cite[Thm.~D]{RydhApprox} to the morphism $\sX \to \sY$.
      The conclusion is that $f$ factors through an affine morphism $\sX \to \sX_0$ and a separated representable morphism $\sX_0 \to \sY$ which is of finite \emph{presentation}.
      Since proper cdh covers are stable under base change, and quasi-projective morphisms are stable under composition, we may replace $\sX$ by $\sX_0$ and thereby assume that $f$ is of finite presentation.

      By another application of \cite[Thm.~D]{RydhApprox} to the morphism $\sY \to \Spec(\bZ)$ we find an affine morphism $\sY \to \sY_0$ such that $\sY_0$ is of finite type over $\Spec(\bZ)$ (hence noetherian).
      Since $f$ is of finite presentation, we can moreover choose this approximation such that $f$ descends to a separated representable morphism of finite presentation $f_0 : \sX_0 \to \sY_0$.
      By the noetherian case we have a proper cdh cover $\widetilde{\sX_0} \to \sX_0$ such that $\widetilde{\sX_0} \to \sY_0$ is quasi-projective.
      Base changing from $\sX_0$ to $\sX$ now yields the desired proper cdh cover.
    \end{proof}

    Finally, Atiyah duality \itemref{item:ipqnosdf/Atiyah} again follows from \lemref{lem:psdfij}.

\section{The \texorpdfstring{$!$}{!}-operations}
\label{sec:shriek}

  \subsection{Statement}
  \label{ssec:shriek}

    Let $\D^*$ be a $(*,\sharp,\otimes)$-formalism on the \inftyCat of scalloped derived stacks.
    We assume $\D^*$ satisfies the Voevodsky conditions.

    \begin{thm}\label{thm:exc}
      For any representable morphism of finite type $f : \sX \to \sY$ in $\sC$, there exists a pair of adjoint functors
      \[ f_! : \D(\sX) \to \D(\sY), \quad f^! : \D(\sY) \to \D(\sX), \]
      and a natural transformation $\alpha_f : f_! \to f_*$, satisfying the following conditions:
      \begin{thmlist}
        \item\label{item:exc/open}
        There are canonical isomorphisms $f_! \simeq f_\sharp$ and $f^! \simeq f^*$ if $f$ is an open immersion.

        \item\label{item:exc/alpha}
        The natural transformation $\alpha_f : f_! \to f_*$ is invertible if $f$ is proper.

        \item\label{item:exc/bc}
        The functor $f_!$ satisfies the base change formula.
        That is, for any commutative square of scalloped derived stacks
        \[ \begin{tikzcd}
          \sX' \ar{r}{g}\ar{d}{u}
          & \sY' \ar{d}{v}
          \\
          \sX \ar{r}{f}
          & \sY
        \end{tikzcd} \]
        which is cartesian on classical truncations, the canonical morphisms of functors $\D(\sX) \to \D(\sY')$
        \begin{align*}
          \Ex^*_! : v^* f_! &\to g_! u^*,\\
          \Ex^!_* : u_* g^! &\to f^! v_*
        \end{align*}
        are invertible.

        \item\label{item:exc/proj}
        The functor $f_!$ satisfies the projection formula.
        That is, $f_! : \D(\sX) \to \D(\sY)$ is a morphism of $\D(\sY)$-module \inftyCats, where $\D(\sX)$ is regarded as a $\D(\sY)$-module via the symmetric monoidal functor $f^* : \D(\sY) \to \D(\sX)$.
        In particular, the canonical morphisms
        \begin{align*}
          \sG \otimes f_!(\sF) &\to f_!(f^*(\sG) \otimes \sF),\\
          \uHom(f^*(\sG), f^!(\sG')) &\to f^!(\uHom(\sG, \sG')),\\
          f_*\big(\uHom(\sF, f^!(\sG))\big) &\to \uHom(f_!(\sF), \sG)
        \end{align*}
        are invertible for all $\sF \in \D(\sX)$ and $\sG, \sG' \in \D(\sY)$.
      \end{thmlist}
      Moreover, the assignment $f \mapsto f_!$ (resp. $f \mapsto f^!$) extends to a functor $\D_!$ (resp. to a contravariant functor $\D^!$) from the \inftyCat of scalloped derived stacks to the \inftyCat of presentable \inftyCats and left-adjoint functors (resp. right-adjoint functors).
    \end{thm}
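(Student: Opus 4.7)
The plan is to follow the classical Deligne construction of the exceptional direct image, adapted to the $\infty$-categorical setting via the gluing machinery of \cite{LiuZhengGluing}. The two basic building blocks are already available: for an open immersion $j$, set $j_! := j_\sharp$ and $j^! := j^*$, with $\alpha_j : j_\sharp \to j_*$ obtained by transposing the identification $j^*j_* = \id$; for a proper representable morphism $p$, set $p_! := p_*$ and $\alpha_p := \id$, and note that $p_!$ preserves colimits by the compactness statement in \thmref{thm:ipqnosdf}\itemref{item:ipqnosdf/compact}, so it admits a right adjoint $p^!$. For a morphism $f : \sX \to \sY$ that admits a compactification $f = p \circ j$ (with $j$ an open immersion into a scalloped derived stack $\bar{\sX}$ and $p : \bar{\sX} \to \sY$ proper representable), we provisionally set $f_! := p_* j_\sharp$.

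The first step is to verify that this provisional definition is independent of the chosen compactification, in a coherent fashion. Given two factorizations one compares via the closure of the diagonal $\sX \hook \bar{\sX} \fibprod_\sY \bar{\sX}'$; the comparison isomorphism is built from proper base change (\thmref{thm:ipqnosdf}\itemref{item:ipqnosdf/bc}), smooth-proper base change (\thmref{thm:ipqnosdf}\itemref{item:ipqnosdf/smprop}) and the elementary identities $j^* j_\sharp = \id$, $j^* j_* = \id$. The base change formula \itemref{item:exc/bc} for $f_!$ in the compactifiable case then follows from the base change formulas for its two constituents ($\sharp$-base change from the Voevodsky axioms, $*$-base change from \thmref{thm:ipqnosdf}\itemref{item:ipqnosdf/bc}). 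The projection formula \itemref{item:exc/proj} reduces, in view of the smooth projection formula, to the projection formula for $p_*$ with $p$ proper representable; one establishes this by Nisnevich-localizing on $\sY$, invoking \thmref{thm:Chow} to pass to a proper cdh cover on which $p$ becomes projective, factoring a projective morphism as a closed immersion into a projective bundle, and combining the projection formula for closed immersions (a formal consequence of localization, see \defnref{defn:0paguh107}\itemref{item:0paguh107/localization}) with Atiyah duality for projective bundles (\lemref{lem:obqobu}) together with \lemref{lem:psdfij}.

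The remaining problem, and the main obstacle, is to assemble these pointwise definitions into an actual $\infty$-functor $\D_!$ on the full $\infty$-category of scalloped derived stacks with representable morphisms of finite type, handling in particular morphisms that are only Nisnevich-locally compactifiable. Here one applies the machinery of \cite{LiuZhengGluing}: the inputs required are (a) the presheaf of \inftyCats $\D^*$ together with its two partial functorialities $f \mapsto f_\sharp$ (for smooth representable $f$, in particular for open immersions) and $p \mapsto p_*$ (for proper representable $p$), (b) all the base change and compatibility isomorphisms assembled above, and (c) existence of Nisnevich-local compactifications of representable morphisms of finite type between scalloped stacks, which is provided by the Nagata-type compactification theorem of Rydh for algebraic stacks together with \thmref{thm:scallop}\itemref{item:scallop/cover}. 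The output of the Liu--Zheng construction is a functor $\D_!$ restricting to $f \mapsto f_\sharp$ on the open-immersion subcategory and to $p \mapsto p_*$ on the proper representable subcategory; taking pointwise right adjoints yields the contravariant functor $\D^!$, and the natural transformation $\alpha_f : f_! \to f_*$ is constructed globally from its open-immersion and proper values. The properties \itemref{item:exc/open}--\itemref{item:exc/proj} then propagate from the compactifiable case by Nisnevich descent (\propref{prop:Nis desc}) and constructible separation (\propref{prop:constrsep}).
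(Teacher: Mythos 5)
Your proposal follows essentially the same architecture as the paper's proof: the Deligne compactification strategy (set $f_! := p_* j_\sharp$ for $f = p\circ j$ with $j$ open and $p$ proper representable), verification of independence and compatibilities via proper base change and smooth-proper base change, coherent assembly through the gluing machinery of \cite{LiuZhengGluing}, and finally extension from the compactifiable case to general representable morphisms of finite type by Nisnevich-local compactification and \cite[Thm.~4.1.8]{LiuZheng}. The paper's proof is this exact outline, citing \thmref{thm:ipqnosdf}, \remref{rem:scovolq} (contractibility of the category of compactifications), and \remref{rem:repr loc compact}.

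The one input you cite that does not match the paper, and is in fact not available in the generality needed, is ``the Nagata-type compactification theorem of Rydh for algebraic stacks.'' The reference \cite{RydhCompact} in the bibliography is an unpublished draft limited to tame Deligne--Mumford stacks, and a Nagata theorem for general scalloped stacks is not what the argument relies on. The source of Nisnevich-local compactifications is instead \remref{rem:repr loc compact}: combine \thmref{thm:scallop}\itemref{item:scallop/cover2} with Sumihiro's theorem (\thmref{thm:sumihiro}\itemref{item:sumihiro/cover}) to Nisnevich-locally replace $f$ by an \emph{affine} morphism of finite type, which is trivially compactifiable (\examref{exam:affcomp}). Replacing your appeal to Rydh's compactification with this affine-reduction argument makes the proposal agree with the paper. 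The rest of the proposal is correct; in particular your derivation of the projection formula for $p_*$ proper (Nisnevich-localize, pass to a projective proper cdh cover via \thmref{thm:Chow}, factor through a projective bundle, combine the closed-immersion and projective-bundle cases) is a correct unwinding of what the cited machinery produces, and your observation that the real content is the coherent $\infty$-functoriality resolved by \cite{LiuZhengGluing} is exactly the point.
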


    \begin{rem}\label{rem:Thom !}
      Thom twists (\notatref{notat:aps-8hfp01h}) commute with each of the six operations.
      That is, for every morphism $f : \sX \to \sY$ and $\alpha \in \K(\sY)$, we have canonical isomorphisms of functors
      \begin{multline*}
        \qquad
        f^* \circ \vb{\alpha} \simeq \vb{f^*\alpha} \circ f^*,
        \quad
        f_* \circ \vb{f^*\alpha} \simeq \vb{\alpha} \circ f_*,\\
        f_! \circ \vb{f^*\alpha} \simeq \vb{\alpha} \circ f_!,
        \quad
        f^! \circ \vb{\alpha} \simeq \vb{f^*\alpha} \circ f^!.
      \end{multline*}
      Indeed, it suffices by \propref{prop:Nis desc} to check this on \fund stacks, in which case we can assume that $\alpha$ is the class of a locally free sheaf (since \fund stacks have the resolution property, see \propref{prop:2rgoy10}).
      In that case the claim is an easy consequence of various base change formulas.
      In view of these formulas, we will often abuse notation by writing e.g. $f_* \circ \vb{\alpha}$ instead of $\vb{\alpha} \circ f_*$ when $\alpha$ lives on the target.
    \end{rem}

  \subsection{Compactifications}

    The construction of the $!$-operations will be done, at least locally, by compactifying.

    \begin{defn}
      Let $f : \sX \to \sY$ be a representable morphism of scalloped derived stacks.
      We say that $f$ is \emph{compactifiable} if there exists a factorization
      \[ f : \sX \xrightarrow{j} \overline{\sX} \xrightarrow{g} \sY \]
      where $j$ is an open immersion and $g$ is proper representable.
      Note that if $f$ is compactifiable, then it is separated of finite type.
    \end{defn}

    \begin{rem}
      Compactifiability can be checked on classical truncations.
      Indeed, if $f : \sX \to \sY$ is a morphism and $\sX_\cl \xrightarrow{j_0} \overline{\sX}_\cl \xrightarrow{g_0} \sY_\cl$ is a compactification of $f_\cl$, then
      \[ \sX \to \overline{\sX}_\cl \fibcoprod_{\sX_\cl} \sX \to \sY \]
      is a compactification of $f$.
      See the proof of \cite[Pt.~II, Chap.~5, 2.1.6]{GaitsgoryRozenblyum}.
    \end{rem}

    \begin{exam}\label{exam:affcomp}
      Any affine morphism of finite type of derived stacks is compactifiable.
    \end{exam}

    \begin{exam}
      If $\sY$ is Deligne--Mumford, or at least has quasi-finite diagonal, then a representable morphism $f : \sX \to \sY$ is compactifiable if and only if it is separated and of finite type (see \cite[Thm.~B]{RydhCompact}).
    \end{exam}

    \begin{rem}\label{rem:scovolq}
      For any representable morphism $f : \sX \to \sY$, the \inftyCat of compactifications of $f$ is either empty or contractible.
      In the case of classical stacks this follows from \cite[Exp.~XVII, Prop.~3.2.6(ii)]{SGA4}.
      The derived case follows by the argument of \cite[Pt.~II, Chap.~5, 2.1.6]{GaitsgoryRozenblyum}.
    \end{rem}

    \begin{rem}\label{rem:repr loc compact}
      Let $f : \sX \to \sY$ be a representable morphism of finite type between scalloped derived stacks.
      Then there exists a commutative square
      \[ \begin{tikzcd}
        \sU \ar{r}{f_0}\ar[twoheadrightarrow]{d}{u}
        & \sV \ar[twoheadrightarrow]{d}{v}
        \\
        \sX \ar{r}{f}
        & \sY
      \end{tikzcd} \]
      where $u$ and $v$ are representable Nisnevich covers and $f_0$ is affine of finite type.
      (Indeed, choose $v$ such that $\sV$ is \fund by \thmref{thm:scallop}\itemref{item:scallop/cover2} and then use \thmref{thm:sumihiro}\itemref{item:sumihiro/cover} to choose a Nisnevich cover $\sU\twoheadrightarrow \sX \fibprodR_\sY \sV$ such that $f_0 : \sU\to\sV$ is affine.)
      In particular, every such $f$ is ``locally compactifiable'' (on the source and target, in the representable Nisnevich topology).
      Furthermore, note that if $\sX$ and $\sY$ have affine diagonal, then by \thmref{thm:scallop}\itemref{item:scallop/cover2} and the last part of \thmref{thm:sumihiro}\itemref{item:sumihiro/cover}, we can take $u$ and $v$ to be affine.
    \end{rem}

  \subsection{Proof of \thmref{thm:exc}}

    For compactifiable morphisms, the claims follow from \thmref{thm:ipqnosdf}, \remref{rem:scovolq}, and the general machinery of \cite{LiuZhengGluing} or \cite[Chap.~8, Thm.~6.1.5]{GaitsgoryRozenblyum} (cf. \cite[Thm.~9.4.8]{LiuZhengGluing}, \cite[Eqn.~(3.8)]{LiuZheng}, \cite[Chap.~5, Thm.~3.4.3]{GaitsgoryRozenblyum}, \cite[Thm.~2.34]{KhanSix}).
    Then one extends to all representable morphisms of finite type by \remref{rem:repr loc compact} and \cite[Thm.~4.1.8]{LiuZheng}.

  \subsection{Constructible separation}

    We can formulate an analogue of \propref{prop:constrsep} using $!$-inverse image:

    \begin{prop}\label{prop:constrsep!}
      For any constructible covering family $(j_\alpha : \sX_\alpha \to \sX)_\alpha$ of scalloped derived stacks, the inverse image functors
      \[ j_\alpha^! : \D(\sX) \to \D(\sX_\alpha) \]
      are jointly conservative.
    \end{prop}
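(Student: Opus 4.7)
The plan is to mimic the proof of \propref{prop:constrsep}: use the localization property to reduce conservativity on $\sX$ to conservativity on a closed substack and its open complement, then induct on the length of a stratification. The only new wrinkle, compared to the $*$-version, is replacing the unit $\sF\to j_*j^*(\sF)$ with the correct half of the $!$-localization triangle and ensuring that $i^!$ behaves well under further stratification via the base change formula.

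First, by quasi-compactness of $\sX$ and the definition of the constructible topology, the family $(j_\alpha)_\alpha$ can be refined by a finite family of locally closed immersions $k_m : \sS_m \hookrightarrow \sX$, $1 \le m \le n$, arising from a filtration
\[
  \initial = \sZ_0 \hook \sZ_1 \hook \cdots \hook \sZ_n = \sX
\]
by closed substacks with $\sS_m = \sZ_m\setminus\sZ_{m-1}$, such that each $k_m$ factors through some $j_{\alpha(m)}$. By the functoriality of $f \mapsto f^!$ (\thmref{thm:exc}), if $j_\alpha^!(\sF) \simeq 0$ for every $\alpha$, then $k_m^!(\sF) \simeq 0$ for every $m$. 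It therefore suffices to show that this latter condition forces $\sF \simeq 0$, and I will prove this by induction on $n$.

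For the inductive step, consider the complementary closed--open pair $i : \sZ_{n-1} \hook \sX$ and $j : \sU = \sS_n \hook \sX$. By the localization property (see \remref{rem:iadsfp}) there is an exact triangle
\[
  i_*i^!(\sF) \to \sF \to j_*j^*(\sF).
\]
Since $k_n = j$ is an open immersion, \thmref{thm:exc}\itemref{item:exc/open} gives $j^* \simeq j^!$, so $j^*(\sF) \simeq 0$ and the right-hand term vanishes. It remains to show $i^!(\sF) \simeq 0$ in $\D(\sZ_{n-1})$. The stack $\sZ_{n-1}$ is scalloped by \corref{cor:scallop more} (the closed immersion $i$ is representable), and the pullback family $\tilde j_\alpha : \sX_\alpha \fibprod_\sX \sZ_{n-1} \to \sZ_{n-1}$ is a constructible cover refined by the induced stratification $\sZ_0 \hook \cdots \hook \sZ_{n-1}$ of length $n-1$. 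For each $\alpha$, form the cartesian square with $\tilde i_\alpha : \sX_\alpha \fibprod_\sX \sZ_{n-1} \to \sX_\alpha$; the base change formula \thmref{thm:exc}\itemref{item:exc/bc} together with functoriality $\tilde j_\alpha^! \circ i^! \simeq (i \circ \tilde j_\alpha)^! \simeq (j_\alpha \circ \tilde i_\alpha)^! \simeq \tilde i_\alpha^! \circ j_\alpha^!$ yields $\tilde j_\alpha^!(i^!(\sF)) \simeq \tilde i_\alpha^!(j_\alpha^!(\sF)) \simeq 0$. The inductive hypothesis applied to $\sZ_{n-1}$ and the family $(\tilde j_\alpha)_\alpha$ then gives $i^!(\sF) \simeq 0$, so $\sF \simeq 0$.

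The base case $n = 0$ is trivial since then $\sX \simeq \initial$ and $\D(\sX)$ is the terminal category. The main potential obstacle is a bookkeeping one: ensuring that the constructible refinement exists as claimed and that all auxiliary stacks remain scalloped so that the $(*,\sharp,\otimes)$-formalism applies. The first is standard (constructible topology on a qcqs stack is finitely generated by locally closed immersions, and stratifications can always be refined into a chain), and the second is immediate from \corref{cor:scallop more}, so I do not expect any genuine difficulty.
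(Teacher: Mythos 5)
Your proof is correct and follows the same strategy as the paper's (which is stated very tersely: reduce to a closed--open pair and use the triangle $i_*i^! \to \id \to j_*j^*$). You fill in the reduction as an induction on the length of a stratification refining the constructible cover, which is exactly what is meant. One small inefficiency: in the inductive step you do not need the base change formula at all. For $m < n$, the locally closed immersion $k_m : \sS_m \hook \sX$ factors as $i \circ \bar k_m$ with $\bar k_m : \sS_m \hook \sZ_{n-1}$, so $\bar k_m^!(i^!\sF) \simeq k_m^!(\sF) \simeq 0$ purely by functoriality of $(-)^!$, and the inductive hypothesis applies directly to $i^!\sF$ and the length-$(n-1)$ stratification of $\sZ_{n-1}$ without ever re-pulling back the cover $(j_\alpha)$. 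Your detour through $\tilde j_\alpha$ and the cartesian square is harmless but unnecessary.
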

    \begin{proof}
      We reduce to the case of a closed-open pair $(i,j)$ and use the exact triangle
      \[
        i_* i^!
        \xrightarrow{\mrm{counit}} \id
        \xrightarrow{\mrm{unit}} j_*j^*
      \]
      from \remref{rem:iadsfp} (recall that $j^* \simeq j^!$).
    \end{proof}

  \subsection{Purity}
  \label{ssec:purity}

    \begin{thm}\label{thm:purity}
      Let $f : \sX \to \sY$ be a representable smooth morphism of scalloped derived stacks.
      Assume that $f$ is compactifiable or that $\sX$ and $\sY$ have affine diagonal.
      Then there is a canonical isomorphism
      \[ \pur_f : f^! \simeq f^*\vb{\sL_{\sX/\sY}} \]
      of functors $\D(\sY) \to \D(\sX)$.
      Equivalently, by adjunction, there is a canonical isomorphism $f_! \simeq f_\sharp\vb{-\sL_{\sX/\sY}}$.
    \end{thm}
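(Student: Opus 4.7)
The plan is to construct a purity natural transformation $\pur_f : f^*\vb{\sL_{\sX/\sY}} \to f^!$ for every smooth representable morphism of scalloped derived stacks, and verify it is an isomorphism by Nisnevich-local reduction to Hoyois's equivariant purity theorem.

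For the construction, I would use the diagonal $\Delta_f : \sX \to \sX\fibprod_\sY\sX$, which is a closed immersion of smooth representable morphisms over $\sX$ (via $\pr_1$ and $\pr_2$, both base changes of $f$). Relative purity (\thmref{thm:relpur closed}) then yields an isomorphism $\pr_{1,\sharp}\Delta_{f,*} \simeq \vb{\sL_{\sX/\sY}}$ of endofunctors of $\D(\sX)$. Combined with the base change isomorphisms of \thmref{thm:exc}\itemref{item:exc/bc} for the cartesian square of projections, together with the units and counits of the $(\Delta_{f,!},\Delta_f^!)$ and $(\pr_{i,!},\pr_i^!)$ adjunctions, this produces a natural transformation $\pur_f$ that is manifestly compatible with arbitrary base change and with smooth representable inverse image.

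To verify $\pur_f$ is invertible, I would reduce via Nisnevich descent (\propref{prop:Nis desc}) to the \qfund case. Under the affine diagonal hypothesis, \thmref{thm:scallop}\itemref{item:scallop/cover2} combined with \thmref{thm:sumihiro} provides an affine Nisnevich cover of $\sY$ by a \fund stack $[Y/G]$ with $G$ nice embeddable, and a further affine Nisnevich cover of the pullback of $\sX$ by a \qfund stack $[X/G]$ with $X$ quasi-affine over $Y$. Under the compactifiable hypothesis, the factorization $f = g\circ j$ with $j$ an open immersion and $g$ proper representable, together with \thmref{thm:Chow} applied to the proper extension, permits an analogous reduction to a local projective setting. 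In the reduced \qfund situation, \remref{rem:Hoyois stable} identifies $\SH(\sX)$ with Hoyois's $G$-equivariant stable motivic homotopy category $\SH^G(X)$, where purity for smooth $G$-equivariant morphisms of $G$-quasi-projective schemes is \cite[Thm.~5.17]{HoyoisEquivariant}. For a general $(*,\sharp,\otimes)$-formalism $\D^*$, the statement propagates from the universal case $\D^* = \SH^*$ via \propref{prop:univ}.

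The main obstacle is verifying that the globally defined transformation $\pur_f$ coincides, upon restriction to the \qfund case, with Hoyois's Pontryagin--Thom purity isomorphism (as opposed to differing by some automorphism of $f^!$). This compatibility should reduce, via the deformation-to-the-normal-bundle argument underlying relative purity, to the case of the zero section of a vector bundle, where both constructions unwind to the Thom isomorphism of \remref{rem:Thom exact}. A secondary technical point is that the compactifiable branch of the hypothesis requires handling the reduction without the direct appeal to \thmref{thm:scallop}\itemref{item:scallop/cover2}; there one instead works from the given compactification and invokes Atiyah duality (\thmref{thm:ipqnosdf}\itemref{item:ipqnosdf/Atiyah}) on the proper extension.
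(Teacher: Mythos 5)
Your construction of $\pur_f$ via the diagonal, relative purity, and the exchange structure is essentially what the paper does; the divergence is in the verification of invertibility, and there your plan has a genuine gap.

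The paper does not reduce to Hoyois's equivariant purity theorem at this stage. Instead it observes that, having already established smooth-proper base change (i.e.\ Atiyah duality \thmref{thm:ipqnosdf}\itemref{item:ipqnosdf/Atiyah}), the exchange transformation $\Ex^{*!} : p^*f^! \to g^!q^*$ is \emph{already} an isomorphism for any Nisnevich-locally compactifiable $f$ (\corref{cor:spdfi}): for proper $f$ it is the transpose of smooth-proper base change, for open immersions it is trivial, and these glue. Then $\pur_f$ is simply defined as $\Delta^!$ applied to the invertible $\Ex^{*!}$ for the diagonal square, using relative purity for the unramified $\Delta_f$ (\corref{cor:relpur}); invertibility is built in, not an afterthought. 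This entire argument takes place in an arbitrary $(*,\sharp,\otimes)$-formalism $\D^*$ and never leaves it. Your ``secondary technical point'' invoking Atiyah duality on the compactification is in fact the whole point: once you have it, you don't need Hoyois's Theorem 5.17 at all.

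The gap in your plan is the final step: ``For a general $(*,\sharp,\otimes)$-formalism $\D^*$, the statement propagates from the universal case $\D^* = \SH^*$ via \propref{prop:univ}.'' This does not follow. The realization functors $R_\sX : \SH(\sX) \to \D(\sX)$ of \propref{prop:univ} commute with $f^*$, $f_\sharp$, $\otimes$, and Thom twists, but there is no statement that they commute with $f^!$ (or $f_*$), which are right adjoints and are not part of the universal-property data. An isomorphism of functors $f^! \simeq f^*\vb{\sL_f}$ in $\SH$ therefore does not transfer along $R$ to an isomorphism of the corresponding functors in $\D$. Compare this with the only place the paper does appeal to Hoyois, namely \lemref{lem:obqobu}: there the transfer via $R$ is used only for a morphism out of the unit ($\eta_f : \un \to f_\sharp(\un\vb{-\sL_f})$) and for an identity of morphisms out of the unit (the triangle identity evaluated on $\un$), both of which $R$ does preserve, and the actual adjunction $(f^*, f_\sharp\vb{-\sL_f})$ is then exhibited directly in $\D$. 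Your proposed transfer operates at the level of the $f^!$-isomorphism itself and cannot be justified by \propref{prop:univ} alone.

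Your first acknowledged obstacle (matching your $\pur_f$ with Hoyois's Pontryagin--Thom construction) is real but becomes moot if you follow the paper's route, since then $\pur_f$ is defined as a composite of isomorphisms and no external comparison is needed. Finally, the appeal to \thmref{thm:Chow} in the compactifiable branch is unnecessary: once $f$ is compactifiable, the decomposition into proper $\circ$ open directly gives $\Ex^{*!}$ invertibility; Chow's lemma is only needed in the proof of proper base change for non-projective proper morphisms, which is an earlier stage of the theory.
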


    \begin{rem}\label{rem:)&gh1}
      In \thmref{thm:purity}, and throughout the statements below, it is enough to assume that $f$ is Nisnevich-locally compactifiable in the sense that there exists a commutative square
      \[ \begin{tikzcd}
        \sU \ar{r}{f_0}\ar[twoheadrightarrow]{d}{u}
        & \sV \ar[twoheadrightarrow]{d}{v}
        \\
        \sX \ar{r}{f}
        & \sY
      \end{tikzcd} \]
      where $u$ and $v$ are \emph{compactifiable} Nisnevich covers and $f_0$ is compactifiable.
      This includes the case of $\sX$ and $\sY$ having affine diagonal (by \remref{rem:repr loc compact}), and this is all that will be necessary for our argument (see the proof of \corref{cor:relpur}).
    \end{rem}

    The following is a corollary, but will in fact feature in our proof of \thmref{thm:purity}.

    \begin{cor}\label{cor:spdfi}
      Suppose given a commutative square of scalloped derived stacks
      \[
        \begin{tikzcd}
          \sX' \ar{r}{g}\ar{d}{p}
          & \sY' \ar{d}{q}
          \\
          \sX \ar{r}{f}
          & \sY
        \end{tikzcd}
      \]
      which is cartesian on classical truncations, where $f$ is representable of finite type and $p$ and $q$ are representable and smooth.
      Consider the natural transformation
      \[
        \Ex^{*!} : p^*f^!
        \xrightarrow{\mrm{unit}} g^! g_! p^* f^!
        \simeq g^! q^* f_! f^!
        \xrightarrow{\mrm{counit}} g^! q^*,
      \]
      where the isomorphism comes from the base change formula (\thmref{thm:exc}\itemref{item:exc/bc}).
      Assume either that $f$ is compactifiable or that $\sX$, $\sY$, $\sX'$ and $\sY'$ have affine diagonal.
      Then $\Ex^{*!}$ is invertible.
    \end{cor}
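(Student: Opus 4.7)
My plan is to prove the compactifiable case first and then reduce the affine-diagonal case to it by Nisnevich descent. Importantly, the argument uses nothing from the purity theorem itself; it relies only on smooth base change (\propref{prop:smooth bc}) and the proper base change package of \thmref{thm:ipqnosdf}, so that \corref{cor:spdfi} can be legitimately invoked in the proof of purity.

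Suppose first that $f$ is compactifiable, and factor $f = \bar f \circ j$ with $j : \sX \hookrightarrow \bar\sX$ an open immersion and $\bar f : \bar\sX \to \sY$ proper representable. Base change along $q$ produces a matching factorization $g = \bar g \circ j'$ together with a smooth representable morphism $\bar p : \bar\sX' \to \bar\sX$, organizing the whole diagram into two squares cartesian on classical truncations. Since $\Ex^{*!}$ is compatible with horizontal composition of such squares, it suffices to treat each small square separately. For the square with horizontal morphisms $j, j'$, invertibility of $\Ex^{*!}$ is immediate: $j^! = j^*$ and $j'^! = j'^*$ by \thmref{thm:exc}\itemref{item:exc/open}, so the exchange collapses to ordinary smooth base change for $*$-inverse image (\propref{prop:smooth bc}). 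For the square with horizontal morphisms $\bar f, \bar g$, the functor $\bar p^* \bar f^!$ admits as left adjoint $\bar f_! \bar p_\sharp = \bar f_* \bar p_\sharp$ and $\bar g^! q^*$ admits as left adjoint $q_\sharp \bar g_! = q_\sharp \bar g_*$; by passage to left adjoints, $\Ex^{*!}_{\bar f}$ is invertible if and only if the smooth-proper exchange $q_\sharp \bar g_* \to \bar f_* \bar p_\sharp$ is invertible, which is exactly \thmref{thm:ipqnosdf}\itemref{item:ipqnosdf/smprop}.

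To handle the affine-diagonal case, I will use Nisnevich descent to reduce to the compactifiable situation just treated. By \remref{rem:repr loc compact}, there exist representable Nisnevich covers $v : \sV \twoheadrightarrow \sY$ and $u : \sU \twoheadrightarrow \sX \fibprod_\sY \sV$ such that the resulting morphism $f_0 : \sU \to \sV$ is affine, hence compactifiable (\examref{exam:affcomp}). Base changing along $q$ produces analogous étale covers $v' : \sV' \twoheadrightarrow \sY'$ and $u' : \sU' \twoheadrightarrow \sX' \fibprod_{\sY'} \sV'$. For the étale morphisms $u, v, u', v'$ one has $(-)^! = (-)^*$, so the exchange $\Ex^{*!}$ for any cartesian square involving them reduces tautologically to functoriality of $(-)^!$ under composition, hence is invertible. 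Composing exchanges horizontally across the relevant cube expresses $u'^* \Ex^{*!}_f$ as the composite of these trivial exchanges with $\Ex^{*!}_{f_0}$, which is invertible by the compactifiable case. Nisnevich separation (\propref{prop:Nis desc}) on $\sX'$ now concludes that $\Ex^{*!}_f$ itself is invertible.

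The principal obstacle is not geometric but notational: one must verify that $\Ex^{*!}$ behaves well under horizontal composition of squares cartesian on classical truncations, so that factoring $f$ as $\bar f \circ j$, and likewise interposing étale covers, really does decompose the exchange in the claimed way. This compatibility is a formal manipulation of units, counits, and the base change isomorphism $\Ex^*_!$, most cleanly phrased using the $\infty$-categorical gluing machinery of \cite{LiuZhengGluing} already used to construct the $!$-operations. Once it is in place, the rest of the proof is an assembly of smooth base change, proper base change, and smooth-proper base change.
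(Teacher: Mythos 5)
Your proof is correct and follows essentially the same route as the paper's: treat the compactifiable case by writing $f = \bar f \circ j$ and handling the proper piece by passing to left adjoints (\thmref{thm:ipqnosdf}\itemref{item:ipqnosdf/smprop}) and the open-immersion piece by \thmref{thm:exc}\itemref{item:exc/open}, then reduce the affine-diagonal case to the compactifiable one via \remref{rem:repr loc compact} and Nisnevich descent. One small imprecision worth flagging: when you base change $f$ along $q$ to produce the cube and then invoke $(-)^! \simeq (-)^*$ for $u$, $v$, $u'$, $v'$, you need these étale covers to be \emph{affine} (hence compactifiable), since at this stage only the compactifiable case of \corref{cor:etale f^!} is available — the paper makes this explicit by choosing the covers affine using the affine-diagonal hypothesis and the last part of \remref{rem:repr loc compact}, and you should do the same rather than merely calling them representable. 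Your emphasis on the formal compatibility of $\Ex^{*!}$ with horizontal pasting of squares is a useful clarification of a step the paper elides.
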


    Before proving \corref{cor:spdfi}, we record a couple weaker variants of \thmref{thm:purity} which it implies.

    \begin{cor}\label{cor:etale f^!}
      Let $f : \sX \to \sY$ be a representable étale morphism of scalloped derived stacks.
      Assume that $f$ is compactifiable or that $\sX$ and $\sY$ have affine diagonal.
      Then there is a canonical isomorphism $f^! \simeq f^*$.
    \end{cor}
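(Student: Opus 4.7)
The plan is to deduce this from \corref{cor:spdfi} applied to the self-fibre-product square of $f$, by restricting along the diagonal of $f$.

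Form the (homotopy) cartesian square
\[
\begin{tikzcd}
  \sX \fibprod_\sY \sX \ar{r}{\pr_1}\ar[swap]{d}{\pr_2}
  & \sX \ar{d}{f}
  \\
  \sX \ar{r}{f}
  & \sY,
\end{tikzcd}
\]
in which the top and left maps are base changes of $f$, hence representable étale. To invoke \corref{cor:spdfi} I must verify that $f$ is compactifiable or that all four stacks have affine diagonal. The compactifiable case is immediate from the hypothesis. In the affine-diagonal case, the only corner requiring argument is $\sX \fibprod_\sY \sX$: the canonical morphism $\sX \fibprod_\sY \sX \to \sX \fibprod \sX$ is affine as a base change of $\Delta_\sY$, the target $\sX \fibprod \sX$ has affine diagonal (since $\Delta_{\sX \fibprod \sX} \simeq \Delta_\sX \fibprod \Delta_\sX$), and the standard fact that an affine morphism into a stack with affine diagonal has affine diagonal then yields the claim. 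Thus \corref{cor:spdfi} supplies a canonical isomorphism
\[
  \Ex^{*!} : \pr_2^*\, f^! \xrightarrow{\;\sim\;} \pr_1^!\, f^*.
\]

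Next, the diagonal $\Delta_f : \sX \to \sX \fibprod_\sY \sX$ is an étale monomorphism (since $f$ is étale and hence formally unramified), therefore an open immersion; \thmref{thm:exc}\itemref{item:exc/open} then gives $\Delta_f^* \simeq \Delta_f^!$. Pulling $\Ex^{*!}$ back along $\Delta_f$ and using $\pr_1 \circ \Delta_f = \id_\sX = \pr_2 \circ \Delta_f$ together with the functoriality of $(-)^*$ and $(-)^!$ asserted in \thmref{thm:exc}, I compute
\[
  f^! \;\simeq\; \Delta_f^*\, \pr_2^*\, f^!
  \;\xrightarrow[\;\sim\;]{\Delta_f^* \Ex^{*!}}\;
  \Delta_f^*\, \pr_1^!\, f^*
  \;\simeq\; \Delta_f^!\, \pr_1^!\, f^*
  \;\simeq\; f^*,
\]
which is the desired natural isomorphism.

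The only genuine checks are geometric: that $\Delta_f$ is an open immersion in the derived setting, and that $\sX \fibprod_\sY \sX$ inherits affine diagonal in the second case. Both are routine. All of the substantive work --- proper base change, the smooth/shriek exchange isomorphism of \corref{cor:spdfi}, and the identification $f^! \simeq f^*$ for open immersions --- is already encoded in the inputs.
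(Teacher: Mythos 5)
Your proposal is correct and follows essentially the same route as the paper's own proof: apply \corref{cor:spdfi} to the self-fibre-product square of $f$, note that the diagonal $\Delta_f$ is an open immersion (so $\Delta_f^* \simeq \Delta_f^!$ by \thmref{thm:exc}\itemref{item:exc/open}), and pull the exchange isomorphism $\Ex^{*!}$ back along $\Delta_f$ to identify $f^!$ with $f^*$. The only difference is that you spell out the verification that $\sX \fibprod_\sY \sX$ has affine diagonal (via the affine map to $\sX \fibprod \sX$), which the paper leaves implicit; this is a correct and harmless elaboration.
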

    \begin{proof}
      Since the diagonal $\Delta : \sX \to \sX \fibprod_\sY \sX$ is an open immersion, we have $\Delta^! \simeq \Delta^*$ (\thmref{thm:exc}\itemref{item:exc/open}).
      Applying \corref{cor:spdfi} to the homotopy cartesian square
      \[ \begin{tikzcd}
        \sX\fibprod_\sY \sX \ar{r}{\pr_2}\ar{d}{\pr_1}
        & \sX \ar{d}{f}
        \\
        \sX \ar{r}{f}
        & \sY,
      \end{tikzcd} \]
      we get the invertible natural transformation
      \[
        f^!
        = \Delta^*\pr_1^*f^!
        \xrightarrow{\Ex^{*!}} \Delta^*\pr_2^!f^*
        \simeq \Delta^!\pr_2^!f^*
        = f^*.
      \]
    \end{proof}

    \begin{cor}\label{cor:relpur}
      Let $f : \sX \to \sY$ be an unramified morphism of derived stacks which are representable and smooth over a scalloped derived stack $\sS$.
      Assume that $f$ is compactifiable or that $\sX$ and $\sY$ have affine diagonal.
      Then there is a canonical isomorphism
      \[ f^! q^* \simeq p^* \vb{\sL_{\sX/\sY}}, \]
      where $p : \sX \to \sS$ and $q : \sY \to \sS$ are the structural morphisms and $\sL_{\sX/\sY}$ is the relative cotangent complex of $f$.
    \end{cor}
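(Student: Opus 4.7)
The plan is to deduce Corollary~\ref{cor:relpur} from the closed immersion case, Theorem~\ref{thm:relpur closed}, together with the étale case, Corollary~\ref{cor:etale f^!}, after a Nisnevich-local factorization of $f$.

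First, both sides of the desired isomorphism $f^!q^* \simeq p^*\vb{\sL_{\sX/\sY}}$ are local on $\sX$ for representable étale morphisms: the right-hand side because $*$-pullback commutes with Thom twists (Remark~\ref{rem:Thom !}), and the left-hand side because for such $u : \sX' \to \sX$ one has $u^! \simeq u^*$ (Corollary~\ref{cor:etale f^!}), whence $u^* f^! \simeq u^! f^! \simeq (fu)^!$. By Nisnevich descent (Proposition~\ref{prop:Nis desc}) it therefore suffices to work Nisnevich-locally on $\sX$.

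Nisnevich-locally on $\sX$, I claim $f$ factors as $\sX \xrightarrow{i} \sZ \xrightarrow{e} \sY$ with $i$ a quasi-smooth closed immersion, $e$ representable étale, and $\sZ$ smooth representable over $\sS$ via $r := q \circ e$. Classically this is Raynaud's factorization theorem for unramified morphisms (cf.\ EGA~IV, 18.4.8); in the derived setting one lifts using an argument of the type of Lemmas~\ref{lem:Lifting-sec-of-sm.mor} and \ref{lem:approx-of-sec}, starting from the factorization on classical truncations and using a finite locally free sheaf on a representable étale neighbourhood of $f$ to construct $\sZ$ as a derived étale neighbourhood inside the associated vector bundle. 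Smoothness of $\sZ$ over $\sS$ is automatic from étaleness of $e$ and smoothness of $\sY$.

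Granted the factorization, the remainder is formal. Corollary~\ref{cor:etale f^!} gives $e^! \simeq e^*$, so
\[
f^!q^* \;\simeq\; i^! e^! q^* \;\simeq\; i^! e^* q^* \;\simeq\; i^! r^*.
\]
Passing to right adjoints in the isomorphism $r_\sharp i_* \simeq p_\sharp \vb{\sN_{\sX/\sZ}}$ of Theorem~\ref{thm:relpur closed}, applicable because $\sX$ and $\sZ$ are smooth representable over $\sS$, gives $i^! r^* \simeq p^*\vb{-\sN_{\sX/\sZ}}$. Since $\sN_{\sX/\sZ} \simeq \sL_{\sX/\sZ}[-1]$ and Thom twists factor through K-theory, where shifts by $[-1]$ negate the class, this simplifies to $p^*\vb{\sL_{\sX/\sZ}}$. Étaleness of $e$ gives $\sL_{\sZ/\sY} = 0$, so the transitivity cofibre sequence $i^*\sL_{\sZ/\sY} \to \sL_{\sX/\sY} \to \sL_{\sX/\sZ}$ produces $\sL_{\sX/\sY} \simeq \sL_{\sX/\sZ}$; combining yields the required $f^!q^* \simeq p^*\vb{\sL_{\sX/\sY}}$.

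The principal obstacle is carrying out the derived Nisnevich-local factorization while preserving the compactifiability or affine-diagonal hypothesis needed to invoke Corollary~\ref{cor:etale f^!}. In the affine-diagonal case one can arrange $\sZ$ to be affine over $\sY$, which also ensures $i$ and $e$ are compactifiable; canonicity of the argument across the Nisnevich cover then follows from Nisnevich descent for $\D^*$ and the canonical nature of the individual isomorphisms used.
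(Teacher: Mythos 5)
Your route differs from the paper's in the key geometric input. The paper invokes Rydh's theorem \cite[Thm.~(1.2)]{RydhUnramified}, which gives a \emph{canonical global} factorization of any unramified (representable) morphism through a closed immersion followed by a representable étale morphism, $\sX \xrightarrow{i} \sX' \xrightarrow{g} \sY$. From there it is exactly the formal computation you carry out: apply \corref{cor:etale f^!} to $g$, transpose \thmref{thm:relpur closed} to handle $i$, and identify $-\sN_{\sX/\sX'} \simeq \sL_{\sX/\sY}$ in $\K(\sX)$ using that $g$ is étale. You instead use the Nisnevich-local version of this factorization (Raynaud/EGA IV, 18.4), and then try to descend.

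The gap in your argument is the gluing step. You are not merely checking a \emph{property} Nisnevich-locally — the statement to be proved is the existence of a specific \emph{datum}, an isomorphism of functors $f^!q^* \simeq p^*\vb{\sL_{\sX/\sY}}$. \propref{prop:Nis desc} lets you build such a datum from compatible local data, but "compatible" means a homotopy-coherent system over the full \v{C}ech nerve of the cover: the locally-defined isomorphisms must agree on double overlaps, and the agreements must themselves be coherent on triple overlaps, etc. Since your local factorizations $\sX' \xrightarrow{i} \sZ \xrightarrow{e} \sY'$ are chosen independently on each patch, you have to prove that the resulting isomorphism is independent of the choice of factorization and that the identifications glue coherently. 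Asserting "canonicity of the argument" does not resolve this; it is precisely the nontrivial part. Rydh's theorem sidesteps the whole issue by supplying a factorization that is itself canonical and global — the étale neighbourhood $\sX'$ is constructed functorially from $f$, so no descent of the isomorphism is ever required. A secondary but related issue is the derived lifting: you sketch a lifting argument via Lemmas~\ref{lem:Lifting-sec-of-sm.mor} and \ref{lem:approx-of-sec}, but it is simpler to note that by derived invariance of the étale site, $g_\cl : \sX'_\cl \to \sY_\cl$ lifts uniquely to an étale $g : \sX' \to \sY$, and then $\sX \to \sY$ factors through $\sX'$ because lifting along an étale map is detected on classical truncations; this again preserves canonicity without a local-to-global argument.
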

    \begin{proof}
      If $f$ is a closed immersion, then this follows from \thmref{thm:relpur closed} by transposition.
      In general, there exists by \cite[Thm.~(1.2)]{RydhUnramified} a canonical global factorization of $f$ through a closed immersion $i$ and a representable étale morphism $g$:
      \[ \sX \xrightarrow{i} \sX' \xrightarrow{g} \sY. \]
      Let $p' = q \circ g : \sX' \to \sS$ denote the structural morphism.
      Combining the closed immersion case and \corref{cor:etale f^!}, we get a canonical isomorphism
      \[
        f^! q^*
        = i^! g^! q^*
        \simeq i^! g^* q^*
        = i^! (p')^*
        \simeq p^* \vb{-\sN_{\sX/\sX'}}
        \simeq p^* \vb{\sL_{\sX/\sY}},
      \]
      where the identification $-\sN_{\sX/\sX'} \simeq \sL_{\sX/\sY}$ in $\K(\sX)$ is induced by the isomorphism of perfect complexes $\sN_{\sX/\sX'}[1] = \sL_{\sX/\sX'} \simeq \sL_{\sX/\sY}$ induced by the étale morphism $g$.
    \end{proof}

    We now return to the proof of \corref{cor:spdfi}.

    \begin{proof}[Proof of \corref{cor:spdfi}]
      If $f$ is proper and representable, then $\Ex^{*!}$ is the right transpose of the smooth-proper base change isomorphism (hence is an isomorphism).
      If $f$ is an open immersion, then invertibility of $\Ex^{*!}$ is clear from \thmref{thm:exc}\itemref{item:exc/open}.
      This shows the claim when $f$ is compactifiable.
      We also get \corref{cor:etale f^!} for compactifiable étale morphisms.

      For the case of general $f$ (but where the stacks have affine diagonal), choose an affine morphism of finite type $f_0 : \sU \to \sV$ and a commutative square as in \remref{rem:repr loc compact}.
      Its base change along $q : \sY' \to \sY$ defines the cartesian cube:
      \[ \begin{tikzcd}[matrix yscale=0.8]
        &
        \sX'
        \ar{rr}{g}
        \ar{dd}[swap,near end]{p}
        & & \sY'
        \ar{dd}{q}
        \\
        \sU'
        \ar{ur}{u'}
        \ar[crossing over, near end]{rr}{g_0}
        \ar{dd}[swap]{p'}
        & & \sV' \ar{ur}[swap, near start]{v'}
        \\
        &
        \sX
        \ar[near start]{rr}{f}
        & & \sY
        \\
        \sU \ar[near end]{ur}{u}
        \ar{rr}{f_0}
        & & \sV
        \ar[crossing over, leftarrow, near end, swap]{uu}{q'}
        \ar{ur}{v}
      \end{tikzcd} \]
      By \propref{prop:Nis desc} it will suffice to show that the morphism
      \[ \Ex^{*!} : u'^*p^*f^! \to u'^*g^!q^* \]
      is invertible.
      Since $u$ and $v$ are affine, and hence compactifiable (\examref{exam:affcomp}), there are canonical isomorphisms $u^* \simeq u^!$ and $v^* \simeq v^!$ by above.
      Under these identifications the above morphism is identified with
      \[ \Ex^{*!} : p'^*f_0^!v^* \to g_0^!q'^*v^*, \]
      which is invertible by the compactifiable case applied to the front face (since $f_0$ is affine).
    \end{proof}

    Finally, now that \corref{cor:spdfi} (and hence \corref{cor:relpur}) is available to us, we are in position to prove \thmref{thm:purity}.

    \begin{proof}[Proof of \thmref{thm:purity}]
      Applying \corref{cor:spdfi} to the homotopy cartesian square
      \[ \begin{tikzcd}
        \sX\times_\sY \sX \ar{r}{\pr_1}\ar{d}{\pr_2}
        & \sX \ar{d}{f}
        \\
        \sX \ar{r}{f}
        & \sY
      \end{tikzcd} \]
      yields a canonical isomorphism $\Ex^{*!} : \pr_1^* f^! \simeq \pr_2^! f^*$.
      Since $f$ is representable and smooth, the diagonal $\Delta : \sX \to \sX\fibprod_\sY\sX$ is unramified with cotangent complex $\sL_\Delta \simeq \sL_{f}[1]$.
      Applying $\Delta^!$ and using the relative purity isomorphism $\Delta^! \pr_1^* \simeq \vb{-\sL_{f}}$ (\corref{cor:relpur}), we get the canonical isomorphism
      \[
        f^! \vb{-\sL_f}
        \simeq \Delta^! \pr_1^* f^!
        \xrightarrow{\Ex^{*!}} \Delta^! \pr_2^! f^*
        \simeq f^*.
      \]
      The purity isomorphism $\pur_f : f^! \simeq f^*\vb{\sL_f}$ is obtained by Thom twisting by $\sL_f$.
    \end{proof}

  \subsection{Descent}

    \begin{cor}\label{thm:D^! descent}
      Let $\sC$ be the \inftyCat of scalloped derived stacks and representable morphisms of finite type.
      Then the presheaf of \inftyCats $\D^!$ on $\sC$ satisfies cdh descent.
      Similarly, $\D_!$ satisfies cdh co-descent on $\sC$ when regarded as a co-presheaf with values in the \inftyCat of presentable \inftyCats and left-adjoint functors.
    \end{cor}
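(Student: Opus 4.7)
The plan is to verify descent for the Nisnevich and proper cdh components of the cdh topology separately, since their join is the cdh topology.

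For the Nisnevich part, any Nisnevich cover is generated by a Nisnevich square with $j$ an open immersion and $p$ a representable étale morphism. By \thmref{thm:exc}\itemref{item:exc/open} we have $j^! \simeq j^*$, while \corref{cor:etale f^!} (extended to all representable étale morphisms of scalloped derived stacks via \remref{rem:)&gh1} and \remref{rem:repr loc compact}) yields $p^! \simeq p^*$. Thus the $!$-pullback square attached to any Nisnevich square coincides with the $*$-pullback square, which is cartesian by Nisnevich descent for $\D^*$ (\propref{prop:Nis desc}).

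For the proper cdh part, I would let $i:\sZ\hookrightarrow\sX$ be a closed immersion and $f:\sX'\to\sX$ a proper representable morphism that restricts to an isomorphism over $\sX\setminus\sZ$; denote the base changes by $f':\sZ'\to\sZ$ and $i':\sZ'\to\sX'$, and set $g = i\circ f' = f\circ i'$. The claim (equivalently formulated for $\D_!$) is that
\[
\D(\sX) \xrightarrow{(i^!, f^!)} \D(\sZ) \fibprod_{\D(\sZ')} \D(\sX')
\]
is an equivalence in $\mrm{Pr}^R$, or dually that $\D(\sX)$ is the pushout $\D(\sZ)\fibcoprod_{\D(\sZ')}\D(\sX')$ in $\mrm{Pr}^L$ along $(i_!, f_!)$. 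The main input will be proper excision (\thmref{thm:ipqnosdf}\itemref{item:ipqnosdf/excision}), which gives a homotopy cartesian square $\id \to i_* i^* \fibprod_{g_* g^*} f_* f^*$ of endofunctors of $\D(\sX)$. Passing to right adjoints (mate squares), and using $i_* = i_!$, $f_* = f_!$, $g_* = g_!$ since all three are proper (\thmref{thm:ipqnosdf}\itemref{item:ipqnosdf/alpha}), I obtain a homotopy cartesian square
\[
\begin{tikzcd}
g_! g^! \ar{r}\ar{d} & f_! f^! \ar{d} \\
i_! i^! \ar{r} & \id
\end{tikzcd}
\]
of endofunctors of $\D(\sX)$. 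By Thom stability $\D(\sX)$ is stable, so this square is equivalently homotopy cocartesian, and evaluating on $\sF \in \D(\sX)$ gives a pushout presentation $\sF \simeq i_! i^! \sF \fibcoprod_{g_! g^! \sF} f_! f^! \sF$.

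This presentation directly witnesses essential surjectivity of the pushout functor $\D(\sZ)\fibcoprod_{\D(\sZ')}\D(\sX') \to \D(\sX)$. Full faithfulness then reduces to verifying that the unit of the associated adjunction is an equivalence on a generating set of the pushout $\infty$-category, using the identities $i^! i_! \simeq \id$ and $i'^! i'_! \simeq \id$ (from the localization property of \defnref{defn:0paguh107}\itemref{item:0paguh107/localization}), proper base change $i^! f_! \simeq f'_! i'^!$ (\thmref{thm:exc}\itemref{item:exc/bc}), and preservation of finite colimits by $i^!$ and $f^!$ in the stable setting. The hard part will be this last step, which is routine but requires carefully matching the various base change and counit isomorphisms against the coherence data built into the pushout.
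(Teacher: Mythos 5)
The paper's own proof is a two-line citation to \cite[Prop.~6.24]{HoyoisEquivariant} and \cite[Thm.~2.52]{KhanSix}, which deduce the claim as a formal consequence of the six-functor package of \thmref{thm:exc}, so there is no detailed argument in the text to compare against; your reconstruction is very much in the intended spirit, and the proper cdh part in particular is a faithful expansion of what those references do.

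On the Nisnevich part, be aware that \corref{cor:etale f^!} (giving $p^!\simeq p^*$ for $p$ étale) carries a running hypothesis — $p$ compactifiable, or affine diagonals — that is \emph{not} automatic for a Nisnevich square in the site of arbitrary scalloped stacks (which only have separated diagonal). Your appeal to Remarks~\ref{rem:)&gh1} and \ref{rem:repr loc compact} does not close this gap, since the Nisnevich covers produced there are themselves only known to be compactifiable under an affine-diagonal hypothesis. The cleanest way to get Nisnevich descent for $\D^!$ is to observe that it is built into the Liu--Zheng gluing used to construct $\D_!$ and $\D^!$ in the first place (one glues precisely along Nisnevich covers), rather than routing through a purity isomorphism. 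On the proper cdh part your argument is correct: the proper excision square lives in $\mathrm{Fun}^{\mathrm{L}}(\D(\sX),\D(\sX))$, is cocartesian by stability, and the anti-equivalence with $\mathrm{Fun}^{\mathrm{R}}$ (passage to right adjoints) sends it to a cartesian square which, after using properness of $i,f,g$ to replace $*$-pushforward by $!$-pushforward, is the square you wrote; pointwise this gives the counit isomorphism $PG\simeq\id$, hence essential surjectivity of $P$ and full faithfulness of $G=(i^!,f^!)$. The step you flag as the ``hard part'' — the unit being an equivalence — does indeed go through: the exchange $i^!f_!\simeq f'_!(i')^!$ that drives the computation of $GP$ on generators is not a consequence of stability alone, but it \emph{is} an instance of $\Ex^!_*$ from \thmref{thm:exc}\itemref{item:exc/bc} once one uses that all four legs of the square are proper, so $*$- and $!$-direct image coincide. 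With that, your proposal is sound.
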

    \begin{proof}
      Follows from \thmref{thm:exc}, see \cite[Prop.~6.24]{HoyoisEquivariant} or \cite[Thm.~2.52]{KhanSix}.
    \end{proof}

\section{The Euler and Gysin transformations}
\label{sec:gys}

  We fix a $(*,\sharp,\otimes)$-formalism $\D^*$ satisfying the Voevodsky conditions, so that $\D^*$ extends to a formalism of six operations by \thmref{thm:exc}.

  \subsection{Euler transformation}

    \begin{constr}\label{constr:eul}
      Let $\sX$ be a scalloped derived stack.
      Given a finite locally free sheaf $\sE$ on $\sX$, let $p$ be the projection of its total space and $0$ the zero section.
      The \emph{Euler transformation} associated to $\sE$, denoted
      \[ \eul_\sE : \id \to \vb{\sE}, \]
      is the composite
      \[
        \id \simeq p_! p^! \xrightarrow{\mrm{unit}} p_! 0_! 0^* p^! \simeq \vb{\sE}
      \]
      where the first isomorphism is homotopy invariance and the second is purity (\thmref{thm:purity}).
    \end{constr}

    \begin{lem}\label{lem:eul=0}
      Let the notation be as in \constrref{constr:eul}.
      Suppose $\sE$ admits a surjective cosection $s : \sE \twoheadrightarrow \sO_\sX$.
      Then $s$ induces a null-homotopy $\eul_\sE \simeq 0$.
    \end{lem}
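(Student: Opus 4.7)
The cosection $s$ yields a surjective morphism of vector bundles $\tilde s : E := \bV_\sX(\sE) \twoheadrightarrow A := \A^1_\sX$ with kernel the vector bundle $K := \bV_\sX(\sK)$, where $\sK := \Ker(s)$. Since $\sO_\sX$ is projective, $s$ splits, giving $\sE \simeq \sK \oplus \sO_\sX$ and a decomposition $E \simeq K \fibprod_\sX A$ in which the zero section factors as $0_E = 0_K \circ 0_A$, where $0_A : \sX \to A$ and $0_K : A \to E$ are the zero sections of $p_A : A \to \sX$ and of $\tilde s$ (viewed as a vector bundle over $A$); also $\vb{\sE} \simeq \vb{\sK}\vb{\sO_\sX}$ by \remref{rem:Thom exact}. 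My plan is to factor $\eul_\sE$ through $\eul_{\sO_\sX}$ and then exhibit a null-homotopy of $\eul_{\sO_\sX}$.

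For the factorization, I would unpack \constrref{constr:eul} using $p_E = p_A \tilde s$ and $0_E = 0_K \circ 0_A$. The unit $\id \to 0_{E,!} 0_E^*$ decomposes as the unit $\id \to 0_{K,!} 0_K^*$ followed by $0_{K,!}(\eta_{0_A})0_K^*$. Combining this with homotopy invariance $\tilde s_! \tilde s^! \simeq \id$, the absorption identity $\tilde s_! 0_{K,!} \simeq \id$ (from $\tilde s \circ 0_K = \id_A$), and the purity isomorphism $0_K^* \tilde s^! \simeq \vb{\sK_A}$ (where $\sK_A = p_A^*\sK$, from \thmref{thm:purity}), a diagram chase identifies $\eul_\sE$ with the composite
\[
\id \xrightarrow{\eul_\sK} \vb{\sK} \xrightarrow{\vb{\sK}(\eul_{\sO_\sX})} \vb{\sK}\vb{\sO_\sX} \simeq \vb{\sE}.
\]
Since $\vb{\sK}$ is an auto-equivalence, producing a null-homotopy of $\eul_{\sO_\sX}$ will suffice to conclude.

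For the null-homotopy of $\eul_{\sO_\sX}$: applying the localization triangle for the closed-open pair $0_A : \sX \hookrightarrow A$, $j : \bG_m\times\sX \hookrightarrow A$ to $p_A^!(\sF)$ and pushing forward by $p_{A,!}$ realizes $\eul_{\sO_\sX}(\sF)$ as the cofiber of the counit $(p_A j)_! (p_A j)^! \sF \to \sF$. The unit section $1 : \sX \hookrightarrow \bG_m\times\sX$ is a closed immersion with $(p_A j) \circ 1 = \id$, so $(p_A j)_! 1_! \simeq \id$ and $1^!(p_A j)^! \simeq \id$. Functoriality of counits under composition of adjunctions then identifies the total composite
\[
\id \simeq (p_A j)_!\, 1_!\, 1^!\, (p_A j)^! \xrightarrow{(p_A j)_!(\epsilon_1)(p_A j)^!} (p_A j)_! (p_A j)^! \xrightarrow{\epsilon_{p_A j}} \id
\]
with the counit of $(\id_!, \id^!)$, namely $\id$; the first arrow is therefore a section of $\epsilon_{p_A j}$, trivializing its cofiber.

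The main obstacle is the diagram chase in the factorization step: while the decomposition $0_E = 0_K \circ 0_A$ is geometrically transparent, matching the two natural transformations requires careful bookkeeping with several Thom twists arising both from purity for the two zero sections and from the identification $\vb{\sE} \simeq \vb{\sK}\vb{\sO_\sX}$, along with repeated base-change identifications (\thmref{thm:ipqnosdf}\itemref{item:ipqnosdf/bc}, \remref{rem:Thom !}). The splitting of $s$ is used only to make $\vb{\sE} \simeq \vb{\sK}\vb{\sO_\sX}$ explicit; the resulting null-homotopy of $\eul_\sE$ does not depend on it.
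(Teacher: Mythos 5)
Your proof is essentially correct, but it takes a longer route than necessary. You first split the cosection to write $\sE \simeq \sK \oplus \sO_\sX$, factor $\eul_\sE$ as $\eul_\sK$ followed by a Thom-twisted $\eul_{\sO_\sX}$, and then produce a null-homotopy of $\eul_{\sO_\sX}$ by splitting the localization triangle for the pair $0_A : \sX \hookrightarrow A$, $j : \bG_m \times \sX \hookrightarrow A$ using the unit section $1 : \sX \to \bG_m \times \sX$. The paper skips the reduction entirely and runs your final step directly on $\sE$: the surjective cosection $s : \sE \twoheadrightarrow \sO_\sX$ corresponds to a nowhere-vanishing section $s : \sX \to \sV$ of the total space, i.e., a section of $q : \sV\setminus 0 \to \sX$, and the localization triangle $q_!q^! \to \id \xrightarrow{\eul_\sE} \vb{\sE}$ is then split by the composite $\id \simeq q_!s_!s^!q^! \to q_!q^!$, exactly as you do for $p_A j$ and $1$. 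So your argument for $\eul_{\sO_\sX}$ \emph{is} the paper's argument, applied after a detour.

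The detour costs you twice. First, the factorization $\eul_\sE \simeq \vb{\sK}(\eul_{\sO_\sX}) \circ \eul_\sK$ is a multiplicativity statement that you describe as a ``diagram chase'' without carrying it out; it is true, but it is real work (several Thom twists, base changes, and purity identifications must be reconciled), and it is not needed for this lemma. Second, your reduction depends on a choice of splitting $\sigma : \sO_\sX \to \sE$ of $s$, so you are left having to argue that the resulting null-homotopy ``does not depend on it''; you assert this but don't prove it, and it is not obvious at the homotopy-coherent level. The paper's construction manifestly depends only on $s$, since the nowhere-zero section $s : \sX \to \sV\setminus 0$ is exactly the data of the surjective cosection (the section $\sX \to \sV$ lands in $\sV\setminus 0$ if and only if $s$ is surjective), so there is nothing to check. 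If you want to keep your argument, I'd encourage you to notice that the section of $q : \sV\setminus 0 \to \sX$ that you'd build from $s$ directly makes the two reductions (split $\sE$, then factor the Euler class) superfluous.
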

    \begin{proof}
      Write $\sV = \bV_\sX(\sE)$ for the total space.
      Note that $s$ corresponds to a nowhere zero section $s : \sX \to \sV$, i.e., it factors through the complement $\sV \setminus 0$ of the zero section.
      Let $q : \sV \setminus 0 \to \sX$ denote the projection.
      The localization triangle
      \[
        q_! q^!
        \xrightarrow{\mrm{counit}} p_! p^!
        \xrightarrow{\mrm{unit}} p_! 0_! 0^* p^!
      \]
      is isomorphic to
      \[
        q_! q^!
        \xrightarrow{\mrm{counit}} \id
        \xrightarrow{\eul_\sE} \vb{\sE}.
      \]
      Since $s : \sX \to \sV \setminus 0$ is a section of $q$, the counit $s_!s^! \to \id$ induces a natural transformation $\id \to q_! q^!$ splitting this triangle, and hence a null-homotopy of $\eul_\sE$.
    \end{proof}

  \subsection{Gysin transformation}

    The Gysin transformation of \cite[4.3.1]{DegliseJinKhan}\footnote{%
      called the \emph{purity} transformation in \emph{op. cit.}
    } extends immediately to our setting.
    We will need the following technical hypothesis on our morphisms.

    \begin{defn}\label{defn:smoothable}
      Let $f : \sX \to \sY$ be a morphism of scalloped derived stacks.
      We say that $f$ is \emph{representably smoothable} if it admits a global factorization
      \[ \sX \xrightarrow{i} \sA \xrightarrow{p} \sY \]
      where $p$ is smooth representable and $i$ is finite unramified.
    \end{defn}

    Recall that a representable morphism of derived stacks is \emph{quasi-smooth} if it is locally of finite presentation and the relative cotangent complex is of Tor-amplitude $[0, 1]$ (with homological grading), see e.g. \cite{KhanRydh}.

    \begin{thm}\label{thm:gys}
      Let $f : \sX \to \sY$ be a quasi-smooth, representably smoothable morphism of scalloped derived stacks with affine diagonal.
      Then there exists a natural transformation
      \[ \gys_{\sX/\sY} := \gys_f : f^*\vb{\sL_f} \to f^! \]
      of functors $\D(\sY) \to \D(\sX)$, satisfying the following properties.
      \begin{thmlist}
        \item
        If $f$ is smooth, then $\gys_f$ is the purity isomorphism of \thmref{thm:purity}.

        \item
        If $f$ is a closed immersion and $\sX$ and $\sY$ are smooth representable over a base $\sS$, then $\gys_f \ast q^*$ (where $\ast$ denotes horizontal composition) is canonically identified with the relative purity isomorphism (\corref{cor:relpur})
        \[ p^* \vb{\sL_{\sX/\sY}} \simeq f^! q^*, \]
        where $p : \sX \to \sS$ and $q : \sY \to \sS$ are the structural morphisms.
      \end{thmlist}
    \end{thm}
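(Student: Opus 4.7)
The plan is to follow the strategy of \cite{DegliseJinKhan,KhanVirtual}, which constructs $\gys_f$ by decomposing $f$ into pieces for which the transformation is explicit. First observe that if $f = p \circ i$ is a representable smoothing with $p$ smooth representable and $i$ finite unramified, then the transitivity triangle $i^*\sL_p \to \sL_f \to \sL_i$, together with the fact that $i^*\sL_p$ sits in degree $0$ and $\sL_f$ has Tor-amplitude in $[0,1]$, forces $\sL_i \simeq \sN_i[1]$ for a finite locally free conormal sheaf $\sN_i$; equivalently $[\sL_i] = -[\sN_i]$ in $\K(\sX)$, so $\vb{\sL_i} = \vb{-\sN_i}$. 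Moreover one may further factor $i = g \circ j$, with $j$ a quasi-smooth closed immersion and $g$ finite étale, using Rydh's factorization theorem (as already invoked in the proof of \corref{cor:relpur}).

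I would then assemble $\gys_f$ out of three ingredients, in order of increasing difficulty. (a) For smooth representable $p$, set $\gys_p := \pur_p$, which exists by \thmref{thm:purity} because $\sX$ and $\sY$ have affine diagonal (\remref{rem:)&gh1}). (b) For the finite étale piece $g$, set $\gys_g$ equal to the étale purity isomorphism $g^* \xrightarrow{\sim} g^!$ of \corref{cor:etale f^!}, combined with the identification $\sL_g \simeq 0$. (c) For the quasi-smooth closed immersion $j : \sX \hookrightarrow \sX'$, construct $\gys_j : j^*\vb{-\sN_j} \to j^!$ by the deformation-to-the-normal-bundle technique: the derived blowup construction of \cite{KhanRydh} provides a deformation space $D_{\sX/\sX'}$, representable over $\sX' \times \A^1$ and hence scalloped by \corref{cor:scallop more}, interpolating between $j$ and the zero section $s : \sX \hookrightarrow N_{\sX/\sX'}$ of the normal bundle. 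The specialization map coming from $D_{\sX/\sX'}$, combined with the tautological identification $s^!(-) \simeq s^*\vb{-\sN_j}(-)$ deduced from the localization triangle of $s$ (Theorem~\ref{thm:exc}\itemref{item:exc/open},\itemref{item:exc/bc}), yields the desired $\gys_j$, exactly as in \cite[\S 3.1]{KhanVirtual}. Then $\gys_f$ is defined by juxtaposing these three pieces using the decomposition $\sL_f = j^*g^*\sL_p + j^*\sL_g + \sL_j = j^*g^*\sL_p - \sN_j$ in $\K(\sX)$.

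To complete the statement, two formal checks remain. Property (i), that $\gys_f = \pur_f$ when $f$ is smooth, is immediate by taking the factorization with $i = \id$. Property (ii), the compatibility with the relative purity isomorphism of \corref{cor:relpur} in the case of a closed immersion between derived stacks smooth over $\sS$, is precisely the compatibility established in \cite[Prop.~3.3.4]{KhanVirtual} between the deformation-to-normal-bundle construction and the relative purity isomorphism, and transports verbatim once one has the six-functor formalism on scalloped stacks. One must also verify independence of $\gys_f$ of the choice of factorization; this follows by showing any two smoothings are dominated by a common refinement (via fibred products over $\sY$) and that the construction is stable under such refinements, using the canonicity of Rydh's factorization.

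The main obstacle is step (c): carrying out the deformation-to-the-normal-bundle construction inside the scalloped framework. The key verifications are that $D_{\sX/\sX'}$ and $N_{\sX/\sX'}$ are scalloped derived stacks with affine diagonal (so that the six operations are available), that the localization triangle for the zero section interacts correctly with the specialization map across the fibres $0, 1 \in \A^1$, and that Nisnevich-local compactifiability holds along the way (so that \thmref{thm:purity} and \corref{cor:relpur} apply). All of these inputs reduce, via Theorems~\ref{thm:scallop} and \ref{thm:sumihiro} together with \corref{cor:scallop more}, to representability of the deformation construction over $\sX'$, which holds by the derived-geometric input of \cite{KhanRydh}.
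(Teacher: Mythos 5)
Your proposal follows the same overall strategy as the paper: choose a representably smoothable factorization $f = p \circ i$, handle the smooth piece $p$ via the purity isomorphism of \thmref{thm:purity}, and handle the finite unramified piece via deformation to the normal bundle of \cite{KhanRydh}, then compose and check well-definedness and the two compatibility properties. The one genuine divergence is that you further factor $i = g \circ j$ (via Rydh's factorization of unramified morphisms, as in the proof of \corref{cor:relpur}) into a quasi-smooth closed immersion $j$ followed by a representable étale $g$, and treat $g$ separately via \corref{cor:etale f^!}. The paper instead runs the deformation-to-the-normal-bundle construction directly on the finite unramified $i$: its \cite[Thm.~4.1.13]{KhanRydh} gives the deformation space $D_{\sX/\sY}$ already at that level of generality, and the specialization map $\sp_f : \id \to u_*u^!$ is extracted from the boundary of the localization triangle for the closed-open pair $(\widehat{i},\widehat{j})$ inside $D_{\sX/\sY} \to \A^1$, after which $u_*u^! \simeq f_*f^!\vb{-\sL_{\sX/\sY}}$ follows from purity for the projection of the normal bundle. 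Your extra factorization is a legitimate alternative but is not needed, and it introduces an additional well-definedness check (independence of the choice of $g,j$) that the paper avoids; the upside is that the closed-immersion case is perhaps a more familiar setting in which to invoke the deformation space.

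Two small corrections. First, Rydh's theorem produces a representable étale $g$, not a \emph{finite} étale one; nothing in your argument depends on finiteness, so this is cosmetic. Second, the identification $s^! \simeq s^*\vb{-\sN_j}$ for the zero section $s$ of the normal bundle is not deduced purely from the localization triangle --- it is the purity isomorphism \thmref{thm:purity} for the smooth projection $\pi$ of the normal bundle (via $s^!\pi^! \simeq \id$ and $\sL_\pi|_\sX \simeq \sN_j$), exactly as in the paper's chain $\pi_*\pi^! \simeq \pi_*\pi^*\vb{-\sL_{\sX/\sY}} \simeq \vb{-\sL_{\sX/\sY}}$. Finally, the paper checks the compatibility with \corref{cor:relpur} and independence of the factorization by pointing to \cite[Lem.~3.2.15, Thm.~3.3.2]{DegliseJinKhan} rather than \cite{KhanVirtual}, though both sources contain essentially the same argument and your citation is reasonable.
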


    \begin{rem}
      By adjunction, the Gysin transformation can be rewritten as a trace or cotrace:
      \begin{align*}
        \on{tr}_f& : f_!f^*\vb{\sL_{f}} \to \id,\\
        \on{cotr}_f& : \id \to f_*f^!\vb{-\sL_{f}}
      \end{align*}
      cf. \cite[Exp.~XVIII, \S 3.2]{SGA4}.
    \end{rem}

    \begin{rem}
      The Gysin transformation is functorial in $f$, up to homotopy, and also enjoys a base change property for homotopy cartesian squares.
      See \cite[Prop.~2.5.4]{DegliseJinKhan} for the precise formulation.
    \end{rem}

  \subsection{Proof of \thmref{thm:gys}}

    We only briefly sketch the construction, as it is the same as in the proofs of \cite[Thms.~4.1.4, 4.3.1]{DegliseJinKhan}.

    \begin{constr}
      Assume first that $f$ is finite unramified.
      Consider the deformation to the normal bundle \cite[Thm.~4.1.13]{KhanRydh}, which fits in the following diagram of homotopy cartesian squares:
      \[ \begin{tikzcd}
        \sX \ar{r}\ar{d}{0}
        & \sX \times \A^1 \ar{d}{\widehat{f}}
        & \sX \times \bG_m \ar{l}\ar{d}{f\times\id}
        \\
        N_{\sX/\sY} \ar{r}{\widehat{i}}\ar{d}{u}
        & D_{\sX/\sY} \ar[leftarrow]{r}{\widehat{j}}\ar{d}{t}
        & \sY \times \bG_m \ar[equals]{d}
        \\
        \sY \ar{r}
        & \sY \times \A^1 \ar[leftarrow]{r}
        & \sY \times \bG_m
      \end{tikzcd} \]
      where the left-hand side is the fibre over $0$ and the right-hand side is the complement.
      The morphism $u$ is the projection $\pi : N_{\sX/\sY} \to \sX$ followed by $f : \sX \to \sY$.

      The boundary map for the localization triangle associated to the closed-open pair $(\widehat{i},~\widehat{j})$ yields a natural transformation
      \[ \partial : q_*q^![-1] \to u_*u^!, \]
      where $q : \sY \times \bG_m \to \sY$ is the projection.
      Using the unit section $1 : \sY \to \sY \times \bG_m$ to split $q$, we get a canonical isomorphism $q_*q^! \simeq \id[1] \oplus \id(1)[2]$ and thus, by including as the first component, a natural transformation
      \begin{equation}\label{eq:sp}
        \sp_{\sX/\sY} := \sp_{f} : \id \to u_*u^!.
      \end{equation}

      Now by homotopy invariance and purity (\thmref{thm:purity}), we have a canonical isomorphism
      \[ \pi_*\pi^! \simeq \pi_*\pi^*\vb{-\sL_{\sX/\sY}} \simeq \vb{-\sL_{\sX/\sY}} \]
      using the canonical identification $\sL_{\pi} \simeq \sN_{\sX/\sY} \simeq -\sL_{\sX/\sY}$ in $\K(\sX)$.
      In particular, we get
      \[ u_*u^! \simeq f_*\pi_*\pi^!f^! \simeq f_*f^!\vb{-\sL_{\sX/\sY}}. \]
      The Gysin transformation for $f$, or rather the cotrace, is then
      \begin{equation*}
        \on{cotr}_{f} : \id \xrightarrow{\sp_f} u_*u^! \simeq f_*f^!\vb{-\sL_{\sX/\sY}}.
      \end{equation*}
      The compatibility with relative purity (\corref{cor:relpur}) is proven as in \cite[Lem.~3.2.15]{DegliseJinKhan}.

      Finally, for the general case, choose a factorization $f = p \circ i$ as in \defnref{defn:smoothable} and define $\on{cotr}_f$ to be the composite
      \[
        \id
        \xrightarrow{\on{cotr}_p} p_*p^!\vb{-\sL_p}
        \xrightarrow{\on{cotr}_i} i_*p_*p^!i^!\vb{-\sL_p-\sL_i}
        \simeq f_*f^!\vb{-\sL_f},
      \]
      where $\on{cotr}_p$ is the transpose of the purity isomorphism (\thmref{thm:purity}).
      One checks this is independent of the choice up to homotopy just as in \cite[Thm.~3.3.2]{DegliseJinKhan}.
    \end{constr}

  \subsection{Self-intersection formula}

    Let us also record the following formulation of the self-intersection formula, proven the same way as \cite[Cor.~4.2.3]{DegliseJinKhan}, which for a closed immersion relates the Gysin transformation with the Euler transformation of its conormal sheaf.

    \begin{prop}
      Let $i : \sX \to \sY$ be a quasi-smooth closed immersion of scalloped derived stacks.
      Then there is a commutative diagram
      \[ \begin{tikzcd}
        i^*\vb{-\sN_{\sX/\sY}} \ar{rr}{\eul_{\sN_{\sX/\sY}}}\ar[equals]{d}
        &
        & i^* \ar[equals]{d}
        \\
        i^*\vb{\sL_{\sX/\sY}} \ar{r}{\gys_{\sX/\sY}}
        & i^! \ar{r}{\mrm{Ex}^{*!}}
        & i^*.
      \end{tikzcd} \]
    \end{prop}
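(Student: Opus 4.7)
The proof I propose follows the pattern of \cite[Cor.~4.2.3]{DegliseJinKhan}: reduce to the universal case of a zero section of a vector bundle, then verify the identity by unpacking definitions.

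\emph{Reduction to the zero section.} By the construction of $\gys_i$ via the deformation to the normal bundle $D_{\sX/\sY}$ (see the proof of \thmref{thm:gys}), together with the base change compatibility of the Gysin transformation alluded to in the remark following that theorem, the formation of $\gys_i$ commutes (suitably) with restriction to the two fibres of $D_{\sX/\sY} \to \A^1$: over the open fibre it yields $\gys_i$ itself, while over the zero fibre it yields the Gysin transformation $\gys_0$ associated to the zero section $0 : \sX \to N_{\sX/\sY}$. The natural transformations $\eul_{\sN}$ and $\mrm{Ex}^{*!}$ are both compatible with base change in the obvious sense (the latter by functoriality of the exchange and the former by \remref{rem:Thom !} applied to $0^*$ on $N_{\sX/\sY}$). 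Thus it suffices to verify the self-intersection formula in the case where $i$ is the zero section $s : \sX \to \bV(\sE)$ of a finite locally free sheaf $\sE$ on $\sX$, with conormal $\sN_s = \sE$ and cotangent $\sL_s = \sE[1]$, whose class in $\K(\sX)$ is $-[\sE]$.

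\emph{The zero section case.} Write $\pi : V := \bV(\sE) \to \sX$ for the projection, so that $\pi \circ s = \id_\sX$. By \thmref{thm:gys}(ii) applied with $\sS = \sX$, $p = \id$, $q = \pi$, the natural transformation $\gys_s \ast \pi^*$ coincides with the relative purity isomorphism $s^*\pi^*(-)\vb{-\sE} \simeq s^! \pi^*(-)$ of \corref{cor:relpur}. Since $\pi^*$ generates $\D(V)$ under colimits and Thom twists (indeed, $s_*s^*$ and $\pi_\sharp$ together generate by the localization triangle for the pair $(s, V\setminus \sX)$ combined with homotopy invariance), it is enough to evaluate on objects of the form $\pi^*\sG$. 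On such objects, the composite $\mrm{Ex}^{*!}\circ\gys_s$ becomes the composite
\[
\sG\vb{-\sE} \xrightarrow{\sim} s^!\pi^*\sG \xrightarrow{\eta} s^* s_! s^! \pi^*\sG \xrightarrow{\epsilon} s^*\pi^*\sG = \sG,
\]
where the first arrow is relative purity, the second is the (inverted) proper base change isomorphism $s^* s_! \simeq \id$ from the classically cartesian self-intersection square, and the last is the counit of $(s_!, s^!)$.

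\emph{Identification with the Euler class.} Unwinding \constrref{constr:eul}, $\eul_\sE(\sG)$ applied (via Thom twisting by $-\sE$) as a map $\sG\vb{-\sE} \to \sG$ is, under the identification $\sG\vb{-\sE} \simeq \pi_! \pi^! \sG\vb{-\sE}\simeq \pi_!\pi^*\sG$ provided by homotopy invariance and purity, induced by the unit $\pi^*\sG \to s_*s^*\pi^*\sG \simeq s_*\sG$ followed by $\pi_! s_* = (\pi\circ s)_! = \id$. A diagram chase, using the triangle identities for the adjunctions $(\pi_!,\pi^!) = (\pi_\sharp\vb{-\sE}, \pi^*\vb{\sE})$ and $(s_!, s^!) = (s_*, s^!)$ together with $\pi \circ s = \id$, identifies this with the composite displayed above. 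The main obstacle is precisely this diagram chase: one must keep careful track of the Thom twists (in particular the identification $\sL_s \equiv -\sE$ in $\K(\sX)$), the interchange between $s_!$ and $s_*$ via \thmref{thm:exc}(ii), and the fact that the proper base change iso used to define $\mrm{Ex}^{*!}$ is exactly the counit $s^*s_* \simeq \id$ arising from the closed immersion.
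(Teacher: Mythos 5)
Your overall strategy---reduce to the zero section of the normal bundle via the deformation, then verify the identity by unwinding the definitions---is exactly the route taken in \cite[Cor.~4.2.3]{DegliseJinKhan}, which is all the paper offers for this proposition. So you are following the intended approach.

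There is, however, a genuine gap in the zero-section case. You invoke \thmref{thm:gys}(ii) to identify $\gys_s \ast \pi^*$ with the relative purity isomorphism, and then claim this determines $\gys_s$ (and hence the whole composite) because ``$\pi^*$ generates $\D(V)$ under colimits and Thom twists.'' The parenthetical justification---the localization triangle for $(s, V\setminus \sX)$ shows $s_*s^*$ and $j_\sharp j^*$ together generate---does not support this claim: objects of the form $s_*s^*\sF$ or $j_\sharp j^*\sF$ are not of the form $\pi^*\sG$, nor are they built from such by Thom twists. For $\pi:\sV\to\sX$ a vector bundle, $\pi^*$ is fully faithful (homotopy invariance), but not essentially surjective on any reasonable generating set, since smooth $\sV$-schemes need not pull back from $\sX$. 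So it is \emph{not} enough to evaluate on $\pi^*\sG$, and the reduction as stated does not go through.

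The standard way around this, and what the DJK argument actually does, is to observe that both composites are $\D(\sV)$-linear (coming from the six operations via projection formulas), hence correspond under adjunction and the closed projection formula to a single morphism $\un_\sV \to s_*\Th_\sX(\sE)$, which can be compared after evaluating on the unit. This avoids the generation issue entirely and also does most of the work of the ``main obstacle'' diagram chase you defer at the end of your third step. As written, that final step is acknowledged but not carried out, so even granting the generation claim the proof would be incomplete; in the adjoint formulation, the comparison of the two classes in $\Hom(\un_\sV, s_*\Th_\sX(\sE))$ is a manageable computation, because in the zero-section case the deformation $D_{\sX/\sV}$ is the trivial family $\sV\times\A^1$ and the specialization map is explicit.
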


    Here $\Ex^{*!} : i^! \to i^*$ is the exchange transformation (\corref{cor:spdfi}) associated to the self-intersection square 
    \[ \begin{tikzcd}
      \sX \ar[equals]{r}\ar[equals]{d}
      & \sX \ar{d}{i}
      \\
      \sX \ar{r}{i}
      & \sY.
    \end{tikzcd} \]


\section{Cohomology and Borel--Moore homology theories}
\label{sec:coh}

  \subsection{Definitions}

    We fix a $(*,\sharp,\otimes)$-formalism $\D^*$ on scalloped derived stacks satisfying the Voevodsky conditions.
    Recall that by \thmref{thm:exc}, $\D^*$ extends to a formalism of six operations.

    \begin{defn}
      Let $f : \sX \to \sY$ be a representable morphism of finite type between scalloped derived stacks.
      The \emph{relative Borel--Moore homology spectrum} with coefficients in a sheaf $\sF \in \D(\sY)$ is the following mapping spectrum
      \[
        \CBM(\sX_{/\sY}, \sF)
        := \Maps_{\D(\sX)}(\un_\sX, f^!(\sF)).
      \]
    \end{defn}

    \begin{exam}
      Let $X$ be a derived algebraic space of finite type over an affine scheme $S$, with an action of a nice group scheme $G$ over $S$.
      The relative Borel--Moore homology spectrum with coefficients in $\sF \in \D(BG)$,
      \[ \CBM([X/G]_{/BG}, \sF), \]
      can be regarded as a (genuine) $G$-equivariant Borel--Moore homology spectrum for $X$ over $S$.
    \end{exam}

    \begin{defn}\label{defn:absolute}\leavevmode
      \begin{defnlist}
        \item
        An \emph{absolute object} of $\D$ (over scalloped derived stacks) is a collection $\sF = (\sF_\sX)_\sX$ of objects $\sF_\sX \in \D(\sX)$, for every scalloped derived stack $\sX$, together with a homotopy coherent system of isomorphisms
        \[ f^*(\sF_\sY) \simeq \sF_\sX \]
        in $\D(\sX)$ for every \emph{representable} morphism $f : \sX \to \sY$.
        More precisely, $\sF$ is a section of the cartesian fibration classified by the presheaf $\D^*$, which is cartesian over representable morphisms.
        
        \item
        An \emph{absolute twist} (over scalloped derived stacks) is similarly a collection $\alpha = (\alpha_\sX)_\alpha$ of points $\alpha_\sX \in \K(\sX)$, for every scalloped derived stack $\sX$, together with a homotopy coherent system of isomorphisms
        \[ f^*(\alpha_{\sY}) \simeq \alpha_{\sX} \]
        in $\K(\sY)$ for every representable morphism $f : \sX \to \sY$.
      \end{defnlist}
      We will usually omit the subscripts by abuse of notation.
      We also consider the variant of these definitions over some fixed base scalloped derived stack $\sS$, e.g. an absolute object $\sF$ over $\sS$ is as above except that $\sF_\sX$ is only given for $\sX$ which live over $\sS$.
    \end{defn}

    \begin{exam}
      Let $\sF$ be an absolute object of $\D$.
      For a scalloped derived algebraic stack $\sX$, the \emph{cohomology spectrum} with coefficients in $\sF$ is
      \[
        \Ccoh(\sX, \sF) := \Maps_{\D(\sX)}(\un_\sX, \sF).
      \]
    \end{exam}

    \begin{rem}
      For any absolute object $\sF$ and absolute twist $\alpha$, we adopt the convention
      \[
        \Ccoh(\sX, \sF)\vb{\alpha}
        := \Ccoh(\sX, \sF\vb{\alpha})
      \]
      and similarly
      \[
        \CBM(\sX_{/\sY}, \sF)\vb{\alpha}
        := \CBM(\sX_{/\sY}, \sF\vb{\alpha})
      \]
      for any representable morphism of finite type $\sX \to \sY$.
    \end{rem}

  \subsection{Operations}
  \label{ssec:coh/op}

    Just as in \cite[\S 2]{DegliseJinKhan} and \cite[\S 2.2]{KhanVirtual}, we immediately get the following structure on Borel--Moore homology from the formalism of six operations.
    This structure is subject to the same type of compatibilities as in Fulton and MacPherson's formalism of bivariant theories \cite[Sect.~2.2]{FultonMacPherson}, see also \cite[\S 2.3]{KhanVirtual}.

    \begin{notat}\label{notat:07agsfd}
      We fix a base $\sS$, a scalloped derived stack, and an absolute object $\sF$ over $\sS$ (\defnref{defn:absolute}).
      We denote by $\sC_{/\sS}$ for the \inftyCat of scalloped derived stacks $\sX$ over $\sS$, and $\sC^\rep_{/\sS}$ for the full subcategory spanned by $\sX \in \sC_{/\sS}$ for which $\sX \to \sS$ is representable of finite type.
    \end{notat}

    \sssec{Direct image}\label{sssec:BM/proper}
      If $f : \sX \to \sY$ is a proper morphism in $\sC^\rep_{/\sS}$, then there are direct image maps
      \begin{equation*}
        f_* : \CBM(\sX_{/\sS}, \sF) \to \CBM(\sY_{/\sS}, \sF).
      \end{equation*}

      If $\sX,\sY \in \sC_{/\sS}$ have affine diagonal and $f : \sX \to \sY$ is a quasi-smooth, proper, representably smoothable morphism, then there are also Gysin maps in cohomology
      \[
        f_! : \Ccoh(\sX, \sF) \to \Ccoh(\sY, \sF)\vb{-\sL_{\sX/\sY}}.
      \]

    \sssec{Inverse image}\label{sssec:BM/Gysin}
      If $f : \sX \to \sY$ is a representable morphism in $\sC_{/\sS}$, then there are inverse image maps
      \[ f^* : \Ccoh(\sY, \sF) \to \Ccoh(\sX, \sF). \]

      If $\sX,\sY \in \sC^\rep_{/\sS}$ have affine diagonal and $f : \sX \to \sY$ is a quasi-smooth, representably smoothable morphism, then there are also Gysin maps in Borel--Moore homology
      \begin{equation*}
        f^! : \CBM(\sY_{/\sS}, \sF) \to \CBM(\sX_{/\sS}, \sF)\vb{-\sL_{\sX/\sY}}.
      \end{equation*}

    \sssec{Change of base}\label{sssec:BM/base change}
      For any commutative square of scalloped derived stacks
      \begin{equation*}
        \begin{tikzcd}
          \sY \ar{r}\ar{d}\ar[phantom]{rd}{\scriptstyle\Delta}
            & \sT \ar{d}{f}
          \\
          \sX \ar{r}
            & \sS
        \end{tikzcd}
      \end{equation*}
      which is cartesian on classical truncations, where $\sX \to \sS$ and $\sY \to \sT$ are representable of finite type, there are maps
      \begin{equation*}
        f^*_\Delta : \CBM(\sX_{/\sS}, \sF) \to \CBM(\sY_{/\sT}, \sF).
      \end{equation*}

    \sssec{Euler class}
      Assume that $\sF$ is unital, i.e., it admits a unit map $\eta : \un \to \sF$.
      For any finite locally free sheaf $\sE$ on $\sX \in \sC_{/\sS}$, there is an Euler class
      \begin{equation*}
        e(\sE) \in \Ccoh(\sX, \sF)\vb{\sE}.
      \end{equation*}

    \sssec{Composition product}\label{sssec:BM/composition product}
      Assume that $\sF$ is multiplicative, i.e., it admits a multiplication map $\mu : \sF \otimes \sF \to \sF$.
      Given representable of finite type morphisms $\sX \to \sT$ and $\sT \to \sS$ between scalloped derived stacks, there is a pairing
      \begin{equation*}
        \circ
        : \CBM(\sX_{/\sT}, \sF) \otimes \CBM(\sT_{/\sS}, \sF)
        \to \CBM(\sX_{/\sS}, \sF).
      \end{equation*}

    Special cases of the composition product are cap and cup products:

    \sssec{Cap product}\label{sssec:BM/cap product}
      Given $\sX \in \sC^\rep_{/\sS}$, there is a pairing
      \begin{equation}
        \cap
        : \Ccoh(\sX, \sF) \otimes \CBM(\sX_{/\sS}, \sF)
        \to \CBM(\sX_{/\sS}, \sF).
      \end{equation}

    \sssec{Cup product}\label{sssec:BM/cup product}
      Given $\sX\in\sC_{/\sS}$, there is a pairing
      \begin{equation}
        \cup
        : \Ccoh(\sX, \sF) \otimes \Ccoh(\sX, \sF)
        \to \Ccoh(\sX, \sF).
      \end{equation}

    \begin{rem}\label{rem:graded}
      For every $\sX \in \sC_{/\sS}$ and $\sF \in \D(\sS)$, the twisted cohomology spectra of $\sX$ can be assembled into a $\K(\sY)$-graded spectrum
      \[
        \Ccoh(\sX, \sF)\vb{\ast}
        :=  \bigoplus_{\alpha \in \K(\sY)} \Ccoh(\sX, \sF)\vb{\alpha},
      \]
      with graded ring structure coming from the cup product.
      For every morphism $f : \sX \to \sY$ in $\sC^\rep_{/\sS}$, the Borel--Moore homology spectra can be assembled into a $\K(\sY)$-graded spectrum
      \[
        \CBM(\sX_{/\sY}, \sF)\vb{\ast}
        :=  \bigoplus_{\alpha \in \K(\sY)} \CBM(\sX_{/\sY}, \sF)\vb{\alpha},
      \]
      which becomes a graded module over $\Ccoh(\sX, \sF)\vb{\ast}$ via cap product.
      We can also collapse these into $\bZ$-gradings, where the homogeneous components of degree $r \in \bZ$ are
      \[
        \bigoplus_{\rk(\alpha)=r} \Ccoh(\sX, \sF)\vb{\alpha},
        \quad
        \bigoplus_{\rk(\alpha)=r} \CBM(\sX_{/\sY}, \sF)\vb{\alpha},
      \]
      respectively.
    \end{rem}

  \subsection{Properties}

    Let the notation be as in \notatref{notat:07agsfd}.
    The following properties follow immediately from the results of \secref{sec:six}, just as in \cite{DegliseJinKhan}.

    \begin{prop}[Localization]
      Given a complementary closed-open pair
      \[ \begin{tikzcd}
        \sZ \ar[hookrightarrow]{r}{i}
        & \sX \ar[hookleftarrow]{r}{j}
        & \sU
      \end{tikzcd} \]
      in $\sC^\rep_{/\sS}$, there is an exact triangle
      \[
        \CBM(\sZ_{/\sS}, \sF)
        \xrightarrow{i_*} \CBM(\sX_{/\sS}, \sF)
        \xrightarrow{j^!} \CBM(\sU_{/\sS}, \sF).
      \]
    \end{prop}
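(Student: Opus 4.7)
The plan is to obtain the triangle by applying the localization property of $\D^*$ (Definition~\ref{defn:0paguh107}\itemref{item:0paguh107/localization}) to the object $f^!(\sF) \in \D(\sX)$, where $f : \sX \to \sS$ is the structural morphism, and then mapping out of the unit $\un_\sX$.

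More precisely, let $g = f \circ i : \sZ \to \sS$ and $h = f \circ j : \sU \to \sS$. By Remark~\ref{rem:iadsfp}, the localization property yields an exact triangle of endofunctors of $\D(\sX)$
\[
  i_* i^! \to \id \to j_* j^*.
\]
Evaluating at $f^!(\sF)$ and using functoriality of the $!$-pullback (Theorem~\ref{thm:exc}), we have $i^! f^! \simeq g^!$ and, since $j$ is an open immersion, $j^* f^! \simeq j^! f^! \simeq h^!$ by Theorem~\ref{thm:exc}\itemref{item:exc/open}. Thus we obtain the exact triangle
\[
  i_* g^!(\sF) \to f^!(\sF) \to j_* h^!(\sF)
\]
in $\D(\sX)$.

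Applying the exact functor $\Maps_{\D(\sX)}(\un_\sX, -)$ preserves this exact triangle. The three terms are then identified via the $(i^*, i_*)$ and $(j^*, j_*)$ adjunctions together with the canonical isomorphisms $i^*(\un_\sX) \simeq \un_\sZ$ and $j^*(\un_\sX) \simeq \un_\sU$ (since $i^*$ and $j^*$ are symmetric monoidal), which give
\[
  \Maps_{\D(\sX)}(\un_\sX, i_* g^!\sF) \simeq \CBM(\sZ_{/\sS}, \sF), \qquad
  \Maps_{\D(\sX)}(\un_\sX, j_* h^!\sF) \simeq \CBM(\sU_{/\sS}, \sF),
\]
and the middle term is $\CBM(\sX_{/\sS}, \sF)$ by definition. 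Under these identifications, the first map becomes $i_*$ in the sense of \ref{sssec:BM/proper} and the second map becomes $j^! = j^*$ in the sense of \ref{sssec:BM/Gysin}.

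There is no real obstacle here: once the localization triangle of Remark~\ref{rem:iadsfp} is in hand, the argument is a formal manipulation of the six operations. The only point deserving care is the compatibility of $j^* f^!(\sF) \simeq h^!(\sF)$ with the map $j^!$ appearing on Borel--Moore homology, which follows from the definition of the latter via the same identification $j^! \simeq j^*$.
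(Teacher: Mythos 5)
Your proof is correct and is exactly the argument that the paper has in mind when it says these properties ``follow immediately from the results of \secref{sec:six}, just as in \cite{DegliseJinKhan}'': apply the localization triangle of \remref{rem:iadsfp} to $f^!(\sF)$, use $j^! \simeq j^*$ and $(f\circ i)^! \simeq i^! f^!$, and map out of $\un_\sX$ using the $(i^*,i_*)$ and $(j^*,j_*)$ adjunctions together with monoidality of $i^*$ and $j^*$. The identification of the resulting two maps with $i_*$ and $j^!$ in the sense of \sssecref{sssec:BM/proper} and \sssecref{sssec:BM/Gysin} is the standard unwinding of adjunctions that you correctly flag as the only point needing care.
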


    \begin{prop}[Derived invariance]\label{prop:BM/nil-invariance}
      For every $\sX \in \sC^\rep_{/\sS}$, we have:
      \begin{thmlist}
        \item
        Change of base along the inclusion of the classical truncation $\sS_\cl \to \sS$ induces isomorphisms
        \begin{equation*}
          \CBM(\sX_{/\sS}, \sF) \to \CBM({\sX\fibprodR_\sS \sS_\cl}_{/\sS_\cl}, \sF).
        \end{equation*}
        
        \item
        Direct image along the inclusion of the classical truncation $i_\sX : \sX_\cl \to \sX$ induces an isomorphism
        \begin{equation*}
          i_{\sX,*} : \CBM({\sX_\cl}_{/\sS}, \sF) \to \CBM(\sX_{/\sS}, \sF).
        \end{equation*}
      \end{thmlist}
      Moreover, both statements also hold with $\sX_\cl$ replaced by the reduction $\sX_{\cl,\red}$.
    \end{prop}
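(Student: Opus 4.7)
The plan is to reduce both assertions to two ingredients already at hand: derived invariance (Corollary~\ref{cor:nil inv}), which says that for every surjective closed immersion $j : \sA \hook \sB$ of scalloped derived stacks the pair $(j^*, j_*)$ is an equivalence, and the $!$-base change formula (Theorem~\ref{thm:exc}\itemref{item:exc/bc}). The key formal remarks I would rely on are that $j^*$ is symmetric monoidal (so $j^*\un_\sB \simeq \un_\sA$ and dually $j_*\un_\sA \simeq \un_\sB$) and that $j_* \simeq j_!$ by properness (Theorem~\ref{thm:exc}\itemref{item:exc/alpha}). This formalism applies in particular to $i : \sS_\cl \hook \sS$, to $i_\sX : \sX_\cl \hook \sX$, to their base changes, and to the further reduction maps $\sS_{\cl,\red} \hook \sS_\cl$ and $\sX_{\cl,\red} \hook \sX_\cl$.

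For part (i), write $i' : \sX' \hook \sX$ for the base change of $i$ along $f : \sX \to \sS$, so that the square
\[
\begin{tikzcd}
\sX' \ar{r}{f'}\ar{d}{i'} & \sS_\cl \ar{d}{i}\\
\sX \ar{r}{f} & \sS
\end{tikzcd}
\]
is homotopy cartesian and $i'$ is itself a surjective closed immersion. The base change isomorphism $\Ex^!_* : i'_* f'^! \xrightarrow{\sim} f^! i_*$, combined with nil-invariance $i_* i^* \sF \simeq \sF$, yields $f^!\sF \simeq i'_* f'^!(\sF|_{\sS_\cl})$ in $\D(\sX)$. Applying $\Maps_{\D(\sX)}(\un_\sX, -)$ and using the adjunction $(i'^*, i'_*)$ together with $i'^*\un_\sX \simeq \un_{\sX'}$ produces an isomorphism $\CBM(\sX_{/\sS}, \sF) \simeq \CBM(\sX'_{/\sS_\cl}, \sF|_{\sS_\cl})$. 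A short bookkeeping check, tracing through the definition of the change-of-base map in \sssecref{sssec:BM/base change} via the transpose exchange $\Ex^{*!}$, identifies this composite with the map in the statement.

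For part (ii), unwinding the construction of $f_*$ in \sssecref{sssec:BM/proper} shows that $i_{\sX,*}$ on Borel--Moore homology is precomposition with the unit $\un_\sX \to i_{\sX,!}\un_{\sX_\cl} \simeq i_{\sX,*}\un_{\sX_\cl}$, via the adjunction $\Maps_{\D(\sX_\cl)}(\un_{\sX_\cl}, i_\sX^! f^!\sF) \simeq \Maps_{\D(\sX)}(i_{\sX,!}\un_{\sX_\cl}, f^!\sF)$. Since $i_\sX^*$ is a symmetric monoidal equivalence with inverse $i_{\sX,*}$, the unit $\un_\sX \to i_{\sX,*} i_\sX^* \un_\sX \simeq i_{\sX,*}\un_{\sX_\cl}$ is an isomorphism, so precomposition with it is an equivalence. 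The reduction statement then follows by applying the same two arguments to the further surjective closed immersions $\sS_{\cl,\red} \hook \sS_\cl$ and $\sX_{\cl,\red} \hook \sX_\cl$ and composing with the classical case. I do not anticipate a genuine obstacle here; the only subtlety is the bookkeeping needed to verify that the natural transformations produced by my argument coincide with the base-change map and proper pushforward as defined in \ssecref{ssec:coh/op}.
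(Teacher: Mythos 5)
Your proof is correct and is essentially the intended argument (the paper itself only gestures at it, saying it follows from the axiomatization and \cite{DegliseJinKhan}). The two ingredients you use — nil-invariance (\corref{cor:nil inv}), which makes $(i^*, i_*)$ an equivalence for a surjective closed immersion, and the $!$-base change exchange $\Ex^!_*$ from \thmref{thm:exc}\itemref{item:exc/bc} — are exactly the right ones.

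A couple of comments on the details. For part~(i), the change-of-base map $f^*_\Delta$ of \sssecref{sssec:BM/base change} is the composite $i'^* \circ (-)$ followed by postcomposition with $\Ex^{*!} : i'^*f^! \to f'^!i^*$. The first arrow is an isomorphism because $i'^*$ is an equivalence of $\infty$-categories (nil-invariance), so only invertibility of $\Ex^{*!}(\sF)$ is at issue. As you say, this is equivalent — via the equivalences $i_*$, $i'_*$ — to invertibility of its mate $\Ex^!_*$, which is \thmref{thm:exc}\itemref{item:exc/bc}. The ``bookkeeping'' you flag is the standard compatibility between $\Ex^{*!}$ (built from $\Ex^*_!$ in \corref{cor:spdfi}) and $\Ex^!_*$ (the right transpose of $\Ex^*_!$); this is formal and you are right not to dwell on it. For part~(ii), the identification of $i_{\sX,*}$ with precomposition by the unit $\un_\sX \to i_{\sX,!}\un_{\sX_\cl}$, and the observation that this unit is invertible since $i_\sX^*$ is fully faithful and $i_{\sX,!} \simeq i_{\sX,*}$ by properness, are exactly right. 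The reduction variant goes through verbatim because $\sX_{\cl,\red} \hook \sX$ is again a surjective closed immersion covered by \corref{cor:nil inv}.
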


    \begin{prop}[Thom isomorphism]\label{prop:BM/homotopy}
      Let $\sX \in \sC^\rep_{/\sS}$ and $\sE$ a finite locally free sheaf on $\sX$ with total space $\pi : \sV \to \sX$.
      Then the Gysin map
      \begin{equation*}
        \pi^! : \CBM(\sX_{/\sS}, \sF) \to \CBM(\sV_{/\sS}, \sF)\vb{-\sE},
      \end{equation*}
      is an isomorphism.
    \end{prop}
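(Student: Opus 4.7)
The plan is to deduce this Thom isomorphism as a direct consequence of purity and homotopy invariance, in two stages: first unfold the Gysin map as a pullback modulo the purity identification, then observe that this pullback is an equivalence by homotopy invariance.

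First, I would identify the Gysin map $\pi^!$ explicitly. Since $\pi : \sV \to \sX$ is a vector bundle, $\sV = \Spec_\sX(\Sym \sE^\vee)$ is affine over $\sX$, hence $\pi$ is compactifiable by \examref{exam:affcomp}. Therefore \thmref{thm:purity} applies and yields a canonical isomorphism $\pur_\pi : \pi^! \simeq \pi^*\vb{\sL_\pi}$. The relative cotangent complex of the vector bundle $\pi$ is canonically $\sL_\pi \simeq \pi^*\sE$, so upon twisting we obtain $\pi^!\vb{-\pi^*\sE} \simeq \pi^*$. Moreover, by \thmref{thm:gys}\emph{(i)}, the Gysin transformation $\gys_\pi$ coincides with $\pur_\pi$ for the smooth morphism $\pi$. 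Writing $f_\sX : \sX \to \sS$ for the structural morphism (so that $f_\sX \circ \pi : \sV \to \sS$ computes $\CBM(\sV_{/\sS},\sF)$), unfolding the definition of the Gysin map from \ref{sssec:BM/Gysin} identifies it, modulo the purity isomorphism and the identification $\sL_\pi \simeq \pi^*\sE$, with the pullback map
\[
  \pi^* : \Maps_{\D(\sX)}(\un_\sX, f_\sX^!\sF) \to \Maps_{\D(\sV)}(\un_\sV, \pi^* f_\sX^!\sF).
\]

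Second, I would show that this pullback is an equivalence using homotopy invariance. Homotopy invariance (one of the Voevodsky conditions, \defnref{defn:0paguh107}) asserts that the unit $\id \to \pi_*\pi^*$ is invertible, i.e., $\pi^*$ is fully faithful on $\D(\sX)$. Combined with $\pi^*\un_\sX \simeq \un_\sV$ (since $\pi^*$ is symmetric monoidal), this immediately gives that the displayed map is an equivalence, completing the proof.

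No serious obstacle is expected: the entire argument is a bookkeeping exercise matching the Gysin map with pullback via the purity identification, and the hard content is already packaged into \thmref{thm:purity}, \thmref{thm:gys}, and the homotopy invariance axiom. The only mild care needed is to keep track of where the Thom twist $\vb{-\sE}$ lands—specifically, that the convention $\CBM(\sV_{/\sS},\sF)\vb{-\sE}$ corresponds to twisting $\pi^! f_\sX^!\sF$ by $-\pi^*\sE \in \K(\sV)$, against which the purity twist $\vb{\sL_\pi}$ cancels cleanly.
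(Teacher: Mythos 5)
Your proof is correct and is essentially the unwinding the paper has in mind: the paper dispatches this proposition with the remark that the properties in that subsection ``follow immediately from the results of Sect.~7--9, just as in [DJK],'' and your argument spells out exactly that reduction. You correctly note that the vector bundle projection $\pi$ is affine, hence compactifiable by \examref{exam:affcomp}, so \thmref{thm:purity} applies without any affine-diagonal hypothesis; you then cancel the purity twist $\vb{\sL_\pi} = \vb{\pi^*\sE}$ against the $\vb{-\sE}$ in the target, identify the Gysin map with the plain pullback $\pi^*$ on mapping spectra, and conclude by full faithfulness of $\pi^*$, which is precisely the homotopy-invariance axiom of \defnref{defn:0paguh107}. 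No gaps.
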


  \subsection{Fundamental classes and Poincaré duality}

    Let the notation be as in \ref{notat:07agsfd}, and assume that $\sF$ is unital.

    \begin{defn}\label{defn:fund smooth}
      Let $\sX \in \sC^\rep_{/\sS}$ such that $\sX$ and $\sS$ have affine diagonal.
      If $\sX$ is smooth over $\sS$, or more generally quasi-smooth and representably smoothable, then there is a relative \emph{fundamental class}
      \begin{equation*}
        [\sX/\sS] \in \CBM(\sX_{/\sS}, \sF)\vb{-\sL_{\sX/\sS}}
      \end{equation*}
      defined as the image of the unit by the Gysin map
      \[
        f^! :
        \CBM(\sS_{/\sS}, \sF) \to
        \CBM(\sX_{/\sS}, \sF)\vb{-\sL_{\sX/\sS}}.
      \]
    \end{defn}

    Since the Gysin transformation is invertible for smooth morphisms (see Theorems~\ref{thm:gys} and \ref{thm:purity}), we have:

    \begin{prop}[Poincaré duality]\label{prop:poincare}
      Let $\sX \in \sC^\rep_{/\sS}$ such that $\sX$ and $\sS$ have affine diagonal.
      If $\sX$ is smooth over $\sS$, then cap product with the fundamental class $[\sX/\sS]$ induces isomorphisms
      \begin{equation*}
        \Ccoh(\sX, \sF)
        \xrightarrow{\cap [\sX/\sS]} \CBM(\sX/\sS, \sF)\vb{-\sL_{\sX/\sS}}.
      \end{equation*}
    \end{prop}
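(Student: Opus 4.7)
The plan is to identify the map $\cap\,[\sX/\sS]$ with the natural transformation on mapping spectra induced by the Gysin transformation $\gys_f \colon f^*\vb{\sL_{\sX/\sS}} \to f^!$, where $f \colon \sX \to \sS$ is the structural morphism, and then invoke \thmref{thm:purity}. The affine-diagonal hypotheses on $\sX$ and $\sS$ place us in the applicable range of that theorem; since Thom twists are invertible, it is enough to treat a single value of $\alpha \in \K(\sX)$.

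First, since $\sF$ is absolute we have $\sF_\sX \simeq f^*\sF$, so
\[
  \Ccoh(\sX, \sF) = \Maps_{\D(\sX)}(\un_\sX, f^*\sF), \qquad
  \CBM(\sX_{/\sS}, \sF)\vb{-\sL_{\sX/\sS}} = \Maps_{\D(\sX)}(\un_\sX, f^!\sF\vb{-\sL_{\sX/\sS}}).
\]
Post-composing with the natural transformation $f^*\sF \to f^!\sF\vb{-\sL_{\sX/\sS}}$ obtained by twisting $\gys_f$ (evaluated at $\sF$) by $\vb{-\sL_{\sX/\sS}}$ yields a canonical map between these two spectra.

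Next, I would verify that this map coincides with $\cap\,[\sX/\sS]$. Unwinding \defnref{defn:fund smooth}, the fundamental class $[\sX/\sS]$ is represented by the composite $\un_\sX \simeq f^*\un_\sS \xrightarrow{f^*\eta} f^*\sF \xrightarrow{\gys_f\vb{-\sL_{\sX/\sS}}} f^!\sF\vb{-\sL_{\sX/\sS}}$, where $\eta \colon \un_\sS \to \sF$ is the unit of $\sF$. The cap product (\ssecref{ssec:coh/op}) is built from the projection formula of \thmref{thm:exc}\itemref{item:exc/proj} and the multiplication on $\sF$, and a direct diagram chase---using naturality of $\gys_f$, its compatibility with the projection formula, and the unit axiom---identifies $\cap\,[\sX/\sS]$ with the Gysin-induced map. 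This is the standard verification of \cite[\S 3]{DegliseJinKhan} and \cite[\S 2.4]{KhanVirtual}, and carries over verbatim to the present setting.

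Finally, the first clause of \thmref{thm:gys} together with \thmref{thm:purity} asserts that $\gys_f$ coincides with the purity isomorphism $\pur_f \colon f^*\vb{\sL_{\sX/\sS}} \xrightarrow{\sim} f^!$ for smooth representable $f$ between stacks with affine diagonal. Hence the Gysin-induced map is an equivalence, and therefore so is $\cap\,[\sX/\sS]$. The main obstacle is the bookkeeping in the second step: the explicit identification of $\cap\,[\sX/\sS]$ with the Gysin-induced map is conceptually routine but demands careful coordination of the projection formula, the Gysin transformation, and several adjunction unit/counit data.
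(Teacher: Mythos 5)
Your proposal is correct and matches the paper's argument: the paper's proof is precisely the one-line observation that the Gysin transformation is invertible for smooth morphisms (citing \thmref{thm:gys} and \thmref{thm:purity}), and your identification of $\cap\,[\sX/\sS]$ with the map induced by $\gys_f$ is exactly the unwinding the authors leave implicit. The ``bookkeeping'' step you flag is indeed the routine verification carried over from \cite[\S 3]{DegliseJinKhan} and \cite[\S 2.2]{KhanVirtual}.
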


\section{Examples}
\label{sec:ex}

  \ssec{Homotopy invariant K-theory}
  \label{ssec:KH}

    \begin{notat}
      Given a scalloped derived stack $\sX$, we let $\KB(\sX)$ denote the Bass--Thomason--Trobaugh K-theory spectrum of the stable \inftyCat of perfect complexes on $\sX$ (see \cite[Def.~2.6]{KhanKstack}, \cite[Sect.~4]{CisinskiKhan}).
      By construction, its infinite loop \ani $\Omega^\infty (\KB(\sX))$ is the K-theory \ani $\K(\sX)$.
    \end{notat}

    We have the following extension of the celebrated result of Thomason--Trobaugh \cite{ThomasonTrobaugh}:

    \begin{thm}\label{thm:Thomason}
      The assignment $\sX \mapsto \KB(\sX)$ determines a Nisnevich sheaf of spectra on the site of scalloped derived stacks.
    \end{thm}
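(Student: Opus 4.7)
The plan is to verify the two conditions of Proposition~\ref{prop:Nis} directly. The first condition, that $\KB(\initial)$ is contractible, is immediate since $\Dperf(\initial)\simeq 0$. For the second, I need to show that for every Nisnevich square
\[ \begin{tikzcd}
  \sW \ar{r}\ar{d} & \sV\ar{d}{p}\\
  \sU \ar{r}{j} & \sX
\end{tikzcd} \]
of scalloped derived stacks, applying $\KB$ yields a homotopy cartesian square of spectra.

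The first main ingredient is the cartesian square of stable \inftyCats of perfect complexes already established in the proof of \thmref{thm:perfect}, using \cite[Thm.~2.2.3]{BKRSMilnor}:
\[ \begin{tikzcd}
  \Dperf(\sX) \ar{r}{j^*}\ar{d}{p^*} & \Dperf(\sU) \ar{d}\\
  \Dperf(\sV) \ar{r} & \Dperf(\sW).
\end{tikzcd} \]
Combined with the Thomason--Neeman localization theorem (cf. \lemref{lem:spdufg0p1} and \cite[Thm.~A.3.11]{HermitianII}), this identifies the horizontal fibers with $\Dperf(\sX~\mrm{on}~Z)$ and $\Dperf(\sV~\mrm{on}~p^{-1}(Z))$ respectively, up to idempotent completion, where $Z = \abs{\sX}\setminus\abs{\sU}$. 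Cartesianness then yields the Nisnevich excision equivalence
\[ \Dperf(\sX~\mrm{on}~Z) \xrightarrow{\sim} \Dperf(\sV~\mrm{on}~p^{-1}(Z)) \]
(after idempotent completion), which is the key geometric input.

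The second ingredient is the fact that $\KB$ is a localizing invariant in the sense of Blumberg--Gepner--Tabuada: it sends Karoubi fiber sequences of small stable \inftyCats to fiber sequences of spectra. Applying this to the Karoubi fiber sequences
\[ \Dperf(\sX~\mrm{on}~Z) \to \Dperf(\sX) \to \Dperf(\sU), \quad \Dperf(\sV~\mrm{on}~p^{-1}(Z)) \to \Dperf(\sV) \to \Dperf(\sW), \]
and using the excision equivalence from the previous step, I obtain the desired homotopy cartesian square of K-theory spectra.

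The real work is the compact generation result of \thmref{thm:perfect}, which provides the cartesian square of $\Dperf$'s in the generality of scalloped derived stacks; once that is in hand, the passage to non-connective K-theory is formal, in the same spirit as \cite[Sect.~4]{CisinskiKhan}. The only subtlety to keep track of is that $\KB$ is Morita-invariant, so an excision equivalence that only holds after idempotent completion still suffices to produce the cartesian square at the level of spectra.
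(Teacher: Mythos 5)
Your argument unpacks, correctly, the content of the paper's one-line proof, which simply cites \cite[Thm.~2.13, Rem.~2.15]{KhanKstack} together with \thmref{thm:perfect}: compact generation feeds the Thomason--Neeman machinery, and Nisnevich descent for the localizing invariant $\KB$ drops out. So your approach is essentially the same; you are just spelling out what the citation does.

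Two small remarks on the details. (1) Cartesianness of the $\Dperf$ square yields the excision equivalence $\Dperf(\sX~\mrm{on}~Z) \simeq \Dperf(\sV~\mrm{on}~p^{-1}(Z))$ on the nose, with no idempotent completion needed: under the identification $\Dperf(\sX) \simeq \Dperf(\sU) \times_{\Dperf(\sW)} \Dperf(\sV)$, the fibre of $j^*$ consists of compatible triples with vanishing $\sU$-component, which is precisely the fibre of $\Dperf(\sV) \to \Dperf(\sW)$. Idempotent completion only enters one step earlier, when asserting that $\Dperf(\sX~\mrm{on}~Z) \to \Dperf(\sX) \to \Dperf(\sU)$ is a Karoubi fibre sequence. (2) For that step you cite \lemref{lem:spdufg0p1}, but that lemma is stated only for \qfund $\sX$. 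For general scalloped $\sX$ one needs compact generation of $\Dqc(\sX~\mrm{on}~Z)$; this does follow, but via \thmref{thm:perfect} combined with Neeman's localization theorem (the kernel of a colimit-preserving Bousfield localization of a compactly generated category, with fully faithful right adjoint, is again compactly generated by objects that are compact in the ambient category). Worth stating explicitly, since it is exactly what makes the Karoubi fibre sequence available in the scalloped generality you need.
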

    \begin{proof}
      This follows from \thmref{thm:perfect} and \cite[Thm.~2.13 and Rem.~2.15]{KhanKstack}.
    \end{proof}

    \begin{constr}\label{constr:07g301}
      Let $\sX$ be a scalloped derived stack.
      Restricting the presheaf $\sX' \mapsto \KB(\sX')$ to the site $\Sm_{/\sX}$ (notation as in \ssecref{ssec:H/constr}) and applying the (exact) $\A^1$-localization functor, we get a motivic $\Sm$-fibred $S^1$-spectrum $\KH_\sX$ (with $\Einfty$-ring structure).
      The homotopy invariant K-theory spectrum $\KH(\sX)$ is given by its global sections:
      \[
        \KH(\sX)
        = \RGamma(\sX, \KH_\sX)
        \simeq \colim_{[n]\in\bDelta^\op} \K(\sX \times \A^n).
      \]
      This definition agrees with \cite[\S 4C]{HoyoisKrishna} and \cite[\S 5A]{KrishnaRavi} in case $\sX$ is classical and with \cite[5.4.1]{KhanKblow} in case $\sX$ is a derived algebraic space.
      If $\sX$ is regular, then $\KH(\sX) \simeq \KB(\sX) \simeq \K(\sX) \simeq \G(\sX)$, where $\K(\sX)$ is the (connective) K-theory spectrum of perfect complexes on $\sX$ and $\G(\sX)$ is the (connective) K-theory spectrum of coherent sheaves on $\sX$ (see \cite[Thm.~3.5]{KhanKstack}).
    \end{constr}

    \begin{rem}\label{rem:apysbqqef}
      The motivic $S^1$-spectrum $\KH_\sX$ is stable under representable $*$-inverse image.
      Indeed, the proof over classical stacks in \cite[Prop.~4.6]{HoyoisKH} generalizes in view of \cite[Prop.~A.2.5]{BKRSMilnor}.
    \end{rem}

    \begin{rem}\label{rem:0afsgdh013}
      Combining \remref{rem:apysbqqef} with \thmref{thm:derinv}, we deduce that for every scalloped derived stack $\sX$, the canonical map
      \[ i^* : \KH(\sX) \to \KH(\sX_\cl), \]
      where $i$ is the inclusion of the classical truncation, is invertible.
      This gives a proof of \corref{cor:intro/KH} which is independent of our stable results such as proper base change (\thmref{thm:ipqnosdf}).
    \end{rem}

    \begin{rem}\label{rem:KH cdh}
      Using the cdh descent criterion of \cite[Thm.~E, Rem.~5.11(c)]{KhanKblow}, we can give a direct proof of \corref{cor:intro/KH cdh} by following \cite[5.3.4]{KhanKblow}.
      The new input in our setting is \remref{rem:0afsgdh013} and the localization theorem for $\MotSpc^*$ (\thmref{thm:loc unstable}), which together imply closed descent (cf. \cite[Ex.~5.9]{KhanKblow}).
    \end{rem}

    We have the following stable representability result:

    \begin{thm}\label{thm:KGL}
      For every scalloped derived stack $\sX$, there exists a canonical motivic $\sE_\infty$-ring spectrum $\KGL_\sX \in \SH(\sX)$ satisfying the following properties:
      \begin{thmlist}
        \item\label{item:0h013u0g}
        For every $\sX'\in \Sm_{/\sX}$, there are functorial isomorphisms of spectra
        \[
          \KH(\sX') \simeq \Ccoh(\sX', \KGL_\sX).
        \]

        \item\label{item:ad07sfg1}
        For every finite locally free sheaf $\sE$ on $\sX$, there is a canonical Bott periodicity isomorphism
        \[ \KGL_\sX\vb{\sE} \simeq \KGL_\sX \]
        in $\SH(\sX)$.

        \item\label{item:0asfh08}
        For any representable morphism $f : \sX \to \sY$, there is a canonical isomorphism
        \[
          f^*(\KGL_\sY) \simeq \KGL_{\sX}
        \]
        in $\SH(\sX)$.
        In fact, the collection $(\KGL_\sX)_\sX$ forms an absolute motivic spectrum in the sense of \defnref{defn:absolute}.
      \end{thmlist}
    \end{thm}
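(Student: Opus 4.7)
The plan is to first construct $\KGL_\sX$ for $\sX$ nicely \qfund, and then extend to all scalloped derived stacks by right Kan extension along the inclusion of \qfund stacks, invoking Nisnevich descent for $\SH^*$ (\thmref{thm:SH}\itemref{item:SH/Nis}). Compatibility of the various pieces is then largely automatic, since both sides of each claimed isomorphism satisfy Nisnevich descent on scalloped derived stacks---the left-hand sides by \thmref{thm:SH}\itemref{item:SH/Nis}, and the right-hand sides (the $\KH$-spectra) by \thmref{thm:Thomason} combined with the $\A^1$-localization of \constrref{constr:07g301}.

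For the \qfund case, the key input is that by \thmref{thm:Thomason} and \constrref{constr:07g301}, the motivic $\Sm$-fibred $S^1$-spectrum $\KH_\sX \in \MotSpc(\sX)_\bullet$-spectra carries a canonical $\sE_\infty$-ring structure and represents $\KH$ on $\Sm_{/\sX}$. I would promote $\KH_\sX$ to an object $\KGL_\sX \in \SH(\sX)$ by constructing a Bott class $\beta_\sX \colon \Th_\sX(\sO_\sX) \wedge \KH_\sX \to \KH_\sX$ arising from the class of $[\sO] - [\sO(-1)]$ on $\P^1_\sX$ (as in the scheme case treated in \cite{CisinskiKH} and the quotient case in \cite{HoyoisKH}), and then forming the colimit $\KGL_\sX := \colim_n \uHom(\Th_\sX(\sO_\sX)^{\wedge n}, \Sigma^\infty \KH_\sX)$ in $\SH(\sX)$, which inherits an $\sE_\infty$-ring structure.

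The main obstacle is verifying Bott periodicity for \emph{arbitrary} finite locally free sheaves $\sE$ on $\sX$, i.e., that $\Th_\sX(\sE)$ acts invertibly on $\KGL_\sX$, since by \remref{rem:invert quot} this is precisely what is required to give a well-defined object of $\SH(\sX)$. Here I would reduce to the analogous statement over $BG$ via the quasi-affine morphism $\sX \to BG$ (using that Thom twists are stable under inverse image, \remref{rem:Thf^*}), and then argue as in \cite[\S 6]{HoyoisKH}: the $\KH$-projective bundle formula $\KH(\P(\sE)) \simeq \bigoplus_{i=0}^{\rk \sE-1} \KH(\sX)\cdot [\sO(-i)]$ (which for $\qfund$ stacks follows from \lemref{lem:8phi112} together with the splitting principle and the trivial-bundle Bott periodicity built into $\beta_\sX$) implies that the product of $\beta$-classes associated to the Jordan-H\"older factors of $\sE$ defines the required equivalence $\Th_\sX(\sE) \wedge \KGL_\sX \simeq \KGL_\sX$.

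Granting this, property \itemref{item:0h013u0g} follows by construction, since $\Omega^\infty \KGL_\sX \simeq \KH_\sX$ as an $\A^1$-invariant Nisnevich sheaf on $\Sm_{/\sX}$ after inverting Bott, and the Bott-inverted $\KH$ agrees with $\KH$ itself on $\Sm_{/\sX}$ because the relevant structure maps are already equivalences at the unstable level by the projective bundle formula. Property \itemref{item:ad07sfg1} is exactly the Bott periodicity verified above. Property \itemref{item:0asfh08} follows from stability of $\KH_\sX$ under representable $*$-inverse image (\remref{rem:apysbqqef}) and functoriality of the construction of $\beta$, yielding coherent isomorphisms $f^*(\KGL_\sY) \simeq \KGL_\sX$ for every representable morphism $f \colon \sX \to \sY$ of \qfund stacks, which then extend to all scalloped derived stacks by the Nisnevich-descent extension in the first paragraph. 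Assembling these coherences into an absolute motivic spectrum is a standard $(\infty,2)$-categorical bookkeeping exercise of the type carried out in \cite[\S 6]{HoyoisKH}.
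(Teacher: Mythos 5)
Your overall strategy matches the paper's: construct $\KGL_\sX$ for \qfund (or \fund) stacks, then extend to general scalloped derived stacks by right Kan extension using Nisnevich descent (\lemref{lem:9sydagy}, \thmref{thm:SH}\itemref{item:SH/Nis}) together with stability of $\KH_\sX$ under representable $*$-inverse image (\remref{rem:apysbqqef}). The divergence is in the local step. The paper's argument for the \fund case is terse: it invokes the explicit presentation of $\SH(\sX)$ in \remref{rem:invert quot}, cites the projective bundle formula of \cite[Prop.~3.2]{HoyoisKH}, and concludes that $\KH_\sX \in \MotSpc(\sX)$ admits a \emph{unique} Bott-periodic delooping $\KGL_\sX \in \SH(\sX)$. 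You instead propose an explicit construction via a $\Th_\sX(\sO_\sX)$-Bott class and the colimit $\colim_n \uHom(\Th_\sX(\sO_\sX)^{\wedge n}, \Sigma^\infty \KH_\sX)$. That is a legitimate alternative (it is how Cisinski proceeds over schemes), but for quotient stacks $\SH(\sX)$ is obtained by inverting \emph{all} the Thom spaces $\Th_\sX(f^*\sE)$, not just $\Th_\sX(\sO_\sX)$; so the colimit by itself only guarantees $\Th(\sO_\sX)$-periodicity, and neither the Bott-periodicity for arbitrary $\sE$ nor the identification $\Omega^\infty\KGL_\sX \simeq \KH_\sX$ is ``by construction.'' Both depend on first establishing that $\KH_\sX$ is already an $\Omega$-spectrum with respect to every $\Th_\sX(f^*\sE)$, i.e.\ the full projective bundle formula at the level of $\MotSpc(\sX)$. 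This is the key technical input, and it is precisely what the paper imports wholesale from \cite[Prop.~3.2]{HoyoisKH}.

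The weak point in your argument is the claimed derivation of the $\KH$-projective bundle formula ``from \lemref{lem:8phi112} together with the splitting principle and the trivial-bundle Bott periodicity built into $\beta_\sX$.'' \lemref{lem:8phi112} is only about transport of the resolution property along quasi-affine morphisms, and while it lets you write any $\sE$ on $\sX$ as a quotient of some $f^*(\sE')$ with $\sE'$ on $BG$, it does not by itself give the projective bundle formula for $\P_\sX(\sE)$. More seriously, the splitting principle is ordinarily \emph{deduced} from the projective bundle formula (it uses conservativity of the pullback along a flag bundle, which is proved via that formula), so invoking the splitting principle here risks circularity. The direct argument, as in \cite[Prop.~3.2]{HoyoisKH}, proves the projective bundle formula for $\KH$ on quotient stacks by a different route (ultimately coming back to the dévissage of $\Dperf$ on a projective bundle), and that should simply be cited rather than re-derived. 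Finally, as a minor note, your reference to \cite[\S 6]{HoyoisKH} appears to be off; the relevant statements in that paper are Prop.~3.2, Def.~5.1 and Thm.~1.7.
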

    \begin{proof}
      We follow the proof of \cite[Thm.~1.7]{HoyoisKH}, which proves the result in the case of certain (classical) global quotient stacks, and ``N-quasi-projective'' morphisms between them.

      If $\sX$ is \fund, it follows from the description of $\SH(\sX)$ in \remref{rem:invert quot}, \cite[Prop.~3.2]{HoyoisKH}, and the projective bundle formula, that there is a unique Bott-periodic delooping $\KGL_\sX \in \SH(\sX)$ of $\KH_\sX \in \MotSpc(\sX)$.
      In particular, \itemref{item:0h013u0g} and \itemref{item:ad07sfg1} hold by construction and \itemref{item:0asfh08} follows from \remref{rem:apysbqqef}.
      See the discussion around Def.~5.1 in \cite{HoyoisKH}.

      For general $\sX$, the claim now follows from \remref{rem:apysbqqef} and Nisnevich descent (cf. \lemref{lem:9sydagy}).
      More precisely, there exists a unique motivic $\sE_\infty$-ring spectrum $\KGL_\sX \in \SH(\sX)$ with a homotopy coherent system of isomorphisms $$u^*(\KGL_\sX) \simeq \KGL_\sU$$ for every \fund derived stack $\sU$ and every Nisnevich covering $u : \sU \twoheadrightarrow \sX$.
      See the comments on Thm.~1.7 in \cite[p.~16]{HoyoisKH}.
    \end{proof}

    \begin{rem}\label{rem:KQ}
      One can similarly construct a motivic spectrum $\mrm{KQ}_\sX \in \SH(\sX)$ representing hermitian K-theory, see \cite{PaninWalter} or \cite[\S 6]{HJNYHermitian}, at least assuming that $2$ is invertible on $\sX$ (although see \cite[Rem.~6.3]{HJNYHermitian}).
    \end{rem}

  \ssec{Algebraic cobordism}
  \label{ssec:coh/MGL}

    Following Voevodsky \cite[\S 6.3]{VoevodskyICM}, we can use our formalism to introduce a theory of algebraic cobordism for stacks.

    The following definition generalizes the one in \cite[\S 16]{BachmannHoyoisNorms} in the case of schemes.

    \begin{constr}\label{constr:07sg113}
      Given a scalloped derived stack $\sX$, we define $\MGL_\sX \in \SH(\sX)$ as the colimit
      \[
        \MGL_\sX = \colim_{(\sU, \alpha)} f_\sharp (\un_\sU\vb{\alpha})
      \]
      over the \inftyCat\footnote{
        i.e., the ``total space'' of the cartesian fibration associated to the presheaf sending $\sU \in \Sm_{/\sX}$ to the virtual rank $0$ part of $\K(\sU)$
      } of pairs $(\sU,\alpha)$ with $f : \sU \to \sX$ smooth representable and $\alpha \in \K(\sU)$ a K-theory class of virtual rank $0$.
      By construction, $\MGL_\sX$ admits a canonical (homotopy coherent) \emph{orientation} in the sense that there is a homotopy coherent system of Thom isomorphisms
      \[ \MGL_\sX\vb{\sE} \simeq \MGL_\sX(r)[2r] \]
      for every locally free sheaf $\sE$ of rank $r$ on $\sX$.
      See \cite[Prop.~16.28, Ex.~16.30]{BachmannHoyoisNorms}.
    \end{constr}

    \begin{prop}\label{prop:dogo011}
      For every representable morphism $f : \sX \to \sY$ of scalloped derived stacks, there is a canonical isomorphism
      $$f^*(\MGL_\sY) \simeq \MGL_{\sX}$$
      in $\SH(\sX)$.
      In fact, the collection $(\MGL_\sX)_\sX$ forms an absolute motivic spectrum in the sense of \defnref{defn:absolute}.
    \end{prop}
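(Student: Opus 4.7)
The plan is to apply $f^*$ to the colimit defining $\MGL_\sY$, commute it past the colimit using the basic functorialities of $\SH^*$, and then identify the result with $\MGL_\sX$.

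Since $f^* : \SH(\sY) \to \SH(\sX)$ preserves colimits (\thmref{thm:SH}\itemref{item:SH/f^*}), I would write
\[
  f^*\MGL_\sY
  \simeq \colim_{(\sU,\alpha)} f^* g_\sharp(\un_\sU\vb{\alpha}),
\]
where $(\sU,\alpha)$ ranges over pairs with $g: \sU \to \sY$ smooth representable and $\alpha \in \K(\sU)$ of virtual rank $0$. Forming the homotopy cartesian square
\[
\begin{tikzcd}
\sU' \ar{r}{g'}\ar{d}{f'} & \sX \ar{d}{f} \\
\sU \ar{r}{g} & \sY,
\end{tikzcd}
\]
the morphism $g'$ is smooth representable (since $f$ is representable) and $f'$ is representable. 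Smooth base change (\thmref{thm:funct SH}\itemref{item:funct SH/smooth}) together with the compatibility of Thom twists with inverse image (\remref{rem:Thom !}) then gives
\[
  f^* g_\sharp(\un_\sU\vb{\alpha})
  \simeq g'_\sharp f'^*(\un_\sU\vb{\alpha})
  \simeq g'_\sharp(\un_{\sU'}\vb{f'^*\alpha}).
\]

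Substituting yields $f^*\MGL_\sY \simeq \colim_{(\sU,\alpha)/\sY} g'_\sharp(\un_{\sU'}\vb{f'^*\alpha})$, and the assignment $(\sU,\alpha) \mapsto (\sU',f'^*\alpha)$ defines a functor from the indexing category of $\MGL_\sY$ into that of $\MGL_\sX$, inducing a canonical comparison map $f^*\MGL_\sY \to \MGL_\sX$. The hardest part is to show this map is an equivalence, since the indexing functor is not essentially surjective: a general smooth representable $\sV \to \sX$ need not arise as a base change of a smooth representable morphism over $\sY$. My plan is to invoke Nisnevich descent: both $\MGL^*$ and its pullback (for a fixed representable $f$) are presheaves of motivic spectra on scalloped stacks satisfying Nisnevich descent, essentially because the defining colimit commutes with the Nisnevich sheafification of the smooth site, and are therefore right Kan extended from the subsite of \fund stacks in the sense of \lemref{lem:9sydagy}. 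This reduces us to the case where both $\sX$ and $\sY$ are \fund, where $\SH$ admits the explicit description of \remref{rem:invert quot} and $\MGL$ admits an equivariant Grassmannian model; in this situation the desired absolute property can be verified directly as in \cite[Prop.~16.28]{BachmannHoyoisNorms}.

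Finally, the absolute structure on $(\MGL_\sX)_\sX$ in the sense of \defnref{defn:absolute} follows by assembling the isomorphisms $f^*(\MGL_\sY) \simeq \MGL_\sX$. Since they are built from smooth base change and Thom twist compatibility, both of which are natural and compatible with composition of representable morphisms, they patch into a cartesian section of the cartesian fibration classified by $\SH^*$ over the wide subcategory of representable morphisms.
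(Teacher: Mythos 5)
Your construction of the comparison map $f^*\MGL_\sY \to \MGL_\sX$ via smooth base change and the Thom-twist compatibility is correct, and you have rightly put your finger on the real difficulty: the functor $(\sU,\alpha) \mapsto (\sU',f'^*\alpha)$ between indexing fibrations is not cofinal, so it is not immediate that the induced map on colimits is an equivalence. However, your proposed resolution does not actually close this gap.

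The appeal to ``Nisnevich descent for the presheaf $\MGL^*$'' is circular: to say that $\sX \mapsto \MGL_\sX$ is a presheaf (a section of the fibration classified by $\SH^*$) with Nisnevich descent is already to assert precisely the absoluteness you are trying to prove, including for the representable étale morphisms that would be used in the reduction. Without first knowing $u^*\MGL_\sX \simeq \MGL_\sU$ for Nisnevich coverings $u$, there is no reduction to \fund stacks. And even granting that reduction, the difficulty persists in the \fund case: for, say, $f : [X/G] \to BG$, not every smooth representable $\sV \to [X/G]$ is a base change of something over $BG$, so the indexing functor is still not essentially surjective. The actual resolution in the paper is a substantive input you are missing: the canonical morphism $f^*(\K^\circ_\sY) \to \K^\circ_\sX$ of presheaves (rank-zero K-theory) on the smooth site is a \emph{Nisnevich-local equivalence}, by \cite[Cor.~2.9]{HoyoisKH}. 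Since the diagram defining $\MGL$ factors through the Nisnevich localization of the fibration classified by $\K^\circ$, this equivalence of indexing data is exactly what makes the comparison map on colimits invertible. This lemma is the whole content of the proposition, and your argument has no substitute for it.

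A minor point: for the final homotopy-coherence statement, the paper points to \cite[Thm.~16.19]{BachmannHoyoisNorms}, not Prop.~16.28 (the latter concerns the orientation on $\MGL$, used in \constrref{constr:07sg113}). Your concluding sentence, that naturality of smooth base change and Thom twists is enough to ``patch'' into a cartesian section, is also too cavalier: arranging the colimits into an actual cartesian section over the fibration is a genuine $\infty$-categorical exercise, which is why the paper supplies a reference for it rather than asserting it.
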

    \begin{proof}
      Let $\K^\circ_\sX$ denote the presheaf on $\Sm_{/\sX}$ sending $\sX'$ to the rank $0$ component of the $K$-theory \ani $\K(\sX')$.
      Then the canonical morphism
      \[ f^*(\K^\circ_{\sY}) \to \K^\circ_{\sX} \]
      is a Nisnevich-local equivalence by \cite[Cor.~2.9]{HoyoisKH} (cf. the discussion near the end of \cite[\S 5]{HoyoisKH}).
      Then the claim follows by construction of $\MGL$.
      Absoluteness, which asserts homotopy coherence of these isomorphisms, is a straightforward exercise with $\infty$-categorical fibrations, see e.g. \cite[Thm.~16.19]{BachmannHoyoisNorms}.
    \end{proof}

    \begin{rem}\label{rem:apfsdin}
      It follows from \cite[Cor.~2.10]{HoyoisKH} that, when $\sX$ is a quotient stack, $\MGL_\sX$ can be described in terms of tautological bundles over ``infinite Grassmannians'', similarly to Voevodsky's original definition in the case of schemes \cite[\S 6.3]{VoevodskyICM}.
      Compare \cite[Thm.~16.13]{BachmannHoyoisNorms}.
    \end{rem}

    \begin{rem}\label{rem:MSL}
      Following \cite[Ex.~16.22]{BachmannHoyoisNorms} one can similarly construct a motivic spectrum $\mrm{MSL}_\sX \in \SH(\sX)$ representing special linear algebraic cobordism.
    \end{rem}

  \ssec{Motivic cohomology}
  \label{ssec:coh/Z}

    We construct a motivic cohomology spectrum for scalloped derived stacks following the framed description given in \cite[Thm.~21]{HoyoisFramedLoc}.
    We begin with the following natural generalization of \cite[Def.~2.3.4]{EHKSY}.

    \begin{defn}\label{defn:07g1ufdl}
      Let $\sX$ be a scalloped derived stack and $\sX', \sX'' \in \Sm_{/\sX}$.
      A \emph{framed correspondence} from $\sX'$ to $\sX''$ is a diagram
      \[ \begin{tikzcd}
        & \sZ\ar[swap]{ld}{f}\ar{rd}{g}
        \\
        \sX' & & \sX''
      \end{tikzcd} \]
      where $f$ is a finite quasi-smooth morphism and $g$ is representable, together with an isomorphism $\sL_{\sZ/\sX'} \simeq 0$ in the \inftyGrpd $\K(\sZ)$.
      (Compare \cite[Def.~2.3.4]{EHKSY}.)
    \end{defn}

    \begin{constr}
      Given a scalloped derived stack $\sX$, we let $\Sm^\fr_{/\sX}$ denote the \inftyCat whose objects are those of $\Sm_{/\sX}$ and morphisms are framed correspondences (defined as in \cite[\S 4]{EHKSY}).
      The \inftyCats $\MotSpc_\fr(\sX)$ and $\SH_\fr(\sX)$ of framed motivic spectra over $\sX$ are defined by repeating the constructions of $\MotSpc(\sX)$ and $\SH(\sX)$ over $\Sm^\fr_{/\sX}$ (the conditions of Nisnevich descent and $\A^1$-invariance being imposed on the restrictions to $\Sm_{/\sX}$).
      As $\sX$ varies, this defines a $(*,\sharp,\otimes)$-formalism which satisfies homotopy invariance and Thom stability by construction, and should also satisfy localization (the proof in the case of schemes \cite[Thm.~8]{HoyoisFramedLoc} likely generalizes).
    \end{constr}

    \begin{rem}
      A framed correspondence as above acts on cohomology with coefficients in $\sF \in \D(\sX)$, for any $(*,\sharp,\otimes)$-formalism $\D$:
      \[
        \Ccoh(\sX'', \sF)
        \xrightarrow{g^*} \Ccoh(\sZ, \sF)
        \simeq \Ccoh(\sZ, \sF)\vb{\sL_f}
        \xrightarrow{f_!} \Ccoh(\sX', \sF).
      \]
      This observation implies that the canonical morphism $R : \SH^* \to \D^*$ (\propref{prop:univ}) admits a canonical factorization through $\SH_\fr^*$.\footnote{%
        This argument requires a \emph{homotopy coherent} action of framed correspondences, which we do not construct here.
      }
      On the restriction to (derived) schemes this factorization is \emph{unique}, i.e., the morphism $\SH^* \to \SH_\fr^*$ (``free transfers'') is invertible (see \cite[Thm.~18]{HoyoisFramedLoc}).
      This should generalize to scalloped stacks, but the necessary analysis of the geometry of framed correspondences over stacks will not be undertaken here.
    \end{rem}

    \begin{constr}\label{constr:afsdo0by}
      Consider the constant sheaf $\bZ_\sX$ on $\Sm_{/\sX}$, with its canonical framed transfers (see \cite[\S 4]{HoyoisFramedLoc}).
      We may regard it as an object of the unstable category $\MotSpc_\fr(\sX)$, form the framed infinite suspension $\Sigma^\infty_{\fr}(\bZ_\sX) \in \SH_\fr(\sX)$, and forget transfers to get a motivic $\Einfty$-ring spectrum
      $$\bZ_\sX \in \SH(\sX)$$
      that we call the (integral) \emph{motivic cohomology spectrum} over $\sX$.
      In the same manner, we get an $A$-linear motivic cohomology spectrum $A_\sX \in \SH(\sX)$ for every abelian group $A$.
      The argument of \cite[Lem.~20]{HoyoisFramedLoc} should generalize to show that this construction is stable under representable $*$-inverse image.
    \end{constr}

    \begin{rem}
      Note that the \emph{definition} of $\bZ_\sX$ is unconditional on the above conjectures on framed correspondences over stacks.
    \end{rem}

    \begin{rem}
      If the description of $\MGL$ in \cite[Thm.~3.4.1]{EHKSY3} is extended to stacks, then we also get a canonical $\sE_\infty$-ring morphism
      \[ \MGL_\sX \to \bZ_\sX \]
      for every scalloped derived stack $\sX$.
    \end{rem}

    \begin{rem}\label{rem:Ztilde}
      One can similarly construct a motivic spectrum $\widetilde{\bZ}_\sX \in \SH(\sX)$ representing Milnor--Witt motivic cohomology, following the framed construction in \cite[Thm.~7.3]{HJNYHermitian}.
    \end{rem}

\section{Fixed point localization}
\label{sec:fix}

  In this section we prove \thmref{thm:intro/conc}.
  We fix the following notation.

  \begin{notat}\leavevmode
    \begin{defnlist}
      \item
      Let $S$ be a connected noetherian affine base scheme.
      Let $T = \bG_{m,S}^{\times l}$ be a split torus over $S$ of dimension $l \ge 0$.
      
      \item
      Given a motivic $\Einfty$-spectrum $\sF \in \SH(BT)$, we consider a certain localization\footnote{%
        in the sense of $\Einfty$-ring spectra, see e.g. \cite[\S 7.2.3]{LurieHA}
      } of the $\bZ$-graded cohomology ring spectrum (see \remref{rem:graded}),
      \[
        \Ccoh(BT, \sF)_{\mrm{loc}} := \sS^{-1}\Ccoh(BT, \sF)\vb{\ast}
      \]
      at a set $\sS$ of homogeneous elements of degree $1$.
      Namely, let $L = [\A^1_S/\bG_{m,S}]$ denote the tautological line bundle on $B\bG_{m,S}$, where $\bG_{m,S}$ acts on $\A^1_S$ by scaling with weight $1$, and let $\pr_i : BT \to B\bG_{m,S}$ denote the $i$th projection.
      Then $\sS$ is the multiplicative closure of the set of Euler classes
      $$ \pr_i^* e(L^{\otimes n}) \in \piz\Ccoh(BT, \sF)\vb{L^{\otimes n}} $$
      for $n\ge 1$ and $1\le i\le l$.

      \item
      Given an $\Ccoh(BT, \sF)\vb{\ast}$-module spectrum $M$, we also set
      \[ M_{\mrm{loc}} := M \otimes^\bL_{\Ccoh(BT, \sF)\vb{\ast}} \Ccoh(BT, \sF)_{\mrm{loc}} \]
      for the extension of scalars.
    \end{defnlist}
  \end{notat}

  \begin{thm}[Concentration]\label{thm:conc}
    Let $i : Z \to X$ be a closed immersion of $T$-equivariant derived algebraic spaces of finite type over $S$, such that $T$ acts without fixed points on the complement $X \setminus Z$.
    Then for every motivic spectrum $\sF \in \SH(BT)$, the $\Ccoh(BT, \sF)\vb{\ast}$-module map
    \[ i_* : \CBM([Z/T]_{/BT}, \sF)\vb{\ast} \to \CBM([X/T]_{/BT}, \sF)\vb{\ast} \]
    induces an isomorphism of $\Ccoh(BT, \sF)_{\mrm{loc}}$-modules
    \[ i_* : \CBM([Z/T]_{/BT}, \sF)_{\mrm{loc}} \simeq \CBM([X/T]_{/BT}, \sF)_{\mrm{loc}}. \]
  \end{thm}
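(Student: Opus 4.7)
The plan is to translate the statement into a vanishing assertion and then prove it by orbit-type stratification. By the localization triangle of \corref{cor:intro/cohomology}\itemref{item:0a7sfd0p1} applied to the closed-open pair $(Z, U := X \setminus Z)$, the concentration statement is equivalent to showing that $\CBM([U/T]_{/BT}, \sF)_{\mrm{loc}} = 0$ whenever $T$ acts on $U$ without fixed points. The entire argument is therefore reduced to this vanishing.

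I would establish the vanishing by stratifying $U$ by orbit type: noetherian induction produces a finite $T$-stable filtration of $U$ whose strata have constant stabilizer $H$, necessarily a proper subgroup of $T$. Applying the localization triangle to each stratum reduces the problem to the single-stabilizer case. For constant stabilizer $H \subsetneq T$, the $T$-action on $U$ factors through a free action of $T/H$, and using the splitting of the short exact sequence of split tori $1 \to H \to T \to T/H \to 1$, there is a canonical equivalence $[U/T] \simeq Y \times BH$ with $Y := U/T$ an algebraic space. The problem then reduces, via the projection formula and purity for smooth representable morphisms, to the vanishing of $\CBM(BH_{/BT}, \sF)_{\mrm{loc}}$.

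The final step --- which I expect to be the main technical obstacle --- is the vanishing $\CBM(BH_{/BT}, \sF)_{\mrm{loc}} = 0$ for every proper $H \subsetneq T$. The strategy is to exhibit an element of $\sS$ whose image in $\Ccoh(BH, \sF)\vb{\ast}$ is null-homotopic: for such a class, \lemref{lem:eul=0} (applied to the associated line bundle on $BH$, which admits a nowhere-vanishing section) provides the null-homotopy, and since the same class acts as a unit on the localized module but as zero on the underlying Borel--Moore homology, the localization must vanish. For $T = \bG_m$ this is immediate, since every proper subgroup is some $\mu_n$ and the line bundle $L^{\otimes n}$ restricts trivially to $B\mu_n$. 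In higher rank one argues by induction on $\rk(T)$, applying rank-one concentration to a quotient $T \twoheadrightarrow \bG_m$ whose kernel contains $H$, or equivalently to the $T_0$-fixed loci for codimension-one subtori $T_0 \subsetneq T$, in order to peel off one dimension at a time and eventually reduce to the rank-one case.
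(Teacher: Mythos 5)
Your overall strategy matches the paper's: use the localization triangle to reduce to the vanishing of $\CBM([U/T]_{/BT}, \sF)_{\mrm{loc}}$ when $T$ acts on $U$ without fixed points, pass by noetherian induction to a generic open with a constant stabilizer subgroup, and kill the localized module by exhibiting a class of $\sS$ that acts simultaneously by zero and invertibly. However, the crucial reduction step has a genuine gap. The paper appeals to Thomason's generic slice theorem, which produces not only a generic stabilizer $T' \subsetneq T$ but an equivalence $[U/T] \simeq [V/T']$ that is \emph{compatible with the structural morphisms to $BT$}, i.e.\ $[U/T] \to BT$ factors through $BT' \to BT$. That factorization is exactly what makes the $\Ccoh(BT, \sF)\vb{\ast}$-module structure on $\CBM([U/T]_{/BT}, \sF)\vb{\ast}$ factor through $\Ccoh(BT', \sF)\vb{\ast}$, so that vanishing of the latter localized ring kills the whole module.

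Your proposal replaces Thomason's slice with the product decomposition $[U/T] \simeq Y \times BH$. That decomposition is in fact correct, but for a different reason than you give: the sequence $1 \to H \to T \to T/H \to 1$ does \emph{not} split when $H$ has a torsion part (and the generic stabilizer typically does, e.g.\ $\mu_n \subsetneq \bG_m$); it follows instead from $T$ being abelian, which trivializes the residual conjugation action of $T/H$ on $BH$. More importantly, the product decomposition does \emph{not} give a factorization of $[U/T] \to BT$ through the projection to $BH$ followed by $BH \to BT$: the $T$-torsor $U \to [U/T]$ is generally not induced from an $H$-torsor. Already with $H = 1$ and $U \to Y$ a nontrivial $T$-torsor, the structural map $Y = [U/T] \to BT$ is nontrivial while $Y \to B\{1\} \to BT$ is trivial. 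Without this factorization there is no reason for the module structure to factor through $\Ccoh(BH, \sF)$, nor for the pullbacks $f^*\pr_i^*e(L^{\otimes n})$ to vanish on $[U/T]$ (when $\pr_i^n|_H$ is trivial, that pullback is only pulled back from $Y$, not trivial), so the reduction to $\CBM(BH_{/BT}, \sF)_{\mrm{loc}}$ is unjustified. (There is also a secondary issue: the step ``projection formula and purity for smooth representable morphisms'' requires the projection $Y \times BH \to BH$ to be smooth, i.e.\ $Y$ smooth over $S$, which fails in general.) The slice structure $U \simeq T \times^{T'} V$ is precisely the missing ingredient. Your final step --- an element of $\sS$ restricting to zero on $BT'$ via \lemref{lem:eul=0} --- is essentially the paper's computation and is fine once the factorization is in hand.
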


  \begin{cor}\label{cor:conc}
    Let $X$ be a $T$-equivariant derived algebraic space, separated of finite type over $S$.
    If $i : X^T \to X$ is the inclusion of the locus of fixed points
    \[
      X^T := \Maps_{BT}(BT, X),
    \]
    then for every motivic spectrum $\sF \in \SH(BT)$, there is an isomorphism of $\Ccoh(BT, \sF)_{\mrm{loc}}$-modules
    \[ i_* : \CBM((X^T \times BT)_{/BT}, \sF)_{\mrm{loc}} \simeq \CBM([X/T]_{/BT}, \sF)_{\mrm{loc}}. \]
  \end{cor}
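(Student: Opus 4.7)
The plan is to deduce this corollary directly from \thmref{thm:conc} by taking $Z := X^T$, the derived fixed locus. The essential geometric input is that $X^T = \Maps_{BT}(BT, X)$ is representable by a closed derived algebraic subspace of $X$: since $X$ is separated over $S$ and $T$ is affine, this is the derived analogue of the classical fact that the fixed locus of a torus action on a separated scheme is a closed subscheme. Concretely, one can realize $X^T$ as an appropriate derived fiber product comparing the action morphism $T \times X \to X$ with the morphism induced by the trivial action, in such a way that the functor of fixed points (rather than stabilizers) is represented.

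Granting this, the corollary follows in a few lines. By construction, $T$ acts trivially on $X^T$, so $[X^T/T] \simeq X^T \times BT$ and hence
\[
  \CBM([X^T/T]_{/BT}, \sF) \simeq \CBM((X^T \times BT)_{/BT}, \sF).
\]
Also by construction, the complementary open $U := X \setminus X^T$ carries a $T$-action with no fixed points. The hypotheses of \thmref{thm:conc} are therefore satisfied for the closed immersion $i : X^T \hookrightarrow X$, and its conclusion gives precisely the claimed isomorphism of $\Ccoh(BT, \sF)_{\mrm{loc}}$-modules.

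The main subtlety, and essentially the only nontrivial step beyond invoking \thmref{thm:conc}, is the representability of the derived fixed locus $X^T$ as a closed derived subspace of $X$. While standard in derived algebraic geometry, it requires care to ensure that the chosen derived fiber product model captures fixed points rather than mere stabilizers, and that its functor of points agrees with the mapping-space definition $\Maps_{BT}(BT, X)$. One can also reduce to the classical case by derived invariance of Borel--Moore homology (\propref{prop:BM/nil-invariance}), which makes it sufficient to identify $(X^T)_\cl$ with $(X_\cl)^T$ as a closed subspace — a classical statement.
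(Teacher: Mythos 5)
Your proposal is correct and is essentially the argument the paper intends (no explicit proof is given, since the deduction is considered immediate). Taking $Z = X^T$ in \thmref{thm:conc}, noting that $T$ acts trivially on $X^T$ so that $[X^T/T] \simeq X^T \times BT$, and observing that the open complement has no fixed points by definition, is exactly the right reduction; and you correctly identify that the separation hypothesis on $X$ is there precisely to guarantee that $X^T$ is a closed subspace (this is also why the paper's subsequent remark explains that separation can be dropped over a field for $T = \bG_m$, via Drinfeld's result). Your suggestion to reduce the closedness of $X^T$ to the classical truncation via \propref{prop:BM/nil-invariance} is a clean way to handle the derived subtlety.
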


  \begin{rem}
    In the situation of \thmref{thm:intro/conc}, where $S$ is the spectrum of a field and $T=\bG_{m,S}$, the separation hypothesis in \corref{cor:conc} can be dropped in view of \cite[Prop.~1.2.2]{DrinfeldGm}.
  \end{rem}

  \begin{rem}
    \thmref{thm:conc} and \corref{cor:conc} hold more generally, with the same proof, for any coefficient $\sF \in \D(BT)$, where $\D^*$ is as in \ssecref{ssec:shriek}.
  \end{rem}

  \begin{proof}[Proof of \thmref{thm:conc}]
    We may assume that $X$ is classical and reduced by \propref{prop:BM/nil-invariance}.
    By Nisnevich descent and \thmref{thm:sumihiro}, we may also assume that $X$ is separated.
    By the localization triangle (\thmref{thm:funct SH}\ref{item:funct SH/closed}(d)), it will suffice to show that if $T$ acts without fixed points on the whole of $X$, then
    \[
      \CBM([X/T]_{/BT}, \sF)_{\mrm{loc}} \simeq 0.
    \]
    Since $T$ then acts without fixed points on every $T$-invariant proper closed subspace $Y \subsetneq X$ as well, it will suffice by noetherian induction (on the quotient stack $[X/T]$), and the localization triangle again, to show that this claim holds after replacing $X$ by some nonempty $T$-invariant open subspace.

    By Thomason's generic slice theorem (see \cite[Thm.~4.10, Rem.~4.11]{ThomasonComp}), there exists a nonempty open $U \sub X$, a proper diagonalizable subgroup $T' \subsetneq T$, and a $T'$-equivariant algebraic space $V$ such that $[U/T] \simeq [V/T']$.
    Therefore, the $\Ccoh(BT, \sF)\vb{\ast}$-module structure on $\CBM([U/T]_{/BT}, \sF)\vb{\ast}$ is obtained by restriction of scalars from the $\Ccoh(BT', \sF)\vb{\ast}$-module structure on $\CBM([V/T']_{/BT}, \sF)\vb{\ast}$.
    Then we have
    \begin{equation*}
      \CBM([U/T]_{/BT}, \sF)_{\mrm{loc}}
      \simeq \CBM([U/T]_{/BT}, \sF) \otimes_{\Ccoh(BT', \sF)\vb{\ast}} \Ccoh(BT', \sF)_{\mrm{loc}},
    \end{equation*}
    so it will suffice to show that $\Ccoh(BT', \sF)_{\mrm{loc}}$ vanishes.

    For simplicity, assume $T = \bG_{m,S}$ and $T' = \mu_{n,S}$ ($n\ge 1$); since $T' \subsetneq T$ is a proper inclusion and $S$ is connected, the argument readily generalizes using \cite[Exp.~VIII, 1.4, 3.2]{SGA3} (as in the proof of \cite[Prop.~1.2]{ThomasonLefschetz}).
    Note that the line bundle $L^{\otimes n}$ on $BT$, which corresponds to the $\bG_{m,S}$-equivariant line bundle $\A^1_S$ where $\bG_{m,S}$ acts with weight $n$, restricts to the \emph{trivial} line bundle on $\mu_{n,S}$.
    Therefore, by \lemref{lem:eul=0} its Euler class $e(L^{\otimes n})$ restricts to $0$ on $BT'$.
    Hence $0$ is a unit in $\Ccoh(BT', \sF)_{\mrm{loc}}$.
  \end{proof}

  \begin{rem}\label{rem:07sg0o1}
    One can show that the classes $t_{i,n} = \pr_i^* e(L^{\otimes n})$ are usually nonzero.
    Indeed, rationalization and étale localization gives a canonical map $\sF \to \sF_{\Q,\et}$.
    Using, say, the additive formal group law to orient $\sF_{\Q,\et}$, this determines a homomorphism of $\bZ$-graded ring spectra
    \[
      \Ccoh(BT, \sF)\vb{\ast}
      \to \Ccoh(BT, \sF_{\Q,\et})\vb{\ast},
    \]
    which in degree $r$ is the map
    \begin{equation*}
      \bigoplus_{\rk(\alpha)=r} \Ccoh(BT, \sF)\vb{\alpha}
      \to \bigoplus_{\rk(\alpha)=r} \Ccoh(BT, \sF_{\Q,\et})\vb{\alpha}
      \xrightarrow{\mrm{fold}} \Ccoh(BT, \sF_{\Q,\et})\vb{r}.
    \end{equation*}
    Under the Thom isomorphism
    \[
      \Ccoh(BT, \sF_{\Q,\et})\vb{L^{\otimes n}} \simeq \Ccoh(BT, \sF_{\Q,\et})\vb{1},
    \]
    the element $t_{i,n}$ maps to $\pr_i^* c_1(L^{\otimes n}) = n \cdot t_i$, where
    $$t_i = \pr_i^* c_1(L) \in \pi_0 \Ccoh(BT, \sF_{\Q,\et})\vb{1}$$
    is nonzero in the polynomial ring (of characteristic zero)
    \[ \pi_0 \Ccoh(BT, \sF_{\Q,\et})\vb{\ast} \simeq R [ t_1,\ldots,t_l ], \]
    see \cite[\S 4, Prop.~3.7]{MorelVoevodsky}, as long as $R = \pi_0 \Ccoh(S, \sF_{\Q,\et})\vb{\ast}$ is nonzero.
  \end{rem}

  \begin{rem}
    For unoriented examples such as Milnor--Witt motivic cohomology, hermitian K-theory, or special linear algebraic cobordism, \thmref{thm:conc} is probably not very satisfactory.
    At least for the lisse-extended theories (see \secref{sec:lim}), the Witt cohomology of $B\bG_m$ is trivial, so that Euler classes in these theories should have no ``Witt contribution''.
    We thank Marc Levine for explaining this to us.
  \end{rem}

\section{Lisse extensions}
\label{sec:lim}

  \ssec{Lisse-extended categories}
  \label{ssec:lim/lisse}

    We begin with a construction of \emph{lisse extensions} of categories of coefficients from algebraic spaces to stacks, and a proof of \thmref{thm:intro/Bor}.\footnote{%
      The idea to consider such a generalization originally arose in unpublished work of Marc Hoyois with the first author.
    }
    Throughout the section, we implicitly assume that all (derived) algebraic spaces are quasi-separated, and all (derived) Artin stacks have quasi-separated representable diagonal.

    Let $S$ be a derived algebraic space and $\D^*$ a $(*,\sharp,\otimes)$-formalism satisfying the Voevodsky conditions over derived algebraic spaces over $S$ (see \cite[\S 2]{KhanSix}).

    \begin{constr}\label{constr:DBor}
      Let $\sX$ be a derived Artin stack over $S$.
      We define
      \[ \D_\Bor(\sX) = \lim_{(T, t)} \D(T) \]
      where the limit is taken over the \inftyCat $\Lis_{\sX}$ of pairs $(T, t)$ where $T$ is a derived algebraic space and $t : T \to \sX$ is a smooth morphism.
      Note that $\D_\Bor(\sX) \simeq \D(\sX)$ if $\sX = X$ is a derived algebraic \emph{space}.
    \end{constr}

    \begin{exam}\label{exam:0--81h}
      When $\D^*$ has étale descent (on algebraic spaces), this construction was considered in \cite[App.~A]{KhanVirtual}.
    \end{exam}

    \begin{rem}\label{rem:Bor aff}
      The limit in \constrref{constr:DBor} can also be taken over the full subcategory $\Lis^{\mrm{aff}}_\sX$ of $\Lis_\sX$ spanned by $(T, t)$ with $T$ \emph{affine}.
      More precisely, the canonical functor
      \begin{equation}\label{eq:padsifhn}
        \lim_{(T, t) \in \Lis_\sX} \D(T) \to \lim_{(T, t) \in \Lis^\mrm{aff}_\sX} \D(T)
      \end{equation}
      is an equivalence.
      This follows from Nisnevich descent for $\D$ (over algebraic spaces).
      Indeed, we can write the source as
      \[ \lim_{(T, t) \in \Lis_\sX} \lim_{(S \text{ aff}, s : S \to T \text{ ét})} \D(S) ~\simeq~ \lim_{(T, t, S \text{ aff}, s : S \to T \text{ ét})} \D(S). \]
      The forgetful functor from the right-hand indexing category to $\Lis_\sX^{\mrm{aff}}$ (which remembers only $S$ and $S \to T \to \sX$) induces a functor to this category from the target of \eqref{eq:padsifhn}, which one checks is inverse to \eqref{eq:padsifhn}.
    \end{rem}

    \begin{rem}
      The discussion of \cite[App.~A]{KhanVirtual} can be adapted to show that on the lisse-extended categories, the following operations extend: $\otimes$ and $\uHom$, $f^*$ and $f_*$ for arbitrary morphisms, $f_!$ and $f^!$ for finite type morphisms, and $\vb{\alpha}$ for K-theory classes $\alpha$.
      We also have the base change formula, the isomorphism $\alpha_f : f_! \simeq f_*$ for $f$ proper representable, the purity isomorphism $\pur_f : f^! \simeq f^*\vb{\sL_f}$ for a smooth morphism, homotopy invariance for vector bundles, and the localization triangles for complementary closed/open pairs.
      The only nontrivial part is the exceptional functoriality, which we will not use here.
    \end{rem}

  \subsection{Cohomology}

    \begin{defn}
      For every derived Artin stack $\sX$ over $S$, the cohomology spectrum with coefficients in $\sF \in \D(\sX)$ is defined by the formula
      \[
        \Ccoh_\Bor(\sX, \sF)
        = \Maps_{\D_\Bor(\sX)}(\un, \sF).
      \]
      Given $\alpha\in\K(\sX)$ we will write
      \begin{equation}\label{eq:anfp1n0}
        \H^\alpha_\Bor(\sX, \sF) = \Hom_{\D_\Bor(\sX)}(\un, \sF\vb{\alpha}) = \pi_0 \big(\Ccoh_\Bor(\sX, \sF)\vb{\alpha}\big).
      \end{equation}
      We have inverse images along arbitrary morphisms and Gysin direct images along proper smooth representable morphisms.
    \end{defn}

    \begin{rem}
      For any derived Artin stack $\sX$, $\sF \in \D_\Bor(\sX)$, the cohomology spectrum is by construction the homotopy limit
      \[ 
        \Ccoh_\Bor(\sX, \sF)
        \simeq \lim_{(T,t)}\Ccoh_\Bor(T, \sF)
      \]
      over $(T,t) \in \Lis_\sX$.
    \end{rem}

  \subsection{The Borel construction}
  \label{ssec:Borel}

    The following result shows that, for quotient stacks, lisse-extended cohomology theories can be computed via Totaro's algebraic version of the Borel construction.

    Let $S$ be the spectrum of a perfect field $k$.
    (For non-perfect fields the result will also follow, up to inverting the characteristic, in view of \cite{ElmantoKhan}.)

    \begin{thm}\label{thm:Borel}
      Let $G$ be an fppf group scheme over $S$.
      Suppose given a tower
      \[ \sV_0 \hook \sV_1 \hook \sV_2 \hook \cdots \]
      of inclusions of vector bundles over $BG$, together with closed substacks $\sW_i \subseteq \sV_i$, such that for each $i$ we have:
      \begin{thmlist}
        \item
        The open complement $U_i = \sV_i \setminus \sW_i$ is representable (by an algebraic space).

        \item
        There is an inclusion $U_i \sub U_{i+1}$.

        \item
        Given any integer $n\ge 0$, there exists an $i\gg 0$ such that $\codim_{\sV_i}(\sW_i) > n$.
      \end{thmlist}
      Then for any motivic spectrum $\sF \in \SH(S)$, there is a canonical isomorphism
      \[
        \Ccoh_\Bor(BG, \sF)
        \simeq \lim_i \Ccoh(U_i, \sF).
      \]
      More generally, if $\sX = [X/G]$ is the quotient of a smooth algebraic space $X$ with $G$-action, we have
      \[
        \Ccoh_{\Bor}(\sX, \sF)
        \simeq \lim_i \Ccoh(\sX \fibprod_{BG} U_i, \sF).
      \]
    \end{thm}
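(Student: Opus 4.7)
The plan is to reformulate the theorem as an equivalence in the lisse-extended category and verify it fiberwise. Since $\sX \to BG$ is representable and $U_i$ is an algebraic space, each $\sX_i := \sX \fibprod_{BG} U_i$ is an algebraic space, so $\Ccoh_\Bor(\sX_i, \sF) = \Ccoh(\sX_i, \sF)$. The structural morphism $\iota_i : \sX_i \to \sX$ is smooth representable as a base change of $U_i \to BG$, so the left adjoint $\iota_{i,\sharp}$ exists in the lisse extension $\SH_\Bor(\sX)$ and we may rewrite
\begin{equation*}
\lim_i \Ccoh(\sX_i, \sF) = \lim_i \Maps(\iota_{i,\sharp}\un_{\sX_i}, \sF) = \Maps\!\left(\colim_i \iota_{i,\sharp}\un_{\sX_i},\, \sF\right).
\end{equation*}
The theorem is therefore equivalent to the statement that the canonical counit morphism
\begin{equation*}
\phi : \colim_i \iota_{i,\sharp}\un_{\sX_i} \to \un_\sX
\end{equation*}
is an equivalence in $\SH_\Bor(\sX)$.

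I would verify that $\phi$ is an equivalence by checking it fiberwise, using the description $\SH_\Bor(\sX) \simeq \lim_{(T,t) \in \Lis_\sX^{\mathrm{aff}}} \SH(T)$ together with smooth base change for $\sharp$-direct image (inherited from $\SH$ on algebraic spaces via the descent definition of $\SH_\Bor$). Restricted to a fixed smooth affine $t : T \to \sX$, $\phi$ becomes the canonical map
\begin{equation*}
\colim_i \Sigma^\infty_+(T_i) \to \Sigma^\infty_+(T)
\end{equation*}
in $\SH(T)$, where $T_i := T \fibprod_\sX \sX_i$ is an open subspace of the vector bundle $V_i^T := T \fibprod_{BG} \sV_i \to T$ with closed complement $W_i^T := T \fibprod_{BG} \sW_i$. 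By homotopy invariance $\Sigma^\infty_+(V_i^T) \simeq \Sigma^\infty_+(T)$, so the cofibre of $\phi|_T$ identifies with the filtered colimit of the cofibres $\mathrm{cof}(\Sigma^\infty_+(T_i) \to \Sigma^\infty_+(V_i^T))$. By Morel's motivic stable connectivity theorem, applicable over the perfect base field $k$, each such cofibre has connectivity bounded below by $\codim_{\sV_i}(\sW_i)$ (possibly after stratifying $W_i^T$ into smooth pieces and applying Morel--Voevodsky purity on each stratum). Since these codimensions tend to $\infty$ by hypothesis, the filtered colimit of the cofibres is $\infty$-connected in the Morel $t$-structure on $\SH(T)$ and hence vanishes.

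The main obstacles are twofold. First, one must ensure that smooth base change for $\sharp$-direct image holds in the lisse extension; this should follow essentially formally from the descent definition of $\SH_\Bor$ and the smooth base change formula for $\SH$ over algebraic spaces. Second, and more substantively, one must justify the connectivity estimate when $\sW_i$ is not assumed smooth: this is the crux of the Morel--Voevodsky approximation argument, and requires either a stratification argument combined with purity on each stratum, or a direct appeal to the version of Morel's stable connectivity theorem valid for arbitrary closed immersions of codimension $\ge c$ in a smooth base.
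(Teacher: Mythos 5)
Your proposal follows the paper's route almost step for step: Theorem~\ref{thm:Borel} is deduced from an equivalence of lisse-extended motivic homotopy types (Theorem~\ref{thm:Borel M}, which is precisely the statement that $f_\sharp(\phi)$ is invertible in $\SH(S)$ for your counit $\phi$), verified fiberwise over $(T,t) \in \Lis^{\mathrm{aff}}_\sX$ via a connectivity estimate using stratification of $\sW_i$ into smooth pieces, localization, and relative purity. One technical point where the paper's argument is more careful than yours: it runs the connectivity estimate in $\SH(S)$ (over the perfect field $k$) rather than in $\SH(T)$, and instead of asserting that an $\infty$-connective object of the homotopy $t$-structure vanishes --- which would require a non-degeneracy statement that is not obviously available, especially over the non-field base $T$ --- it tests the filtered colimit against the compact generators $M_S(Y)(r)[s]$ of $\SH(S)$, using the dimension of $Y$ to get a quantitative vanishing of each $\Hom(M_S(Y)(r)[s], K_i)$ for $i$ large.
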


    \begin{exam}\label{exam:071g20}
      If $G$ is a smooth embeddable group scheme over $S$, then there exists a choice of $(\sV_i, \sW_i)_i$ as in \thmref{thm:Borel} by \cite[Rem.~1.4]{Totaro}.
      For example, for the multiplicative group $\bG_m$ we get
      \[
        \Ccoh_\Bor(B\bG_m, \sF)
        \simeq \lim_i \Ccoh(\bP^i, \sF).
      \]
      More generally for the general linear group $\mrm{GL}_n$ we get
      \[
        \Ccoh_\Bor(B\mrm{GL}_n, \sF)
        \simeq \Ccoh(\mrm{Gr}_{n,\infty}, \sF).
      \]
      where $\mrm{Gr}_{n,\infty}$ is the Grassmannian ind-scheme of rank $n$ vector subspaces.
    \end{exam}

    \begin{rem}\label{rem:Milnor}
      In the situation of \thmref{thm:Borel}, the canonical maps
      \[
        \H^\alpha_\Bor(\sX, \sF)
        \simeq \piz \lim_i \Ccoh(\sX \fibprod_{BG} U_i, \sF)\vb{\alpha}
        \twoheadrightarrow \lim_i \H^\alpha(\sX \fibprod_{BG} U_i, \sF)
      \]
      are always surjective by the Milnor exact sequence.
      More generally for every $s\in\bZ$ we have surjections
      \[
        \pi_s \Ccoh_\Bor(\sX, \sF)\vb{\alpha}
        \twoheadrightarrow \lim_i \pi_s \Ccoh(\sX \fibprod_{BG} U_i, \sF)\vb{\alpha}.
      \]
      In the case of motivic cohomology, we will show (see \remref{rem:Milnor mot}) that these are in fact bijective.
    \end{rem}

    \begin{rem}
      See \cite{equilisse} for generalizations of \thmref{thm:Borel} to the case where $\sX$ is singular.
    \end{rem}

  \subsection{Proof of \thmref{thm:Borel} for motivic cohomology}
  \label{ssec:Borel mot}

    Let $\Lambda$ be a commutative ring in which the characteristic exponent of the field $k$ is invertible.
    In this subsection, we will give a proof of \thmref{thm:Borel} in the special case of $\Lambda$-linear motivic cohomology, i.e., where $\sF = \Lambda\vb{n}$ for any $n\in\bZ$.
    This will be independent of the proof of the general statement proven in the next subsection, but we decided to also include this argument due to its comparative simplicity.

    \begin{proof}
      Let $\pi_i : \sX \fibprod_{BG} \sV_i \to \sX$ denote the projections and $j_i : \sX \fibprod_{BG} U_i \to \sX \fibprod_{BG} \sV_i$ the inclusions.
      The inverse image maps
      \[
        \Ccoh_\Bor(\sX, \Lambda)\vb{n}
        \xrightarrow{\pi_i^*} \Ccoh_\Bor(\sX \fibprod_{BG} \sV_i, \Lambda)\vb{n}
        \xrightarrow{j_i^*} \Ccoh_\Bor(\sX \fibprod_{BG} U_i, \Lambda)\vb{n},
      \]
      where
      \[
        \Ccoh_\Bor(\sX \fibprod_{BG} U_i, \Lambda)\vb{n}
        \simeq \Ccoh(\sX \fibprod_{BG} U_i, \Lambda)\vb{n}
      \]
      since $U_i$ is an algebraic space, induce a canonical map
      \begin{equation*}
        \Ccoh_\Bor(\sX, \Lambda)\vb{n}
        \to \lim_i \Ccoh(\sX \fibprod_{BG} U_i, \Lambda)\vb{n}.
      \end{equation*}
      By homotopy invariance, $\pi_i^*$ is invertible for every $i$, so it will suffice to show that $j_i^*$ is invertible for $i\gg 0$.
      More precisely, we will show that this holds for all $i$ such that $\codim_{\sV_i}(\sW_i) > n$.
      
      By construction of the lisse-extended theory and cofinality, it is enough to prove the claim with $\sX$ replaced by $T$, for any $(T,t)\in\Lis^{\mrm{aff}}_\sX$.
      By the localization triangle and Poincaré duality, the fibre of the map $j_i^*$ is the Borel--Moore homology spectrum
      \[
        \CBM(T \fibprod_{BG} {\sW_i}_{/S}, \Lambda)\vb{-d+n},
      \]
      where $d = \dim (T \fibprod_{BG} \sV_i)$.
      But this spectrum vanishes as soon as $d-n > \dim(T \fibprod_{BG} \sW_i)$, i.e., whenever
      \[
        \codim_{T \fibprod_{BG} \sV_i}(T \fibprod_{BG} \sW_i) 
        = \codim_{\sV_i}(\sW_i)
      \]
      is strictly larger than $n$.
      (This follows from the comparison with the Bloch cycle complex; see \cite[Prop.~19.18]{MazzaWeibelVoevodsky} and \cite[Cor.~8.12]{CisinskiDegliseCdh}, and note that $T \fibprod_{BG} \sW_i$ is affine since $T$ is.)
    \end{proof}

    \begin{rem}\label{rem:Milnor mot}
      Our proof also shows that the canonical homomorphisms (\remref{rem:Milnor})
      \[
        \pi_s \Ccoh_\Bor(\sX, \Lambda)\vb{n}
        \twoheadrightarrow \lim_i \pi_s \Ccoh(\sX \fibprod_{BG} U_i, \Lambda)\vb{n}
      \]
      are bijective for all $n,s \in \bZ$.
      Indeed, these are the limits over $i$ of the restriction maps
      \[
        \pi_s \Ccoh_\Bor(\sX, \Lambda)\vb{n}
        \simeq \pi_s \Ccoh_\Bor(\sX \fibprod_{BG} \sV_i, \Lambda)\vb{n}
        \xrightarrow{j_i^*} \pi_s \Ccoh(\sX \fibprod_{BG} U_i, \Lambda)\vb{n}
      \]
      which we showed were invertible for $i\gg 0$.
      (The first isomorphism is homotopy invariance for the vector bundle $\sX\fibprod_{BG}\sV_i \to \sX$.)
    \end{rem}

  \subsection{Proof of \thmref{thm:Borel} in general}
  \label{ssec:Borel homotopy}

    We will deduce the general case of \thmref{thm:Borel} from a stronger comparison at the level of stable motivic homotopy types.

    \begin{notat}\label{notat:as0fg1}
      Let $\sX$ be a smooth Artin stack over an algebraic space $S$.
      Then we write
      \[ M_S^\Bor(\sX) := f_{\sharp}(\un_\sX) \in \SH(S), \]
      where $f_\sharp : \SH_\Bor(\sX) \to \SH_\Bor(S) \simeq \SH(S)$ is $\sharp$-direct image along the structural morphism $f : \sX \to S$.
      Note that if $\sX = X$ is an algebraic space, then $M_S(X) \simeq \Sigma^\infty_+ \L \h_S(X)$.
      In general it is computed as
      \[
        M_S^\Bor(\sX) \simeq \colim_{(T,t)\in\Lis_{\sX}} M_S^\Bor(T).
      \]
    \end{notat}

    The following can be regarded as a comparison of the lisse-extended motivic stable homotopy type of a quotient stack with its Morel--Voevodsky motivic stable homotopy type (see \cite[\S 4.2]{MorelVoevodsky} and \cite[\S 3]{KrishnaCompletion}).

    \begin{thm}\label{thm:Borel M}
      Let $G$ be an fppf group scheme over $S$.
      Let $(\sV_i)_i$, $(\sW_i)_i$ and $(U_i)_i$ be as in \thmref{thm:Borel}.
      Let $\sX = [X/G]$ be the quotient of a smooth algebraic space $X$ over $S$ with $G$-action.
      Then there is a canonical isomorphism
      \[
        M_S^\Bor(\sX)
        \simeq \colim_i M_S(\sX \fibprod_{BG} U_i)
      \]
      in $\SH(S)$.
    \end{thm}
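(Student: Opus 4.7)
The strategy is to work in the lisse extension and use the adjunction $(f_\sharp, f^*)$ for the structural morphism $f \colon \sX \to S$. By definition $M_S^\Bor(\sX) = f_\sharp(\un_\sX)$, and unpacking the limit presentation $\SH_\Bor(\sX) = \lim_{(T,t)\in\Lis_\sX}\SH(T)$ yields
\[ f_\sharp(\un_\sX) \simeq \colim_{(T,t)\in\Lis_\sX} M_S(T), \]
since for each $(T,t)$ the composite $f\circ t\colon T\to S$ is a smooth morphism from a smooth algebraic space. Each projection $q_i\colon \sX\fibprod_{BG}U_i\to\sX$ is smooth (being the base change of the smooth map $U_i\to BG$) with source a smooth algebraic space over $S$, so $M_S(\sX\fibprod_{BG}U_i) \simeq f_\sharp(q_{i,\sharp}q_i^*\un_\sX)$. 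Since $f_\sharp$ commutes with colimits, the theorem follows once we produce an isomorphism
\[ \colim_i q_{i,\sharp}q_i^*(\un_\sX) \xrightarrow{\sim} \un_\sX \quad \text{in } \SH_\Bor(\sX). \]

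To verify the latter, I would test in $\SH(T)$ for each $(T,t)\in\Lis_\sX$ using the limit presentation. Smooth base change in the lisse extension identifies $t^*q_{i,\sharp}q_i^*\un_\sX \simeq M_T(T\fibprod_{BG}U_i)$, so the problem reduces to showing that for every smooth algebraic space $T$ equipped with a morphism $T\to BG$ (induced by $T\to\sX\to BG$),
\[ \colim_i M_T(T\fibprod_{BG}U_i) \simeq \un_T \quad \text{in } \SH(T). \]
Geometrically, $T\fibprod_{BG}U_i = P\times^G U_i$ (with $P\to T$ the $G$-torsor classifying $T\to BG$) is an open subspace of the vector bundle $T\fibprod_{BG}\sV_i = P\times^G V_i$ on $T$, whose complement $T\fibprod_{BG}\sW_i$ has codimension $c_i=\codim_{\sV_i}(\sW_i)\to\infty$.

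Combining homotopy invariance $M_T(T\fibprod_{BG}\sV_i)\simeq\un_T$ with the localization triangle for the closed-open pair inside this vector bundle, the cofiber $C_i$ of $M_T(T\fibprod_{BG}U_i)\to\un_T$ is identified with
\[ C_i \simeq (g_i)_\sharp\bigl(i_{i,*}\un_{T\fibprod_{BG}\sW_i}\bigr), \]
where $g_i$ is the bundle projection and $i_i$ the closed immersion. The final step is to show $\colim_i C_i\simeq 0$ in $\SH(T)$. Testing against a compact generator $\Sigma^{p,q}_T M_T(U)$ and invoking purity (\thmref{thm:purity}) for smooth substrata of $\sW_i$, one sees that the relevant mapping groups vanish once $c_i$ exceeds the fixed bidegree $(p,q)$; compactness of the generators then forces $\colim_i C_i \simeq 0$.

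The main obstacle is precisely this vanishing step, because the closed substacks $\sW_i$ need not be smooth, so purity does not directly present $C_i$ as a Thom spectrum of rank $c_i$. One robust way to handle this is to filter $\sW_i$ by smooth locally closed pieces and induct on the strata, appealing to Chow's lemma (\thmref{thm:Chow}) if necessary to reduce to the projective situation where resolutions are available. For motivic cohomology this was bypassed in \ssecref{ssec:Borel mot} by strong vanishing of Borel--Moore homology above the dimension. An alternative route is to adapt the Morel--Voevodsky approximation of $BG$ (\cite[\S 4.2]{MorelVoevodsky}, \cite[Rem.~1.4]{Totaro}) to transfer the unstable geometric contractibility statements for the tower $(U_i)_i$ directly to the stable motivic setting.
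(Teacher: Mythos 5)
Your proposal takes essentially the same route as the paper's proof and correctly isolates the pivotal difficulty: the closed substacks $\sW_i$ need not be smooth, so one cannot directly present the cofibre $C_i$ (the paper calls it $K_i$) as a Thom spectrum of the right connectivity. Your suggested fix — stratifying $T\fibprod_{BG}\sW_i$ by smooth pieces and inducting on the strata — is exactly the mechanism the paper uses, so the skeleton of your argument is sound.

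Two details need correction, though. First, Chow's lemma plays no role here; the relevant input is perfectness of $k$, which guarantees that the scheme $T\fibprod_{BG}\sW_i$ can be stratified by smooth \emph{closed} subschemes, so that localization and relative purity place $K_i$ in the full subcategory of $\SH(S)$ generated under colimits and extensions by objects $M_S(W)\vb{\sE}$ with $W$ smooth over $S$ and $\sE$ a locally free sheaf on $W$ of rank $\ge c_i := \codim_{\sV_i}(\sW_i)$. Second, your proposed vanishing criterion, that the mapping groups vanish ``once $c_i$ exceeds the fixed bidegree $(p,q)$,'' is not quite right: what one actually establishes is that each $K_i$ is $c_i$-connective for the homotopy $t$-structure on $\SH(S)$, which follows from \cite[Lem.~3.1]{HoyoisHopkinsMorel} applied to each $M_S(W)\vb{\sE}$. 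Then \cite[Cor.~2.4]{HoyoisHopkinsMorel} gives $\Hom_{\SH(S)}(M_S(Y)(r)[s], K_i) = 0$ for $i$ large enough that $c_i > s - r + \dim(Y)$ — the dimension of the test object enters the bound, not only the bidegree. Compactness of the generators $M_S(Y)(r)[s]$ then yields $\colim_i K_i \simeq 0$, as you want. The alternative route via Morel--Voevodsky that you float at the end is not what the paper does.
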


    \begin{proof}
      As in the proof in \ssecref{ssec:Borel mot}, the morphism
      \[
        \colim_i M_S(\sX \fibprod_{BG} U_i)
        \to M_S^\Bor(\sX)
      \]
      is induced by the canonical morphisms
      \[
        M_S^\Bor(\sX \fibprod_{BG} U_i)
        \to M_S^\Bor(\sX \fibprod_{BG} \sV_i)
        \simeq M_S^\Bor(\sX)
      \]
      for every $i$.
      These can be described as the colimits over $(T,t)\in\Lis^{\mrm{aff}}_\sX$ over the analogous morphisms
      \[
        M_S(T \fibprod_{BG} U_i)
        \to M_S(T \fibprod_{BG} \sV_i)
        \simeq M_S(T).
      \]
      We claim that for every $(T,t)$, the colimit over $i$ of the cofibres
      \[
        K_i := M_S(T \fibprod_{BG} \sV_i)/M_S(T \fibprod_{BG} U_i)
      \]
      vanishes.
      In fact, we will show that each $K_i$ is $c_i$-connective for the homotopy t-structure on $\SH(S)$, where $c_i$ is the codimension of $\sW_i$ in $\sV_i$.
      This will imply (see \cite[Cor.~2.4]{HoyoisHopkinsMorel}) that for every $Y \in \Sm_{/S}$ and $r,s\in\bZ$,  we have
      \[
        \Hom_{\SH(S)}(M_S(Y)(r)[s], K_i) = 0
      \]
      for $i$ large enough that $c_i > s-r+\dim(Y)$.
      Since the objects $M_S(Y)(r)[s]$ form a set of compact generators of $\SH(S)$, it will follow that $\colim_i K_i = 0$ as claimed.
      
      Since $k$ is perfect, each scheme $T\fibprod_{BG}\sW_i$ can be stratified by smooth closed subschemes.
      By the localization triangle and relative purity, it follows that each $K_i$ is contained in the full subcategory of $\SH(S)$ generated under colimits and extensions by objects of the form $M_S(W)\vb{\sE}$ for $W \in \Sm_{/S}$ and $\sE$ a locally free sheaf on $W$ of rank $\ge c_i$.
      It will thus suffice to show that every such $M_S(W)\vb{\sE}$ is $c_i$-connective, since this property is preserved under colimits and extensions.
      But this holds by \cite[Lem.~3.1]{HoyoisHopkinsMorel}.
    \end{proof}

  \subsection{Equivariant Chow groups, cobordism and K-theory}
  \label{ssec:lim/equiv}

    Let $S$ be the spectrum of a perfect field $k$, $G$ be a smooth embeddable group scheme over $S$, and $X$ a smooth $G$-quasi-projective $S$-scheme.
    \thmref{thm:Borel} yields the following comparisons.

    \begin{exam}\label{exam:equiv Chow}
      The lisse-extended motivic cohomology of $[X/G]$
      \[
        \Ccoh_\Bor([X/G], \Lambda)\vb{n}
      \]
      is computed for every $n\in\bZ$ by $\Lambda$-linear Bloch cycle complexes of the Borel construction.
      Here $\Lambda$ is any commutative ring in which the characteristic exponent of $k$ is invertible.
      This follows from the comparison of the motivic complexes and Bloch cycle complexes of schemes (see \cite[Lect.~19]{MazzaWeibelVoevodsky}).
      By \remref{rem:Milnor mot} we moreover get for all $n,s\in\bZ$ canonical isomorphisms
      \[
        \pi_s \Ccoh_\Bor([X/G], \Lambda)\vb{n} \simeq \CH^G_{d-n}(X, s) \otimes \Lambda,
      \]
      if $X$ is of pure dimension $d$, where on the right-hand side are the $G$-equivariant higher Chow groups of $X$ as defined by Edidin--Graham \cite{EdidinGraham}.
      In particular,
      \[
        \H^n_\Bor([X/G], \Lambda) \simeq \CH^G_{d-n}(X) \otimes \Lambda
      \]
      where the left-hand side is defined as in \eqref{eq:anfp1n0}.
    \end{exam}

    \begin{exam}\label{exam:equiv cob}
      Applying \thmref{thm:Borel} to the algebraic cobordism spectrum $\MGL$, we find that the lisse-extended algebraic cobordism of $[X/G]$
      \[
        \Ccoh_\Bor([X/G], \MGL)\vb{n}
      \]
      can be computed via Voevodsky's algebraic cobordism \cite[\S 6.3]{VoevodskyICM} of the Borel construction.
      If $k$ is of characteristic zero, then it follows from \cite{LevineMGL,HoyoisHopkinsMorel} that there are surjections
      \begin{equation}\label{eq:07ugbp1j}
        \H^n_\Bor([X/G], \MGL)
        \twoheadrightarrow \lim_i \Omega^n([X/G] \fibprod_{BG} U_i)
      \end{equation}
      where the notation is as in \thmref{thm:Borel}.
      The right-hand side here has been considered in \cite{HellerMalagonLopez} and \cite[Thm.~6.1]{KrishnaCobordism}.
      We note that these theories are not known to satisfy several fundamental properties such as the right-exact localization sequence\footnote{\label{fn:1390gy1}%
        The contrary is claimed in \cite[Thm.~20]{HellerMalagonLopez}.
        However, as brought to our attention by M.~Levine (who attributes the observation to A.~Merkurjev) and H.~Park, the proof relies on the (false) assertion that the limit of a right-exact sequence of a projective system of abelian groups, the $\bR^1\lim$ terms each of which vanish, is still right-exact.
      } (see however \cite[Cor.~6.2]{KrishnaCobordism} for the special case of sections of projective morphisms).
      Therefore, lisse-extended cobordism can be viewed as a well-behaved replacement for the latter theories which does admit the right-exact localization sequence.
      Moreover, using the higher groups it is also extends to the left.
      Indeed, we have:
    \end{exam}

    \begin{prop}\label{prop:07ug71}
      Let $i : Z \to X$ and $j : U \to X$ be a complementary closed-open pair of smooth $G$-equivariant $k$-schemes.
      Write $\sX = [X/G]$, $\sZ = [Z/G]$, and $\sU = [U/G]$ for the quotient stacks.
      Then for every $n\in\bZ$ there is a long-exact sequence
      \begin{multline*}
        \cdots \xrightarrow{\partial}
        \pi_s \Ccoh_\Bor(\sZ, \MGL)\vb{n-c}
        \xrightarrow{i_!} \pi_s \Ccoh_\Bor(\sX, \MGL)\vb{n}
        \xrightarrow{j^*} \pi_s \Ccoh_\Bor(\sU, \MGL)\vb{n}
        \xrightarrow{\partial}
        \\
        \cdots \xrightarrow{\partial}
        \H^{n-c}_\Bor(\sZ, \MGL)
        \xrightarrow{i_!} \H^{n}_\Bor(\sX, \MGL)
        \xrightarrow{j^*} \H^n_\Bor(\sU, \MGL)
        \to 0
      \end{multline*}
      where $c = \codim_X(Z)$.
    \end{prop}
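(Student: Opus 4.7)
The strategy is to produce an exact triangle of cohomology spectra
\[
  \Ccoh_\Bor(\sZ, \MGL)\vb{n-c}
  \xrightarrow{i_!} \Ccoh_\Bor(\sX, \MGL)\vb{n}
  \xrightarrow{j^*} \Ccoh_\Bor(\sU, \MGL)\vb{n}
\]
and then pass to the long exact sequence of homotopy groups. To obtain the triangle, I would apply the localization triangle $i_* i^! \to \id \to j_* j^*$ in $\SH_\Bor(\sX)$ to $\MGL_\sX\vb{n}$ and take $\Maps_{\SH_\Bor(\sX)}(\un_\sX, -)$. Recall that the localization triangle, together with $f^*$, $f_*$, $f^!$, $f_!$, and $\vb{\alpha}$, extend to the lisse-extended formalism as discussed following \constrref{constr:DBor}.

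The remaining work is to identify the three terms. The middle term is tautologically $\Ccoh_\Bor(\sX, \MGL)\vb{n}$, and for the right-hand term I would use the lisse-extended analogue of absoluteness of $\MGL$ (a direct consequence of \propref{prop:dogo011} via passage to the limit in \constrref{constr:DBor}) to identify $j^*\MGL_\sX \simeq \MGL_\sU$. For the left-hand term, writing $p : \sZ \to S$ and $q : \sX \to S$ for the structural morphisms, the fact that $\MGL_\sX \simeq q^*\MGL_S$ by absoluteness, combined with relative purity (\corref{cor:relpur}, which carries over to the lisse extension since its proof reduces to standard purity for the smooth morphisms $p$ and $q$), yields
\[
  i^! \MGL_\sX
  \simeq i^! q^* \MGL_S
  \simeq p^* \MGL_S \vb{\sL_{\sZ/\sX}}
  \simeq \MGL_\sZ \vb{\sL_{\sZ/\sX}}.
\]
Since $Z \hookrightarrow X$ is a regular closed immersion of smooth schemes of codimension $c$, the K-theory class $[\sL_{\sZ/\sX}]$ has rank $-c$, so the canonical orientation on $\MGL$ (\constrref{constr:07sg113}) trivializes this Thom twist as $\vb{\sL_{\sZ/\sX}} \simeq \vb{-c}$. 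Then $i_! = i_*$ since $i$ is proper, and adjunction identifies the left-hand term as $\Ccoh_\Bor(\sZ, \MGL)\vb{n-c}$.

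The main obstacle is essentially bookkeeping: verifying that the pieces of the six-functor formalism invoked above --- the localization triangle, purity for smooth morphisms, and absoluteness of $\MGL$ --- all transfer cleanly from $\SH^*$ to the lisse extension $\SH_\Bor^*$. These each follow routinely from the construction of $\SH_\Bor(\sX)$ as a limit of $\SH(T)$ over $(T,t) \in \Lis_\sX$, together with the compatibility of the relevant operations with $*$-inverse image along smooth representable morphisms of algebraic spaces. The content of the proposition beyond the right-exact sequences already considered in \examref{exam:equiv cob} and the literature cited there is precisely that lisse-extension, unlike the naive limit $\lim_i \Omega^n([X/G] \fibprod_{BG} U_i)$, benefits from the full force of the six-functor formalism --- so the long exact sequence comes essentially for free once this framework is in place.
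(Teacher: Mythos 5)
Your argument correctly produces the localization triangle in $\SH_\Bor(\sX)$ and extracts the long exact sequence of homotopy groups after twisting and using relative purity plus orientability of $\MGL$ — this matches the first sentence of the paper's proof. But you have missed that the statement ends with $\to 0$: the sequence is asserted to \emph{terminate} at $\H^n_\Bor(\sU,\MGL)$, i.e. $j^*$ is surjective on $\pi_0$. An exact triangle of spectra gives a long exact sequence in all degrees $s\in\bZ$; it does not give termination at $s=0$. For that you need the vanishing $\pi_{-1}\Ccoh_\Bor(\sZ,\MGL)\vb{n}=0$, and this is precisely what the paper identifies as the content to be proven (``we only need to demonstrate right-exactness of the last row'').

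The paper establishes this vanishing via \thmref{thm:Borel}: one writes $\Ccoh_\Bor(\sZ,\MGL)\vb{n}$ as $\lim_i \Ccoh(\sZ\fibprod_{BG}U_i,\MGL)\vb{n}$, then applies the Milnor exact sequence and a Mittag--Leffler verification for the projective system $\{\pi_0\Ccoh(\sZ\fibprod_{BG}U_i,\MGL)\vb{n}\}_i$ (borrowed from Heller--Malag\'on-L\'opez) to reduce to the vanishing of $\pi_{-1}\Ccoh(-,\MGL)\vb{n}$ on smooth quasi-projective schemes. Your remark that ``the long exact sequence comes essentially for free once this framework is in place'' is therefore slightly off target: the surrounding discussion (\examref{exam:equiv cob} and its footnote) is at pains to note that the naive limit construction is \emph{not} known to satisfy the right-exact sequence precisely because of potential $\bR^1\lim$ contributions, and resolving this — rather than producing the abstract long exact sequence — is what the proposition is really asserting.
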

    \begin{proof}
      In view of the localization triangle and relative purity for $\SH_\Bor$, we only need to demonstrate right-exactness of the last row.
      In other words, it is enough to show that $\pi_{-1} \Ccoh_\Bor(\sZ, \MGL)\vb{n} = 0$ for every $n$.
      For this we use the description in terms of the Borel construction (\thmref{thm:Borel}).
      By the Milnor exact sequence and the fact that the Mittag--Leffler condition holds for the projective system $\{\pi_0 \Ccoh(\sZ\fibprod_{BG}U_i, \MGL)\vb{n}\}_i$ (see \cite[Lem.~18]{HellerMalagonLopez}), we have
      \[ \pi_{-1} \Ccoh_\Bor(\sZ, \MGL)\vb{n} \simeq \lim_i \pi_{-1} \Ccoh(\sZ \fibprod_{BG} U_i, \MGL)\vb{n}. \]
      We can choose $U_i$ to be quasi-projective as in \examref{exam:071g20}, so the claim follows from the corresponding vanishing on smooth quasi-projective schemes (see e.g. \cite{Hoyois112064}).
    \end{proof}

    \begin{exam}
      In view of \examref{exam:071g20}, the lisse-extended cobordism ring of the classifying stack $B\bG_m$ or more generally $BGL_n$ can be computed as in \cite[Props.~3.4, 3.5]{Vezzosi}.
      In particular, in that case the canonical surjections \eqref{eq:07ugbp1j} are in fact bijective.
    \end{exam}

    \begin{exam}\label{exam:equiv k}
      There is a canonical ring homomorphism\footnote{%
        The $0$ in the target does not correspond to the $0$ in the source: we have $\H^n_\Bor([X/G], \KGL) \simeq \H^0_\Bor([X/G], \KGL)$ for all $n\in\bZ$ by Bott periodicity.
      }
      \[
        \K_0([X/G]) \to \H^0_\Bor([X/G], \KGL)
      \]
      which however is \emph{not} an isomorphism.
      In fact, \cite[Thm.~9.10]{KrishnaCompletion} (combined with \thmref{thm:Borel}) shows that it exhibits the target as the completion of the source along the augmentation ideal.
    \end{exam}

  \subsection{Lisse vs. Kan extension}
  \label{ssec:lim/kan}

    Let $\D^*$ be as in \ssecref{ssec:lim/lisse}.
    Define $\D^*_\mrm{Kan}$ as the right Kan extension of $\D^*$ from derived algebraic spaces to derived stacks.
    In this subsection we will show that the lisse extension $\D^*_\Bor$ agrees with $\D^*_\mrm{Kan}$ for a large class of stacks (see \corref{cor:Chowdhury}).
    This result was inspired by C.~Chowdhury's thesis \cite{Chowdhury}, and another proof of the comparison has appeared in \cite{ChowdhuryArXiv}.
    We will restrict our attention to classical (underived) stacks, since by derived invariance there is no loss of generality.

    \begin{defn}\label{defn:Chowdhury}\leavevmode
      \begin{defnlist}
        \item
        If $Y$ is an algebraic space, a \emph{Nis-Artin atlas} of $Y$ is a smooth morphism $f : X \to Y$ of algebraic spaces such that there exists an algebraic space $Y'$ and a Nisnevich cover $Y' \twoheadrightarrow Y$ for which the base change $X \fibprod_Y Y' \to Y'$ admits a section.

        \item
        A \emph{Nis-Artin atlas} is a smooth representable morphism of stacks $f : \sX \to \sY$ such that for any morphism $Y \to \sY$ with $Y$ an algebraic space, the base change $Y \fibprod_\sY \sX \to Y$ is a Nis-Artin atlas.

        \item
        A \emph{Nis-Artin stack} is a stack $\sX$ which has quasi-separated representable diagonal and admits a Nis-Artin atlas $p : X \to \sX$ where $X$ is a scheme.
      \end{defnlist}
    \end{defn}

    \begin{rem}\label{rem:afd0u1}\leavevmode
      \begin{defnlist}
        \item
        Any Nis-Artin stack is Artin.

        \item\label{item:afd0u1/;10g1}
        A morphism $f : \sX \to \sY$ is a Nis-Artin atlas if and only if it is smooth, representable, and surjective on field-valued points.
        This follows from \cite[Lem.~0.6]{equilisse}.
        
        \item
        Any algebraic space\footnote{recall our implicit quasi-separatedness convention} is a Nis-Artin stack, since it admits a Nisnevich cover by a scheme.
        See \cite[I, Prop.~5.7.6]{RaynaudGruson}.

        \item
        More generally, any quasi-separated Artin stack with separated diagonal is Nis-Artin; see \cite[Thm.~0.7]{equilisse}.

        \item
        One can also define \emph{higher} Nis-Artin stacks by analogy with higher Artin stacks: see \cite[0.2.2]{equilisse}.
        Our results in this and the following subsection continue to hold in that case.
      \end{defnlist}
    \end{rem}

    Let $\tau$ denote the representable Nisnevich topology on Nis-Artin stacks, which by \cite[Lem.~0.6]{equilisse} is generated by the pretopology in which covers are Nis-Artin atlases.

    \begin{prop}\label{prop:pubn-u-1}
      Let $\sC$ be the category of Nis-Artin stacks and $\sC_0$ the full subcategory of algebraic spaces (or affine schemes).
      Then for any \inftyCat $\sV$ admitting small limits, restriction along the inclusion $i : \sC_0 \to \sC$ induces an equivalence
      \[
        \Shv_\tau(\sC)_\sV
        \to \Shv_\tau(\sC_0)_\sV = \Shv_\Nis(\sC_0)_\sV
      \]
      on \inftyCats of $\sV$-valued $\tau$-sheaves.
    \end{prop}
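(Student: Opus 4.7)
The plan is to imitate the proof of \lemref{lem:9sydagy}. The inclusion $i : \sC_0 \hookrightarrow \sC$ is fully faithful, and by \remref{rem:afd0u1}\itemref{item:afd0u1/;10g1} the Chowdhury topology on $\sC$ restricts to the Nisnevich topology on $\sC_0$. Every Nisnevich cover is automatically a Chowdhury cover (it admits a section against itself Nisnevich-locally), so $i$ is topologically continuous; conversely, any Chowdhury cover $f : X \to Y$ of an algebraic space $Y$ is a smooth representable surjection whose source is an algebraic space and which admits sections Nisnevich-locally on $Y$, so it generates a Nisnevich covering sieve. Hence $i$ is also topologically cocontinuous. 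We therefore obtain the adjunctions $(\L_\mrm{Cho} i_!, i^*, i_*)$ between the $\sV$-valued sheaf \inftyCats, with $\L_\mrm{Cho} i_!$ fully faithful since $i$ is.

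It remains to verify that $\L_\mrm{Cho} i_!$ generates $\Shv_\mrm{Cho}(\sC)_\sV$ under colimits. The crucial input is the definition of Chowdhury stack: every $\sX \in \sC$ admits a Chowdhury cover $p : X \to \sX$ with $X$ an algebraic space. Since $p$ is representable and $\sX$ has quasi-separated representable diagonal, the \Cech\ nerve $X^\bullet$ of $p$ is a simplicial object in $\sC_0$. Chowdhury descent along the singleton covering family $\{p\}$ then produces a colimit formula
\[
  h_\sX \simeq \colim_{[n]\in\bDelta^\op} h_{X^n}
\]
in $\Shv_\mrm{Cho}(\sC)_\sV$, exhibiting the representable of $\sX$ as a colimit of representables from $\sC_0$. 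The variant of the statement with affine schemes in place of algebraic spaces follows in the same way, since every (quasi-separated) algebraic space admits a Nisnevich cover by an affine scheme.

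The step that will require the most care is verifying the cocontinuity of $i$, which depends on the nontrivial coincidence of the Chowdhury and Nisnevich topologies on algebraic spaces. Once this is in hand, the formal comparison-of-sites machinery, together with the \Cech-nerve generation argument above, yields both the full faithfulness of $\L_\mrm{Cho} i_!$ and the conservativity of $i^*$, and hence the equivalence.
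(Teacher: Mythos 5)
Your proof follows the same route as the paper: reduce to the comparison-of-sites argument from Lemma~\ref{lem:9sydagy}, with the key geometric input being that every Chowdhury stack is the colimit of the \v{C}ech nerve of a Chowdhury cover by a scheme. One small imprecision: the "adjunction $(\L_\mrm{Cho} i_!, i^*, i_*)$" and the generation-under-colimits argument implicitly work in the topos of anima-valued sheaves (where the left Kan extension $i_!$ exists), and the statement for a general $\sV$ admitting only small limits then follows formally from the anima case by identifying $\Shv(\sC)_\sV$ with limit-preserving functors $\Shv(\sC)^\op \to \sV$; the paper elides this in exactly the same way, so your proposal is a faithful elaboration.
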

    \begin{proof}
      As in the proof of \lemref{lem:9sydagy}, this follows from the fact that any Nis-Artin stack $\sX$ can be written as the colimit of the \v{C}ech nerve of a Nis-Artin atlas by a scheme.
      See also \cite[Thm.~3.4.1]{Chowdhury}.
    \end{proof}

    Since the inverse to the equivalence of \propref{prop:pubn-u-1} is given by right Kan extension, we have in particular:

    \begin{cor}\label{cor:0afgs01}
      On Nis-Artin stacks, $\D^*_\mrm{Kan}$ is the unique $\tau$-sheaf which restricts to $\D^*$ on algebraic spaces (or affine schemes).
    \end{cor}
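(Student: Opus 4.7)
The statement follows almost immediately by combining Proposition~\ref{prop:pubn-u-1} with the definition of $\D^*_\mrm{Kan}$, but it is worth laying out the three checks explicitly.

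First, I would verify that the restriction of $\D^*$ to the subcategory $\sC_0$ is a sheaf for the topology in question. Since $\D^*$ satisfies the Voevodsky conditions, Proposition~\ref{prop:Nis desc} shows that it satisfies Nisnevich descent on algebraic spaces; by Remark~\ref{rem:afd0u1}\itemref{item:afd0u1/;10g1}, the Nisnevich and Chowdhury topologies coincide on $\sC_0$, so $\D^*|_{\sC_0}$ is a Chowdhury sheaf on $\sC_0$.

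Next, I would identify $\D^*_\mrm{Kan}|_{\sC}$ with the right Kan extension of $\D^*|_{\sC_0}$ along $i : \sC_0 \hookrightarrow \sC$. By definition, for any stack $\sX$,
\[
  \D^*_\mrm{Kan}(\sX) = \lim_{(T, t)} \D^*(T)
\]
where $(T,t)$ runs over algebraic spaces mapping to $\sX$. Restricting the domain to Chowdhury stacks does not change this formula, so $\D^*_\mrm{Kan}|_{\sC}$ is precisely the right Kan extension $i_* (\D^*|_{\sC_0})$. One also checks that $\D^*_\mrm{Kan}$ restricts to $\D^*$ on $\sC_0$: for $Y \in \sC_0$, the pair $(Y, \id_Y)$ is terminal in the indexing category, so the limit collapses to $\D^*(Y)$.

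Finally, I would apply Proposition~\ref{prop:pubn-u-1} with $\sV$ the \inftyCat of (symmetric monoidal) presentable \inftyCats and colimit-preserving functors: the restriction functor $\Shv_\mrm{Cho}(\sC)_\sV \to \Shv_\Nis(\sC_0)_\sV$ is an equivalence, and its inverse is right Kan extension $i_*$. Hence $i_*(\D^*|_{\sC_0}) = \D^*_\mrm{Kan}|_\sC$ is a Chowdhury sheaf, and it is uniquely characterized among Chowdhury sheaves by its restriction to $\sC_0$. There is no real obstacle here; the only subtlety is making sure that the right Kan extension in the sense of Proposition~\ref{prop:pubn-u-1} (computed inside the \inftyCat of sheaves) agrees with the pointwise right Kan extension used in the definition of $\D^*_\mrm{Kan}$, which is automatic since $i_*$ preserves sheaves (as $i$ is cocontinuous) and any pointwise right Kan extension that happens to be a sheaf agrees with the right Kan extension within sheaves.
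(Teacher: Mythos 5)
Your proof is correct and matches the paper's approach, which is simply to cite Proposition~\ref{prop:pubn-u-1}: restriction from Chowdhury sheaves on $\sC$ to Nisnevich sheaves on $\sC_0$ is an equivalence with inverse given by right Kan extension, so $\D^*_\mrm{Kan}$ (being that right Kan extension) is the unique Chowdhury sheaf restricting to $\D^*$. You have usefully spelled out the three checks that the paper compresses into one sentence — that $\D^*|_{\sC_0}$ is a Chowdhury sheaf via Nisnevich descent, that the pointwise Kan extension formula does not change when the codomain is restricted to $\sC$ (since both use the same slice category of algebraic spaces over $\sX$), and that the presheaf-level right Kan extension agrees with the sheaf-level inverse of restriction because it already lands in sheaves. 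The only cosmetic slip is your parenthetical attribution of ``$i_*$ preserves sheaves'' to cocontinuity; cocontinuity is what ensures restriction $i^*$ preserves sheaves, while continuity is what ensures $i_*$ does. The paper's proof of Lemma~\ref{lem:9sydagy} invokes both properties jointly, so your argument goes through unchanged.
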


    \begin{thm}\label{thm:Bor=Kan}
      Let $\sV$ be an \inftyCat admitting small limits, and $\sF$ a $\sV$-valued Nisnevich sheaf on algebraic spaces.
      Let $\sF_{\mrm{Kan}}$ denote the right Kan extension of $\sF$ from algebraic spaces to Artin stacks, and let $\sF_\Bor$ denote the lisse extension, i.e.
      \[
        \RGamma(\sX, \sF_\Bor) = \lim_{(T,t)\in\Lis_\sX} \RGamma(T,\sF).
      \]
      Then for any Nis-Artin stack $\sX$, there is a canonical isomorphism
      \[
        \RGamma(\sX, \sF_\Bor) \to \RGamma(\sX, \sF_{\mrm{Kan}})
      \]
      in $\sV$.
    \end{thm}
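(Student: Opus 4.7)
The plan is to use Corollary~\ref{cor:0afgs01}, which characterizes $\sF_{\mrm{Kan}}|_\sC$ on Chowdhury stacks as the unique Chowdhury sheaf restricting to $\sF$ on algebraic spaces. I will show that $\sF_\Bor|_\sC$ also satisfies these two properties; then the natural comparison map $\sF_{\mrm{Kan}} \to \sF_\Bor$ (coming from the inclusion of smooth morphisms $\Lis_\sX \hookrightarrow (\mrm{AlgSp}/\sX)$) is an isomorphism on $\sC$ by uniqueness.

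Restriction to algebraic spaces is immediate: for any algebraic space $Y$, the pair $(Y, \mrm{id}_Y)$ is a terminal object in $\Lis_Y$, so $\sF_\Bor(Y) = \lim_{\Lis_Y} \sF = \sF(Y)$.

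For Chowdhury descent, fix a Chowdhury cover $p: X \to \sX$ with $X$ a scheme, and form the \v{C}ech nerve $X_\bullet$; each term $X_n = X^{n+1}/\sX$ is an algebraic space (by representability of the diagonal of $\sX$) equipped with a smooth structural map $p_n : X_n \to \sX$. For each $(T, t) \in \Lis_\sX$, the base change $X \fibprod_\sX T \to T$ is a Chowdhury cover of the algebraic space $T$, hence a Nisnevich cover by Remark~\ref{rem:afd0u1}\itemref{item:afd0u1/;10g1}; Nisnevich descent for $\sF$ then gives $\sF(T) \simeq \lim_\Delta \sF(X_\bullet \fibprod_\sX T)$. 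Taking the limit over $\Lis_\sX$ and swapping the two limits reduces the required Chowdhury descent identity $\sF_\Bor(\sX) \simeq \lim_\Delta \sF(X_\bullet)$ to the claim that, for each $n$,
\[
\lim_{(T,t) \in \Lis_\sX} \sF(X_n \fibprod_\sX T) \simeq \sF(X_n).
\]

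This last identification is the crux of the argument and the main obstacle; I deduce it from the following cofinality statement. For any smooth morphism $y: Y \to \sX$ from an algebraic space, the functor $\Phi_Y: \Lis_\sX \to \Lis_Y$ sending $(T, t)$ to $(Y \fibprod_\sX T, \pr_Y)$ is initial. By the $\infty$-categorical form of Quillen's Theorem~A, this reduces to contractibility of the slice $\Lis_\sX \times_{\Lis_Y} (\Lis_Y)_{(T', t')/}$ for every $(T', t') \in \Lis_Y$. Unpacking via the universal property of the fibre product, its objects identify with pairs $((T, t) \in \Lis_\sX, \, \phi: T' \to T \text{ over } \sX)$; the tuple $(T = T', \, t = y \circ t', \, \phi = \id_{T'})$ lies in this category (the composition $y \circ t'$ is smooth) and is initial, since any other object receives a unique morphism from it, namely $\phi$ itself. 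Applying this lemma to each $(X_n, p_n)$ completes the verification of Chowdhury descent, and the theorem follows.
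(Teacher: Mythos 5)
Your argument is correct and follows essentially the same strategy as the paper's proof (reduce to Chowdhury descent via \corref{cor:0afgs01}, apply Nisnevich descent over each $(T,t) \in \Lis_\sX$, commute limits, and finish with a cofinality argument). Where the paper's proof ends with the phrase ``using a cofinality argument,'' you actually supply the lemma, and this is genuinely useful content. Two small comparisons are worth recording. First, you restrict to a Chowdhury cover $p : X \to \sX$ with $X$ a scheme, whereas the paper works with an arbitrary Chowdhury cover $\sU \to \sX$. Your restriction is a real simplification: since $X_n$ is an algebraic space, $\Lis_{X_n}$ has a terminal object, so $\lim_{\Lis_{X_n}^\op}\sF = \sF(X_n)$ and the cofinality assertion has the clean form $\lim_{\Lis_\sX^\op}\sF(X_n\fibprod_\sX T) \simeq \sF(X_n)$. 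In the paper's version the $(n{+}1)$-fold fibre power $\sU^{\times_\sX(n+1)}$ may be a stack, and the cofinality statement must compare against $\RGamma(\sU^{\times_\sX(n+1)},\sF_\Bor)$ rather than $\sF$ of an algebraic space, which is slightly more delicate. Second, your verification of the cofinality lemma is right once one keeps track of variances: the diagram $\RGamma(-,\sF)$ is indexed by $\Lis_\sX^\op$, so the relevant slice is $\Lis_\sX^\op\times_{\Lis_Y^\op}(\Lis_Y^\op)_{/(T',t')}$, which is the opposite of the category $\Lis_\sX\times_{\Lis_Y}(\Lis_Y)_{(T',t')/}$ you write down; contractibility is preserved under $(-)^\op$, so this is fine. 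Your claimed initial object is correct, and the cleanest way to see contractibility (avoiding the slightly informal ``unique morphism'' phrasing, which is really a $1$-categorical argument) is to note that the slice you describe is equivalent to $(\Lis_\sX)_{(T',\,y\circ t')/}$, which has $\id$ as an initial object.

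One point of framing to tighten: as written, you say you will show $\sF_\Bor|_\sC$ is a Chowdhury sheaf and then appeal to uniqueness, but your verification only checks descent along a single Chowdhury cover by a scheme for each $\sX$; a priori being a Chowdhury sheaf requires descent along \emph{all} Chowdhury covers. This gap is harmless, because what you actually prove is the stronger, more direct statement $\RGamma(\sX,\sF_\Bor) \simeq \lim_\Delta \sF(X_\bullet) \simeq \RGamma(\sX,\sF_{\mrm{Kan}})$, using that the proof of \propref{prop:pubn-u-1} identifies $\RGamma(\sX,\sF_{\mrm{Kan}})$ with $\lim_\Delta\sF(X_\bullet)$ for any Chowdhury cover $X \twoheadrightarrow \sX$ by a scheme. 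I would simply present the argument as this direct comparison (observing at the end that the natural restriction map $\sF_{\mrm{Kan}} \to \sF_\Bor$ realizes the isomorphism), rather than routing through the sheaf-uniqueness characterization.

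Finally, a trivial remark on direction: the statement writes the canonical isomorphism as $\RGamma(\sX,\sF_\Bor) \to \RGamma(\sX,\sF_{\mrm{Kan}})$, whereas the natural map induced by the inclusion $\Lis_\sX \hookrightarrow (\mathrm{AlgSp})_{/\sX}$ (the one you invoke) goes $\sF_{\mrm{Kan}} \to \sF_\Bor$. Since the map is an equivalence this is immaterial, but worth being aware of.
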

    \begin{proof}
      By \corref{cor:0afgs01} it is enough to show that $\sF_\Bor$ satisfies descent for Nis-Artin atlases.
      Let $p : \sU \twoheadrightarrow \sX$ be a Nis-Artin atlas.
      For every $(T,t) \in \Lis_\sX$, the base change $\sU_T = T \fibprod_\sX \sU \to T$ is a Nis-Artin atlas of algebraic spaces, hence generates a covering in the Nisnevich topology.
      Thus by Nisnevich descent for $\sF$ we have homotopy limit diagrams
      \[
        \RGamma(T, \sF) \to \RGamma(\sU_T, \sF)\rightrightarrows \RGamma(\sU_T \times_{T} \sU_T, \sF) \rightrightrightarrows \RGamma(\sU_T \times_{T} \sU_T \times_{T} \sU_T, \sF) \rightrightrightrightarrows \cdots.
      \]
      Passing to the limit over $(T,t)$ and using a cofinality argument yields that
      \[
        \RGamma(\sX, \sF_\Bor) \to \RGamma(\sU, \sF)\rightrightarrows \RGamma(\sU \times_{\sX} \sU, \sF) \rightrightrightarrows \RGamma(\sU \times_{\sX} \sU \times_{\sX} \sU, \sF) \rightrightrightrightarrows \cdots
      \]
      is also a homotopy limit diagram.
      The claim follows.
    \end{proof}

    \begin{cor}\label{cor:Chowdhury}
      Let $\sX$ be a Nis-Artin stack.
      Then there is a canonical equivalence of \inftyCats
      \[
        \D_\Bor(\sX) \to \D_\mrm{Kan}(\sX).
      \]
    \end{cor}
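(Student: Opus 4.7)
The plan is to deduce the corollary as a direct specialization of Theorem~\ref{thm:Bor=Kan}, taking $\sV$ to be the $\infty$-category of presentable $\infty$-categories (with colimit-preserving, or equivalently right-adjoint, functors as morphisms), and $\sF$ to be the restriction of $\D^*$ to derived algebraic spaces. For this to apply I need to verify three small things: that $\sV$ admits small limits, that $\sF$ is a Nisnevich sheaf on algebraic spaces, and that the two candidate extensions to Chowdhury stacks appearing in the corollary coincide with those in the theorem.

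First I would note that the $\infty$-category of presentable $\infty$-categories and colimit-preserving functors admits small limits (they are computed as in the $\infty$-category of large $\infty$-categories, by \cite[Prop.~5.5.3.13]{LurieHTT}). Next, Nisnevich descent of $\D^*$ on algebraic spaces is part of the hypotheses in Section~\ref{sec:lim} (it is a standard consequence of the Voevodsky conditions, see Proposition~\ref{prop:Nis desc}). Finally, by construction (see Construction~\ref{constr:DBor} and Notation~\ref{notat:as07fgafo}), the value $\D_\Bor(\sX)$ is literally the limit $\lim_{(T,t)\in\Lis_\sX}\D(T)$, which matches $\RGamma(\sX,\sF_\Bor)$ in the notation of Theorem~\ref{thm:Bor=Kan}; and $\D_\mrm{Kan}(\sX)$ is by definition the right Kan extension evaluated at $\sX$, which matches $\RGamma(\sX,\sF_\mrm{Kan})$.

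With these identifications in place, the canonical comparison map produced by Theorem~\ref{thm:Bor=Kan} is the desired equivalence $\D_\Bor(\sX)\to\D_\mrm{Kan}(\sX)$, and it is an isomorphism because $\sX$ is a Chowdhury stack. There is no real obstacle here; all of the work has already been done in Theorem~\ref{thm:Bor=Kan}, which in turn rests on Proposition~\ref{prop:pubn-u-1} (the Chowdhury-topological analogue of Lemma~\ref{lem:9sydagy}). The only point worth being careful about is the choice of $\sV$: one should take presentable $\infty$-categories with \emph{right-adjoint} functors so that the limit computation in Construction~\ref{constr:DBor}, which involves $*$-inverse image (a left adjoint whose right adjoint is the structure morphism of the relevant cartesian fibration), is correctly interpreted; equivalently one can pass to the opposite and work with left-adjoint functors, but either way the formal manipulation is routine.
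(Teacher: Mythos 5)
Your proposal is correct and is exactly the paper's (implicit) argument: Corollary~\ref{cor:Chowdhury} is stated as an immediate specialization of Theorem~\ref{thm:Bor=Kan} to $\sF = \D^*$, with the identifications $\D_\Bor(\sX) = \RGamma(\sX,\sF_\Bor)$ and $\D_\mrm{Kan}(\sX) = \RGamma(\sX,\sF_\mrm{Kan})$ holding by construction, and Nisnevich descent for $\D^*$ on algebraic spaces supplied by Proposition~\ref{prop:Nis desc}.

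One small slip in the write-up: you twice describe the morphisms in the target category as ``right-adjoint'' functors, but $*$-inverse image functors are colimit-preserving, i.e.\ \emph{left} adjoints, so the correct choice is $\sV = \mathrm{Pr}^{\mathrm{L}}$ (or the symmetric monoidal variant, as in Notation~\ref{notat:as07fgafo}), which does admit small limits computed in $\widehat{\mathrm{Cat}}_\infty$ by \cite[Prop.~5.5.3.13]{LurieHTT}; the final caveat about passing to opposites is therefore unnecessary, though it does not affect the validity of the argument.
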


    \begin{rem}
      In \cite{Chowdhury}, Chowdhury takes $\SH^*_\mrm{Kan}$ as his definition of the stable motivic homotopy category on the class of Nis-Artin stacks.
      Thus \corref{cor:Chowdhury} shows that the lisse extension recovers his construction when the latter is defined.
    \end{rem}

  \subsection{Lisse-extended motivic homotopy types}
  \label{ssec:lim/mhtp}

    Let $\sX$ be a smooth Artin stack over an algebraic space $S$.
    Recall the lisse-extended stable motivic homotopy type $M_S^\Bor(\sX) \in \SH(S)$ defined in \notatref{notat:as0fg1}.
    We also have the unstable variant
    \[
      \L \h_S^\Bor(\sX) = \colim_{(T,t)\in\Lis_{\sX}} \L \h_S(T)
    \]
    in $\MotSpc(S)$, so that $M_S^\Bor(\sX) = \Sigma^\infty_+ \L \h_S^\Bor(\sX)$.
    Note that this definition still makes sense for $\sX$ singular.
    In this subsection we give some alternative descriptions of $\L\h_S^\Bor(\sX)$ and $M_S^\Bor(\sX)$ (\thmref{thm:bosy1}) in the case of Nis-Artin stacks (see \defnref{defn:Chowdhury}).

    \begin{constr}\label{constr:0pagf01}
      Let $\sX$ be an Artin stack locally of finite presentation over $S$.
      Let $\h_S(\sX)$ denote the $\Sm$-fibred animum over $S$ represented by $\sX$, i.e., the presheaf of anima
      \[T \in \Sm_{/S} \mapsto \Maps_{/S}(T, \sX).\]
      We then write
      \[ M_S^{\mrm{Yon}}(\sX) := \Sigma^\infty_+ \L \h_S(\sX) \in \SH(S) \]
      for the infinite $\bT$-suspension of its motivic localization.
    \end{constr}

    \begin{constr}\label{constr:oabb1}
      Consider the assignment $T \in \Sm_{/S} \mapsto M_S(T) \in \SH(S)$.
      Let $\sX \mapsto M^{\mrm{Kan}}_S(\sX)$ denote its left Kan extension along the inclusion from algebraic spaces locally of finite presentation over $S$ to Artin stacks locally of finite presentation over $S$.
      For example, if $X \twoheadrightarrow \sX$ is a Nis-Artin atlas, then by \propref{prop:pubn-u-1} we may write
      \[
        M_S^\mrm{Kan}(\sX) \simeq \abs{M_S(\Cech(X/\sX)_\bullet)}
        = \colim_{[n]\in\bDelta^\op} M_S(\Cech(X/\sX)_n)
      \]
      where $\Cech(X/\sX)_\bullet$ is the \v{C}ech nerve and $\abs{-}$ denotes geometric realization of simplicial objects.
      We similarly write $\L\h_S^{\mrm{Kan}}(\sX) \in \MotSpc(S)$ for the unstable analogue, defined by left Kan extending $T \in \Sm_{/S} \mapsto \L\h_S(T) \in \MotSpc(S)$.
    \end{constr}

    \begin{thm}\label{thm:bosy1}
      Let $\sX$ be an Artin stack locally of finite presentation over an algebraic space $S$.
      If $\sX$ is a Nis-Artin stack, then there are canonical isomorphisms
      \[
        \L\h_S^\Bor(\sX) \simeq \L\h_S^{\mrm{Kan}}(\sX) \simeq \L\h_S(\sX)
      \]
      in $\MotSpc(S)$, hence in particular
      \[
        M^\Bor_S(\sX) \simeq M^{\mrm{Kan}}_S(\sX) \simeq M^{\mrm{Yon}}_S(\sX)
      \]
      in $\SH(S)$.
    \end{thm}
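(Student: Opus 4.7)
The plan is to establish the two unstable isomorphisms separately, with the stable statements following immediately by applying the symmetric monoidal colimit-preserving functor $\Sigma^\infty_+ : \MotSpc(S)_\bullet \to \SH(S)$.

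For the first isomorphism $\L\h_S^\Bor(\sX) \simeq \L\h_S^\mrm{Kan}(\sX)$, my strategy is Yoneda. The inclusion of $\Lis_\sX$ into the indexing category for the pointwise left Kan extension produces a canonical comparison map $\L\h_S^\Bor(\sX) \to \L\h_S^\mrm{Kan}(\sX)$. For any test object $\sG \in \MotSpc(S)$, applying $\Maps(-,\sG)$ to this map yields the restriction
\[
  \RGamma(\sX, \sG_\mrm{Kan}) = \lim_{(T, T \to \sX)} \RGamma(T,\sG)
  \to \lim_{(T,t)\in\Lis_\sX} \RGamma(T, \sG) = \RGamma(\sX, \sG_\Bor),
\]
where $\sG_\mrm{Kan}$ and $\sG_\Bor$ denote the right Kan extension and the lisse extension of $\sG$, respectively. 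Since $\sG$ is by construction a Nisnevich sheaf on algebraic spaces, this map is an equivalence by \thmref{thm:Bor=Kan} applied to the Chowdhury stack $\sX$. The first comparison then follows from the Yoneda lemma.

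For the second isomorphism $\L\h_S^\mrm{Kan}(\sX) \simeq \L\h_S(\sX)$, I would fix a Chowdhury cover $p : X \twoheadrightarrow \sX$ with $X$ an algebraic space, which exists by hypothesis on $\sX$. Since $\sX$ has representable diagonal, every term of the \Cech~nerve $\Cech(X/\sX)_\bullet$ is an algebraic space. The unstable analogue of the identification recorded in \constrref{constr:oabb1} gives
\[
  \L\h_S^\mrm{Kan}(\sX) \simeq \abs{\L\h_S(\Cech(X/\sX)_\bullet)}.
\]
On the other hand, by definition of Chowdhury cover, the base change of $p$ along any $T \to \sX$ with $T \in \Sm_{/S}$ admits a section Nisnevich-locally on $T$. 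Equivalently, the map of $\Sm$-fibred anima $\h_S(X) \to \h_S(\sX)$ is an effective epimorphism after Nisnevich sheafification, so $\h_S(\sX)$ is the geometric realization of the \Cech~nerve $\h_S(\Cech(X/\sX)_\bullet)$ after sheafification. Since the motivic localization $\L$ factors through Nisnevich sheafification, applying $\L$ gives $\L\h_S(\sX) \simeq \abs{\L\h_S(\Cech(X/\sX)_\bullet)}$, completing the identification.

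The most delicate point is the \Cech~realization formula for $\L\h_S^\mrm{Kan}(\sX)$ invoked above, which says that the pointwise left Kan extension — a priori a colimit over the large comma category of all morphisms from smooth algebraic spaces to $\sX$ — is computed by the \Cech~nerve of a single Chowdhury cover. This ultimately reduces to \propref{prop:pubn-u-1}, via the observation that $\L\h_S^\mrm{Kan}$, regarded as a cosheaf, inherits the Nisnevich codescent of $\L\h_S$ on algebraic spaces and so satisfies codescent against Chowdhury covers. With this in hand, the rest of the argument is essentially bookkeeping combining \thmref{thm:Bor=Kan} with Nisnevich descent.
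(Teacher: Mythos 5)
Your proof is correct and follows essentially the same route as the paper's. The only cosmetic difference is in the first step: you establish $\L\h_S^\Bor(\sX) \simeq \L\h_S^{\mrm{Kan}}(\sX)$ by a Yoneda argument (testing against $\sG \in \MotSpc(S)$ and invoking \thmref{thm:Bor=Kan} for the sheaf $T \mapsto \RGamma(T,\sG)$), whereas the paper applies \thmref{thm:Bor=Kan} directly to $T \mapsto \L\h_S(T)$ viewed as a Nisnevich sheaf valued in $\MotSpc(S)^\op$; these are two phrasings of the same observation that the colimit comparison dualizes to the limit comparison of \thmref{thm:Bor=Kan}. For the second isomorphism, both you and the paper choose a Chowdhury cover $p : X \twoheadrightarrow \sX$, identify $\L\h_S^{\mrm{Kan}}(\sX)$ with the Čech realization $\abs{\L\h_S(\Cech(X/\sX)_\bullet)}$ via \propref{prop:pubn-u-1}, and show the comparison map $\abs{\h_S(\Cech(X/\sX)_\bullet)} \to \h_S(\sX)$ is a Nisnevich-local equivalence of $\Sm$-fibred anima because Chowdhury covers base-change to maps admitting Nisnevich-local sections; the paper cites \cite[Cor.~6.2.3.5]{LurieHTT} for the step that an effective epimorphism in an $\infty$-topos recovers its target as the Čech realization, which is the same fact you use.
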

    \begin{proof}
      The canonical isomorphism $\L\h^\Bor_S(\sX) \simeq \L\h^{\mrm{Kan}}_S(\sX)$ follows from \thmref{thm:Bor=Kan} (applied to $T \mapsto \L\h_S(T)$, regarded as a Nisnevich sheaf $\Sm_{/S}^\op \to \MotSpc(S)^\op$).
      Consider the canonical morphism
      \[
        \h_S^{\mrm{Kan}}(\sX) \to \h_S(\sX)
      \]
      where $\h_S^{\mrm{Kan}}(-)$ is defined by left Kan extension as in \constrref{constr:oabb1}.
      Choosing a Nis-Artin atlas $p : X \twoheadrightarrow \sX$, we may identify this with the canonical map of $\Sm$-fibred anima
      \[
        \abs{\h_S(\Cech(X/\sX)_\bullet)} \to \h_S(\sX)
      \]
      which is a Nisnevich-local equivalence by definition of Nis-Artin atlases and \cite[Cor.~6.2.3.5]{LurieHTT} (compare \cite[Vol.~I, Chap.~2, Lem.~2.3.8]{GaitsgoryRozenblyum} which is the étale analogue).
      Since motivic localization commutes with colimits, we get a canonical isomorphism
      \[
        \L\h_S^{\mrm{Kan}}(\sX)
        \simeq \abs{\L\h_S(\Cech(X/\sX)_\bullet)}
        \to \L\h_S(\sX)
      \]
      as claimed.
    \end{proof}

    \begin{rem}\label{rem:CDH comparison}
      Let $S$ be the spectrum of a field $k$ and consider the ``linearization'' functor
      \[
        \MotSpc(S) \to \on{\mbf{DM}}^{\mrm{eff}}(S)
      \]
      to the \inftyCat of effective Voevodsky motives over $S$ (with integral coefficients), see e.g. \cite[Ex.~5.2.17]{CisinskiDegliseBook} or \cite[\S 5.3]{EHKSY}.
      The image of $\L\h_S(\sX) \in \MotSpc(k)$ by this functor was defined as the \emph{Nisnevich motive} of $\sX$ in \cite{ChoudhuryDeshmukhHogadi}.
      Thus \thmref{thm:bosy1} shows that the lisse-extended motivic homotopy type $\L\h_S^\Bor(\sX)$ recovers the Nisnevich motive of \cite{ChoudhuryDeshmukhHogadi}.
      Therefore, it also recovers the étale-local construction of \cite{Choudhury} and, in the smooth proper case, the Chow motive constructed by To\"en \cite{ToenMotive} in view of \cite[Thm.~6.4]{Choudhury}.
      When $S$ is not the spectrum of a field, $\L\h_S^\Bor(\sX) \in \MotSpc(S)$ still gives rise to a relative (effective) motive over $S$ in the sense of \cite[Chap.~11]{CisinskiDegliseBook} or \cite[Chap.~9]{Spitzweck}, say.
    \end{rem}

    \begin{rem}\label{rem:ysbobql}
      \thmref{thm:bosy1} generalizes a computation of the motivic animum $\L\h_S(\sX) \in \MotSpc(S)$ obtained in \cite[Thm.~1.2]{ChoudhuryDeshmukhHogadi}.
      Indeed, whenever $p : X \twoheadrightarrow \sX$ is a Nis-Artin atlas, \thmref{thm:bosy1} yields an isomorphism
      \[ \L\h_S(\sX) \simeq \abs{\L\h_S(\Cech(X/\sX)_\bullet)} \]
      in $\MotSpc(S)$.
      In \cite[Thm.~1.2]{ChoudhuryDeshmukhHogadi}, this was proven when $S$ is the spectrum of a field $k$ and $\sX$ admits a representable Nisnevich cover $[U/\GL_n] \twoheadrightarrow \sX$, for the Nis-Artin atlas $U \twoheadrightarrow [U/\GL_n] \twoheadrightarrow \sX$.
    \end{rem}

  \subsection{Exhaustive stacks}
  \label{ssec:lim/exh}

    In \cite{HoskinsPepinLehalleur}, Hoskins and Pepin Lehalleur adapt the algebraic approximation to the Borel construction of Totaro and Morel--Voevodsky to a certain class of \emph{exhaustive} algebraic stacks.
    In this subsection, we record a generalization of the material of \ssecref{ssec:Borel homotopy} to this setting.

    \begin{defn}
      Let $\sX$ be an algebraic stack.
      A \emph{system of approximations} $(\sX_i, \sV_i, \sW_i)_i$ for $\sX$ consists of:
      \begin{defnlist}
        \item
        An exhaustive filtration of $\sX$ by an increasing sequence of quasi-compact opens $\sX_i \sub \sX$.

        \item
        For every $i$, a vector bundle $\sV_i$ over $\sX_i$ and a closed substack $\sW_i \sub \sV_i$.

        \item
        For every $i$, a monomorphism of vector bundles $\phi_i : \sV_i \to \sV_{i+1} \cap {\sX_i}$ over $\sX_i$.
      \end{defnlist}
      This data is subject to the following conditions:
      \begin{defnlist}[label=(\alph*), ref=(\alph*)]
        \item
        For every $i$, the open complement $U_i = \sV_i \setminus \sW_i$ is representable by a quasi-separated algebraic space.

        \item
        For every $i$, the morphism $\phi_i$ sends $U_i \sub \sV_i$ to $U_{i+1} \cap \sX_i \sub \sV_{i+1} \cap \sX_i$.

        \item
        Given any integer $n\ge 0$, there exists an $i\gg 0$ such that $\codim_{\sV_i}(\sW_i) > n$.
      \end{defnlist}
    \end{defn}

    We have the following generalization of \thmref{thm:Borel M}, with the same proof.
    When $S$ is the spectrum of a field, this can alternatively be deduced by combining the comparison of \thmref{thm:bosy1} with \cite[Prop.~5.4]{ChoudhuryDeshmukhHogadi}.

    \begin{thm}\label{thm:0gb0111}
      Let $\sX$ be a smooth algebraic stack over an algebraic space $S$.
      Given any system of approximations $(\sX_i, \sV_i, \sW_i)_i$ for $\sX$, the canonical morphism in $\SH(S)$
      \[
        \colim_i M_S(U_i) \to
        M_S^\Bor(\sX)
      \]
      is invertible, where $U_i = \sV_i \setminus \sW_i$ and the transition morphisms in the colimit are induced by the composites
      \[
        U_i \xrightarrow{\phi_i} U_{i+1} \cap \sX_i \hook U_{i+1}.
      \]
    \end{thm}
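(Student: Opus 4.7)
The plan is to adapt the argument of \thmref{thm:Borel M}, using the exhaustion $(\sX_i)_i$ as an additional outer reduction. First I would exhaust $\sX$ by the $\sX_i$: by \remref{rem:Bor aff} and cofinality we have $M_S^\Bor(\sX) \simeq \colim_{(T,t) \in \Lis^{\mathrm{aff}}_\sX} M_S(T)$, and since each affine $T$ is quasi-compact, $t$ factors through some $\sX_i$, yielding
\[
  M_S^\Bor(\sX) \simeq \colim_i M_S^\Bor(\sX_i).
\]
Second, I would replace each $\sX_i$ by $\sV_i$: homotopy invariance for the vector bundle $\sV_i \to \sX_i$, applied termwise on $\Lis^{\mathrm{aff}}_{\sX_i}$ just as in the proof of \thmref{thm:Borel M}, gives canonical isomorphisms $M_S^\Bor(\sV_i) \simeq M_S^\Bor(\sX_i)$.

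Next I would invoke the localization triangle. Let $p_i : \sV_i \to S$ denote the structural morphism, $j_i : U_i \hookrightarrow \sV_i$ the open immersion, and $\iota_i : \sW_i \hookrightarrow \sV_i$ the complementary closed immersion. Pushing forward $j_{i,\sharp} j_i^* \un \to \un \to \iota_{i,*} \iota_i^* \un$ along $p_i$ produces a cofibre sequence
\[
  M_S(U_i) \to M_S^\Bor(\sV_i) \to K_i
\]
in $\SH(S)$, where I use $M_S^\Bor(U_i) = M_S(U_i)$ because $U_i$ is an algebraic space. The monomorphisms $\phi_i$ make this compatible as $i$ varies and produce precisely the transition morphisms described in the statement. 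It therefore suffices to show $\colim_i K_i \simeq 0$.

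The main obstacle is this connectivity estimate. Unfolding the lisse extension one can write
\[
  K_i \simeq \colim_{(T,t) \in \Lis^{\mathrm{aff}}_{\sV_i}} \on{cofib}\big( M_S(T \fibprod_{\sV_i} U_i) \to M_S(T) \big),
\]
and for each $(T,t)$ the closed subspace $T \fibprod_{\sV_i} \sW_i \hookrightarrow T$ has codimension $\ge c_i := \codim_{\sV_i}(\sW_i)$. Following \thmref{thm:Borel M}, I would stratify $T \fibprod_{\sV_i} \sW_i$ by smooth locally closed subschemes and combine the localization triangle, relative purity (\corref{cor:relpur}) and \cite[Lem.~3.1]{HoyoisHopkinsMorel} to show that each such cofibre, and hence $K_i$ itself, lies in the full subcategory of $\SH(S)$ generated under colimits and extensions by objects $M_S(W)\vb{\sE}$ with $\rk(\sE) \ge c_i$, and is therefore $c_i$-connective for the homotopy $t$-structure. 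Since $c_i \to \infty$ and $\SH(S)$ is compactly generated by Tate twists and shifts of $M_S(Y)$ for $Y \in \Sm_{/S}$, we conclude $\colim_i K_i \simeq 0$; combining with the previous reductions gives the asserted isomorphism $\colim_i M_S(U_i) \simeq M_S^\Bor(\sX)$.
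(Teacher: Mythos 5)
Your proposal matches the paper's intended argument: the paper's proof of \thmref{thm:0gb0111} is literally the one-sentence remark that it goes ``with the same proof'' as \thmref{thm:Borel M}, and you have correctly spelled that out, including the extra outer reduction $M_S^\Bor(\sX) \simeq \colim_i M_S^\Bor(\sX_i)$ needed because $\sX$ need not be quasi-compact (the factorization of each affine $(T,t)$ through some $\sX_i$ is exactly the right point). Your handling of the transition morphisms via $\phi_i$, the localization cofibre $K_i = p_{i,\sharp}\iota_{i,*}\iota_i^*(\un)$, and its unfolding as a colimit over $\Lis^{\mathrm{aff}}_{\sV_i}$ is all sound; organizing the colimits as ``fix $i$, vary $(T,t)$, then vary $i$'' is a harmless repackaging of the paper's ``fix $(T,t)$, vary $i$, then vary $(T,t)$'' since the $c_i$-connectivity estimate holds termwise and is stable under colimits. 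One caveat worth recording, though it is inherited from the paper's phrasing rather than introduced by you: the smooth-stratification step in the connectivity argument uses that the base is the spectrum of a perfect field (as in \ssecref{ssec:Borel}), so taking $S$ to be a general algebraic space, as the statement of \thmref{thm:0gb0111} literally allows, is not covered by this proof without further argument.
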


    \begin{rem}\label{rem:UO1b2hov}
      Let $S$ be the spectrum of a field $k$.
      In \cite[Def.~2.15]{HoskinsPepinLehalleur}, $\sX$ is said to be \emph{exhaustive} if it admits a system of approximations as above, where $U_i$ are representable by separated schemes of finite type over $k$.
      For $\sX$ exhaustive, the right-hand side of \thmref{thm:0gb0111}, or rather its image by the ``linearization'' functor (see \remref{rem:CDH comparison})
      \[
        \MotSpc(S) \to \on{\mbf{DM}}^{\mrm{eff}}(S),
      \]
      is taken to be the definition of the Voevodsky motive of $\sX$ in \cite[Def.~2.17]{HoskinsPepinLehalleur}.
      Combining Theorems~\ref{thm:0gb0111} and \ref{thm:bosy1}, we find that for exhaustive stacks, all existing constructions of the motive or motivic homotopy type agree.
      For example, this applies to moduli stacks of vector bundles on curves (see \cite[Thm.~3.2]{HoskinsPepinLehalleur}).
      Compare the étale-local comparison of \cite[Prop.~A.7]{HoskinsPepinLehalleur}.
    \end{rem}

\changelocaltocdepth{1}
\appendix

\section{Linearly scalloped stacks}
\label{sec:linearly}

  In this appendix we explain how our results can be generalized from \emph{nicely} to \emph{linearly} scalloped stacks, to include for example arbitrary quotients by reductive groups in characteristic zero.

  In the nicely scalloped case our proofs and constructions are relatively clean thanks in large part to the work of \cite{AlperHallRydh} and \cite{AlperHallHalpernLeistnerRydh}, in particular the local structure theorem for nicely scalloped stacks (\thmref{thm:scallop}).
  As we will now see, it is possible to reach essentially the same results, at the cost of somewhat more complicated definitions and arguments, by relying instead on Hoyois's work on equivariant motivic homotopy theory \cite{HoyoisEquivariant}.
  This has the advantage that it will include the case of linearly scalloped stacks, while also reducing much of our reliance on the not yet published works \cite{AlperHallRydh,AlperHallHalpernLeistnerRydh}.

  We have chosen to write this separately from the main text in order to keep the exposition as readable as possible.
  After giving some background on the definition of linearly scalloped stacks, we will go through sections~\ref{sec:H}--\ref{sec:ex} and explain all modifications necessary for the linearly scalloped case.
  For simplicity, we will work with classical stacks throughout (but it is easy to generalize to derived stacks, using the notion of quasi-projectivity developed in \cite{resdag}).

  Note that \secref{sec:fix} only deals with quotients by algebraic tori, which are nicely scalloped, so there is nothing to change there.
  Of course there is nothing to change in \secref{sec:lim} either.

  \ssec{Linearly scalloped stacks}

    \begin{defn}\label{defn:linearly fund}
      Let $\sX$ be a quasi-compact quasi-separated algebraic stack.
      \begin{defnlist}
        \item
        We say that $\sX$ is \emph{linearly \fund} if it admits an \emph{affine} morphism $\sX \to BG$ for some linearly reductive embeddable group scheme $G$ over an affine scheme $S$.

        \item
        We say that $\sX$ is \emph{linearly \qfund} if it admits a \emph{quasi-projective} morphism $\sX \to BG$ for some linearly reductive embeddable group scheme $G$ over an affine scheme $S$.

        \item
        We say that $\sX$ is \emph{linearly scalloped} if it has separated diagonal and admits a scallop decomposition $(\sU_i, \sV_i, u_i)_i$ where $\sV_i$ are linearly \qfund and $u_i$ are quasi-projective.
      \end{defnlist}
    \end{defn}

    \begin{rem}\label{rem:ysfg913o}
      Any linearly \qfund stack $\sX$ satisfies the resolution property.
      This follows from the fact that $BG$ has the resolution property for $G$ linearly reductive and embeddable (\examref{exam:0g11up0}), and the fact that $\sX$ is quasi-projective over such a $BG$ and in particular admits a family of line bundles which is relatively ample over $BG$.
    \end{rem}
    
    \begin{warn}
      Every nicely \fund (resp. \qfund) stack is of course linearly \fund (resp. \qfund).
      Nicely scalloped stacks with affine diagonal are also linearly scalloped by \thmref{thm:scallop}\itemref{item:scallop/cover2}.
      Note that without the affine diagonal condition, this is not clear because given a scallop decomposition $(\sU_i, \sV_i, u_i)_i$ as in \defnref{defn:scallop}, the $u_i$'s may not be quasi-projective in general.
    \end{warn}

    \begin{exam}
      Let $G$ be a linearly reductive group scheme over an affine scheme $S$ and $X$ a $G$-quasi-projective scheme.
      Then the quotient stack $\sX = [X/G]$ is linearly scalloped.
      Indeed, while $G$ need not be embeddable, it is always locally so by \cite[Cor.~13.2]{AlperHallRydh}.
    \end{exam}

    \begin{exam}\label{exam:a0gpfsd71}
      Let $\sX$ be a qcqs algebraic stack with affine diagonal and finitely many characteristics.
      If all stabilizers of $\sX$ are linearly reductive, then $\sX$ is linearly scalloped.
      This follows from \cite[Thm.~1.13]{AlperHallHalpernLeistnerRydh}.
    \end{exam}

    \begin{rem}
      The analogue of \thmref{thm:scallop} is not available to us in the linearly scalloped case.
      For example, a linearly scalloped stack need not have all stabilizers linearly reductive.
    \end{rem}

    \begin{rem}
      Linearly \fund stacks are also linearly fundamental in the sense of \cite[Defn.~2.7]{AlperHallRydh}, but not conversely.
      See \cite[Rem.~2.9, App.~A.1]{AlperHallRydh}.
    \end{rem}

  \ssec{The unstable category}

    We now describe the modifications necessary throughout the text, starting with \secref{sec:H}, if we want to replace all instances of ``nice group scheme'' by ``linearly reductive group scheme'', ``\fund'' by ``linearly \fund'', ``scalloped'' by ``linearly scalloped'', and so on.

    \sssec{The site $\Sm_{/\sX}$.}

      Given a linearly scalloped stack $\sX$, $\Sm_{/\sX}$ will denote the \inftyCat of stacks $\sX'$ which are smooth and \emph{quasi-projective} over $\sX$.
      Note that $\sX'$ is then linearly scalloped (a scallop decomposition for $\sX$ gives one for $\sX'$ by base change).

    \sssec{The \inftyCat of motivic spaces.}

      We define $\MotSpc(\sX)$ as in \defnref{defn:H}, except that a presheaf $\sF$ of \anis on $\Sm_{/\sX}$ is \emph{homotopy invariant} if for every $\sX' \in \Sm_{/\sX}$ and every vector bundle \emph{torsor} $\pi : \sV \to \sX'$, the map of \anis
      \[ \pi^* : \RGamma(\sX', \sF) \to \RGamma(\sV, \sF) \]
      is invertible.

      If $\sX$ happens to be nicely scalloped, then this is equivalent to the \inftyCat $\MotSpc(\sX)$ we already defined in \ssecref{ssec:H/constr}.
      There are two differences to be reconciled:
      \begin{enumerate}
        \item 
        The homotopy invariance condition is \textit{a priori} stronger in the linearly scalloped setting.
        However, the stronger condition involving vector bundle torsors is in fact automatic in the nicely scalloped case.
        Indeed, any nicely scalloped $\sX$ admits a Nisnevich cover $u : \sU \twoheadrightarrow \sX$ with $\sU$ nicely \fund (\thmref{thm:scallop}\itemref{item:scallop/cover}).
        Then if $\pi : \sV \to \sX$ is a vector bundle torsor, the base change $\sV \fibprod_\sX \sU \to \sU$ admits a section, and hence is a vector bundle.

        \item
        In the nicely scalloped case, we defined $\Sm_{/\sX}$ to consist only of smooth stacks that are \emph{representable}, but not necessarily quasi-projective, over $\sX$.
        However, when $\sX$ is nicely scalloped both variants give rise to the same \inftyCat $\MotSpc(\sX)$ (up to equivalence).
        In fact, we claim that the inclusions
        \[
          \Sm^\mrm{qaff}_{/\sX} \hook
          \Sm^\mrm{qproj}_{/\sX} \hook
          \Sm^{\mrm{repr}}_{/\sX}
        \]
        of the sites of smooth \emph{quasi-affine}, smooth \emph{quasi-projective}, and smooth \emph{representable} stacks over $\sX$, both induce equivalences on $\A^1$-invariant Nisnevich sheaves (by left Kan extension).
        To see this recall that the assignment $\sX \mapsto \Sm^?_{/\sX}$ is a Nisnevich (or even étale) sheaf for $? \in \{\mrm{qaff}, \mrm{repr}\}$, which easily implies that the corresponding versions of $\sX \mapsto \MotSpc(\sX)$ are both Nisnevich sheaves.
        Therefore to show that the induced functor is an equivalence, it is enough by \thmref{thm:scallop}\itemref{item:scallop/cover} to consider the case where $\sX$ is nicely \qfund.
        This easily follows from the fact that every $\sX' \in \Sm^\mrm{repr}_{/\sX}$ is isomorphic (up to Nisnevich localization) to a colimit of objects in $\Sm^\mrm{qaff}_{/\sX}$ by \thmref{thm:sumihiro}.
      \end{enumerate}

    \sssec{Generation}

      The proof of the direct analogue of \propref{prop:gen} requires no modifications.
      However, we will require the sharper statement that $\MotSpc(\sX)$ is generated under sifted colimits by $\L\h_\sX(\sX')$ with $\sX'$ linearly \emph{\fund} (and not just linearly \qfund).
      To prove this, it is enough to show that for every linearly \qfund $\sX' \in \Sm_{/\sX}$, there is a linearly \fund $\sX'' \in \Sm_{/\sX}$ such that $\L\h_\sX(\sX') \simeq \L\h_\sX(\sX'')$.
      This follows from the following variant of \cite[Prop.~2.20]{HoyoisEquivariant}, which will be a substitute for \thmref{thm:locally affine}.

      \begin{prop}[Jouanolou device]\label{prop:jou}
        Let $f : \sX \to \sY$ be a quasi-projective morphism with $\sY$ \qfund.
        Then there exists an affine bundle $\pi : \sV \to \sX$ such that the composite $\sV \to \sX \to \sY$ is affine.
      \end{prop}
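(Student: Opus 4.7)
The plan is to adapt the classical Jouanolou trick to our setting, using the resolution property of \qfund stacks (\propref{prop:2rgoy10}) together with the good compactification provided by \lemref{lem:dUO)G}. First I would factor $f : \sX \to \sY$ as $\sX \xrightarrow{j} \sP \xrightarrow{p} \sY$, where $j$ is an affine open immersion and $p$ is projective (\lemref{lem:dUO)G}); since $\sY$ has the resolution property, \lemref{lem:07gf1} then allows me to refine $p$ to a closed immersion $i : \sP \hookrightarrow \P_\sY(\sE)$ for some finite locally free sheaf $\sE$ on $\sY$.

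Next I would construct the Jouanolou device $J(\sE)$ as the closed substack of $\bV_\sY(\uEnd(\sE))$ consisting of idempotent endomorphisms of $\sE$ of rank $1$. Sending an idempotent $e : \sE \to \sE$ to the quotient $\sE \twoheadrightarrow \Im(e)$ defines a map $J(\sE) \to \P_\sY(\sE)$ which is a torsor under the vector bundle whose fibre at $e$ is $\Hom(\sE/\Im(e), \Im(e))$; in particular this map is an affine bundle. Moreover, by construction $J(\sE) \to \sY$ is affine, being cut out by universal equations inside the total space of the vector bundle $\uEnd(\sE)$ on $\sY$.

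Finally I would set $\sV := \sX \fibprod_{\P_\sY(\sE)} J(\sE)$. Then $\sV \to \sX$ is an affine bundle as the base change of $J(\sE) \to \P_\sY(\sE)$. To see that the composite $\sV \to \sY$ is affine, I would factor it as $\sV \to \sP \fibprod_{\P_\sY(\sE)} J(\sE) \to \sY$: the first arrow is the base change of the affine open immersion $j$ along $\sP \fibprod_{\P_\sY(\sE)} J(\sE) \to \sP$, hence is affine; the second factors as the closed immersion $\sP \fibprod_{\P_\sY(\sE)} J(\sE) \hookrightarrow J(\sE)$ (base change of $i$) followed by the affine morphism $J(\sE) \to \sY$. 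The only point of genuine friction, and hence the main (small) obstacle, is to verify that the Jouanolou device and its torsor structure over $\P_\sY(\sE)$ transport from the case of affine schemes to our qfund base $\sY$; this is straightforward because $J(\sE)$ and its structure maps are defined by universal constructions that commute with arbitrary base change.
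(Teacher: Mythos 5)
Your proof is correct and follows essentially the same route as the paper: reduce via \lemref{lem:dUO)G} and \lemref{lem:07gf1} to a locally closed immersion $\sX \hook \P_\sY(\sE)$ with $\sE$ finite locally free, then pull back the Jouanolou device on $\P_\sY(\sE)$; your verification that $\sV \to \sY$ is affine is the same composite of an affine open immersion, a closed immersion and an affine structure map that the paper performs implicitly. The only real difference is that you construct $J(\sE)$ explicitly (as rank-$1$ idempotent endomorphisms of $\sE$), whereas the paper black-boxes this by citing \cite[Prop.~2.20]{HoyoisEquivariant}. One small slip: with your quotient convention for $\P_\sY(\sE)$ (sending $e$ to the surjection $\sE \twoheadrightarrow \Im(e)$, i.e.\ remembering $\Ker(e)$), the fibre of $J(\sE)\to\P_\sY(\sE)$ parametrizes complements to $\Ker(e)$, hence is a torsor under $\Hom(\Im(e),\sE/\Im(e))$ rather than $\Hom(\sE/\Im(e),\Im(e))$; this has no effect on the argument.
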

      \begin{proof}
        Write $\sY = [Y/G]$, where $G$ is a linearly reductive embeddable group scheme over an affine scheme $S$, and $Y$ is a $G$-quasi-projective scheme.
        By \remref{rem:ysfg913o}, $\sY$ has the resolution property.
        By \cite[Prop.~2.20]{HoyoisEquivariant} we therefore have the result in the special case $\sX = \P_{\sY}(\sE)$ for $\sE$ a finite locally free sheaf over $\sY$.
        
        For $f$ projective, choose an embedding into a projective bundle $\P_\sY(\sE)$ where $\sE$ is a finite locally free sheaf on $\sY$.
        By the previous case there exists an affine bundle $\pi_0 : \sV_0 \to \P_{\sY}(\sE)$ such that $\sV_0 \to \sY$ is affine.
        Then its base change $\pi : \sV \to \sX$ along the closed immersion $\sX \to \P_{\sY}(\sE)$ is an affine bundle, and $\sV \hook \sV_0$ is a closed immersion (hence in particular affine).

        Finally for $f$ quasi-projective, since $\sY$ has the resolution property, we may choose a factorization through an \emph{affine} open immersion and a projective morphism.
        Then we may repeat the same argument to reduce to the projective case.
      \end{proof}

      Similarly, \propref{prop:gen}\itemref{item:0h103n} becomes: for every quotient $\sX = [X/G]$, where $G$ is a linearly reductive group scheme over an affine scheme $S$ and $X$ is a $G$-quasi-projective scheme over $S$, $\MotSpc(\sX)$ is generated under sifted colimits by $\L\h_\sX([U/G])$ with $U$ an affine $G$-scheme, smooth over $X$.
      For this we can again apply \propref{prop:jou} if $G$ is embeddable.
      Otherwise, let $S' \twoheadrightarrow S$ be a Nisnevich cover with $S'$ an affine scheme such that $G' = G \fibprod_S S'$ is embeddable (such exists by \cite[Cor.~13.2]{AlperHallRydh}).
      Then for every $\sY = [Y/G] \in \Sm_{/\sX}$ (where $Y$ is $G$-quasi-projective), $\L\h_{\sX}(\sY)$ is by Nisnevich descent the geometric realization of a simplicial diagram of objects of the form $\L\h_{\sX}(\sY')$ where $\sY' = [Y'/G'] \simeq [Y'/G]$ is quasi-projective over $B_{S'}(G') \simeq BG \fibprod_S S' \simeq [S'/G]$ (where $G$ acts trivially on $S'$).
      We conclude by the embeddable case.

    \sssec{Comparison with Hoyois}

      In \remref{rem:Hoyois}, we now get the comparison
      \[ \MotSpc([X/G]) \simeq \MotSpc^G(X) \]
      for all $G$-quasi-projective schemes $X$ over $S$, with $G$ a linearly reductive group scheme over an affine scheme $S$, where the right-hand side is Hoyois's construction \cite{HoyoisEquivariant}.
      This follows from the variant of \propref{prop:gen}\itemref{item:0h103n} proven just above and \cite[Prop.~3.16(1)]{HoyoisEquivariant}.

    \sssec{Functoriality}

      In Propositions~\ref{prop:f_sharp} and \ref{prop:smooth bc}, replace ``smooth representable'' by ``smooth quasi-projective.''
      In \propref{prop:Nis sep}, assume the Nisnevich cover is by quasi-projective morphisms.
      For later use let us record the following (which is a reformulation of homotopy invariance for vector bundle torsors):

      \begin{prop}\label{prop:?v,hy9y1}
        Let $\sX$ be a linearly scalloped stack.
        Then for every vector bundle torsor $\pi : \sV \to \sX$, the inverse image functor
        \[ \pi^* : \MotSpc(\sX) \to \MotSpc(\sV) \]
        is fully faithful (in particular, conservative).
      \end{prop}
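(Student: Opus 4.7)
The plan is to identify fully faithfulness of $\pi^*$ with invertibility of the counit $\pi_\sharp \pi^* \to \id$, and then to check this on a generating family where it reduces to the homotopy invariance axiom built into the definition of $\MotSpc$.

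First I would verify that a vector bundle torsor $\pi : \sV \to \sX$ is automatically affine, since affineness is fppf-local on the base and vector bundles are affine. In particular, $\pi$ is smooth and quasi-projective (compare \examref{exam:qaff}), so $\sV$ is linearly scalloped whenever $\sX$ is, and the left adjoint $\pi_\sharp : \MotSpc(\sV) \to \MotSpc(\sX)$ of $\pi^*$ exists by the linearly scalloped analogue of \propref{prop:f_sharp}. Showing $\pi^*$ is fully faithful therefore reduces, by the standard adjunction criterion, to showing that the counit $\pi_\sharp \pi^* \to \id_{\MotSpc(\sX)}$ is an equivalence.

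Next, since $\pi^*$ and $\pi_\sharp$ both preserve colimits, and since the linearly scalloped analogue of \propref{prop:gen} exhibits $\MotSpc(\sX)$ as generated under sifted colimits by representables $\L\h_\sX(\sX')$ with $\sX' \in \Sm_{/\sX}$, it suffices to verify the counit on such generators. Using \propref{prop:f^*}\itemref{(ii)} and \propref{prop:f_sharp}, one computes
\[
  \pi_\sharp\pi^*\bigl(\L\h_\sX(\sX')\bigr)
  \simeq \pi_\sharp\bigl(\L\h_\sV(\sX'\fibprod_\sX\sV)\bigr)
  \simeq \L\h_\sX(\sX'\fibprod_\sX\sV),
\]
where the last identification is legal because the composite $\sX'\fibprod_\sX\sV \to \sX$ is smooth (as a composite of smooth morphisms) and quasi-projective (as a composite of quasi-projective morphisms, using the affineness of $\pi$). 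Under these identifications the counit becomes the canonical morphism $\L\h_\sX(\sX'\fibprod_\sX\sV) \to \L\h_\sX(\sX')$ induced by the projection.

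Finally, the projection $\sX'\fibprod_\sX\sV \to \sX'$ is itself a vector bundle torsor, being the base change of $\pi$ along $\sX' \to \sX$. By the Yoneda lemma, the above morphism is a motivic equivalence if and only if, for every motivic $\sG \in \MotSpc(\sX)$, the restriction map
\[
  \RGamma(\sX', \sG) \to \RGamma(\sX'\fibprod_\sX\sV, \sG)
\]
is invertible; but this is precisely the homotopy invariance axiom in the linearly scalloped setting. The only non-formal step, and hence the main point requiring care, is the geometric observation that vector bundle torsors are affine, so that $\pi_\sharp$ exists and $\sX'\fibprod_\sX\sV$ lies in $\Sm_{/\sX}$; granting this, the rest of the argument is a straightforward manipulation of adjunctions.
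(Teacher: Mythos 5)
Your proof is correct and is exactly what the paper has in mind: the paper's ``proof'' consists of the parenthetical ``(which is a reformulation of homotopy invariance for vector bundle torsors),'' and you have supplied precisely the reformulation --- invertibility of the counit $\pi_\sharp\pi^*\to\id$, checked on the representable generators $\L\h_\sX(\sX')$ via the identification $\pi_\sharp\pi^*\L\h_\sX(\sX')\simeq\L\h_\sX(\sX'\fibprod_\sX\sV)$, which the homotopy-invariance axiom (stated for vector bundle torsors in the linearly scalloped setting) then identifies with the counit target. One small aside: the appeal to \examref{exam:qaff} for quasi-projectivity of $\pi$ is only strictly valid when $\sV$ is classical; in general it is cleaner to note that a torsor for $\bV_\sX(\sE)$ embeds as a closed subscheme of the total space of the corresponding extension $\sF$ of $\sO_\sX$ by $\sE$, and $\bV_\sX(\sF)$ is open in $\P_\sX(\sF\oplus\sO_\sX)$, so $\pi$ is quasi-projective for derived $\sX$ too.
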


    \sssec{Exactness of $i_*$}

      We have the analogue of \thmref{thm:lifting closed} in the special case where $f_0$ is affine.
      In view of \propref{prop:gen}\itemref{item:0h103n} (the form proven above), this will imply \propref{prop:i_* colimits}.
      This is a small (but much easier) variant on the proof of \cite[Prop.~6.1]{AlperHallHalpernLeistnerRydh}; for simplicity we sketch the argument in the case where $\sZ = \sX_\cl$ and $i$ is the canonical inclusion, which is all we will need for \thmref{thm:derinv}.

      Since $\sX$ is linearly \fund, there exists an affine morphism $\sX \to BG$ where $G$ is a linearly reductive group scheme over an affine scheme $S$.
      Write $\sX$ as the colimit of its $n$-truncations $\tau_{\le n}(\sX)$ (see e.g. \cite[Prop.~6.1]{AlperHallHalpernLeistnerRydh}).
      Since the relative cotangent complex $\sL_{\sZ'/\sZ}$ is projective (since $f_0 : \sZ' \to \sZ$ is smooth and $BG$ is cohomologically affine), there is no obstruction to lifting $f_0$ to smooth (resp. étale) morphisms $f_n : \sX'_{\le n} \to \tau_{\le n}(\sX)$ for all $n$.
      We can also extend the affine morphism $p_0 : \sZ' = \tau_{\le 0}(\sX') \to BG$ to $p_n : \sX'_{\le n} \to BG$, since the obstruction to this lifting again vanishes because the cotangent complex of $BG$ is of Tor-amplitude $[-1, 0]$ (with homological grading) and $BG$ is cohomologically affine.
      Finally, let $\sX'$ be the colimit of $\sX'_{\le n}$ over $n$, i.e.,
      \[ \sX' = \colim_n \sX'_{\le n} \simeq \Spec_{BG}\big( \lim_n p_{n,*}\sO_{\sX'_{\le n}} \big). \]
      Then there is an affine morphism $\sX' \to BG$ by construction (so $\sX'$ is linearly \fund), and we conclude by passing to the colimit over $n$ of the squares
      \[\begin{tikzcd}
        \sZ' \ar{d}{f_0}\ar{r}
        & \sX'_{\le n} \ar{d}{f_n}
        \\
        \sZ = \sX_\cl \ar{r}
        & \tau_{\le n}(\sX).
      \end{tikzcd}\]

    \sssec{Localization}

      No modifications necessary in \thmref{thm:loc unstable}.

  \ssec{The stable category}

    Throughout \secref{sec:SH}, ``representable morphism'' should be changed to ``quasi-projective morphism''.
    In particular, we get the analogue of \thmref{thm:funct SH} except that in part~\itemref{item:funct SH/smooth} we only look at smooth quasi-projective morphisms.
    Moreover, any reference to the ``Nisnevich topology'' or ``Nisnevich descent'', etc. refers now to the topology generated by \emph{quasi-projective} Nisnevich covers (instead of representable ones).

    The necessary modifications in this section are as follows:

    \sssec{Proof of \lemref{lem:basic stab}}

      The proof in the (quasi-)affine case requires no modifications.
      For the general (quasi-projective) case, we use the following claim: if $\pi : \sV \to \sX$ is a vector bundle torsor such that the claim holds for $f_\sV = f \circ \pi$, then it holds for $f$ itself.
      This follows by the same argument with \propref{prop:?v,hy9y1} substituted for Nisnevich separation.
      Now by \propref{prop:jou} we are reduced to the affine case.

    \sssec{Smooth base change, \thmref{thm:funct SH}\itemref{item:funct SH/smooth}\itemref{item:funct SH/smooth/bc} (see \sssecref{sssec:sf0g61oyl1})}

      We have the following variant of \lemref{lem:a0s7dft}: if $\pi : \sV \to \sX$ is a vector bundle torsor such that $(p\circ \pi)_\sharp$ satisfies base change against $g^*$, then so does $p_\sharp$.
      The proof is the same except we use the isomorphism $p_\sharp \simeq (p\circ \pi)_\sharp \pi^*$ (\propref{prop:?v,hy9y1}) and similarly for $q$, instead of the ``descent'' isomorphisms.

      In the proof of Case 1, we have appealed to \thmref{thm:sumihiro}\itemref{item:sumihiro/scallop}.
      This is not available in the linearly scalloped case but we can use \propref{prop:jou} instead: take $\pi : \sV \to \sX$ a vector bundle torsor such that $p \circ \pi : \sV \to \sY$ is affine.
      Combining this with the statement just mentioned proves Case 1.

      Cases 2 and 3 then go through without modification.

    \sssec{Smooth projection formula, \thmref{thm:funct SH}\itemref{item:funct SH/smooth}\itemref{item:funct SH/smooth/bc} (see \sssecref{sssec:a0sgu0p1})}

      Instead of appealing to \lemref{lem:a0s7dft} we argue as in the above proof of smooth base change, using the isomorphism $p_\sharp \simeq (p\circ \pi)_\sharp \pi^*$ for $\pi : \sV \to \sX$ a vector bundle torsor with $p \circ \pi$ affine (\propref{prop:jou}).

  \ssec{Proper base change}

    No modifications necessary in \secref{sec:prop}.
    Note that \thmref{thm:Chow} holds also for $\sY$ linearly scalloped, since linearly scalloped stacks are also of global type in the sense of \cite{RydhApprox}.

  \ssec{The \texorpdfstring{$!$}{!}-operations}

    In \secref{sec:shriek}, the $!$-operations are defined by the same procedure for representable morphisms of finite type which are \emph{Nisnevich-locally compactifiable} in the sense that there exists a commutative square as in \remref{rem:repr loc compact} with $f_0$ compactifiable and $u,v$ quasi-projective Nisnevich covers.
    For example, any quasi-projective morphism of linearly scalloped stacks $f : \sX \to \sY$ is compactifiable (by definition).

    Throughout \ssecref{ssec:purity} we replace the assumption that ``$f : \sX \to \sY$ is compactifiable or $\sX$ and $\sY$ have affine diagonal'' with the assumption that $f$ is Nisnevich-locally compactifiable in the above sense.
    Compare \remref{rem:)&gh1}.

  \ssec{The Euler and Gysin transformations}

    In \secref{sec:gys} we replace the \emph{representably smoothable} assumptions (\defnref{defn:smoothable}) by \emph{quasi-projectively smoothable}, meaning that $f$ factors through a closed immersion and a \emph{quasi-projective} smooth morphism.
    Note that if $\sY$ has the resolution property, then any quasi-projective morphism $f : \sX \to \sY$ is quasi-projectively smoothable.

    Since quasi-projectively smoothable morphisms are compactifiable, we can then drop the affine diagonal assumptions on $\sX$ and $\sY$ (thanks to the modifications to \ssecref{ssec:purity} described just above).

  \ssec{Cohomology and Borel--Moore homology}

    In \secref{sec:coh} one should make the obvious modifications.
    For example, whenever a $!$-operation appears (e.g. in the definition of Borel--Moore homology), we should assume that $f$ is Nisnevich-locally compactifiable.
    When the word ``representably smoothable'' appears, it should be replaced with ``quasi-projectively smoothable''.
    On the other hand all affine diagonal assumptions can be dropped (e.g. in \defnref{defn:fund smooth} and \propref{prop:poincare}).

  \ssec{Examples}

    In \secref{sec:ex} we make the following modifications.

    \sssec{Homotopy invariant K-theory}

      In \ssecref{ssec:KH}:
      \begin{enumerate}
        \item 
        We claim that Nisnevich descent for $\KB$ (\thmref{thm:Thomason}) still holds on linearly scalloped stacks.
        For this we just need to verify that \thmref{thm:perfect} goes through.
        The only modification necessary is in the proof of \lemref{lem:spdufg0p1}, where we have to replace ``quasi-affine morphism $f : \sX \to BG$'' by ``quasi-projective''.
        Then the functor $f^* : \Dqc(BG) \to \Dqc(\sX)$ does not typically generate under colimits anymore.
        Nevertheless, if we choose a quasi-compact immersion $i : \sX \hook \P_{BG}(\sE)$, then we claim that the functors $f^*(-) \otimes i^*(\sO(k))$ do jointly generate under colimits (as $k\in\bZ$ varies).
        Indeed, since $i^*$ generates under colimits, it suffices to show that $g^*(-) \otimes \sO(k) : \Dqc(BG) \to \Dqc(\P_{BG}(\sE))$ generate under colimits, where $g : \P_{BG}(\sE) \to BG$ is the projection.
        This follows from \cite[Thm.~3.3]{KhanKblow}.

        \item
        Remarks~\ref{rem:0afsgdh013} and \ref{rem:KH cdh} generalize our \corref{cor:intro/KH} and \corref{cor:intro/KH cdh} to the linearly scalloped case.
        Note that this extends the cdh descent theorem of \cite{HoyoisKrishna} to the case of stacks with linearly reductive but possibly \emph{infinite} stabilizer groups, at least assuming affine diagonals (see \examref{exam:a0gpfsd71}).

        \item
        In \thmref{thm:KGL}\itemref{item:0asfh08} we should replace ``representable'' by ``quasi-projective''.
      \end{enumerate}

    \sssec{Algebraic cobordism}
      
      In \ssecref{ssec:coh/MGL}, ``smooth representable'' should be replaced by ``smooth quasi-projective'' in \constrref{constr:07sg113} and \propref{prop:dogo011}.

    \sssec{Motivic cohomology}

      In \ssecref{ssec:coh/Z}:
      \begin{enumerate}
        \item 
        Add the assumptions in \defnref{defn:07g1ufdl} that $f$ is quasi-projectively smoothable and $g$ is quasi-projective.
        (The former is superfluous if $\sX$ admits the resolution property, e.g. if it is linearly \qfund.)
      
        \item
        In \constrref{constr:afsdo0by}, change ``representable'' to ``quasi-projective''.
      \end{enumerate}



\bibliographystyle{halphanum}

\begin{thebibliography}{ABCDEFG}

  \bibitem[AHHLR]{AlperHallHalpernLeistnerRydh} J.~Alper, J.~Hall, D.~Halpern-Leistner, D.~Rydh, \textit{Artin algebraization for pairs with applications to the local structure of stacks and Ferrand pushouts}.  Forum Math. Sigma~{\bf{12}} (2024), paper no.~e20, 25 p.
  
  \bibitem[AHR]{AlperHallRydh} J.~Alper, J.~Hall, D.~Rydh, \textit{The étale local structure of algebraic stacks}, \arXiv{1912.06162} (2019).

  \bibitem[AKLPR]{virloc} D.~Aranha, A.\,A.~Khan, A.~Latyntsev, H.~Park, C.~Ravi, \textit{Virtual localization revisited}.  \arXiv{2207.01652} (2022).

  \bibitem[AKR]{resdag} D.~Aranha, A.\,A.~Khan, C.~Ravi, \textit{Quasi-projectivity and the resolution property in derived algebraic geometry}.  In preparation.
  
  \bibitem[AOV]{AbramovichOlssonVistoli} D.~Abramovich, M.~Olsson, A.~Vistoli, \textit{Tame stacks in positive characteristic}. Ann.~Inst.~Fourier~{\bf{58}}, no. 4, 1057--1091 (2008); corrigendum ibid.~{\bf{64}}, no. 3, 945--946 (2014).

  \bibitem[AS]{AtiyahSegalCompletion} M.~Atiyah, G.~Segal, \textit{Equivariant K-theory and completion}.  J.~Differ.~Geom.~{\bf{3}} (1969), 1--18.

  \bibitem[AS2]{AtiyahSegalEuler} M.~Atiyah, G.~Segal, \textit{On equivariant Euler characteristics}.  J.~Geom.~Phys.~{\bf{6}} (1989), no.~4, 671--677.

  \bibitem[AY]{AnnalaYokura} T.~Annala, S.~Yokura, \textit{Bivariant algebraic cobordism with bundles}.  Algebr. Geom.~{\bf{10}} (2023), no.~4, 461--488.

  \bibitem[Ay]{AyoubThesis} J.~Ayoub, \textit{Les six opérations de Grothendieck et le formalisme des cycles évanescents dans le monde motivique. I--II}. Astérisque~{\bf{314}}--{\bf{315}} (2007).

  \bibitem[BC]{BaumConnes} P.~Baum, A.~Connes, \textit{Chern character character for discrete groups}, in: A fête of topology, Pap. Dedic. Itiro~Tamura~(1988), 163--232.

  \bibitem[BH]{BachmannHoyoisNorms} T.~Bachmann, M.~Hoyois, \textit{Norms in motivic homotopy theory}. Astérisque~{\bf{425}} (2021).

  \bibitem[BF]{BehrendFantechi} K.~Behrend, B.~Fantechi, \textit{The intrinsic normal cone}, Invent.~Math.~{\bf{128}} (1997), no.~1, 45--88.

  \bibitem[BKRS]{BKRSMilnor} T.~Bachmann, A.\,A.~Khan, C.~Ravi, V.~Sosnilo, \textit{Categorical Milnor squares and K-theory of algebraic stacks}.  Sel. Math.~{\bf{28}} (2022), no.~5, paper no.~85, 72 p.

  \bibitem[BVdB]{BondalVdB} A.~Bondal, M.~van~den~Bergh, \textit{Generators and representability of functors in commutative and noncommutative geometry}.  Mosc.~Math.~J.~{\bf{3}} (2003), no.~1, 1--36.

  \bibitem[Bac]{BachmannMackey} T.~Bachmann, \textit{Motivic spectral Mackey functors}.  \arXiv{2205.13926} (2022).

  \bibitem[Bar]{Barwick} C.~Barwick, \textit{On the algebraic $K$-theory of higher categories}. J.~Topol.~{\bf{9}} (2016), no.~1, 245--347.

  \bibitem[Bo]{Borel} A.~Borel, \textit{Seminar on transformation groups}.  Ann. Math. Stud.~{\bf{46}} (1960).

  \bibitem[CD]{CisinskiDegliseBook} D.-C.~Cisinski, F.~Déglise, \textit{Triangulated categories of mixed motives}. Springer Monogr. Math.~(2019).

  \bibitem[CD2]{CisinskiDegliseCdh} D.-C.~Cisinski, F.~Déglise, \textit{Integral mixed motives in equal characteristic}.  Doc. Math., Extra Vol. (2015): Alexander S. Merkurjev's Sixtieth Birthday, 145--194.

  \bibitem[CDH]{ChoudhuryDeshmukhHogadi} U.~Choudhury, N.~Deshmukh, A.~Hogadi, \textit{The Nisnevich Motive of an Algebraic Stack}.  Ann. K-Theory~{\bf{8}} (2023), no.~2, 245--273.

  \bibitem[CDH+]{HermitianII} B.~Calmès, E.~Dotto, Y.~Harpaz, F.~Hebestreit, M.~Land, K.~Moi, D.~Nardin, T.~Nikolaus, W.~Steimle, \textit{Hermitian K-theory for stable $\infty$-categories II: Cobordism categories and additivity}.  \arXiv{2009.07224} (2020).

  \bibitem[CK]{CisinskiKhan} D.-C.~Cisinski, A.\,A.~Khan, \textit{$\mathbf{A}^1$-homotopy invariance in spectral algebraic geometry}.  \arXiv{1705.03340} (2020).

  \bibitem[ÇS]{Anima} K.~Çesnavicius, P.~Scholze, \textit{Purity for flat cohomology}.  Ann. Math.~{\bf{199}} (2024), no.~1, 51--180.

  \bibitem[CW]{CostenobleWaner} S.\,R.~Costenoble, S.~Waner, \textit{Equivariant ordinary homology and cohomology}.  Lecture Notes in Mathematics~{\bf{2178}}, Springer (2016).

  \bibitem[Ca]{Carlsson} G.~Carlsson, \textit{Equivariant Stable Homotopy and Segal's Burnside Ring Conjecture}.  Ann. Math.~{\bf{120}} (1984), 189--224.

  \bibitem[Chou]{Choudhury} U.~Choudhury, \textit{Motives of Deligne-Mumford stacks}.  Adv.~Math.~{\bf{231}} (2012), no.~6, 3094--3117.

  \bibitem[Chow1]{Chowdhury} C.~Chowdhury, \textit{Motivic homotopy theory of algebraic stacks}, Ph.D. thesis (2021), available at: \url{https://doi.org/10.17185/duepublico/74930}.

  \bibitem[Chow2]{ChowdhuryArXiv} C.~Chowdhury, \textit{Motivic homotopy theory of algebraic stacks}.  Ann. K
  -Theory~{\bf{9}} (2024), no.~1, 1--22.
  
  \bibitem[Ci]{CisinskiKH} D.-C.~Cisinski, \textit{Descente par éclatements en $K$-théorie invariante par homotopie}. Ann. Math.~{\bf{177}} (2013), no.~2, 425--448.

  \bibitem[DG]{DrinfeldGaitsgoryFiniteness} V.~Drinfeld, D.~Gaitsgory, \textit{On some finiteness questions for algebraic stacks}.  Geom.~Funct.~Anal.~{\bf{23}}, no. 1, 149--294 (2013).

  \bibitem[DJK]{DegliseJinKhan} F.~Déglise, F.~Jin, A.\,A.~Khan, \textit{Fundamental classes in motivic homotopy theory}. J. Eur. Math. Soc. (JEMS)~{\bf{23}} (2021), no.~12, 3935--3993.

  \bibitem[Dr]{DrinfeldGm} V.~Drinfeld, \textit{On algebraic spaces with an action of $\mathbb{G}_m$}.  \arXiv{1308.2604} (2013).

  \bibitem[EG]{EdidinGraham} D.~Edidin, W.~Graham, \textit{Equivariant intersection theory}.  Invent. Math.~{\bf{131}} (1998), no.~3, 595--644.

  \bibitem[EG2]{EdidinGrahamLocalization} D.~Edidin, W.~Graham, \textit{Localization in equivariant intersection theory and the Bott residue formula}.  Am. J. Math.~{\bf{120}} (1998), no.~3, 619--636.

  \bibitem[EGA I\textsubscript{new}]{EGAI} A.~Grothendieck, with J.~Dieudonn\'e, \textit{\'El\'ements de g\'eom\'etrie alg\'ebrique}.  Die Grundlehren der mathematischen Wissenschaften~{\bf{166}}, Springer (1971).

  \bibitem[EHKSY]{EHKSY} E.~Elmanto, M.~Hoyois, A.\,A.~Khan, V.~Sosnilo, M.~Yakerson, \textit{Motivic infinite loop spaces}. Camb. J. Math.~{\bf{9}} (2021), no.~2, 431--549.

  \bibitem[EHKSY3]{EHKSY3} E.~Elmanto, M.~Hoyois, A.\,A.~Khan, V.~Sosnilo, M.~Yakerson, \textit{Modules over algebraic cobordism}. Forum Math. Pi~{\bf{8}} (2020), no.~e14, 1--44.

  \bibitem[EK]{ElmantoKhan} E.~Elmanto, A.\,A.~Khan, \textit{Perfection in motivic homotopy theory}. Proc. Lond. Math. Soc.~{\bf{120}} (2020), no.~1, 28--38.

  \bibitem[ES]{ElmantoSosnilo} E.~Elmanto, V.~Sosnilo, \textit{On nilpotent extensions of $\infty$-categories and the cyclotomic trace}. Int. Math. Res. Not.~{\bf{2022}} (2022), no.~21, 16569--16633.

  \bibitem[FM]{FultonMacPherson} W.~Fulton, R.~MacPherson, \textit{Categorical framework for the study of singular spaces.}.  Mem. Amer. Math. Soc.~{\bf{31}} (1981), no.~243.

  \bibitem[GH]{GepnerHeller} D.~Gepner, J.~Heller, \textit{The tom Dieck splitting theorem in equivariant motivic homotopy theory}.  J. Inst. Math. Jussieu~{\bf{22}} (2023), no.~3, 1181--1250.

  \bibitem[GR]{GaitsgoryRozenblyum} D.~Gaitsgory, N.~Rozenblyum, \textit{A study in derived algebraic geometry. Vol. I. Correspondences and duality}.  Math. Surv. Mono.~{\bf{221}} (2017).

  \bibitem[HJNY]{HJNYHermitian} M.~Hoyois, J.~Jelisiejew, D.~Nardin, M.~Yakerson, \textit{Hermitian K-theory via oriented Gorenstein algebras}.  J. Reine Angew. Math.~{\bf{793}} (2022), 105--142.

  \bibitem[HHR]{HillHopkinsRavenel} M.~A.~Hill, M.~J.~Hopkins, D.~C.~Ravenel, \textit{On the nonexistence of elements of Kervaire invariant one}.  Ann. Math.~{\bf{184}} (2016), no.~1, 1--262.

  \bibitem[HHR2]{HillHopkinsRavenelSketch} M.~A.~Hill, M.~J.~Hopkins, D.~C.~Ravenel, \textit{The Arf--Kervaire problem in algebraic topology: Sketch of the proof}, in: Current developments in mathematics, 2010 (2011), 1--43.

  \bibitem[HK]{HoyoisKrishna} M.~Hoyois, A.~Krishna, \textit{Vanishing theorems for the negative K-theory of stacks}. Ann. K-Theory~{\bf{4}} (2019), no.~3, 439--472.

  \bibitem[HML]{HellerMalagonLopez} J.~Heller, J.~Malag\'on-L\'opez, \textit{Equivariant algebraic cobordism}.  J. Reine Angew. Math.~{\bf{684}} (2013), 87--112.

  \bibitem[HPL]{HoskinsPepinLehalleur} V.~Hoskins, S.~Pepin Lehalleur, \textit{On the Voevodsky motive of the moduli stack of vector bundles on a curve}.  Q. J. Math.~{\bf{72}} (2021), no.~1--2, 71--114.

  \bibitem[HR]{HallRydhPerfect} J.~Hall, D.~Rydh, \textit{Perfect complexes on algebraic stacks}.  Compos. Math.~{\bf{153}} (2017), no.~11, 2318--2367.

  \bibitem[HVØ]{HellerVoineaguOstvaer} J.~Heller, M.~Voineagu, P.\,A.~Østvær, \textit{Equivariant cycles and cancellation for motivic cohomology}.  Doc. Math.~{\bf{20}} (2015), 269--332.

  \bibitem[HØ]{HellerOstvaerSlice} J.~Heller, P.\,A.~Østvær, \textit{The zero slice of the $C_2$-equivariant motivic sphere}.  In preparation.

  \bibitem[Ho]{Hoyois112064} M.~Hoyois, \textit{Answer to MathOverflow question 111849} (2012), available at \url{https://mathoverflow.net/a/112064}.

  \bibitem[Ho2]{HoyoisHopkinsMorel} M.~Hoyois, \textit{From algebraic cobordism to motivic cohomology}.  J. Reine Angew. Math.~{\bf{702}} (2015), 173--226.

  \bibitem[Ho3]{HoyoisEquivariant} M.~Hoyois, \textit{The six operations in equivariant motivic homotopy theory}, Adv.~Math.~{\bf{305}}, 197--279 (2017); corrigendum ibid.~{\bf{333}}, 1293--1296 (2018).

  \bibitem[Ho4]{HoyoisKH} M.~Hoyois, \textit{Cdh descent in equivariant homotopy K-theory}.  Doc.~Math.~{\bf{25}} (2020), 457--482.

  \bibitem[Ho5]{HoyoisFramedLoc} M.~Hoyois, \textit{The localization theorem for framed motivic spaces}.  Compos. Math.~{\bf{157}} (2021), no.~1, 1--11.

  \bibitem[IZ]{IllusieZheng} L.~Illusie, W.~Zheng, \textit{Quotient stacks and equivariant étale cohomology algebras: Quillen’s theory revisited}.  J. Algebr. Geom.~{\bf{25}} (2016), no.~2, 289--400.

  \bibitem[Jos]{Joshua} R.~Joshua, \textit{Bredon-style homology, cohomology and Riemann--Roch for algebraic stacks}.  Adv. Math.~{\bf{209}} (2007), no.~1, 1--68.

  \bibitem[Joy]{JoyceWall} D.~Joyce, \textit{Enumerative invariants and wall-crossing formulae in abelian categories}.  \arXiv{2111.04694} (2021).

  \bibitem[KP]{KiemPark} Y.-H. Kiem, H.~Park, \textit{Virtual intersection theories}.  Adv. Math.~{\bf{388}} (2021), article ID~107858, 51 p.

  \bibitem[KØ]{KrishnaOstvaer} A.~Krishna, P.\,A.~Østvær, \textit{Nisnevich descent for K-theory of Deligne-Mumford stacks}.  J. K-Theory~{\bf{9}} (2012), 291--331.

  \bibitem[Kh]{KhanLocalization} A.\,A.~Khan, \textit{The Morel--Voevodsky localization theorem in spectral algebraic geometry}.  Geom. Topol.~{\bf{23}} (2019), no.~7, 3647--3685.

  \bibitem[Kh2]{KhanKblow} A.\,A.~Khan, \textit{Algebraic K-theory of quasi-smooth blow-ups and cdh descent}.  Ann. H.~Lebesgue~{\bf{3}} (2020), 1091--1116.

  \bibitem[Kh3]{KhanVirtual} A.\,A.~Khan, \textit{Virtual fundamental classes for derived stacks I}.  \arXiv{1909.01332} (2019).

  \bibitem[Kh4]{KhanKstack} A.\,A.~Khan, \textit{K-theory and G-theory of derived algebraic stacks}.  Jpn. J. Math.~{\bf{17}} (2022), no.~1, 1--61.

  \bibitem[Kh5]{KhanSix} A.\,A.~Khan, \textit{Voevodsky's criterion for constructible categories of coefficients}.  Unpublished draft (2021), available at \url{https://www.preschema.com/papers/six.pdf}.

  \bibitem[KhRa]{equilisse} A.\,A.~Khan, C.~Ravi, \textit{Equivariant generalized cohomology via stacks}.  \arXiv{2209.07801} (2022).

  \bibitem[KhRy]{KhanRydh} A.\,A.~Khan, D.~Rydh, \textit{Virtual Cartier divisors and blow-ups}.  \arXiv{1802.05702} (2018).

  \bibitem[Kre]{Kresch} A.~Kresch, \textit{Cycle groups for Artin stacks}.  Invent. Math.~{\bf{138}} (1999), no.~3, 495--536.
  
  \bibitem[Kri]{KrishnaCobordism} A.~Krishna, \textit{Equivariant cobordism of schemes}.  Doc. Math.~{\bf{17}} (2012), 95--134.

  \bibitem[Kri2]{KrishnaCompletion} A.~Krishna, \textit{The completion problem for equivariant K-theory}.  J. Reine Angew. Math.~{\bf{740}} (2018), 275--317.

  \bibitem[KrRa]{KrishnaRavi} A.~Krishna, C.~Ravi, \textit{Algebraic $K$-theory of quotient stacks}. Ann. \(K\)-Theory~{\bf{3}} (2018), no.~2, 207--233.

  \bibitem[LMB]{LaumonMoretBailly} G.~Laumon, L.~Moret-Bailly, \textit{Champs algébriques}.  Ergebnisse der Mathematik und ihrer Grenzgebiete. 3.~Folge~{\bf{39}}, Springer (2000).

  \bibitem[LMM]{LewisMayMcClure} G.~Lewis, J.\,P.~May, J.\,E.~McClure, \textit{Ordinary $\mathrm{RO}(G)$-graded cohomology}.  Bull. Am. Math. Soc.~{\bf{4}} (1981), 208--212.

  \bibitem[LS]{LevineSerpe} M.~Levine, C.~Serpé, \textit{On a spectral sequence for equivariant K-theory}.  K-Theory~{\bf{38}} (2008), 177--222.

  \bibitem[LZ]{LiuZhengGluing} Y.~Liu, W.~Zheng, \textit{Gluing restricted nerves of $\infty$-categories}.  \arXiv{1211.5294} (2012).

  \bibitem[LZ2]{LiuZheng} Y.~Liu, W.~Zheng, \textit{Enhanced six operations and base change theorem for higher Artin stacks}.  \arXiv{1211.5948} (2012).

  \bibitem[Le]{LevineConiveau} M.~Levine, \textit{The homotopy coniveau tower}.  J.~Topol.~{\bf{1}} (2008), no.~1, 217--267.

  \bibitem[Le2]{LevineMGL} M.~Levine, \textit{Comparison of cobordism theories}.  J.~Algebra~{\bf{322}} (2009), no.~9, 3291--3317.

  \bibitem[Le3]{LevineIntrinsic} M.~Levine, \textit{The intrinsic stable normal cone}.  Algebr. Geom.~{\bf{8}} (2021), no.~5, 518--561.

  \bibitem[Li]{LiuJoyce} H.~Liu, \textit{Equivariant K-theoretic enumerative invariants and wall-crossing formulae in abelian categories}.  \arXiv{2207.13546} (2022).

  \bibitem[Lu]{LurieHTT} J.~Lurie, \textit{Higher topos theory}, Ann. Math. Stud.~{\bf{170}} (2009).

  \bibitem[Lu2]{LurieHA} J.~Lurie, \textit{Higher algebra}, version of 2017-09-18.  Available at: \url{https://www.math.ias.edu/~lurie/papers/HA.pdf}.

  \bibitem[Lu3]{LurieSAG} J.~Lurie, \textit{Spectral algebraic geometry}, version of 2018-02-03.  Available at: \url{https://www.math.ias.edu/~lurie/papers/SAG-rootfile.pdf}.

  \bibitem[MV]{MorelVoevodsky} F.~Morel, V.~Voevodsky, \textit{${\bf A}^1$-homotopy theory of schemes}.  Publ. Math. Inst. Hautes Études Sci.~{\bf{90}} (1999), 45--143.

  \bibitem[MWV]{MazzaWeibelVoevodsky} C.~Mazza, C.~Weibel, V.~Voevodsky, \textit{Lecture notes on motivic cohomology}.  Clay Math. Monographs (2016).

  \bibitem[Ma]{Manolache} C.~Manolache, \textit{Virtual pull-backs}.  J. Algebraic Geom.~{\bf{21}} (2012), no.~2, 201--245.

  \bibitem[NS]{NikolausScholze} T.~Nikolaus, P.~Scholze, \textit{On topological cyclic homology}.  Acta Math.~{\bf{221}} (2018), no.~2, 203--409.

  \bibitem[PW]{PaninWalter} I.~Panin, C.~Walter, \textit{On the motivic commutative ring spectrum $\mathbf{BO}$}.  St. Petersbg. Math. J.~{\bf{30}} (2019), no.~6, 933--972.

  \bibitem[RG]{RaynaudGruson} M.~Raynaud, L.~Gruson, \textit{Crit\`eres de platitude et de projectivit\'e. Techniques de ``platification'' d'un module}.  Invent. Math.~{\bf{13}} (1971), 1--89.

  \bibitem[RS]{RicharzScholbach} T.~Richarz, J.~Scholbach, \textit{The intersection motive of the moduli stack of shtukas}.  Forum Math. Sigma~{\bf{8}} (2020), paper no.~e8, 99 p.

  \bibitem[Ro]{RobaloBridge} M.~Robalo, \textit{$K$-theory and the bridge from motives to noncommutative motives}.  Adv. Math.~{\bf{269}} (2015), 399--550.

  \bibitem[Ry]{RydhUnramified} D.~Rydh, \textit{The canonical embedding of an unramified morphism in an étale morphism}.  Math. Z.~{\bf{268}} (2011), no.~3--4, 707--723.

  \bibitem[Ry2]{RydhCompact} D.~Rydh, \textit{Compactification of tame Deligne--Mumford stacks}.  Unpublished draft (2011), available at \url{https://people.kth.se/~dary/tamecompactification20110517.pdf}.

  \bibitem[Ry3]{RydhApprox} D.~Rydh, \textit{Noetherian approximation of algebraic spaces and stacks}.  J. Algebra~{\bf{422}} (2015), 105--147.

  \bibitem[Ry4]{RydhFunctFlat} D.~Rydh, \textit{Functorial flatification of proper morphisms}.  Unpublished draft (2015).

  \bibitem[Ry5]{RydhApproxSheaves} D.~Rydh, \textit{Approximation of sheaves on algebraic stacks}.  Int. Math. Res. Not.~{\bf{2016}} (2016), no.~3, 717--737.

  \bibitem[SGA3]{SGA3} M.~Demazure, A.~Grothendieck (eds.), \textit{Schémas en groupes} (SGA 3). Séminaire de Géometrie Algébrique du {B}ois-{M}arie 1962--1964. Lecture Notes in Mathematics~{\bf{151}}, {\bf{152}}, {\bf{153}}, Springer (1970).

  \bibitem[SGA4]{SGA4} M.~Artin, A.~Grothendieck, J.\,L.~Verdier (eds.), \textit{Théorie des topos et cohomologie étale des schemas} (SGA 4). Séminaire de Géometrie Algébrique du {B}ois-{M}arie 1963--1964. Lecture Notes in Mathematics~{\bf{269}}, {\bf{270}}, {\bf{305}}, Springer (1972).

  \bibitem[Se]{SegalK} G.~Segal, \textit{Equivariant K-theory}.  Publ. Math., Inst. Hautes Étud. Sci.~{\bf{34}} (1968), 129--151.

  \bibitem[Se2]{Segal} G.~Segal, \textit{Equivariant stable homotopy theory}.  Actes Congr. internat. Math. 1970~{\bf{2}} (1971), 59--63.

  \bibitem[Sp]{Spitzweck} M.~Spitzweck, \textit{A commutative $\mathbb{P}^1$-spectrum representing motivic cohomology over Dedekind domains}.  Mém. Soc. Math. Fr.~{\bf{157}} (2018).

  \bibitem[Su]{Sumihiro} H.~Sumihiro, \textit{Equivariant completion. II}.  J. Math. Kyoto Univ.~{\bf{15}} (1975), no.~3, 573--605.

  \bibitem[TT]{ThomasonTrobaugh} R.\,W.~Thomason, T.~Trobaugh, \textit{Higher algebraic $K$-theory of schemes and of derived categories}, in: The Grothendieck Festschrift, Vol.~III, 247--435, Progr.~Math.~{\bf{88}} (1990), Birkhäuser.

  \bibitem[Th]{ThomasonComp} R.\,W.~Thomason, \textit{Comparison of equivariant algebraic and topological K-theory}.  Duke Math. J.~{\bf{53}} (1986), no.~3, 795--825.

  \bibitem[Th2]{ThomasonLefschetz} R.\,W.~Thomason, \textit{Une formule de Lefschetz en K-théorie équivariante algébrique}.  Duke Math. J.~{\bf{68}} (1992), no.~3, 447--462.

  \bibitem[To\"e]{ToenGRR} B.~Toën, \textit{Théorèmes de Riemann--Roch pour les champs de Deligne--Mumford}, \(K\)-Theory~{\bf{18}} (1999), no.~1, 33--76.

  \bibitem[To\"e2]{ToenMotive} B.~To\"en, \textit{On motives for Deligne-Mumford stacks}.  Int. Math. Res. Not.~{\bf{2000}} (2000), no.~17, 909--928.

  \bibitem[To\"e3]{ToenSimp} B.~To\"en, \textit{Simplicial presheaves and derived algebraic geometry}, in: Simplicial methods for operads and algebraic geometry, 119--186, Adv.~Courses~Math.~CRM~Barcelona (2010).

  \bibitem[Tot]{Totaro} B.~Totaro, \textit{The Chow ring of a classifying space}, in: Algebraic K-theory, Proc. Symp. Pure Math.~{\bf{67}} (1999), 249--281.

  \bibitem[Ve]{Vezzosi} G.~Vezzosi, \textit{Brown--Peterson spectra in stable $\mathbb{A}^1$-homotopy theory}.  Rend. Semin. Mat. Univ. Padova~{\bf{106}} (2001), 47--64.

  \bibitem[Vi]{Vistoli} A.~Vistoli, \textit{Higher equivariant K-theory for finite group actions}.  Duke Math. J.~{\bf{63}} (1991), no.~2, 399--419.

  \bibitem[Vo]{VoevodskyICM} V.~Voevodsky, \textit{$\mathbf{A}^1$-homotopy theory}.  Doc. Math., Extra Vol.~{\bf{I}} (1998): ICM Berlin 1998, 579--604.

  \bibitem[Vo2]{VoevodskyCross} P.~Deligne, \textit{Voevodsky's lectures on cross functors}.  Unpublished (2001), available at \url{https://www.math.ias.edu/vladimir/node/94}.

  \bibitem[We]{WeibelKH} C.\,A.~Weibel, \textit{Homotopy algebraic K-theory}, in: Algebraic K-theory and algebraic number theory, Proc. Semin., Honolulu/Hawaii 1987, Contemp.~Math.~{\bf{83}} (1989), 461--488.

\end{thebibliography}

Institute of Mathematics, Academia Sinica, Taipei 10617, Taiwan

School of Mathematics, Tata Institute of Fundamental Research, Homi Bhabha Road, Colaba, Mumbai 400005, India

\end{document}